\def\rr{{\mathbb R}}
\def\rn{{{\rr}^n}}
\def\cc{{\mathbb C}}
\def\nn{{\mathbb N}}
\def\zz{{\mathbb Z}}
\def\cd{{\mathcal D}}
\def\cg{{\mathcal G}}
\def\cj{{\mathcal J}}
\def\cm{{\mathcal M}}
\def\cp{{\mathscr P}}
\def\cq{{\mathcal Q}}
\def\cs{{\mathcal S}}
\def\cx{{\mathcal X}}
\def\cy{{\mathcal Y}}
\def\fz{\infty}
\def\az{\alpha}
\def\supp{{\mathop\mathrm{\,supp\,}}}
\def\diam{{\mathop\mathrm{\,diam\,}}}
\def\loc{{\mathop\mathrm{\,loc\,}}}
\def\aoti{{\mathrm {\,ATI\,}}}
\def\lz{\lambda}
\def\dz{\delta}
\def\ez{\epsilon}
\def\bz{\beta}
\def\gz{{\gamma}}
\def\vz{\varphi}
\def\kz{{\kappa}}
\def\tz{\theta}
\def\sz{\sigma}
\def\wz{\widetilde}
\def\ls{\lesssim}
\def\gs{\gtrsim}
\def\laz{\langle}
\def\raz{\rangle}
\def\wg{\wedge}
\def\gfz{\genfrac{}{}{0pt}{}}
\def\hs{\hspace{0.3cm}}
\def\dint{\displaystyle\int}
\def\r{\right}
\def\lf{\left}
\def\gfz{\genfrac{}{}{0pt}{}}
\newtheorem{thm}{Theorem}[section]
\newtheorem{prop}[thm]{Proposition}
\newtheorem{cor}[thm]{Corollary}
\newtheorem{lem}[thm]{Lemma}
\theoremstyle{definition}
\newtheorem{defn}[thm]{Definition}
\newtheorem{rem}[thm]{Remark}
\numberwithin{equation}{section}
\begin{document}

\arraycolsep=1pt

\title{\bf Besov-Type and Triebel--Lizorkin-Type Spaces
Associated with  Heat Kernels
\footnotetext{\hspace{-0.22cm}2010
{\it Mathematics Subject Classification}. Primary 46E35;
Secondary 42B35, 42B25.
\endgraf {\it Key words and phrases}. Besov space, Triebel--Lizorkin space, metric measure space, heat kernel, Peetre maximal function, frame.
\endgraf
This project is  supported
by the National Natural Science
Foundation of China (Grant Nos. 11471042, 11171027 and 11361020), the Specialized Research Fund for the Doctoral Program of Higher Education
of China (Grant No. 20120003110003),
the Fundamental Research Funds for Central Universities of China (Nos. 2013YB60 and 2014KJJCA10)
and the Alexander von Humboldt Foundation.
\endgraf $^\ast$ Corresponding author.}}
\author{Liguang Liu,  Dachun Yang\,$^\ast$ and Wen Yuan}
\date{}
\maketitle

\vspace{-1cm}

\begin{center}
\begin{minipage}{14cm}\small
\noindent{\bf Abstract.}
Let $(M, \rho,\mu)$ be a space of homogeneous type
satisfying the reverse doubling condition and the non-collapsing condition.
In this paper, the authors introduce
Besov-type spaces $B_{p,q}^{s,\tau}(M)$ and Triebel--Lizorkin-type spaces $F_{p,q}^{s,\tau}(M)$
associated to a   nonnegative self-adjoint operator $L$ whose heat kernel
satisfies sub-Gaussian upper bound  estimate, H\"older continuity, and
stochastic completeness. The novelty in this article is that the indices
$p,q,s,\tau$ here can be take full range of all possible values as in
the Euclidean setting.
Characterizations of these spaces
via Peetre maximal functions and the heat semigroup are established
for full range of possible indices. Also, frame characterizations of these spaces are given.
When $L$  is the Laplacian operator on $\mathbb R^n$, these spaces
coincide with the Besov-type and Triebel-Lizorkin-type spaces on $\mathbb R^n$
studied in [Lecture Notes in Mathematics 2005, Springer-Verlag, Berlin, 2010].
In the case  $\tau=0$ and the smoothness index $s$ is around zero,
comparisons of  these spaces with
the Besov and Triebel--Lizorkin spaces studied in [Abstr. Appl. Anal. 2008, Art. ID 893409, 250 pp]
are also presented.
\end{minipage}
\end{center}


\section{Introduction\label{s1}}

\hskip\parindent
 The tremendous development of theories of function spaces
 in the last few decades
has resulted in extraordinary accomplishments
in several fields of mathematics such as
potential theory, partial differential equations, approximation theory
and so on.
Besov and Triebel--Lizorkin spaces,
known so far, are very general scales
of functions spaces. They cover various types of function spaces
such as Lebesgue spaces, Sobolev spaces,
Hardy spaces and $\mathrm{BMO}$ (see, for example, \cite{t83,t92,rs,FJ90, FJW}).
In recent years, due to the applications in partial differential equations
such as heat and Navier--Stokes equations,
the scale of Besov and Triebel--Lizorkin
spaces was further extended to Besov--Morrey spaces and Triebel--Lizorkin--Morrey
spaces, via replacing the Lebesgue norm in the definition of
 Besov--Triebel--Lizorkin spaces by the Morrey norm (see, for example,
 \cite{ax02,KY,ma03,TX,st,s012,s013}). The classical Morrey spaces and
many other Morrey-type spaces, such as
 Hardy--Morrey spaces and Sobolev--Morrey spaces, are proved to belong to this scale.
 A more general scale of function
spaces is the Besov-type and Triebel--Lizorkin-type spaces
introduced in \cite{yy1,yy2,YSY}, which unify Besov and Triebel--Lizorkin
spaces, Triebel--Lizorkin--Morrey
spaces (see \cite{TX,st,s09,s10}), and the $Q_\az$ spaces (see \cite{ejpx,dx,xj01,xj06}).
For  more properties on these generalized Besov and Triebel-Lizorkin
spaces and their applications in
partial differential equations such as heat and Navier-Stokes equations,
we refer to
\cite{rs,x07,syy,yy4,lsuyy,lsuyy2,lz10,lxy12,yyz13,ysy13,yhsy} and, especially,
to the excellent monograph \cite{t13} by Triebel and two excellent surveys \cite{s012,s013}
by Sickel for many unsolved questions on
this subject.

The main aim of this article is
to develop a theory for
the Besov-type and the
Triebel--Lizorkin-type spaces on general metric measure spaces.
The setup of the underlying space is as follows.
Let $(M, \rho)$ be a locally compact complete metric space
with a metric $\rho$.
Suppose that $\mu$ is a positive regular Borel measure such
that the following \emph{doubling condition} holds true:
there exists a positive constant $K$ such that, for all $x\in M$ and $r\in(0,\fz)$,
\begin{equation}\label{doubling1}
\mu(B(x, 2r))\le K\mu(B(x, r)).
\end{equation}
The triple $(M, \rho, \mu)$ is called a \emph{space of homogeneous
type} in the sense of Coifman and Weiss \cite{CW1, CW2}.
(Notice that a space of homogeneous
type in \cite{CW1,CW2} is endowed with a quasi-metric. But, throughout
this article, we \emph{always assume} that $\rho$ is a metric for simplicity.)
Condition \eqref{doubling1} implies that, for all $x\in M$, $r\in(0,\fz)$ and $\lz\in(1,\fz)$,
\begin{equation}\label{doubling2}
\mu(B(x, \lz r))\le K \lz^d\mu(B(x, r)),
\end{equation}
where $d:= \log_2 K>0$ is a ``dimension" constant.
Also, we assume  the  \emph{reverse doubling condition}:
there exists a constant $K_\ast\in(1,\fz)$ such that, for all $x\in M$ and $0< r\le \frac{\diam M}{3}$,
\begin{equation}\label{rdoubling1}
\mu(B(x, 2 r))\ge K_\ast\mu(B(x, r)).
\end{equation}
Condition \eqref{rdoubling1} implies that,
for all $x\in M$, $\lz\in[1,\fz)$ and $0< r\le \frac{2\diam M}{3\lz }$,
\begin{equation}\label{rdoubling2}
\mu(B(x, \lz r))\ge K_\ast^{-2}\lz^\kz \mu(B(x, r)),
\end{equation}
where $\kz:= \log_2 K_\ast>0$ also measures the ``dimension" of $(M, \rho, \mu)$
in some sense.
The doubling and the reverse doubling conditions make $(M, \rho, \mu)$ into
an RD-space originally introduced in \cite{HMY2}.
Moreover, we require the following \emph{non-collapsing condition}: there exists a positive constant $c_0$ such that
\begin{equation}\label{non-collapsing}
\inf_{x\in M}\mu(B(x, 1))\ge c_0.
\end{equation}
On $(M,\rho,\mu)$, we always assume that there exists a nonnegative definite self-adjoint operator $L$ whose {\em domain} ${\rm{Dom}}(L)$ is dense in $L^2(M)$.
By the spectral theory, $L$  has a spectral resolution $\{E_\lz\}_{\lz\ge 0}$
such that, for any bounded Borel measurable function $f$,
$f(L)=
\int_0^\infty f(\lz)\,dE_\lz.$
The heat semigroup $\{e^{-tL}\}_{t>0}$ arising from $L$ is assumed to be a family of
integral operators which is associated to the {\em heat kernel}
$\{p_t\}_{t>0}$ in the following way:
$$e^{-tL}f(x)=\int_M p_t(x,y)f(y)\,d\mu(y), \quad \forall\ x\in M,$$
at least for functions $f\in L^2(\mu)$. Obviously,
the heat kernel $\{p_t\}_{t>0}$ is {\em symmetric}, that is, for all $t>0$ and $x,\,y\in M$,
$p_t(x,y)=p_t(y,x).$
It is also easy to observe that $\{p_t\}_{t>0}$ satisfies the {\em heat semigroup property:\,} for all $s,\,t>0$ and $x,\,y\in M$,
$$p_t(x,y)=\int_M p_t(x,z)p_t(z,y)\,d\mu(z).$$
Further, assume that there exist
constants  $C^*$, $c^*$, $\az_0\in(0,\infty)$ and $\beta_0\in[2,\fz)$ such that the following hold:

\begin{enumerate}
\item[\bf (UE)] {\bf Upper bound estimate:\,}
 for all $t\in(0,1]$ and $x,\, y\in M$,
\begin{equation}\label{GUB}
|p_t(x,y)|
\le C^\ast\frac{\exp\big(-c^\ast\big[\frac{\rho(x,y)}{t^{1/\beta_0}}\big]
^{\frac{\beta_0}{\beta_0-1}}\big)}
{\sqrt{\mu(B(x,t^{1/\beta_0})) \,\mu(B(y, t^{1/\beta_0}))}}.
\end{equation}

\item[\bf (HE)] {\bf H\"older continuity estimate:\,}
for all $t\in(0,1]$ and $x,\, y,\, y'\in M$ satisfying
$\rho(y, y')\le  t^{1/\beta_0}$,
\begin{equation}\label{HC}
|p_t(x,y)-p_t(x,y')|
\le C^\ast \left[\frac{\rho(y,y')}{t^{1/\beta_0}}\right]^{\alpha_0}
\frac{\exp\big(-c^\ast\big[\frac{\rho(x,y)}{t^{1/\beta_0}}
\big]^{\frac{\beta_0}{\beta_0-1}}\big)}
{\sqrt{\mu(B(x,t^{1/\beta_0})) \,\mu(B(y, t^{1/\beta_0}))}}.
\end{equation}

\item[\bf (SC)]
{\bf Stochastic Completeness:\,}  for all $t\in(0,\infty)$ and $x\in M$,
\begin{equation}\label{Markov}
\int_M p_t(x,y)\,d\mu(y)=1.
\end{equation}
\end{enumerate}
Throughout this article, we  fix the aforementioned parameters
$K,\, d,\, K_\ast,\, \kappa,\, c_0$,
$C^*$, $c^*$, $\az_0$ and $\beta_0$.

For the case $\bz_0=2$, an interesting example for the previous setup
arises from the second order elliptic operator $L:=-{\rm div} (A \nabla),$
where $A:=\{a_{i,j}(x)\}_{1\le i,j\le d}$ is a uniformly elliptic symmetric
matrix-valued real function on $\rr^d$ or complex function on $\rr^d$
with $d\in\{1,2\}$. Another typical example is the interval $[-1,1]$
endowed with the measure $du(x):=w_{\az,\bz}(x)\,dx$ and $L$ being
the Jacobi operator, where $w_{\az,\bz}(x):=(1-x)^\az(1+x)^\bz$, $x\in[-1,1]$,
with $\az,\,\bz\in(-1,\fz)$, is the classical Jacobi weight on $[-1,1]$ (see \cite{CKP}).
Other examples for $\bz_0=2$
can be given by  geodesically complete Riemannian manifolds with nonnegative Ricci curvature.
For $\bz_0>2$, a bunch of examples are provided by a
large family of various fractals; see  \cite{Gri}. For example, the Sierpinski gasket has a natural
Hausdorff measure $\mu$ and a
diffusion process which
has a transition  density $p_t$ such that
\begin{equation}\label{diffusion}
p_t(x,y) \asymp \frac{C}{t^{d/\beta}}
\exp\left(-c\left[\frac{\rho(x,y)}{t^{1/\beta}}\right]
^{\frac{\beta}{\beta-1}}\right),
\end{equation}
where $d$ is the Hausdorff dimension of the fractal
and $\beta$  the {\em walk dimension} which is larger than $2$.
Here the notation $\asymp$ in \eqref{diffusion} means that
both $\le$ and $\ge$ can be used, but the positive constants
$C$ and $c$  may be different in upper and lower bounds.
Indeed, there are examples so that $\bz$ in \eqref{diffusion} is possible
to take all values in the range $[2, \, d+1)$.
Since all metric balls are precompact as a priori,
if the heat kernel $p_t$ satisfies  \eqref{diffusion},
then it satisfies {\bf (HE)}; see  \cite[Theorem 3.1 and Corollary 4.2]{BGK} and  \cite[Theorem~7.4]{GT}.
The topic of heat kernels has been studied intensively in lots of articles;
see, for example, \cite{BGK, Gri, GT, GH, GHL} and the references therein.
For more details of these examples, we refer the reader to
\cite[Section~1.3]{GL}.

For the above setup, Coulhon, Kerkyacharian and Petrushev \cite{CKP,KP}
considered the special case $\bz_0=2$ and they developed
a theory of Besov and Triebel-Lizorkin spaces.
Assume that there exists a positive constant $c$
such that $\Phi_0, \Phi \in C^\infty([0,\fz))$ such that
\begin{equation}\label{e2.1'}
\supp \Phi_0\subset [0, 2],\quad
\Phi_0^{(2\nu+1)}(0)=0\ \,\textup{for all } \nu\ge0,\,\, \qquad
|\Phi_0(\lz)|\ge c\ \,\,\textup{for}\,\,\lz\in[0, 2^{-3/4}],
\end{equation}
and
\begin{equation}\label{e2.2'}
\supp \Phi\subset [2^{-1}, 2],\quad \, \quad
|\Phi(\lz)|\ge c\ \,\,\textup{for}\,\,\lz\in[2^{-3/4}, 2^{3/4}].
\end{equation}
For all $j\in\nn$, let
\begin{equation}\label{eq:2.16xxx}
\Phi_j(\cdot):=\Phi(2^{-j}\cdot).
\end{equation}
For $s\in\rr$ and $p,q\in(0,\infty]$,
the \emph{Besov space $B_{p,q}^s(M)$}  is defined to be the collection of all {\em distributions} $f$ such that
\begin{equation}\label{besov}
\|f\|_{B^s_{p,q}(M)}:=
\bigg\{\sum_{j=0}^\infty\big\|2^{js}\Phi_j(\sqrt L)f\big\|_{L^p(M)}^q\bigg\}^{1/q}<\fz,
\end{equation}
where a usual modification is made when $p$ or $q$ is infinity.
Here we refer the reader to Section \ref{sec-3} below for the definition of {\em ``distributions"}, and to \cite[Section~5]{KP} for a detailed discussion.
Analogously, for $s\in\rr$, $p\in(0,\infty)$ and $q\in(0,\infty]$, the \emph{Triebel--Lizorkin space $F^s_{p,q}(M)$}
is defined to be  the collection of all distributions
$f$ such that
\begin{equation}\label{triebel}
\|f\|_{F^s_{p,q}(M)}:=
\bigg\|\bigg\{\sum_{j=0}^\infty|2^{js}\Phi_j(\sqrt L)f|^q\bigg\}^{1/q}\bigg\|_{L^p(M)}<\fz,
\end{equation}
where a usual modification is made when $q=\infty$.
Similarly, the spaces
$\wz B^s_{p,q}(M)$ and $\wz F^s_{p,q}(M)$ are defined
with $2^{js}$ replaced by
$[\mu(B(\cdot, 2^{-j}))]^{-s/d}$ in the above two (quasi-)norms.
As was proved in \cite[Propositions~6.3 and 7.2]{KP}, one may also equivalently
define these Besov and Triebel--Lizorkin spaces
by replacing the dilation $2$ in the definition of $\Phi_j$
(see \eqref{eq:2.16xxx}) with a general number $b\in(1,\fz)$.
Frame decompositions of these spaces are considered in \cite{KP} by using certain Calder\'on reproducing
formula.

It should be pointed out that there exists some history on the study for
function spaces related to operators different from the Laplace operators;
see, for example, Peetre \cite{p76} and  Triebel
\cite{t83,t95} for using spectral decompositions induced by a selfadjoint
positive operator to introduce (inhomogeneous) Besov spaces.
On the other hand, it is known that
Riesz transforms defined via a general operator $L$
may not be bounded on the classical Hardy spaces.
To solve such problems, Auscher, Duong and McIntosh \cite{ADM}
made some prominent contributions, which include
a theory of Hardy spaces associated with a  general operator $L$
whose heat kernels satisfy pointwise Possion upper bounds;
see also Duong and Yan \cite{DY0, DY1, DY2}.
On metric measure spaces whose measures satisfy a polynomial growth condition,
Bui, Duong and Yan in \cite{BDY}  studied
homogeneous Besov space $\dot B_{p,q}^{s}$ for $|s|<1$ and $p,q\in[1,\infty]$
associated with an operator $L$
whose heat kernels satisfy the upper bound Gauss estimate and the H\"older
continuity.
The advantage of the theory of Besov and Triebel--Lizorkin spaces developed in \cite{KP} lies in that it concerns full range of the indices $s,p$ and $q$.

Inspired by \cite{KP} and \cite{yy1,yy2,YSY,t13},
the main aim of this article is to develop
the Besov-type and the Triebel--Lizorkin-type spaces for full range of possible indices on the metric measure space $(M,\rho,\mu)$ which satisfies \eqref{doubling1}, \eqref{rdoubling1} and \eqref{non-collapsing},
with further assumptions that,
on $(M,\rho,\mu)$, there is a  nonnegative self-adjoint  operator $L$
whose heat kernels $\{p_t\}_{t>0}$
satisfy  conditions \textbf{(UE)}, \textbf{(HE)}  and
 \textbf{(SC)}.
To achieve this goal, we stick to the philosophy used in \cite{BH, FJ90, t83, YSY}
(see also \cite{s012,s013}).
The obstacle here is that, on general metric measure spaces, it is difficult to consider functions
with smooth order strictly larger than $1$.
However, the assumptions on the heat kernel provide kind of ``differential" structure
on the metric measure space. Indeed, due to $ {\bf (UE)}$ and ${\bf (HE)}$,
the smooth functional calculus induced by
heat kernels still have fast decay (even can be exponential decay) at infinity.
This idea was developed in \cite{KP, CKP} for the case $\bz_0=2$, and here it is generalized to general $\bz_0$ in Propositions \ref{prop2.10x} and \ref{prop2.12x} below. Such smooth functional calculus plays a role of the Schwartz functions as in the Euclidean space.
This standpoint is reconfirmed by a new observation in this article, that is,
the pointwise
off-diagonal estimate presented in Proposition \ref{prop2.14x} below.
Thus, it is possible to consider functions
with a higher smooth order.
Due to the smooth functional calculus, one can establish the continuous Calder\'on reproducing formula (see Section \ref{sec-2.2}, and \cite{CKP,KP} for the case $\bz_0=2$).
This formula is a powerful tool, so that it can be used to establish the
Peetre maximal function characterization and
the heat semigroup characterization of these spaces (see Section \ref{sec-6}).
Further, we build a new
discrete Calder\'on reproducing formula (see Theorem \ref{thm-CRF} below)
that  is much more
parallel to the  one used in the classical setting \cite{BH, FJ90, t83, YSY}.
Consequently, in Section \ref{sec-7},  frames decompositions of such Besov-type and Triebel-Lizorkin-type spaces are considered. The framework we build in this article
generalizes the function spaces in \cite{YSY} (see also \cite{s012, s013}) to metric measure spaces, and also generalizes the work of \cite{KP, CKP} to more general scale of functions spaces.

This article is organized as follows.

Section \ref{sec-2} is devoted to some auxiliary estimates.
In Section \ref{sec-2.1}, we present some basic estimates which hold
true on any  metric measure space $(M,\rho,\mu)$.
Then Section \ref{sec-2.2} gives some estimates related to
the smooth functional calculus induced by the heat kernel, including
an off-diagonal estimate (see Proposition \ref{prop2.14x} below).
The exponential decay of the functional calculus for certain smooth functions in
Proposition \ref{prop2.12x} below is crucial for establishing the discrete
Calder\'on reproducing formula in Theorem \ref{thm-CRF}
below (see the proof of Lemma \ref{lem9.5x} below).
The  continuous Calder\'on
reproducing formula is given at the end of Section \ref{sec-2.2}.

In Section \ref{sec-3},
we introduce the Besov-type spaces $B_{p,q}^{s,\tau}(M)$, $\wz B_{p,q}^{s,\tau}(M)$,
and the Triebel--Lizorkin-type spaces
$F_{p,q}^{s,\tau}(M)$, $\wz F_{p,q}^{s,\tau}(M)$,
where $\tau\in(0,\infty)$, $s\in\rr$, $p\in(0,\infty)$, $q\in(0,\infty)$, and $q$ can take $\infty$ for the spaces
$B_{p,q}^{s,\tau}(M)$ and $\wz B_{p,q}^{s,\tau}(M)$.
When $\tau=0$ and $\bz_0=2$, these spaces are actually the Besov and
the Triebel--Lizorkin
spaces introduced in \cite{KP}.

Applying the smooth functional calculus in Section \ref{sec-2}, in Section \ref{sec-4.1},
we control the Peetre maximal functions (see Proposition
\ref{prop4.2x} below) by the Hardy-Littlewood maximal function,
which essentially generalizes  \cite[Lemma 6.4]{KP} and is
 used elsewhere  in this article.
The estimate in Proposition
\ref{prop4.2x}, which is not restricted to be elements in
the spectral space as in \cite[Lemma 6.4]{KP},
is valid for general distributions.
In Section \ref{sec-4.2}, we prove some embedding properties
of the Besov-type and the Triebel--Lizorkin-type spaces,
and then classify these spaces for the index $\tau$ in different ranges
in  Section \ref{sec-4.3}.

In Section \ref{sec-6}, applying
the estimates of the Peetre maximal functions
and the continuous Calder\'on
reproducing formula,
we  characterize the Besov-type and the Triebel--Lizorkin-type spaces
via the Peetre maximal functions (see Theorem \ref{thm6.2x} below),
which also indicates that these spaces are well defined.
By using this Peetre maximal function characterization, we further
 establish the heat semigroup characterization of the Besov-type
and the Triebel--Lizorkin-type spaces in both discrete and continuous
versions (see Theorems \ref{thm6.7x} and \ref{thm6.8x} below).
Comparing with the
continuous heat semigroup characterization for Besov and Triebel-Lizorkin
spaces in \cite[Theorems~6.7 and 7.5]{KP}, wherein $p\in[1,\fz]$,
there is no restriction on $p$ here in the discrete
heat semigroup characterization of the Besov-type
and the Triebel--Lizorkin-type spaces.

Section \ref{sec-7} is devoted to the frame characterization
of these new scales of function spaces.  The frame structure we considered here
relies on Christ's dyadic cubes in $M$, which is different from those in \cite{KP},
and hence we need to establish a new discrete Calder\'on
reproducing formula associated with Christ's dyadic cubes and the
functions $\Phi_0$ and $\Phi$ in \eqref{e2.1'} and \eqref{e2.2'}
(see Theorem \ref{thm-CRF} below, whose proof is presented in Section 8).
As an application,
we show that $F_{p,q}^{s,1/p}(M)$ and $\wz F_{p,q}^{s,1/p}(M)$
are indeed the endpoint case $F_{\infty,q}^s(M)$ and $\wz F_{\infty,q}^s(M)$ of the Triebel--Lizorkin
spaces, where $p\in(0,\infty)$, $q\in(0,\infty]$ and $s\in\rr$.

Let $(M,\rho,\mu)$ be the Euclidean space
and $L$ the Laplacian operator. In Section \ref{sec-8.1},
we prove that  the Besov-type
and the Triebel--Lizorkin-type spaces
 defined in this article coincide with those spaces
 introduced by Yuan, Sickel and Yang \cite{YSY},
 by using their heat semigroup characterizations. Hence, the article here generalizes the work of \cite{YSY} (see also \cite{s012,s013}).

Recall that Besov spaces and Triebel--Lizorkin spaces with smooth order smaller than $1$
on RD-spaces was studied systematically in \cite{HMY2}. It was asked as an open question in
\cite{KP} whether these spaces coincide with the ones introduced in \cite{KP}.
 In Section \ref{sec-8.2}, we give an affirmative answer
  to this question when smooth order is close to zero.
 To be precise, when $\tau=0$ and $\bz_0=2$ in ${\bf (UE)}$ and ${\bf (HE)}$,
we show that the (quasi-)norms of the Besov and
the Triebel--Lizorkin spaces
on RD-spaces
defined in \cite{HMY2} coincide exactly
with those
in \cite{KP} when $s$ is around zero.  The proof needs
the discrete  Calder\'on
reproducing formula obtained in Section \ref{sec-7} and
the corresponding  one on RD-spaces obtained in \cite{HMY2}.
Thus, this article also generalizes both the works of \cite{HMY2} and \cite{KP}.

We finally make some conventions on notation.
Let $\nn:=\{1,2,\dots\}$, $\zz_+:=\{0\}\cup\nn$,
$\rr_+:=[0,\infty)$,  and   $\cc_+:=\{a+ib:\, a>0,\ b\in\rr\}$.
Denote by $C$ a positive constant depending at most on the parameters
$K,\, d,\, K_\ast,\, \kappa,\, c_0$,
$C^*$, $c^*$, $\az_0$ and $\beta_0$ appearing in \eqref{doubling1}
through \eqref{Markov}, but the value of $C$ may be different on  each occasion.
Occasionally, we may write $C:=C(\az, \bz,\dots)$ which means that
$C$ depends not only on the aforementioned parameters in \eqref{doubling1} through \eqref{Markov}
but also on the parameters $\az,\,\bz,\,\dots$ .
For any numbers $s,\,t\in\rr$, let
$(s\vee t):=\max\{s,t\}$ and $(s\wedge t):=\min\{s,t\}.$
The notation
$A\lesssim B$ means $A\le C B$ and, similarly, for $A\gtrsim B$.
If $B\ls A\ls B$, then write $A\sim B$.
If an operator $T$ is bounded from a (quasi)-Banach space
$\cx$ to a (quasi)-Banach space $\cy$, then
we denote by $\|T\|_{\cx\to\cy}$ its \emph{operator norm}.
For notational simplicity,
we let $|E|:=\mu(E)$ for any measurable set $E\subset M$.


\section{Smooth functional calculus}\label{sec-2}

\hskip\parindent
In this section, we give the smooth functional calculus
induced by the heat kernels, pointwise off-diagonal estimates, and the continuous
Calder\'on reproducing formula. Some of ideas come from
the articles \cite{CKP, KP} where the special case $\bz_0=2$ was considered.
For the completeness of the article, we provide detailed proofs of
these results for general $\bz_0$.

\subsection{Basic estimates related to metric measure spaces}\label{sec-2.1}

\hskip\parindent
For any $\delta,\,\sigma \in(0,\infty)$, let
\begin{equation*}
D_{\delta, \sigma}(x,y)
:= \frac{1}
{\sqrt{|B(x, \delta)|\, |B(y,\delta)|} } \frac1{\left[1+\delta^{-1}{\rho(x,y)}\right]^{\sigma}},\qquad\, x,\,y\in M.
\end{equation*}
By \eqref{doubling2} and $B(x,\delta)\subset B(y,\delta+\rho(x,y))$, we see that
\begin{equation*}
D_{\delta, \sigma}(x,y) \le \,
\frac{\sqrt{K}} {|B(x, \delta)|} \,
\frac1{\left[1+\delta^{-1}{\rho(x,y)}\right]^{\sigma-d/2}},\qquad\, x,\,y\in M.
\end{equation*}
Notice that the roles of $x$ and $y$ can be reversed in the above inequality.
Furthermore, by \eqref{doubling1} and \eqref{rdoubling1},
we can deduce the estimates listed in the following lemma.

\begin{lem}\label{lem2.1x}
\begin{enumerate}
\item[\rm (i)] Let $\sz\in(d,\fz)$. Then,
for all  $\delta\in(0,\fz)$
and   $x\in M$,
$$
\frac{1}{|B(x, \delta)|}\int_M  \frac{1}
{[1+\delta^{-1}\rho(x,y)]^{\sz}}\,d\mu(y)\le \frac{K}{1-2^{d-\sz}},
$$
$$
\int_M  \frac{1}
{|B(y, \delta)|[1+\delta^{-1}\rho(x,y)]^{\sz}}\,d\mu(y)\le \frac{K2^d}{1-2^{d-\sz}}\quad
and\quad
\int_M   D_{\delta, \sigma}(x,y)
\,d\mu(y)\le \frac{K2^d}{1-2^{d-\sz}}.$$

\item[\rm (ii)] Let $\sz\in(d,\fz)$. Then,
 for all  $s,\,t\in(0,\fz)$
and   $x,\,y\in M$,
$$
\int_M   D_{s, \sigma}(x,z) D_{t, \sigma}(z,y)
\,d\mu(z)\le \frac{K^2\,2^{d+\sz+1}}{1-2^{d-\sz}} \max\{(t^{-1}s)^{d},\,(s^{-1}t)^{d}\}\,
D_{s\vee t, \sigma}(x,y).$$
\end{enumerate}
\end{lem}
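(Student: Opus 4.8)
Part (i) is essentially a standard dyadic-annular decomposition of $M$, so I would focus the exposition on Part (ii), which is the substantive estimate. The plan for (i) is: for fixed $x$ and $\delta$, write $M = B(x,\delta) \cup \bigcup_{j\ge 0}\big(B(x, 2^{j+1}\delta)\setminus B(x, 2^j\delta)\big)$, so that on the $j$-th annulus we have $1+\delta^{-1}\rho(x,y)\sim 2^j$ and, by \eqref{doubling2}, $|B(x,2^{j+1}\delta)|\le K 2^{(j+1)d}|B(x,\delta)|$. Summing $\sum_{j\ge 0} 2^{jd}2^{-j\sz}$ gives the geometric series $\tfrac{1}{1-2^{d-\sz}}$ (this converges precisely because $\sz>d$), and tracking the constants yields the first bound; the second follows from $|B(y,\delta)|\gtrsim K^{-1}2^{-jd}|B(x,\delta)|$ on the same annulus (again \eqref{doubling2}, using $B(x,\delta)\subset B(y, \delta+\rho(x,y))\subset B(y, 2^{j+1}\delta)$), and the third is immediate from the definition of $D_{\delta,\sz}$ together with the pointwise bound $D_{\delta,\sz}(x,y)\le \sqrt K\,|B(x,\delta)|^{-1}[1+\delta^{-1}\rho(x,y)]^{-(\sz-d/2)}$ recorded just above the lemma, applied with $\sz-d/2$ in place of $\sz$ — wait, one must be slightly careful and instead just bound $D_{\delta,\sz}(x,y)\le \sqrt K |B(x,\delta)|^{-1}[1+\delta^{-1}\rho(x,y)]^{-\sz}$ crudely (since $[1+\cdots]^{-d/2}\le 1$) and apply the first inequality.

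\textbf{Part (ii).} By the symmetry of the claimed bound under $s\leftrightarrow t$ (and $x\leftrightarrow y$), I may assume without loss of generality that $t\le s$, so that $s\vee t=s$ and $\max\{(t^{-1}s)^d,(s^{-1}t)^d\}=(s/t)^d$. Split the integral over $z$ into the region $A:=\{z:\rho(x,z)\ge \rho(z,y)\}$ and its complement $A^c$. On $A$ we have $\rho(x,y)\le \rho(x,z)+\rho(z,y)\le 2\rho(x,z)$, hence $1+s^{-1}\rho(x,y)\le 2(1+s^{-1}\rho(x,z))$, so $[1+s^{-1}\rho(x,z)]^{-\sz}\le 2^{\sz}[1+s^{-1}\rho(x,y)]^{-\sz}$. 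For the $\sqrt{|B(z,s)|}$ versus $\sqrt{|B(z,t)|}$ discrepancy: since $t\le s$, $|B(z,t)|\le |B(z,s)|$, so $|B(z,s)|^{-1/2}\le |B(z,t)|^{-1/2}$, which is the wrong direction. Instead I use \eqref{doubling2} in the form $|B(z,s)|\le K(s/t)^d |B(z,t)|$, i.e. $|B(z,t)|^{-1/2}\le K^{1/2}(s/t)^{d/2}|B(z,s)|^{-1/2}$ — still fine but let me rather pull the factor out cleanly: on all of $M$, $D_{t,\sz}(z,y)\le D_{s,\sz}(z,y)\cdot \sqrt{|B(z,s)||B(y,s)|/(|B(z,t)||B(y,t)|)}\cdot \big(\tfrac{1+t^{-1}\rho}{1+s^{-1}\rho}\big)^{\sz}$; the last ratio is $\le (s/t)^{\sz}$ and the measure ratio is $\le K^2(s/t)^{d}$ by \eqref{doubling2} applied to each ball, so $D_{t,\sz}(z,y)\le K^2 (s/t)^{\sz+d}D_{s,\sz}(z,y)$ — this is lossier in the $\sz$-exponent than the target. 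The efficient route is to not convert globally but to exploit the triangle inequality per region: on $A$, factor the $x$-decay onto $\rho(x,y)$ as above, leaving $\int_A \sqrt{|B(z,s)||B(z,t)|}^{-1}[1+t^{-1}\rho(z,y)]^{-\sz}\,d\mu(z)$ times $2^{\sz}\sqrt{|B(x,s)||B(y,s)|}^{-1}[1+s^{-1}\rho(x,y)]^{-\sz}$; in the surviving integral bound $|B(z,s)|^{-1/2}\le K^{1/2}(s/t)^{d/2}|B(z,t)|^{-1/2}$ by \eqref{doubling2}, so the integral is $\le K^{1/2}(s/t)^{d/2}\int_M |B(z,t)|^{-1}[1+t^{-1}\rho(z,y)]^{-\sz}\,d\mu(z)$. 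This last integral is NOT of the form handled by Part (i)(first inequality) because the ball is centered at the variable $z$; but it is exactly the \emph{second} inequality of Part (i) (with $\delta=t$, and $x$ there being $y$ here), giving $\le \tfrac{K2^d}{1-2^{d-\sz}}$. Collecting, the $A$-contribution is $\le \tfrac{K^{5/2}2^{d+\sz}}{1-2^{d-\sz}}(s/t)^{d/2}\sqrt{|B(x,s)||B(y,s)|}^{-1}[1+s^{-1}\rho(x,y)]^{-\sz}$. Also $\sqrt{|B(y,s)|}^{-1}\le K^{1/2}(s/t)^{d/2}\sqrt{|B(y,t)|}^{-1}$... hmm, but the target has $D_{s,\sz}$, so I actually want to \emph{keep} $|B(y,s)|$, which I have. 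Good: the $A$-term is $\lesssim (s/t)^{d/2}D_{s,\sz}(x,y)\le (s/t)^d D_{s,\sz}(x,y)$ with constant $\tfrac{K^{5/2}2^{d+\sz}}{1-2^{d-\sz}}$.

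\textbf{The region $A^c$ and conclusion.} On $A^c=\{\rho(z,y)>\rho(x,z)\}$ we have $\rho(x,y)\le 2\rho(z,y)$, so $[1+t^{-1}\rho(z,y)]^{-\sz}\le 2^\sz[1+t^{-1}\rho(x,y)]^{-\sz}\le 2^\sz(s/t)^{\sz}\cdot$ hmm, that loses powers. Better: $[1+t^{-1}\rho(z,y)]^{-\sz}\le 2^\sz[1+t^{-1}\rho(x,y)]^{-\sz}$, and since $t\le s$, $[1+t^{-1}\rho(x,y)]^{-\sz}\le[1+s^{-1}\rho(x,y)]^{-\sz}$ — no wait, $t\le s$ means $t^{-1}\ge s^{-1}$ means $1+t^{-1}\rho\ge 1+s^{-1}\rho$ means $[1+t^{-1}\rho]^{-\sz}\le[1+s^{-1}\rho]^{-\sz}$. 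Yes. So on $A^c$, $D_{t,\sz}(z,y)\le 2^\sz[1+s^{-1}\rho(x,y)]^{-\sz}\sqrt{|B(z,t)||B(y,t)|}^{-1}$, and then $\sqrt{|B(y,t)|}^{-1}\le K^{1/2}(s/t)^{d/2}\sqrt{|B(y,s)|}^{-1}$ by \eqref{doubling2} ($|B(y,s)|\le K(s/t)^d|B(y,t)|$). Factoring these out of $\int_{A^c}$, what remains is $\int_M D_{s,\sz}(x,z)\sqrt{|B(z,t)|}^{-1}\,d\mu(z)$; bound $\sqrt{|B(z,t)|}^{-1}\le K^{1/2}(s/t)^{d/2}\sqrt{|B(z,s)|}^{-1}$ and then use $D_{s,\sz}(x,z)\le\sqrt K|B(x,s)|^{-1}[1+s^{-1}\rho(x,z)]^{-\sz}$... this is getting an extra $|B(z,s)|^{-1/2}$ that Part (i) does not directly absorb — so instead I should keep $|B(z,s)|^{-1/2}$ paired with the $D_{s,\sz}$ and note $D_{s,\sz}(x,z)|B(z,s)|^{-1/2}\le \sqrt K|B(x,s)|^{-1}[1+s^{-1}\rho(x,z)]^{-\sz}|B(z,s)|^{-1/2}$, then use $|B(z,s)|^{-1/2}\le K^{1/2}|B(x,s)|^{-1/2}[1+s^{-1}\rho(x,z)]^{d/2}$ (from $B(x,s)\subset B(z,s+\rho(x,z))$ and \eqref{doubling2}), reducing to $\int_M|B(x,s)|^{-3/2}[1+s^{-1}\rho(x,z)]^{-(\sz-d/2)}\,d\mu(z)$, hmm the power of $|B(x,s)|$ is now wrong.

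Let me instead handle $A^c$ symmetrically to $A$: on $A^c$ factor the $y$-decay onto $\rho(x,y)$, leaving $\int_{A^c}D_{s,\sz}(x,z)|B(z,t)|^{-1/2}|B(z,s)|^{1/2}\cdot|B(z,s)|^{-1/2}\,d\mu(z)$ — concretely, $\int_{A^c}\sqrt{|B(x,s)||B(z,s)|}^{-1}[1+s^{-1}\rho(x,z)]^{-\sz}\cdot|B(z,t)|^{-1/2}\,d\mu(z)$, bound $|B(z,t)|^{-1/2}\le K^{1/2}(s/t)^{d/2}|B(z,s)|^{-1/2}$, getting $K^{1/2}(s/t)^{d/2}|B(x,s)|^{-1/2}\int_M|B(z,s)|^{-1}[1+s^{-1}\rho(x,z)]^{-\sz}\,d\mu(z)$, which by the \emph{second} inequality of Part (i) is $\le K^{1/2}(s/t)^{d/2}|B(x,s)|^{-1/2}\cdot\tfrac{K2^d}{1-2^{d-\sz}}$. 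Pairing with the factored-out $2^\sz K^{1/2}(s/t)^{d/2}[1+s^{-1}\rho(x,y)]^{-\sz}|B(y,s)|^{-1/2}$ gives the $A^c$-term $\le \tfrac{K^2 2^{d+\sz}}{1-2^{d-\sz}}(s/t)^d D_{s,\sz}(x,y)$. Adding the two regions and bounding the constant crudely by $\tfrac{K^2 2^{d+\sz+1}}{1-2^{d-\sz}}$ (using $K\ge 1$, $K^{5/2}2^{d+\sz}+K^2 2^{d+\sz}\le K^2 2^{d+\sz+1}$ once one absorbs the half-power of $K$ into... actually $K^{5/2}\ge K^2$, so one needs $K^{5/2}2^{d+\sz}+K^{5/2}2^{d+\sz}=K^{5/2}2^{d+\sz+1}$; the paper states $K^2 2^{d+\sz+1}$, so either I should be slightly more careful in the $A$-region to avoid the extra $K^{1/2}$, or the paper is being mildly generous — in the write-up I will simply present whatever constant the computation honestly yields and note it is of the stated form) completes the proof.

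\textbf{Main obstacle.} The delicate point, and the one I expect to require the most care, is the bookkeeping of which ball ($B(\cdot,s)$ versus $B(\cdot,t)$) is paired with which decay factor after the triangle-inequality split: one must convert $|B(z,t)|^{-1/2}$ to $|B(z,s)|^{-1/2}$ (costing exactly the factor $(s/t)^{d/2}$ from \eqref{doubling2}) and then recognize the surviving integral as precisely the \emph{second} inequality of Part (i) — with the ball centered at the \emph{integration variable} — rather than the first. Getting the factor $(s/t)^{d/2}$ out of each of the two regions is what produces the final $(s/t)^d=\max\{(t^{-1}s)^d,(s^{-1}t)^d\}$; the symmetrization at the start is what lets one assume $t\le s$ and thereby use $[1+t^{-1}\rho]^{-\sz}\le[1+s^{-1}\rho]^{-\sz}$ freely. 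Everything else is the routine dyadic-annulus summation of Part (i).
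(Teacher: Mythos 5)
Your plan for the first two inequalities of part (i) is exactly the paper's dyadic-annulus argument, but your shortcut for the third inequality fails. You claim the pointwise bound $D_{\delta,\sz}(x,y)\le \sqrt K\,|B(x,\delta)|^{-1}[1+\delta^{-1}\rho(x,y)]^{-\sz}$ ``crudely, since $[1+\cdots]^{-d/2}\le 1$''. This is backwards: the uniform bound recorded just above the lemma has exponent $\sz-d/2$, and multiplying by $[1+\delta^{-1}\rho(x,y)]^{-d/2}\le 1$ makes the right-hand side \emph{smaller}, so the claimed bound does not follow; in fact it would force $|B(x,\delta)|\le K|B(y,\delta)|$ uniformly in $x,\,y$, which is false on a general doubling space, where the ratio can grow like $[1+\delta^{-1}\rho(x,y)]^{d}$. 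Using the correct exponent $\sz-d/2$ together with the first inequality would require $\sz>3d/2$, whereas the lemma assumes only $\sz>d$. The fix is what the paper does: run the annulus argument directly on $D_{\delta,\sz}$, writing on the annulus $\rho(x,y)<2^{j}\delta$ that $|B(y,\delta)|^{-1/2}\le \sqrt{K2^{(j+1)d}}\,|B(x,2^{j}\delta)|^{-1/2}$ and pairing this with $\sqrt{|B(x,2^{j}\delta)|/|B(x,\delta)|}\le\sqrt{K2^{jd}}$, so the two half-powers of the volume ratio combine to a loss of $2^{jd}$ per annulus and $\sz>d$ suffices.

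Your part (ii) is in substance the paper's proof, merely reorganized: the paper (with $s\le t$) converts scale once at the outset via $D_{s,\sz}(x,z)\le K(ts^{-1})^{d}D_{t,\sz}(x,z)$ and then proves the equal-scale bound $\int_M D_{t,\sz}(x,z)D_{t,\sz}(z,y)\,d\mu(z)\ls D_{t,\sz}(x,y)$ using the same dichotomy ($\rho(z,x)\ge\frac12\rho(x,y)$ or $\rho(z,y)\ge\frac12\rho(x,y)$) and the second inequality of (i), while you split first and convert ball volumes region by region; both routes are fine. However, your accounting in region $A$ is garbled: you factor out $|B(y,s)|^{-1/2}$ although the integrand carries $|B(y,t)|^{-1/2}$ --- that swap is an upper bound only at the cost $K^{1/2}(s/t)^{d/2}$ from \eqref{doubling2} --- and you instead charge $K^{1/2}(s/t)^{d/2}$ to the replacement $|B(z,s)|^{-1/2}\le|B(z,t)|^{-1/2}$, which is free since $t\le s$. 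The two slips offset, so your stated bound for $A$ is still true (if generous), and with the charges placed correctly the computation yields about $K^{3/2}2^{d+\sz}(s/t)^{d/2}$ for $A$ and $K^{2}2^{d+\sz}(s/t)^{d}$ for $A^{c}$, whose sum fits inside the stated constant $K^{2}2^{d+\sz+1}/(1-2^{d-\sz})$; there is no need to settle for a worse constant than the lemma asserts.
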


\begin{proof}
For (i), by similarity, we only prove the third inequality in (i).
Fix $\sz>d$. Write
\begin{eqnarray*}
\int_M   D_{\delta, \sigma}(x,y)
\,d\mu(y)
=\sum_{j=0}^\infty \int_{\rho(x,y)\sim 2^j\dz}
\frac{1}
{\sqrt{|B(x, \delta)|\, |B(y,\delta)|} } \frac1{\left[1+\delta^{-1}{\rho(x,y)}\right]^{\sigma}}\,d\mu(y),
\end{eqnarray*}
where the notation $\rho(x,y)\sim 2^j\dz$ means that
$2^{j-1}\dz\le \rho(x,y)<2^{j}\dz$ when $j\in\nn$ and
that $\rho(x,y)<\dz$ when $j=0$. Then
\begin{eqnarray*}
\int_M   D_{\delta, \sigma}(x,y)
\,d\mu(y)
\le \sum_{j=0}^\infty 2^{-j\sz}\int_{\rho(x,y)< 2^{j}\dz}
\frac{1}
{\sqrt{|B(x, \delta)|\, |B(y,\delta)|} } \,d\mu(y).
\end{eqnarray*}
For $\rho(x,y)<2^{j}\dz$, using \eqref{doubling2}, we obtain
$$
\frac{1}
{\sqrt{|B(x, \delta)|\, |B(y,\delta)|} }
= \frac{1}
{\sqrt{|B(x, \delta)|\, |B(x, 2^{j}\delta)| } } \sqrt{\frac{|B(x, 2^{j}\delta)|}{|B(y,\delta)|}}
\le  \frac{\sqrt{K2^{(j+1)d}}}
{\sqrt{|B(x, \delta)|\, |B(x, 2^{j}\delta)| } },
$$
and hence, by $\sigma\in(d,\fz)$,
\begin{eqnarray*}
\int_M   D_{\delta, \sigma}(x,y)
\,d\mu(y)
\le \sum_{j=0}^\infty 2^{-j\sz}
\sqrt{K2^{(j+1)d}}  \sqrt{\frac{|B(x, 2^{j}\delta)|}{|B(x,\delta)|}}
\le K2^d \sum_{j=0}^\infty 2^{-j(\sz-d)}=\frac{K2^d}{1-2^{d-\sz}}.
\end{eqnarray*}
The remaining two inequalities in (i) follow in a similar way.

To prove (ii), by symmetry, we may as well assume that $s\le t$. By \eqref{doubling2}, we have
$$
D_{s, \sigma}(x,z) \le K ( ts^{-1})^d \,D_{t, \sigma}(x,z),\qquad x,\,z\in M.
$$
Thus,
\begin{equation}\label{eq:2.1x}
\int_M   D_{s, \sigma}(x,z) D_{t, \sigma}(z,y)
\,d\mu(z)\le K ( ts^{-1})^d \int_M   D_{t, \sigma}(x,z) D_{t, \sigma}(z,y)
\,d\mu(z).
\end{equation}
Notice that any $x,\,y,\,z\in M$
satisfy $\rho(z,x)\ge \frac12\rho(x,y)$ or  $\rho(z,y)\ge \frac12\rho(x,y)$.
Therefore,
\begin{eqnarray*}
&&\int_M   D_{t, \sigma}(x,z) D_{t, \sigma}(z,y)
\,d\mu(z)\\
&&\quad\le  \frac{1}
{\sqrt{|B(x, t)|\, |B(y,t)|} }
\int_{\rho(z,x)\ge \frac12\rho(x,y)}
\frac1{|B(z,t)|}
\frac{1}{[1+t^{-1}{\rho(x,z)}]^\sz[1+t^{-1}{\rho(z,y)}]^\sz}\,d\mu(z)\\
&&\qquad
+\frac{1}
{\sqrt{|B(x, t)|\, |B(y,t)|} } \int_{\rho(z,y)\ge \frac12\rho(x,y)}\cdots=:\cj_1+\cj_2.
\end{eqnarray*}
Since $\sz>d$, from the second inequality in (i), we deduce that
\begin{eqnarray*}
\cj_1
&&\le \frac{1}{\sqrt{|B(x, t)|\, |B(y,t)|} }
\frac{1}{[1+(2t)^{-1}{\rho(x,y)}]^\sz}
\int_M \frac1{|B(z,t)|}
\frac{1}{[1+t^{-1}{\rho(z,y)}]^\sz}\,d\mu(z)\\
&&\le 2^\sz \, \frac{K2^d}{1-2^{d-\sz}}\, D_{t,\sz}(x,y).
\end{eqnarray*}
Likewise, the same estimate also holds for $
\cj_2$.
From these and \eqref{eq:2.1x}, it follows the desired
estimate in (ii) under the assumption
that $s\le t$, which completes the proof of Lemma \ref{lem2.1x}.
\end{proof}

The following conclusion is just \cite[Proposition~2.9]{CKP}.

\begin{lem}\label{lem2.2x}
Let $\delta\in(0,\infty)$
and $\sigma\in[d+1,\fz)$.
Assume that the kernels of the integral operators
$U$ and $V$ satisfy
$|U(x,y)|\le D_{\delta, \sigma}(x,y)$ and $|V(x,y)|\le D_{\delta, \sigma}(x,y)$
for all $x,\,y\in M$. Then, for any operator $R$ which is bounded on $L^2(M)$,
the operator $URV$ is an integral operator with its kernel satisfying
$$|URV(x,y)|
\le\|U(x,\cdot)\|_{L^2(M)}\|R\|_{L^2(M)\to L^2(M)}
\|V(\cdot,y)\|_{L^2(M)}, \qquad x,\,y\in M.$$
\end{lem}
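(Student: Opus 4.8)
The plan is to verify the pointwise bound $|URV(x,y)| \le \|U(x,\cdot)\|_{L^2(M)} \|R\|_{L^2(M)\to L^2(M)} \|V(\cdot,y)\|_{L^2(M)}$ directly from the definition of composition of integral operators, treating $x$ and $y$ as fixed parameters and reducing the claim to the operator norm estimate $|\langle Rg, h\rangle| \le \|R\|_{L^2\to L^2}\|g\|_{L^2}\|h\|_{L^2}$ together with Cauchy--Schwarz. First I would note that, since $\sigma\ge d+1>d/2$, Lemma \ref{lem2.1x}(i) (applied after the pointwise bound $D_{\delta,\sigma}(x,y)\le \sqrt{K}\,|B(x,\delta)|^{-1}[1+\delta^{-1}\rho(x,y)]^{-(\sigma-d/2)}$ recorded just above that lemma) guarantees that $U(x,\cdot)$ and $V(\cdot,y)$ lie in $L^2(M)$ for each fixed $x,y$; this is what makes the right-hand side finite and also justifies the Fubini manipulations below. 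Write $URV(x,y) = \int_M\int_M U(x,z)\,R(z,w)\,V(w,y)\,d\mu(w)\,d\mu(z)$, interpreted as $\big\langle R\big(V(\cdot,y)\big),\,\overline{U(x,\cdot)}\big\rangle_{L^2(M)}$ — more precisely, for fixed $x$ set $h_x(z):=\overline{U(x,z)}$ and for fixed $y$ set $g_y(w):=V(w,y)$, so that $URV(x,y)=\langle R g_y,\,h_x\rangle_{L^2(M)}$ (the kernel of $R$ need not even be a function — only its boundedness on $L^2$ is used).

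Then the estimate is immediate: by the definition of the operator norm, $|\langle Rg_y, h_x\rangle| \le \|Rg_y\|_{L^2}\|h_x\|_{L^2} \le \|R\|_{L^2(M)\to L^2(M)}\|g_y\|_{L^2}\|h_x\|_{L^2}$, and $\|g_y\|_{L^2} = \|V(\cdot,y)\|_{L^2(M)}$, $\|h_x\|_{L^2} = \|U(x,\cdot)\|_{L^2(M)}$, which is exactly the claimed bound. The only point requiring a little care is the identification $URV(x,y)=\langle R g_y, h_x\rangle$: a priori $URV$ is defined as the composition of three bounded operators on $L^2(M)$, and one must check that this composition is again an integral operator and that its kernel is given pointwise (for $\mu\otimes\mu$-a.e. $(x,y)$) by the iterated-integral expression above. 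Since $U$ and $V$ have honest kernels satisfying $D_{\delta,\sigma}$-bounds, for $f\in L^2(M)$ one has $Vf(w)=\int_M V(w,y)f(y)\,d\mu(y)$ with $w\mapsto \langle f, \overline{V(w,\cdot)}\rangle$ well defined (using $V(w,\cdot)\in L^2$), then $RVf\in L^2$, and finally $U(RVf)(x)=\langle RVf,\,h_x\rangle$; expanding $RVf = R(\int_M V(\cdot,y)f(y)\,d\mu(y))$ and pulling the (vector-valued, $L^2$-convergent) integral through the bounded operator $R$ and through the inner product with $h_x$ yields $URV f(x)=\int_M \langle R\,V(\cdot,y),\,h_x\rangle f(y)\,d\mu(y)$, which displays the kernel as $\langle R g_y, h_x\rangle$. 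The interchange of $R$ and the integral is justified because $y\mapsto V(\cdot,y)f(y)$ is Bochner-integrable into $L^2(M)$ (its $L^2$-norm is dominated by $\|V(\cdot,y)\|_{L^2}|f(y)|\lesssim \||B(\cdot,\delta)|^{-1/2}\|$-type factors times $|f(y)|$, integrable over $M$ when $f\in L^2$ by Cauchy--Schwarz and Lemma \ref{lem2.1x}), so this is routine.

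The main obstacle, such as it is, is purely this measure-theoretic bookkeeping — verifying that the formal composition of kernels represents the operator $URV$ and that all the integrals converge absolutely — rather than any genuine estimation; the inequality itself is just Cauchy--Schwarz plus the definition of $\|R\|_{L^2\to L^2}$. In a write-up I would dispatch the bookkeeping in one or two lines by invoking the $L^2$-membership of $U(x,\cdot)$ and $V(\cdot,y)$ from Lemma \ref{lem2.1x}(i) and Fubini, and then state the Cauchy--Schwarz bound. This is, in essence, the argument of \cite[Proposition~2.9]{CKP}, reproduced here for completeness.
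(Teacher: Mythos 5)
Your argument is correct and is essentially the proof of \cite[Proposition~2.9]{CKP}; the paper offers no proof of Lemma \ref{lem2.2x} beyond that citation, so there is no genuinely different route to compare against. Fixing $x$ and $y$, the memberships $U(x,\cdot),\,V(\cdot,y)\in L^2(M)$ do follow from Lemma \ref{lem2.1x}(i) (note the relevant threshold is $\sigma>d$, not $\sigma>d/2$: squaring $D_{\delta,\sigma}$ leaves the exponent $2\sigma>d$, and your hypothesis $\sigma\ge d+1$ meets it), and the claimed bound is then exactly Cauchy--Schwarz together with the definition of $\|R\|_{L^2(M)\to L^2(M)}$ applied to $URV(x,y)=\langle R(V(\cdot,y)),\overline{U(x,\cdot)}\rangle$.

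One justification does not hold as you state it, though it is easily repaired. You claim that for every $f\in L^2(M)$ the map $y\mapsto V(\cdot,y)f(y)$ is Bochner integrable in $L^2(M)$ ``by Cauchy--Schwarz and Lemma \ref{lem2.1x}''. Cauchy--Schwarz would require $\int_M\|V(\cdot,y)\|_{L^2(M)}^2\,d\mu(y)\lesssim\int_M |B(y,\delta)|^{-1}\,d\mu(y)<\infty$, which fails whenever $\mu(M)=\infty$ (on $\rn$ the integrand is a positive constant), so for $f\in L^2(M)\setminus L^1(M)$ the vector-valued integral need not converge absolutely and $R$ cannot be pulled through it this way. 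The repair is standard: since $\|V(\cdot,y)\|_{L^2(M)}\lesssim |B(y,\delta)|^{-1/2}$ is bounded uniformly in $y$ by \eqref{doubling2} and \eqref{non-collapsing}, Bochner integrability does hold for every $f\in L^1(M)\cap L^2(M)$, and verifying the representation $URVf(x)=\int_M\langle R(V(\cdot,y)),\overline{U(x,\cdot)}\rangle f(y)\,d\mu(y)$ on this dense class (or for bounded, compactly supported $f$) is all that is needed to identify $URV$ as an integral operator with the stated kernel; this is the same convention used in the proof of Lemma \ref{lem2.4x} via \cite[Theorem~6, p.\,503]{DS}. With that adjustment your write-up is complete.
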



\subsection{Smooth functional calculus induced by the heat kernels}\label{sec-2.2}

\hskip\parindent According to \cite[Theorem~7.3]{Ou}, the doubling condition \eqref{doubling1} and the assumption ${\bf (UE)}$ imply that the Gaussian upper bound can be extended to the open right half-plane $\cc^+$. Indeed, there exist positive constants
$C$ and $c$ such that, for all    $z:=t+iu$ with $t\in(0,1]$ and $u\in\rr$, and all $x,\,y\in M$,
\begin{eqnarray}\label{eq:2.2x}
|p_z(x,y)|
\le C \frac{\exp\big(-c\big[\frac{\rho(x,y)}{|z|^{1/\beta_0}}\big]
^{\frac{\beta_0}{\beta_0-1}}\cos \tz\big)}
{\sqrt{|B(x,t^{1/\beta_0})| \,|B(y, t^{1/\beta_0})|}},
\end{eqnarray}
where $\theta:={\rm arg}\, z$. The positive
constants $C$ and $c$ in \eqref{eq:2.2x} may depend on $\bz_0,\, C^\ast$ and $c^\ast$.

\begin{lem}\label{lem2.3x}
Let $k\in\nn$ and $g:\,\rr\to\cc$ be such that
 $ |g(\cdot)|(1+|\cdot|)^k\in L^1(\rr)$. Then there exists a positive constant
 $C$, independent of $g$ and $k$, such that,
 for all  $\dz\in(0,1]$ and  $x,\,y\in M$,
\begin{eqnarray*}
\lf|\int_\rr g(u) p_{\dz^{\bz_0}(1-iu)}(x,y)\,du\r|
\le (Ck)^k \,\| g(\cdot)(1+|\cdot|)^k\|_{L^1(\rr)}\, D_{\dz,k}(x,y).
\end{eqnarray*}
\end{lem}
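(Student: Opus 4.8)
The plan is to estimate the integral $\int_\rr g(u)\, p_{\dz^{\bz_0}(1-iu)}(x,y)\,du$ by splitting the range of $u$ according to the size of $|u|$ relative to a threshold depending on $\rho(x,y)$, and then using the complex Gaussian bound \eqref{eq:2.2x} on each piece. Fix $\dz\in(0,1]$ and $x,\,y\in M$, and write $z:=\dz^{\bz_0}(1-iu)$, so $|z|=\dz^{\bz_0}\sqrt{1+u^2}$, $\mathrm{Re}\, z=\dz^{\bz_0}$, and $\cos(\arg z)=1/\sqrt{1+u^2}$. Since $\mathrm{Re}\,z=\dz^{\bz_0}\le 1$, \eqref{eq:2.2x} applies and gives
\begin{equation*}
|p_z(x,y)|\le C\,\frac{\exp\big(-c\big[\tfrac{\rho(x,y)}{\dz (1+u^2)^{1/(2\bz_0)}}\big]^{\frac{\bz_0}{\bz_0-1}}(1+u^2)^{-1/2}\big)}{\sqrt{|B(x,\dz)|\,|B(y,\dz)|}}.
\end{equation*}
A short computation shows that the exponent simplifies: $\big[\rho(x,y)/\dz\big]^{\frac{\bz_0}{\bz_0-1}}(1+u^2)^{-\frac{1}{2(\bz_0-1)}-\frac12}=\big[\rho(x,y)/\dz\big]^{\frac{\bz_0}{\bz_0-1}}(1+u^2)^{-\frac{\bz_0}{2(\bz_0-1)}}$, so the Gaussian factor is $\exp\big(-c\,[\rho(x,y)/(\dz\sqrt{1+u^2})]^{\frac{\bz_0}{\bz_0-1}}\big)$; this is harmless — the key point is only that it is at most $1$, and also that it decays once $\dz\sqrt{1+u^2}\lesssim\rho(x,y)$.

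The main step is to produce the factor $D_{\dz,k}(x,y)$, i.e.\ the polynomial decay $[1+\dz^{-1}\rho(x,y)]^{-k}$. Set $r:=\dz^{-1}\rho(x,y)$ and split $\int_\rr=\int_{|u|\le r}+\int_{|u|>r}$ (if $r\le 1$ the bound is trivial since then $[1+r]^{-k}\sim 1$ and one just uses $|p_z|\le C/\sqrt{|B(x,\dz)||B(y,\dz)|}$ together with $\|g(\cdot)(1+|\cdot|)^k\|_{L^1}\ge\|g\|_{L^1}$, absorbing constants into $(Ck)^k$). On the range $|u|>r$ we have $1+|u|>r$, hence $(1+|u|)^k>r^k\gtrsim[1+r]^k/2^k$, so
\begin{equation*}
\int_{|u|>r}|g(u)|\,|p_z(x,y)|\,du\le \frac{C}{\sqrt{|B(x,\dz)||B(y,\dz)|}}\cdot\frac{2^k}{[1+r]^k}\int_\rr|g(u)|(1+|u|)^k\,du,
\end{equation*}
which is of the required form with a constant of size $C\,2^k\le(Ck)^k$. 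On the range $|u|\le r$ one instead exploits the Gaussian factor: there $\dz\sqrt{1+u^2}\le\dz\sqrt{1+r^2}\le\sqrt2\,\rho(x,y)$ roughly, so the exponent $-c[\rho(x,y)/(\dz\sqrt{1+u^2})]^{\bz_0/(\bz_0-1)}\le -c'$ when $|u|\le r$ — more carefully, one bounds $\exp(-c[\cdots]^{\bz_0/(\bz_0-1)})\lesssim_k (1+\dz^{-1}\rho(x,y)/\sqrt{1+u^2})^{-k}\le(1+r)^{-k}(1+u^2)^{k/2}\le(1+r)^{-k}(1+|u|)^k$, using the elementary inequality $e^{-c\,t^{a}}\le (C_a k/ c)^k t^{-k}$ valid for all $t>0$ (with $a=\bz_0/(\bz_0-1)\ge 1$, and the factor $(Ck)^k$ coming from $\sup_{t>0}t^{-k}e^{-ct^a}$). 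Integrating against $|g(u)|$ then again yields $\|g(\cdot)(1+|\cdot|)^k\|_{L^1}$, and combining the two ranges gives the claimed estimate.

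The one technical point to get right is the tracking of the constant $(Ck)^k$, which comes entirely from the elementary bound $\sup_{t>0}t^{-k}e^{-ct^{\bz_0/(\bz_0-1)}}\le(C(\bz_0)\,k)^k$ used to convert the subexponential Gaussian decay into $k$-th order polynomial decay; since $\bz_0\ge 2$ is fixed, the exponent $\bz_0/(\bz_0-1)\in(1,2]$ is bounded away from $0$ and this constant is uniform in $k$ (depending only on $\bz_0,c$, hence only on the fixed parameters), and in particular independent of $g$. I expect this constant-chasing — rather than any conceptual difficulty — to be the main (minor) obstacle; the geometric splitting and the application of \eqref{eq:2.2x} are routine. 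Note also that I do not need Lemma~\ref{lem2.1x} or Lemma~\ref{lem2.2x} here, only the extended complex Gaussian bound \eqref{eq:2.2x}; this lemma is purely a pointwise estimate on the kernel.
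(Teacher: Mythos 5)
Your argument is correct and is essentially the paper's own proof: both rest on the extended complex upper bound \eqref{eq:2.2x} together with $|z|\sim\dz^{\bz_0}(1+|u|)$ and $\cos\arg z\sim(1+|u|)^{-1}$, and then trade the sub-Gaussian factor $\exp(-c[\rho(x,y)/(\dz(1+|u|))]^{\bz_0/(\bz_0-1)})$ for $[1+\dz^{-1}\rho(x,y)]^{-k}(1+|u|)^{k}$ via the elementary bound $\sup_{t>0}t^{k}e^{-ct^{\bz_0/(\bz_0-1)}}\le (Ck)^{k}$, absorbing $(1+|u|)^{k}$ into the weight on $g$. The only (harmless) difference is that your splitting at $|u|=\dz^{-1}\rho(x,y)$ is redundant, since the pointwise estimate you use on the range $|u|\le\dz^{-1}\rho(x,y)$ — which is exactly the paper's argument — already holds for every $u\in\rr$.
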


\begin{proof}
For $z:= \dz^{\bz_0}(1-iu)$ with $\dz\in(0,1]$ and $u\in\rr$, we have $|z|\sim \dz^{\bz_0}(1+|u|)$ and
$\cos\arg z\sim [1+|u|]^{-1}$. Thus, for a different small positive
constant $c'\in(0,1]$, which may depend on $\bz_0$ and the constant $c$ in \eqref{eq:2.2x},  we have
\begin{eqnarray*}
\exp\left(-c\lf[\frac{\rho(x,y)}{|z|^{1/{\bz_0} }}\r]
^{\frac{{\bz_0} }{{\bz_0} -1}}\cos \tz\right)
&&\le \exp\left(-c'\lf[\frac{\rho(x,y)}{\dz(1+|u|)}\r]
^{\frac{{\bz_0} }{{\bz_0} -1}}\right).
\end{eqnarray*}
Let $\sz:=k({\bz_0}-1)/{\bz_0}$. We use $c'\in(0,1]$, together with the inequalities
$e^{-x}\le (\sz e^{-1})^{\sz} x^{-\sz}$ for all $x\in(0,\infty)$
and $(a+b)^q\le 2^{q-1} (a^q+b^q)$ for all $a,\,b\in\rr_+$ and $q\in[1,\infty)$
to conclude that
the right-hand side of  the above inequality is bounded by
\begin{eqnarray*}
e^{c'} (\sz (c'e)^{-1})^{\sz} \lf(1+\lf[\frac{\rho(x,y)}{\dz(1+|u|)}\r]
^{\frac{\bz_0}{{\bz_0} -1}}\r)^{-\sz}
&&\le e (\sz (c'e)^{-1})^{\sz} 2^{\sz(\frac{{\bz_0}}{{\bz_0}-1}-1)}
\lf[1+\frac{\rho(x,y)}{\dz}\r]^{-k}
\lf[1+|u|\r]^{k}.
\end{eqnarray*}
Notice that $e (\sz (c'e)^{-1})^{\sz} 2^{\sz(\frac{{\bz_0}}{{\bz_0}-1}-1)}
\le (\wz c \,k)^k$,
where $\wz c$ is some positive constant independent of  $k$.
From this and \eqref{eq:2.2x}, it follows that
\begin{eqnarray*}
\lf|\int_\rr g(u) p_{\dz^{\bz_0}(1-iu)}(x,y)\,du\r|
&&\le  (\wz c \,k)^k\, \frac{[ 1+  \frac{\rho(x,y)}{\dz}
]^{-k}}
{\sqrt{|B(x,\dz)| \,|B(y, \dz)|}} \,\int_\rr |g(u)|[1+|u|]^k\,du,
\end{eqnarray*}
as desired.
\end{proof}

Applying  Lemma \ref{lem2.3x},
we obtain the following result parallel to  \cite[Theorem 3.1]{CKP},
but invoking the new parameter $\bz_0\in[2,\fz)$.

\begin{lem}\label{lem2.4x}
 Let $k\in\nn$ and $g:\,\rr\to\cc$ be such that
$ |\widehat g(\cdot)|(1+|\cdot|)^k\in L^1(\rr)$.
For any $\dz\in(0,1]$, the operator $g(\dz^{\bz_0}L)e^{-\dz^{\bz_0} L}$
is an integral operator whose kernel $g(\dz^{\bz_0}L)e^{-\dz^{\bz_0} L}(x,y)$ satisfying
\begin{enumerate}
\item[\rm(i)]  for all   $x,\,y\in M$,
\begin{equation*}
|g(\dz^{\bz_0}L)e^{-\dz^{\bz_0} L}(x,y)|
\le (C k)^k \,\| \widehat g(\cdot)(1+|\cdot|)^k\|_{L^1(\rr)}\, D_{\dz,k}(x,y),
\end{equation*}
where $\widehat g$ denotes the Fourier transform of $g$;
\item[\rm(ii)]  when $k>2d$, for all $x,\,y,\,y'\in M$ with $\rho(y,y')\le \dz$,
\begin{eqnarray*}
&&|g(\dz^{\bz_0}L)e^{-\dz^{\bz_0} L}(x,y)-g(\dz^{\bz_0}L)e^{-\dz^{\bz_0} L}(x,y')|\\
&&\quad\le  (C k)^k \,\|\widehat g(\cdot)(1+|\cdot|)^k\|_{L^1(\rr)}\,
\lf[\frac{\rho(y,y')}{\dz}\r]^{\az_0}  D_{\dz,k}(x,y); \notag
\end{eqnarray*}
\item[\rm(iii)] for all $x\in M$,
\begin{equation*}
\int_Mg(\dz^{\bz_0}L)e^{-\dz^{\bz_0} L}(x,y)\,d\mu(y)=g(0).
\end{equation*}
\end{enumerate}
Here the positive constants $C$  in (i) and (ii) depend only on
$K,\, \az_0,\, \bz_0,\, C^\ast$ and $c^\ast$,
but independent of $g,\,k$, $\dz$ and $x,\,y,\,y'$.
\end{lem}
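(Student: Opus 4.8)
The plan is to pass to a Fourier representation of the operator and reduce everything to the complex‑time kernel bound of Lemma~\ref{lem2.3x}. Since $|\widehat g(\cdot)|(1+|\cdot|)^k\in L^1(\rr)$ forces $\widehat g\in L^1(\rr)$, Fourier inversion gives $g(\lz)=\frac1{2\pi}\int_\rr\widehat g(u)e^{iu\lz}\,du$, so by the spectral theorem and the contractivity $\|e^{-\dz^{\bz_0}(1-iu)L}\|_{L^2(M)\to L^2(M)}\le1$ the Bochner integral $\frac1{2\pi}\int_\rr\widehat g(u)\,e^{-\dz^{\bz_0}(1-iu)L}\,du$ converges in operator norm and equals $g(\dz^{\bz_0}L)e^{-\dz^{\bz_0}L}$. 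Each $e^{-\dz^{\bz_0}(1-iu)L}$ is an integral operator with kernel $p_{\dz^{\bz_0}(1-iu)}(x,y)$, the holomorphic extension of the heat kernel to $z=\dz^{\bz_0}(1-iu)\in\cc^+$, for which \eqref{eq:2.2x} in particular yields $p_z(x,\cdot)\in L^1(M)\cap L^2(M)$. By Lemma~\ref{lem2.3x}, applied with $\widehat g/(2\pi)$ in the role of $g$, one has $\int_\rr|\widehat g(u)|\,|p_{\dz^{\bz_0}(1-iu)}(x,y)|\,du\ls(Ck)^k\|\widehat g(\cdot)(1+|\cdot|)^k\|_{L^1(\rr)}\,D_{\dz,k}(x,y)<\fz$, so Fubini's theorem shows that $\ck_\dz:=g(\dz^{\bz_0}L)e^{-\dz^{\bz_0}L}$ is an integral operator with
$$\ck_\dz(x,y)=\frac1{2\pi}\int_\rr\widehat g(u)\,p_{\dz^{\bz_0}(1-iu)}(x,y)\,du,\qquad x,\,y\in M,$$
and the preceding absolute bound is precisely assertion~(i).

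For~(iii), integrating this identity over $y\in M$ (legitimate by the absolute bound just displayed together with Lemma~\ref{lem2.1x}(i)) reduces the claim to $\int_M p_{\dz^{\bz_0}(1-iu)}(x,y)\,d\mu(y)=1$ for every $u\in\rr$, since $\frac1{2\pi}\int_\rr\widehat g(u)\,du=g(0)$. The function $z\mapsto\int_M p_z(x,y)\,d\mu(y)$ is holomorphic on $\cc^+$: holomorphy of $z\mapsto p_z(x,y)$ for fixed $x,\,y$ is standard for Gaussian‑bounded semigroups, and \eqref{eq:2.2x} (which is valid on all of $\cc^+$) furnishes an $L^1(M)$‑majorant that is locally uniform in $z$. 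Since {\bf (SC)} says this holomorphic function equals $1$ on $(0,\fz)$, the identity theorem forces it to equal $1$ on all of $\cc^+$, which gives~(iii).

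The remaining assertion~(ii) is the crux, because {\bf (HE)} provides H\"older continuity only at real times while the representation of $\ck_\dz$ runs over the complex times $z=\dz^{\bz_0}(1-iu)$. Fix $x,\,y,\,y'\in M$ with $\rho(y,y')\le\dz$. If $\rho(y,y')>2^{-1/\bz_0}\dz$, then $[\rho(y,y')/\dz]^{\az_0}\sim1$, and, since $\rho(y,y')\le\dz$ gives $|B(y',\dz)|\sim|B(y,\dz)|$, $1+\dz^{-1}\rho(x,y')\sim1+\dz^{-1}\rho(x,y)$ and therefore $D_{\dz,k}(x,y')\ls2^kD_{\dz,k}(x,y)$, the trivial bound $|\ck_\dz(x,y)-\ck_\dz(x,y')|\le|\ck_\dz(x,y)|+|\ck_\dz(x,y')|$ together with~(i) already yields the claim with an admissible constant $(Ck)^k$. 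If instead $\rho(y,y')\le2^{-1/\bz_0}\dz=(\dz^{\bz_0}/2)^{1/\bz_0}$, write $z=z_1+z_2$ with $z_2:=\dz^{\bz_0}/2$ and $z_1:=\dz^{\bz_0}(\tfrac12-iu)$, both in $\cc^+$, and use the complex semigroup identity (valid since $p_{z_1}(x,\cdot),\,p_{z_2}(\cdot,y)\in L^2(M)$)
$$p_z(x,y)-p_z(x,y')=\int_M p_{z_1}(x,w)\,\big[p_{z_2}(w,y)-p_{z_2}(w,y')\big]\,d\mu(w).$$
Running the computation in the proof of Lemma~\ref{lem2.3x} for $z_1$, where $\mathrm{Re}\,z_1=\dz^{\bz_0}/2$, $|z_1|\sim\dz^{\bz_0}(1+|u|)$, $\cos\arg z_1\sim(1+|u|)^{-1}$ and $|B(\cdot,(\dz^{\bz_0}/2)^{1/\bz_0})|\sim|B(\cdot,\dz)|$, gives $|p_{z_1}(x,w)|\le(Ck)^k(1+|u|)^k\,D_{\dz,k}(x,w)$; and {\bf (HE)} at the real time $\dz^{\bz_0}/2$, combined with the elementary bound $\sup_{r\ge0}(1+r)^ke^{-c^\ast r^{\bz_0/(\bz_0-1)}}\le(Ck)^k$ and the same volume comparison, gives $|p_{z_2}(w,y)-p_{z_2}(w,y')|\le(Ck)^k[\rho(y,y')/\dz]^{\az_0}\,D_{\dz,k}(w,y)$. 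Inserting both and applying Lemma~\ref{lem2.1x}(ii) with $s=t=\dz$ (permissible since $k>2d>d$) yields $|p_z(x,y)-p_z(x,y')|\le(Ck)^k(1+|u|)^k[\rho(y,y')/\dz]^{\az_0}\,D_{\dz,k}(x,y)$; multiplying by $|\widehat g(u)|/(2\pi)$ and integrating over $u\in\rr$ completes~(ii). The one genuinely delicate point throughout is this complex‑time splitting in~(ii), and within it the bookkeeping of the $(1+|u|)^k$ factors so that only $\|\widehat g(\cdot)(1+|\cdot|)^k\|_{L^1(\rr)}$, and nothing worse, survives in the final estimate; tracing the constants through \eqref{eq:2.2x}, {\bf (HE)} and Lemma~\ref{lem2.1x} shows that $C$ depends only on $K,\,\az_0,\,\bz_0,\,C^\ast,\,c^\ast$.
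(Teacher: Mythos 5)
Your proposal is correct and takes essentially the same route as the paper: the kernel comes from the Fourier representation $g(\dz^{\bz_0}L)e^{-\dz^{\bz_0}L}=\int_\rr\widehat g(u)\,e^{-\dz^{\bz_0}(1-iu)L}\,du$ (up to normalization) combined with the complex-time bound \eqref{eq:2.2x} and Lemma \ref{lem2.3x} for (i), analytic continuation of \textbf{(SC)} to $\cc_+$ for (iii), and, for (ii), the same key idea of peeling off the real half-time $\dz^{\bz_0}/2$ so that \textbf{(HE)} applies, composed via Lemma \ref{lem2.1x}(ii). Your only departures are organizational: you perform the half-time splitting pointwise in $u$ (bounding $p_{z_1}$ and $p_{z_2}$ separately before integrating in $u$) instead of at the operator level through part (i), and you add the welcome case distinction $2^{-1/\bz_0}\dz<\rho(y,y')\le\dz$, where \textbf{(HE)} at time $\dz^{\bz_0}/2$ does not literally apply and which the paper's proof silently glosses over.
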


\begin{proof}
In order to see that
$g(\dz^{{\bz_0}}L)e^{-\dz^{\bz_0} L}$ has a kernel,
by \cite[Theorem~6, p.\,503]{DS} and the density of $L^1(M)\cap L^2(M)$ in $L^2(M)$,
we only need to prove that
\begin{equation}\label{eq:2.3x}
|\laz g(\dz^{{\bz_0}}L)e^{-\dz^{\bz_0} L} \phi,\, \psi\raz|
\le C \|\phi\|_{L^1(M)}\|\psi\|_{L^1(M)},\qquad \phi,\,\psi\in L^1(M)\cap L^2(M).
\end{equation}
Since $L$ is self-adjoint and positive definite, by the spectral resolution $\{E_\lz\}_{\lz\ge0}$ of the operator $L$, we have
\begin{eqnarray}\label{eq:2.4x}
\lf\laz g(\dz^{{\bz_0}}L)e^{-\dz^{\bz_0} L} \phi,\, \psi\r\raz
= \int_0^\infty g(\dz^{{\bz_0}}\lz)e^{-\dz^{\bz_0} \lz} \,d\laz E_\lz\phi,\, \psi\raz
= \int_\rr \widehat g(\xi) \laz e^{-\dz^{\bz_0}(1-2\pi i \xi)L} \phi,\,\psi\raz\,d\xi,
\end{eqnarray}
where, in the penultimate step,
to change the order of integration, we use Fubini's theorem and
\begin{eqnarray*}
 \int_0^\infty \int_\rr |\widehat g(\xi) e^{-\dz^{\bz_0} \lz+2\pi i \xi \dz^{{\bz_0}}\lz}|
 \,d\xi\,d|\laz E_\lz\phi,\, \psi\raz|
 &&\le \|\widehat g\|_{L^1(\rr)} \|\phi\|_{L^2(M)}\|\psi\|_{L^2(M)}.
\end{eqnarray*}
The operator $e^{-\dz^{\bz_0}(1-2\pi i \xi)L}$ has a complex kernel $p_{\dz^{\bz_0}(1-2\pi i \xi)}(x,y)$
and
$$
|p_{\dz^{\bz_0}(1-2\pi i \xi)}(x,y)|
\le C\frac1{\sqrt{|B(x,\dz)| \,|B(y, \dz)|}}
\le C \dz^{-d}\frac1{\sqrt{|B(x,1)| \,|B(y, 1)|}}
\le C \dz^{-d}<\infty
$$
by using \eqref{eq:2.2x}, \eqref{doubling2}, $\dz\in(0,1]$ and \eqref{non-collapsing},
which implies that
$$
\int_M\int_M \int_\rr |\widehat g(\xi)p_{\dz^{\bz_0}(1-2\pi i \xi)}(x,y)|
|\phi(y)||\psi(x)|\,d\xi\,d\mu(x)\,d\mu(y)
\le C \dz^{-d}\|\widehat g\|_{L^1(\rr)} \|\phi\|_{L^1(M)}\|\psi\|_{L^1(M)}.
$$
From this and  Fubini's theorem, we continue to write \eqref{eq:2.4x} as follows:
\begin{eqnarray*}
\lf\laz g(\dz^{{\bz_0}}L)e^{-\dz^{\bz_0} L} \phi,\, \psi\r\raz
&&=\int_M\int_M \lf[\int_\rr \widehat g(\xi)p_{\dz^{\bz_0}(1-2\pi i \xi)}(x,y)\,d\xi\r]
\phi(y)\psi(x)\,d\mu(x)\,d\mu(y).
\end{eqnarray*}
The above argument  implies that \eqref{eq:2.3x} holds true and the kernel
of the operator $g(\dz^{{\bz_0}}L)e^{-\dz^{\bz_0} L}$ is
\begin{equation}\label{eq:2.5x}
g(\dz^{{\bz_0}}L)e^{-\dz^{\bz_0} L}(x,y)
=\int_\rr \widehat g(\xi)p_{\dz^{\bz_0}(1-2\pi i \xi)}(x,y)\,d\xi.
\end{equation}

To prove (i), by \eqref{eq:2.5x}, \eqref{eq:2.2x}  and Lemma \ref{lem2.3x},
we know that, for all $x,\,y\in M$,
\begin{equation*}
|g(\dz^{{\bz_0}}L)e^{-\dz^{\bz_0} L}(x,y)|
\le \int_\rr |\widehat g(\xi)| |p_{\dz^{\bz_0}(1-2\pi i \xi)}(x,y)|\,d\xi
\le (Ck)^k\,\| \widehat g(\cdot)(1+|\cdot|)^k\|_{L^1(\rr)} \, D_{\dz,k}(x,y),
\end{equation*}
where the positive constant $C$ is the one same as
in the inequality of Lemma \ref{lem2.3x}.

Now we prove (ii).
Since
$g(\dz^{{\bz_0}}L)e^{-\dz^{\bz_0} L}=g(\dz^{{\bz_0}}L)e^{-\frac12\dz^{\bz_0} L}e^{-\frac12\dz^{\bz_0} L},$
we apply (i), {\bf (HE)}  and Lemma \ref{lem2.1x}(ii)
to conclude  that, for all $x,\,y,\,y'\in M$ with $\rho(y,y')\le \dz$,
\begin{eqnarray*}
&&|g(\dz^{{\bz_0}}L)e^{-\dz^{\bz_0} L}(x,y)-g(\dz^{{\bz_0}}L)e^{-\dz^{\bz_0} L}(x,y')|\\
&&\quad\le \int_M
|g(\dz^{{\bz_0}}L)e^{-\frac12\dz^{\bz_0} L}(x,z) |
|p_{\dz^{\bz_0}/2}(z,y)-p_{\dz^{\bz_0}/2}(z,y')|\,d\mu(z)\\
&&\quad\le C(K, d)\,( Ck)^k\,\| \widehat g(\cdot)(1+|\cdot|)^k\|_{L^1(\rr)}
\lf[\frac{\rho(y,y')}{\dz}\r]^{\az_0}\, D_{\dz,k}(x,y),
\end{eqnarray*}
where  $C(K, d)$ in the last inequality is a positive constant coming from Lemma \ref{lem2.1x}(ii)
and $k>2d$.

Finally, we prove (iii). Since $\int_M p_t(x,y)\,d\mu(y)= 1$ for all $x\in M$,
by the analytic continuation, we know that
$\int_M p_{t+iu}(x,y)\,d\mu(y)= 1$ for all $x\in M$ and $t+iu\in\cc^+$.
Thus, \eqref{eq:2.5x} gives us that, for all $x\in M$,
\begin{equation*}
\int_Mg(\dz^{{\bz_0}}L)e^{-\dz^{\bz_0} L}(x,y)\,d\mu(y)
=\int_M\int_\rr \widehat g(\xi)p_{\dz^{\bz_0}(1-2\pi i \xi)}(x,y)\,d\xi\,d\mu(y)
=\int_\rr \widehat g(\xi)\,d\xi=g(0).
\end{equation*}
This proves (iii) and hence finishes the proof of Lemma \ref{lem2.4x}.
\end{proof}

\begin{rem}\label{rem2.5x}
(i) Notice that the above two lemmas remain valid if $k\in\nn$ therein is replaced by
$\sz\in[1,\infty)$, but with the previous constants $(Ck)^k$ in
the inequalities in (i) and (ii) of Lemma \ref{lem2.4x} replaced
by $(C\sz)^\sz$.

(ii) For any $k>2d$, as was proved in \cite[p.\,1017 and Remark~3.2]{CKP}, we have
$$\| \widehat g(\cdot)(1+|\cdot|)^k\|_{L^1(\rr)}
\le C 2^k \lf(\|g\|_{L^1(\rr)} +\|g^{(k+2)}\|_{L^1(\rr)}\r)$$
for some positive constant $C$ independent of $k$ and $g$.
\end{rem}

Specially, as a corollary of Lemma \ref{lem2.4x}, we easily obtain the following result;
see \cite[Corollary~3.3]{CKP} for the case $\bz_0=2$.

\begin{cor}\label{cor2.6x}
Let $m\in\zz_+$ and $\sz\in(0,\infty)$.
Then there exists a positive constant
$C:=C(m,\sz)$ such that,
for all $\dz\in(0,1]$,
the kernel of the operator $(\dz^{\bz_0} L)^me^{-\dz^{\bz_0} L}$ satisfies
\begin{enumerate}
\item[\rm(i)]  for all   $x,\,y \in M$,
$|(\dz^{\bz_0} L)^me^{-\dz^{\bz_0} L}(x,y)|
\le C\, D_{\dz,\sz}(x,y);$
\item[\rm(ii)]  when $\sz>2d$, for all $x,\,y,\,y'\in M$ with $\rho(y,y')\le \dz$,
\begin{equation*}
|(\dz^{\bz_0} L)^me^{-\dz^{\bz_0} L}(x,y)-(\dz^{\bz_0} L)^me^{-\dz^{\bz_0} L}(x,y')|
\le C \lf[\frac{\rho(y,y')}{\dz}\r]^{\az_0} D_{\dz,\sz}(x,y).
\end{equation*}
\end{enumerate}
\end{cor}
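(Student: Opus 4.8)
The plan is to deduce Corollary \ref{cor2.6x} directly from Lemma \ref{lem2.4x} by choosing the function $g$ appropriately. Given $m\in\zz_+$, set $g(\lz):=\lz^m$; then $g(\dz^{\bz_0}L)e^{-\dz^{\bz_0}L}=(\dz^{\bz_0}L)^me^{-\dz^{\bz_0}L}$, which is exactly the operator in question. Strictly speaking $\lz^m$ is not integrable on $\rr$, so I would instead work with the scaled heat factor: write $(\dz^{\bz_0}L)^me^{-\dz^{\bz_0}L}=h(\dz^{\bz_0}L)e^{-\frac12\dz^{\bz_0}L}$ where $h(\lz):=\lz^m e^{-\lz/2}$, which belongs to $C^\infty([0,\fz))$, decays rapidly at infinity together with all derivatives, and extends to a Schwartz-class function after even extension, so $|\widehat h(\cdot)|(1+|\cdot|)^k\in L^1(\rr)$ for every $k$. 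Then one applies Lemma \ref{lem2.4x} with $\dz$ replaced by $\dz/2^{1/\bz_0}$ (a harmless rescaling, using $\dz\in(0,1]$), or more cleanly absorbs the constant, so that the conclusions of Lemma \ref{lem2.4x}(i) and (ii) apply verbatim to $h(\dz^{\bz_0}L)e^{-\frac12\dz^{\bz_0}L}$.

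For part (i): fix any $\sz\in(0,\fz)$. Choose an integer $k\ge\sz$ (and also $k>2d$ if one wants uniformity with part (ii)); then $D_{\dz,k}(x,y)\le D_{\dz,\sz}(x,y)$ pointwise because $[1+\dz^{-1}\rho(x,y)]^{-k}\le[1+\dz^{-1}\rho(x,y)]^{-\sz}$. Lemma \ref{lem2.4x}(i) applied to $g=h$ gives
\begin{equation*}
|(\dz^{\bz_0}L)^me^{-\dz^{\bz_0}L}(x,y)|\le (Ck)^k\,\|\widehat h(\cdot)(1+|\cdot|)^k\|_{L^1(\rr)}\,D_{\dz,k}(x,y)\le C(m,\sz)\,D_{\dz,\sz}(x,y),
\end{equation*}
where the constant $C(m,\sz)$ is now fixed since $m$, $\sz$ and hence $k$ are fixed, and $\|\widehat h(\cdot)(1+|\cdot|)^k\|_{L^1(\rr)}$ is a finite number depending only on $m$ and $k$ (here one may invoke Remark \ref{rem2.5x}(ii) to bound this $L^1$ norm in terms of $\|h\|_{L^1(\rr)}+\|h^{(k+2)}\|_{L^1(\rr)}$, both finite because $h$ is Schwartz). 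Part (ii) is entirely parallel: taking $\sz>2d$ and an integer $k$ with $k\ge\sz>2d$, Lemma \ref{lem2.4x}(ii) applied to $g=h$ yields, for all $x,y,y'\in M$ with $\rho(y,y')\le\dz$,
\begin{equation*}
|(\dz^{\bz_0}L)^me^{-\dz^{\bz_0}L}(x,y)-(\dz^{\bz_0}L)^me^{-\dz^{\bz_0}L}(x,y')|\le (Ck)^k\,\|\widehat h(\cdot)(1+|\cdot|)^k\|_{L^1(\rr)}\,\lf[\frac{\rho(y,y')}{\dz}\r]^{\az_0}D_{\dz,k}(x,y),
\end{equation*}
and bounding $D_{\dz,k}\le D_{\dz,\sz}$ and absorbing the (now fixed) constants gives the claimed inequality.

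The only real point requiring care — and the one I would be most careful to get right — is the reduction from $g(\lz)=\lz^m$ to the integrable function $h(\lz)=\lz^m e^{-\lz/2}$ and the corresponding rescaling of $\dz$: one must check that writing $(\dz^{\bz_0}L)^m e^{-\dz^{\bz_0}L}=h((\dz/2^{1/\bz_0})^{\bz_0}\cdot 2\, L)$-type manipulations are legitimate, i.e. that $h(\dz^{\bz_0}L)e^{-\frac12\dz^{\bz_0}L}$ is genuinely of the form treated in Lemma \ref{lem2.4x} with $\dz$ in $(0,1]$. Since $\dz\in(0,1]$ implies $\dz/2^{1/\bz_0}\in(0,1]$ as well, and since $D_{\dz/2^{1/\bz_0},k}(x,y)\sim D_{\dz,k}(x,y)$ with constants depending only on $k$ and $\bz_0$ (by \eqref{doubling2}), this causes no difficulty; it only affects the constants, which are permitted to depend on $m$ and $\sz$. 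Everything else is a routine matter of fixing $k$ in terms of $\sz$ and monotonicity of $D_{\dz,\cdot}$ in the exponent, so there is no substantial obstacle beyond this bookkeeping.
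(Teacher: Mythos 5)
Your overall strategy coincides with the paper's: split off half of the heat factor, view $(\dz^{\bz_0}L)^m e^{-\dz^{\bz_0}L}$ as a rapidly decaying function of $\dz^{\bz_0}L$ composed with $e^{-\frac12\dz^{\bz_0}L}$, apply Lemma \ref{lem2.4x} after the harmless rescaling $\dz\mapsto\dz/2^{1/\bz_0}$, and finish with the monotonicity $D_{\dz,k}\le D_{\dz,\sz}$ for $k\ge\sz$ together with \eqref{doubling2}. The one step that fails as written is your verification of the hypothesis $|\widehat h(\cdot)|(1+|\cdot|)^k\in L^1(\rr)$ for $h(\lz)=\lz^m e^{-\lz/2}$: you assert that $h$ ``extends to a Schwartz-class function after even extension''. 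It does not. The even extension is $|\lz|^m e^{-|\lz|/2}$, and since the Taylor expansion of $\lz^m e^{-\lz/2}$ at the origin contains nonzero odd-order terms (already for $m=0$ the first derivative jumps at $0$; in general either the $m$-th or the $(m+1)$-st derivative at $0$ is nonzero and of odd order), the even extension is only finitely smooth at $0$. Hence its Fourier transform decays only at a fixed polynomial rate determined by $m$, so $(1+|\xi|)^k|\widehat h(\xi)|$ is not integrable once $k$ exceeds roughly $m+1$ --- which is exactly the regime you need, since $k\ge\sz$ with $\sz$ arbitrary (and $\sz>2d$ in part (ii)). For the same reason Remark \ref{rem2.5x}(ii) cannot be invoked: $h^{(k+2)}$ does not exist in $L^1(\rr)$ for that extension, and the unmodified formula $\lz^m e^{-\lz/2}$ on all of $\rr$ is not even integrable.

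The repair is exactly the device the paper uses. Since $L\ge0$, only the values of the symbol on $[0,\fz)$ matter for the operator, while the extension to $(-\fz,0)$ only affects $\widehat h$; you are therefore free to extend smoothly rather than evenly. Take $\theta\in C^\infty(\rr)$ with $\supp\theta\subset[-1,\fz)$, $0\le\theta\le1$ and $\theta\equiv1$ on $[0,\fz)$, and set $g(\lz):=\lz^m\theta(\lz)e^{-\lz}$; then $g$ is smooth, vanishes for $\lz<-1$, decays rapidly at $+\fz$, and Remark \ref{rem2.5x}(ii) gives $\|\widehat g(\cdot)(1+|\cdot|)^k\|_{L^1(\rr)}<\fz$ for every admissible $k$, while $(\dz^{\bz_0}L)^m e^{-\dz^{\bz_0}L}=2^m g(2^{-1}\dz^{\bz_0}L)e^{-2^{-1}\dz^{\bz_0}L}$ is precisely of the form treated in Lemma \ref{lem2.4x} with parameter $\dz/2^{1/\bz_0}\in(0,1]$. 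With this substitution the rest of your bookkeeping (choice of an integer $k\ge\sz$, with $k>2d$ in part (ii), the bound $D_{\dz,k}\le D_{\dz,\sz}$, and the comparability of $D_{\dz/2^{1/\bz_0},k}$ with $D_{\dz,k}$ via \eqref{doubling2}) goes through verbatim and reproduces the paper's proof.
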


\begin{proof}
Let $\theta\in C^\infty(\rr)$ such that $\supp\theta\subset [-1,\infty)$,
$0\le\theta\le1$ and $\theta(\lz)\equiv 1$ on $[0,\infty)$. For $\lz\in\rr$, let
$g(\lz):= \lz^m \theta(\lz)e^{-\lz}.$
By Remark \ref{rem2.5x}(ii), it is easy to show that
$\| \widehat g(\cdot)(1+|\cdot|)^\sz\|_{L^1(\rr)}=:C(m,\sz) <\infty.$ Also,
$(\dz^{\bz_0} L)^me^{-\dz^{\bz_0} L}
=2^mg(2^{-1}\dz^{\bz_0} L)e^{-2^{-1}\dz^{\bz_0} L},$
which, combined with Lemma \ref{lem2.4x}(i) and \eqref{doubling2},
yields that, for all $x,\, y\in M$,
\begin{eqnarray*}
|(\dz^{\bz_0} L)^me^{-\dz^{\bz_0} L}(x,y)|
=2^m|g(2^{-1}\dz^{\bz_0} L)e^{-2^{-1}\dz^{\bz_0} L}(x,y)|\le C(m,\sz)\, D_{\dz,\sz}(x,y).
\end{eqnarray*}
This proves (i).

The proof for (ii) follows in a similar way,
but using Lemma \ref{lem2.4x}(ii), the details being omitted,
which completes the proof of Corollary \ref{cor2.6x}.
\end{proof}

Let $C_c^\infty(\rr)$ be the space of all infinite differential functions with compact support.
The following lemma was presented in
\cite[Proposition~2.6]{KP}
(see also \cite[Theorem~2.3]{IPX}).

\begin{lem}\label{lem2.7x}
Let $\ez\in(0,1]$. Then there exists $\phi\in C_c^\infty(\rr)$ satisfying
the following conditions:
$\supp\phi\subset[0,2]$;
$\phi^{(m)}(0)=0$ for all $m\in\nn$;
$\phi\equiv 1$ on $[0,1]$, or $\supp\phi\subset [1/2, 2]$,
or $\supp\phi\subset[1/2,2]$ and $\sum_{j=0}^\infty \phi(2^{-j}t)=1$ for all $t\in[1,\infty)$;
$\|\phi\|_{L^\infty(\rr_+)}\le 1$;
$\|\phi^{(k)}\|_{L^\infty(\rr_+)}\le 8(16\ez^{-1} k^{1+\ez})^k$ for all $k\in\nn$.
\end{lem}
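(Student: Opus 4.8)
The statement to prove is Lemma \ref{lem2.7x}, which asserts the existence of smooth compactly supported functions $\phi$ on $\mathbb{R}$ with prescribed support and vanishing properties, but — crucially — with explicit, quantitative control on the growth of the sup-norms of all derivatives, namely $\|\phi^{(k)}\|_{L^\infty} \le 8(16\varepsilon^{-1}k^{1+\varepsilon})^k$. The plan is to construct such a function by hand as a single antiderivative (or iterated antiderivative) of a carefully chosen bump, tracking the derivative bounds explicitly. The reference to \cite[Theorem~2.3]{IPX} and \cite[Proposition~2.6]{KP} suggests the standard route, so I would follow that template.

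First I would recall the classical Denjoy–Carleman-type construction: one builds a nonnegative bump $\eta$ supported in a small interval by taking an infinite convolution $\eta = *_{j\ge 1} \kappa_{a_j}$, where each $\kappa_a$ is the normalized indicator of $[-a,a]$ (so $\int \kappa_a = 1$), and $\{a_j\}$ is a summable sequence of positive numbers. The convolution $\eta$ is then $C^\infty$, supported in $[-\sum_j a_j, \sum_j a_j]$, and its derivatives obey $\|\eta^{(k)}\|_{L^\infty} \le \prod_{j=1}^{k} a_j^{-1}$ (differentiating the convolution hits $k$ of the factors, and $\|\kappa_a'\|$ in the distributional-difference sense contributes $a^{-1}$; more precisely one differentiates $k$ distinct factors and the remaining convolution is an $L^1$-normalized probability density, so the sup norm is at most $\prod_{j\le k} (2a_j)^{-1}\cdot(\text{combinatorial constant})$). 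The key point is the freedom in choosing $a_j$: taking $a_j \sim c\, j^{-(1+\varepsilon)}$ makes $\sum_j a_j$ finite (so the support is controllable, and can be rescaled to fit $[0,1]$ or $[1/2,2]$ after an affine change of variables) while $\prod_{j=1}^k a_j^{-1} \sim c^{-k} (k!)^{1+\varepsilon} \lesssim (Ck^{1+\varepsilon})^k$ by Stirling. Choosing $c$ in terms of $\varepsilon$ appropriately produces exactly the constant $16\varepsilon^{-1}$ in the bound, and the factor $8$ absorbs the combinatorial overhead; one needs to be slightly careful to get the stated numerical constants, but this is a routine optimization.

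Second, to obtain the three structural alternatives — (a) $\phi\equiv 1$ on $[0,1]$ with $\supp\phi\subset[0,2]$ and all derivatives vanishing at $0$; (b) $\supp\phi\subset[1/2,2]$; (c) $\supp\phi\subset[1/2,2]$ with the partition-of-unity property $\sum_{j\ge0}\phi(2^{-j}t)=1$ on $[1,\infty)$ — I would derive each from the bump $\eta$. For (b), simply rescale and translate $\eta$ so its support lands in $[1/2,2]$; the derivative bounds scale by powers of the (fixed, $\varepsilon$-dependent) rescaling factor, which can be absorbed. For (a), set $\phi(t) := \big(\int_{-\infty}^{?}\eta\big)^{-1}\int_t^{\infty}\eta(\text{affine in }s)\,ds$-type primitive arranged so $\phi=1$ near $0$ and $\phi=0$ for $t\ge2$; since $\phi' = $ (constant)$\cdot\eta(\text{affine})$, we get $\|\phi^{(k)}\| = $ (constant)$\cdot\|\eta^{(k-1)}\|$, shifting the index by one — still within the claimed bound after adjusting constants. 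The vanishing $\phi^{(m)}(0)=0$ for $m\in\mathbb{N}$ is automatic because $\phi$ is constant on a neighborhood of $0$. For (c), take the $\phi$ from (b) supported in $[1/2,2]$ and renormalize: because $2^{-j}$-dilates of $[1/2,2]$ tile $(0,\infty)$ with bounded overlap, the sum $S(t):=\sum_{j}\phi_0(2^{-j}t)$ is smooth, positive and bounded above and below on $[1,\infty)$; replacing $\phi_0$ by $\phi_0/S$ (extended) or rather building $\phi$ from a partition argument gives the desired property — alternatively one starts from a function $\equiv 1$ on $[1,2]$ of type (a) and forms $\phi(t) = \psi(t)-\psi(2t)$ for a suitable $\psi$ of type (a), whose telescoping sum is $1$; derivative bounds of $\phi$ then follow from those of $\psi$.

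The main obstacle is the bookkeeping of constants: getting the derivative bound in exactly the stated form $8(16\varepsilon^{-1}k^{1+\varepsilon})^k$ rather than just $(C_\varepsilon k^{1+\varepsilon})^k$ requires choosing the convolution parameters $a_j$ optimally and carefully tracking (i) the Stirling estimate $k!\le k^k$ (or $k!\le e\,k^{k+1/2}e^{-k}$, depending on which is cleanest), (ii) the combinatorial factor from differentiating a convolution product, and (iii) the loss incurred in each of the three rescaling/primitive constructions. None of this is conceptually hard — it is the kind of explicit estimate that appears in Gevrey-class and ultradifferentiable-function theory — but it is the step where I would need to be most careful, and I would expect to invoke the precise computation from \cite{IPX} or \cite{KP} rather than fully reproduce it. The rest (smoothness of infinite convolutions, support computations, the telescoping for the partition of unity) is standard.
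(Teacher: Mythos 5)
Your construction is correct and is essentially the same as the one behind this lemma: the paper gives no proof at all, simply citing \cite[Proposition~2.6]{KP} and \cite[Theorem~2.3]{IPX}, and the construction there is exactly your route --- an (iterated) convolution of box kernels with widths $a_j\sim c\,j^{-(1+\ez)}$, $c\sim\ez$, giving $\prod_{j\le k}a_j^{-1}\ls (C\ez^{-1}k^{1+\ez})^k$, followed by affine rescaling, a primitive for the variant with $\phi\equiv1$ on $[0,1]$, and the telescoping $\phi(t)=\psi(t)-\psi(2t)$ for the partition of unity. One small caution: in bounding $\|\eta^{(k)}\|_{L^\infty}$ you must retain one undifferentiated factor of bounded sup norm (e.g.\ start the convolution with the normalized indicator of a unit-length interval, or use that the tail of the infinite convolution has sup norm at most $(2a_{k+1})^{-1}$), since being an $L^1$-normalized probability density alone gives no sup bound; with that adjustment your bookkeeping indeed produces a bound of the stated form $8(16\ez^{-1}k^{1+\ez})^k$.
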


Using Lemmas \ref{lem2.4x} and \ref{lem2.7x}, with
slight modifications of the proofs of
\cite[Theorems~3.4]{CKP}, we obtain the following result.
Here we present the details of the proof because of the sensitivity of
the coefficient constants in the
inequalities in (i) and (ii) below.
When $\bz_0=2$, such delicate estimates were also given in \cite[Theorem~3.1]{KP} by using
the {\em finite speed propagation property} of the heat kernel,
which might fail when $\bz_0\neq 2$, so we avoid to use it
in the proof below.

\begin{lem}\label{lem2.8x}
Let $\ez\in(0,1]$, $k\in\nn$ and $k> 2d$. Assume that $f\in C^{2k+4}(\rr_+)$,
$\supp f\subset [0, R]$ with $R\ge1$, and
$f^{(2\nu+1)}(0)=0$ for $\nu\in\{0,1,\dots, k+1\}$.
Then the operator $f(\dz^{\bz_0/2}\sqrt L)$, $\dz\in(0,1]$,
is an integral operator with kernel $f(\dz^{\bz_0/2} \sqrt L)(x,y)$ satisfying
 \begin{enumerate}
\item[\rm(i)]  for all   $x,\,y\in M$,
$|f(\dz^{\bz_0/2}\sqrt L)(x,y)|
\le c_k\, D_{\delta,\,k}(x,y),$
where
$$c_k:=c_1 (R^{2/\bz_0})^{2k+d+4} (c_2 k^{1+\ez})^{2k+2} \lf[\|f\|_{L^\infty(\rr_+)}+\|f^{(2k+4)}\|_{L^\infty(\rr_+)}
+\max_{0\le j\le k+2} |f^{(2j)}(0)|\r].$$
\item[\rm (ii)] for all $x,\,y,\,y'\in M$ such that $\rho(y,y')\le \delta$,
\begin{equation*}
|f(\delta^{\bz_0/2} \sqrt L)(x, y)-f(\delta^{\bz_0/2} \sqrt L)(x, y')|
\le c_k' \lf[\frac{\rho(y,y')}{\delta}\r]^{\alpha_0}
D_{\delta, \,k}(x,y),
\end{equation*}
where $c_k':= c_3 \,c_k \,R^{2\az_0/\bz_0}.$
\item[\rm (iii)] for all $x\in M$,
$\int_M  f(\delta^{\bz_0/2} \sqrt L)(x, y)\,d\mu(y)=f(0).$
\end{enumerate}
Here the positive constants $c_1,\, c_2$ and $c_3$  in (i)
and (ii)  depend only on $\ez,\, d,\, K,\, \az_0,\, \bz_0,\, C^\ast$ and $c^\ast$,
but independent of $\dz,\, f,\, R$ and $k$.
\end{lem}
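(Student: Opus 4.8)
The plan is to reduce $f(\dz^{\bz_0/2}\sqrt L)$ to an operator of the form $g(\eta^{\bz_0}L)e^{-\eta^{\bz_0}L}$ covered by Lemma \ref{lem2.4x}, after rescaling so that the support of $f$ shrinks to $[0,1]$; this rescaling is what keeps the weight $e^{\lambda}$ (needed to pass from $\tilde F$ to $g$) under control. Since $L\ge0$, the spectral theorem gives $f(\dz^{\bz_0/2}\sqrt L)=F(\dz^{\bz_0}L)$ with $F(\mu):=f(\sqrt\mu)$ and $\supp F\subset[0,R^{2}]$. Put $\eta:=\dz R^{-2/\bz_0}\in(0,1]$ (using $R\ge1$), $\tilde f(t):=f(Rt)$ (so that $\supp\tilde f\subset[0,1]$, $\tilde f\in C^{2k+4}(\rr_+)$, $\tilde f^{(2\nu+1)}(0)=0$ for $\nu\le k+1$), and $\tilde F(\lambda):=\tilde f(\sqrt\lambda)=F(R^{2}\lambda)$; then $f(\dz^{\bz_0/2}\sqrt L)=\tilde F(\eta^{\bz_0}L)$. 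Since $\eta\le\dz$, \eqref{doubling2} gives $D_{\eta,k}(x,y)\le K(R^{2/\bz_0})^{d}D_{\dz,k}(x,y)$ for all $x,y\in M$ (and the same comparison will absorb, in part (ii), the gap between the hypothesis $\rho(y,y')\le\dz$ and the scale $\eta$), so it suffices to work with $\eta$ and $\tilde f$, the powers of $R$ in $c_{k}$ and $c_{k}'$ being produced by this comparison together with the scaling $f\mapsto f(R\,\cdot)$.

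The crux is a smoothness estimate for $\tilde F=\tilde f(\sqrt{\cdot})$, following \cite[Theorem~3.4]{CKP}. The hypotheses say exactly that the even extension $\tilde f_{e}(t):=\tilde f(|t|)$ belongs to $C^{2k+4}(\rr)$ and is supported in $[-1,1]$; since an even $C^{2k+4}$ function is a $C^{k+2}$ function of $t^{2}$ (Whitney), there is $G\in C^{k+2}(\rr)$ with $\tilde f_{e}(t)=G(t^{2})$, whence $\tilde F=G\in C^{k+2}(\rr_{+})$. To bound $\|G^{(j)}\|_{L^\infty}$, $j\le k+2$, in terms of $\|f\|_{L^\infty}$, $\|f^{(2k+4)}\|_{L^\infty}$ and $\max_{0\le j\le k+2}|f^{(2j)}(0)|$ only, I would split $\tilde F$ through its Taylor expansion at $0$; using that the odd derivatives of $\tilde f$ vanish there,
$$
\tilde F(\lambda)=\sum_{\nu=0}^{k+1}\frac{\tilde f^{(2\nu)}(0)}{(2\nu)!}\,\lambda^{\nu}+r_{k}(\lambda),\qquad
r_{k}(\lambda)=\int_{0}^{\sqrt\lambda}\frac{(\sqrt\lambda-s)^{2k+3}}{(2k+3)!}\,\tilde f^{(2k+4)}(s)\,ds.
$$
The polynomial part is controlled on $[0,1]$ by $C_{k}\max_{0\le j\le k+1}|\tilde f^{(2j)}(0)|$; for $r_{k}$, one first invokes the Landau--Kolmogorov inequalities on $\rr$ applied to the compactly supported $\tilde f_{e}$ to dominate the intermediate derivatives $\|\tilde f^{(i)}\|_{L^\infty}$, $0<i<2k+4$, by $\|\tilde f\|_{L^\infty}$ and $\|\tilde f^{(2k+4)}\|_{L^\infty}$, and then differentiates $r_{k}$ up to order $k+2$ (legitimate thanks to the factor $\lambda^{k+2}$ implicit in the remainder). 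Tracking the resulting combinatorial constants, together with the Gevrey-type bounds $\|\theta^{(j)}\|_{L^\infty}\le 8(16\ez^{-1}j^{1+\ez})^{j}$ of a cutoff $\theta\in C_{c}^{\infty}(\rr)$ from Lemma \ref{lem2.7x} with $\theta\equiv1$ on $[0,1]$ and bounded support, produces the factor $(c_{2}k^{1+\ez})^{2k+2}$.

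With $\theta$ as above, set $g(\lambda):=G(\lambda)\theta(\lambda)e^{\lambda}\in C^{k+2}(\rr)$; it has bounded support and satisfies $g(\eta^{\bz_0}L)e^{-\eta^{\bz_0}L}=\tilde F(\eta^{\bz_0}L)$ because $g(\lambda)e^{-\lambda}=\tilde F(\lambda)$ on $[0,\fz)\supset\supp\tilde F$. By the previous step, Leibniz' rule and Remark \ref{rem2.5x}(ii),
$$
\|\widehat g(\cdot)(1+|\cdot|)^{k}\|_{L^{1}(\rr)}\le (c_{2}k^{1+\ez})^{2k+2}\Big[\|f\|_{L^\infty(\rr_+)}+\|f^{(2k+4)}\|_{L^\infty(\rr_+)}+\max_{0\le j\le k+2}|f^{(2j)}(0)|\Big],
$$
so Lemma \ref{lem2.4x}(i), combined with $D_{\eta,k}\le K(R^{2/\bz_0})^{d}D_{\dz,k}$ and the $R$-scaling, yields (i). For (ii) one argues as in the proof of Lemma \ref{lem2.4x}(ii): writing $\tilde F(\eta^{\bz_0}L)=\big[g_{1}(\eta^{\bz_0}L)e^{-\eta^{\bz_0}L}\big]\circ e^{-\frac12\eta^{\bz_0}L}$ with $g_{1}(\lambda):=g(\lambda)e^{\lambda/2}$, one estimates the first factor by (i), then applies {\bf (HE)} to $p_{\eta^{\bz_0}/2}$ together with Lemma \ref{lem2.1x}(ii) (here $k>2d$ is used), the range $\eta<\rho(y,y')\le\dz$ being handled directly by the size bound (i) and \eqref{doubling2}, which is the origin of the extra factor $R^{2\az_0/\bz_0}$ in $c_{k}'$. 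Finally (iii) is immediate from Lemma \ref{lem2.4x}(iii), since $g(0)=\tilde F(0)=\tilde f(0)=f(0)$; alternatively it follows from {\bf (SC)} by analytic continuation exactly as in the proof of Lemma \ref{lem2.4x}(iii).

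The main obstacle is the smoothness step: proving $\tilde f(\sqrt{\cdot})\in C^{k+2}$ with norm bounds depending only on $\|f\|_{L^\infty}$, $\|f^{(2k+4)}\|_{L^\infty}$ and the even-order values $f^{(2j)}(0)$, and, more delicately, with explicit and essentially sharp dependence on $k$, $\ez$ and $R$ (the factors $(c_{2}k^{1+\ez})^{2k+2}$ and $(R^{2/\bz_0})^{2k+d+4}$), since precisely these dependencies are what make the lemma usable in the discrete Calder\'on reproducing formula of Theorem \ref{thm-CRF}, where $k$ is eventually let to grow. Once this is in hand, the rest of the argument is a routine adaptation of the $\bz_0=2$ proofs of \cite{CKP,KP}, with the finite--speed--of--propagation argument of \cite{KP} replaced by the Fourier/contour representation underlying Lemma \ref{lem2.4x}.
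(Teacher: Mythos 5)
Your proposal is correct and, at the skeleton level, it is the paper's own argument (which follows CKP): reduce to unit support via $\tilde f(t):=f(Rt)$ and $\eta:=\dz R^{-2/\bz_0}$, write $f(\dz^{\bz_0/2}\sqrt L)=g(\eta^{\bz_0}L)e^{-\eta^{\bz_0}L}$ with $g(\lambda)=e^{\lambda}\tilde f(\sqrt\lambda)$ (cut off), bound $\|\widehat g(\cdot)(1+|\cdot|)^{k}\|_{L^1(\rr)}$ through Remark \ref{rem2.5x}(ii) using the Taylor expansion at $0$ and the Gevrey cutoff of Lemma \ref{lem2.7x}, and conclude with Lemma \ref{lem2.4x}; your part (ii) reproduces the proof of Lemma \ref{lem2.4x}(ii) (compose with $e^{-\frac12\eta^{\bz_0}L}$ and use \textbf{(HE)}), and (iii) is Lemma \ref{lem2.4x}(iii). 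The one step you organize differently is the $C^{k+2}$ bound for $\tilde f(\sqrt{\cdot})$: the paper splits $f=f_1+f_2$, with $f_1$ a Gevrey cutoff times the even Taylor polynomial of degree $2k+4$, estimates $g_i(\lambda)=e^{\lambda}f_i(\sqrt\lambda)$ separately, and differentiates the integral remainder through the identity $\frac{d^{N}}{d\lambda^{N}}F(\sqrt\lambda)=\sum_{j=1}^{N}c_j\lambda^{-N+j/2}F^{(j)}(\sqrt\lambda)$ with $|c_j|\le N!$, so that only $\|f\|_{L^\infty(\rr_+)}$, $\|f^{(2k+4)}\|_{L^\infty(\rr_+)}$ and the values $f^{(2j)}(0)$ ever appear and no intermediate derivatives are needed; your Whitney plus Landau--Kolmogorov detour is legitimate (the line constants are universal, so the Gevrey bookkeeping survives), but it is avoidable by exactly this device, which also settles regularity at $\lambda=0$ because the remainder vanishes there to order $k+2$.

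Two caveats on constants, neither a gap relative to the paper's own level of detail. First, your reduction gives $\|\tilde f^{(2k+4)}\|_{L^\infty}=R^{2k+4}\|f^{(2k+4)}\|_{L^\infty}$ and $D_{\eta,k}\le K R^{2d/\bz_0}D_{\dz,k}$, hence a total factor of order $R^{2k+4+2d/\bz_0}$; this matches the stated $(R^{2/\bz_0})^{2k+d+4}$ only when $\bz_0=2$ and exceeds it when $\bz_0>2$ --- but the paper's proof, which defers the case $R>1$ to the $\bz_0=2$ rescaling of CKP, yields the same power, and $R$ is an absolute constant in every later application, so this is immaterial. Second, handling the range $\eta<\rho(y,y')\le\dz$ in (ii) by the size bound costs a factor of order $2^{k}$ from comparing the quantities $D_{\dz,k}(x,y)$ and $D_{\dz,k}(x,y')$, which is absorbed by enlarging $c_2$, so the stated form $c_k'=c_3\,c_k\,R^{2\az_0/\bz_0}$ is still met.
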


\begin{proof} Without loss of generality, we may assume that $R=1$;
otherwise, we let $h(\lz):=f(R\lz)$ and  do the argument
same as in the beginning of the proof of
\cite[Theorem~3.4]{CKP}.

Let $R=1$. Due to Lemma \ref{lem2.7x}, we choose an even smooth cut-off function $\eta$ such that
$\supp\eta\subset[-1,1]$, $\eta=1$ on $[-1/2,1/2]$, $0\le\eta\le1$
and
$\|\eta^{(k)}\|_{L^\infty(\rr)}\le C(C_\ez k^{1+\ez})^k$ for all $k\in\nn,$
where $C_\ez$ is a positive constant depending only on $\ez$.
Split $f=f_1+f_2$, where
$$f_1(\lz):= \eta(\lz^2) \sum_{j=0}^{k+2} \frac{f^{(2j)}(0)}{(2j)!}\lz^{2j}, \qquad \lz\in[0,1].$$
Write $f_i(\lz)=:g_i(\lz^2)e^{-\lz^2}$, $i\in\{1,2\}$. Then
$$g_1(\lz)= \eta(\lz) e^\lz \sum_{j=0}^{k+2} \frac{f^{(2j)}(0)}{(2j)!}\lz^{j}$$
and, by the Taylor expansion formula, we see that
\begin{eqnarray*}
g_2(\lz)
= e^\lz \lf\{ [1-\eta(\lz)]\sum_{j=0}^{k+2} \frac{f^{(2j)}(0)}{(2j)!}\lz^{j}
+\frac{(-1)^{2k+4}}{(2k+4)!} \int_0^{\sqrt \lz} (t-\sqrt \lz)^{2k+4} f^{(2k+4)}(t)\,dt\r\}.
\end{eqnarray*}
For $i\in\{1,2\}$, by (i) and (ii) of Lemma \ref{lem2.4x} and Remark \ref{rem2.5x}(ii), we obtain
\begin{eqnarray*}
|f_i(\dz^{\bz_0/2}\sqrt L)(x,y)|
\le C (Ck)^k  \lf[\|g_i\|_{L^1(\rr)}+ \|g_i^{(k+2)}\|_{L^1(\rr)}\r]\, D_{\dz, k}(x,y)
\end{eqnarray*}
and, when $\rho(y,y')\le \dz$,
\begin{eqnarray*}
&&|f_i(\delta^{\bz_0/2} \sqrt L)(x, y)-f_i(\delta^{\bz_0/2} \sqrt L)(x, y')|\\
&&\quad\le C (Ck)^k  \lf[\|g_i\|_{L^1(\rr)}+ \|g_i^{(k+2)}\|_{L^1(\rr)}\r] \lf[\frac{\rho(y,y')}{\delta}\r]^{\alpha_0}
D_{\delta, \,k}(x,y),
\end{eqnarray*}
where the positive
constant $C$ depends only on $d,\, K,\, \bz_0,\, \az_0,\, C^\ast$ and $c_\ast$.
Observe that the functions $f,\,f_1,\,f_2,\, g_1,\, g_2 $ and $\eta$ are all supported in $[-1,1]$. Thus, to obtain (i) and (ii) of Lemma \ref{lem2.8x}, we only need to prove that, for $i\in\{1,2\}$,
\begin{eqnarray}\label{eq:2.6x}
&&\|g_i\|_{L^\infty([0,1])}+ \|g_i^{(k+2)}\|_{L^\infty([0,1])}\notag\\
&&\quad\le C (Ck^{1+\ez})^{k+2} \lf[\|f\|_{L^\infty(\rr_+)}+\|f^{(2k+4)}\|_{L^\infty(\rr_+)}
+\max_{0\le j\le k+2} |f^{(2j)}(0)|\r].
\end{eqnarray}

Now we prove \eqref{eq:2.6x}.
Clearly, for any $\lz\in[0,1]$, we have
\begin{equation}\label{eq:2.7x}
|g_1(\lz)| \le e \lf\{\max_{0\le j\le k+2} |f^{(2j)}(0)|\r\}\, \sum_{j=0}^{k+2} \frac1{(2j)!}
\le 2e \max_{0\le j\le k+2} |f^{(2j)}(0)|
\end{equation}
and, by the fact $\supp f\subset [-1,1]$, we see that
\begin{equation}\label{eq:2.8x}
|g_2(\lz)|=|e^\lz f(\sqrt \lz)-g_1(\lz)| \le e \|f\|_{L^\infty(\rr)}+ 2e \max_{0\le j\le k+2} |f^{(2j)}(0)|.
\end{equation}
It remains to estimate $\|g_i^{(k+2)}\|_{L^\infty([0,1])}$, $i\in\{1,2\}$.
By Leibniz's rule, we see that
\begin{eqnarray*}
g_1^{(k+2)}(\lz)= e^\lz \sum_{N=0}^{k+2} \sum_{i=0}^N \binom {k+2} N  \,\binom N i \eta^{(N-i)}(\lz)  \sum_{j=i}^{k+2} \frac{f^{(2j)}(0)}{(2j)!}(\lz^{j})^{(i)},
\end{eqnarray*}
which, combined with the properties of $\eta$, yields that, when $\lz\in[0,1]$,
\begin{eqnarray*}
|g_1^{(k+2)}(\lz)|
\le 2C (3 C_\ez k^{1+\ez})^{k+2} \max_{0\le j\le k+2} |f^{(2j)}(0)|,
\end{eqnarray*}
by using the fact that $\sum_{N=0}^{k+2} \sum_{i=0}^N \binom {k+2} N  \,\binom N i = [1+(1+x)]^{k+2}\Big|_{x=1}=3^{k+2}$. Thus, we obtain
\begin{equation}\label{eq:2.9x}
\|g_1^{(k+2)}\|_{L^\infty([0,1])}
\le 2C ( 3C_\ez k^{1+\ez})^{k+2} \max_{0\le j\le k+2} |f^{(2j)}(0)|.
\end{equation}
To estimate $\|g_2^{(k+2)}\|_{L^\infty([0,1])}$,
via a little trivial calculations, we conclude that
\begin{eqnarray*}
g_2^{(k+2)}(\lz)
&&= e^\lz \sum_{N=0}^{k+2}
\sum_{i=0}^N \binom {k+2} N \binom N i [1-\eta(\lz)]^{(i)} \sum_{j=N-i}^{k+2}
\frac{f^{(2j)}(0)}{2^j (j-N+i)!}\,\lz^{j-N+i+1}\\
&&\quad
+ e^\lz \frac{(-1)^{2k+4}}{(2k+4)!} \sum_{N=0}^{k+2} \binom {k+2} N  \int_0^{\sqrt \lz} \frac{d^N(t-\sqrt \lz)^{2k+4}}{d\lz^N} f^{(2k+4)}(t)\,dt =: \cj_1+\cj_2.
\end{eqnarray*}
Using the properties of $\eta$, we find that
\begin{eqnarray*}
\cj_1
\le C(3C_\ez k^{1+\ez})^{k+2} \max_{0\le j\le k+2}|f^{(2j)}(0)| .
\end{eqnarray*}
For $\cj_2$,
we  use the following  estimate (see \cite[p.\,1019]{CKP}): for all $F\in C^N(\rr)$ and $\lz\in\rr$,
$$
\frac{d^N}{d\lz^N} F(\sqrt\lz) =\sum_{j=1}^N c_j \lz^{-N+j/2} F^{(j)}(\sqrt\lz),\qquad |c_j|\le N!,
$$
to conclude  that, for all $\lz\in[0,1]$,
\begin{eqnarray*}
\cj_2 &&\le e \|f^{(2k+4)}\|_{L^\infty(\rr)} \sum_{N=0}^{k+2} \binom {k+2} N \sum_{j=1}^N \frac{N!}{(2k+5-j)!}
 \le C (Ck)^{k+2}\,\|f^{(2k+4)}\|_{L^\infty(\rr)}.
\end{eqnarray*}
Combining the estimates of $\cj_1$ and $\cj_2$, we know that
\begin{equation}\label{eq:2.10x}
\|g_2^{(k+2)}\|_{L^\infty([0,1])} \le
C (C_\ez k^{1+\ez})^{k+2}\,
\lf[\sup_{0\le j\le k+2}|f^{(2j)}(0)|+\|f^{(2k+4)}\|_{L^\infty(\rr)}\r].
\end{equation}
From \eqref{eq:2.7x}, \eqref{eq:2.8x}, \eqref{eq:2.9x} and \eqref{eq:2.10x},
it follows \eqref{eq:2.6x}. This finishes the proofs of (i) and (ii).

Finally, (iii) follows from Lemma \ref{lem2.4x}(iii). Thus, we complete the proof of
Lemma \ref{lem2.8x}.
\end{proof}

As a consequence of Lemmas \ref{lem2.8x} and \ref{lem2.2x}, if we proceed as in the proof of \cite[Theorem~3.7]{CKP}, then we obtain
properties of kernels of operators of the form $f(\sqrt L)$, with $f$ being some non-smooth compactly supported function, the details of the proof being omitted.

\begin{cor}\label{cor2.9x}
Let $f$ be a bounded measurable function on $\rr_+$ with $\supp f\subset [0,\tau^{\bz_0/2}]$
for some $\tau\ge1$. Then $f(\sqrt L)$ is an integral operator with kernel $f(\sqrt L)(x,y)$ satisfying
\begin{eqnarray*}
|f(\sqrt L)(x,y)| \le C_\flat\, \frac{\|f\|_{L^\infty(\rr)}}{\sqrt{|B(x,\tau^{-1})| \, |B(y, \tau^{-1})|}}, \qquad x,\,y\in M
\end{eqnarray*}
and, when $x,\ y,\ y'\in M$ such that $\rho(y,y')\le \tau^{-1}$,
\begin{eqnarray*}
|f(\sqrt L)(x,y)-f(\sqrt L)(x,y')| \le C_\flat\, \frac{[\tau\rho(y,y')]^{\az_0}\,\|f\|_{L^\infty(\rr)}}{\sqrt{|B(x,\tau^{-1})|\, |B(y, \tau^{-1})|}},
\end{eqnarray*}
where $C_\flat$ is a positive constant independent of $\tau$, $f$ and $x,\,y,\,y'\in M$.
\end{cor}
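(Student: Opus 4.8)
The plan is to transfer the smooth functional-calculus kernel estimates of Lemma~\ref{lem2.8x} to the merely bounded measurable $f$ by factoring $f(\sqrt L)$ through two smooth cut-offs and absorbing the roughness of $f$ into an $L^2$-bounded middle factor, in the spirit of \cite[Theorem~3.7]{CKP}. Put $\dz:=\tau^{-1}\in(0,1]$. Using Lemma~\ref{lem2.7x} with $\ez=1$, I would fix a function $\eta\in C_c^\infty(\rr)$ with $\supp\eta\subset[0,2]$, $0\le\eta\le1$, $\eta\equiv1$ on $[0,1]$, $\eta^{(m)}(0)=0$ for all $m\in\nn$, and $\|\eta^{(j)}\|_{L^\infty(\rr)}\le8(16 j^2)^j$. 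Rescaling, set $\psi(\lz):=\eta(\tau^{-\bz_0/2}\lz)$, so that $\psi\equiv1$ on $[0,\tau^{\bz_0/2}]\supset\supp f$ and $\psi(\sqrt L)=\eta(\dz^{\bz_0/2}\sqrt L)$. Since $\psi\equiv1$ on $\supp f$, the spectral calculus yields the key identity
\[
f(\sqrt L)=\psi(\sqrt L)\,f(\sqrt L)\,\psi(\sqrt L)=\bigl(f(\sqrt L)\psi(\sqrt L)\bigr)\psi(\sqrt L).
\]

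Next, fix an integer $k$ with $k>2d$ and $k\ge d+1$. Applying Lemma~\ref{lem2.8x}(i) and (ii) to $\eta$ (smooth, supported in $[0,2]$, with all odd-order derivatives vanishing at $0$) shows that $\psi(\sqrt L)$ is an integral operator with
\[
|\psi(\sqrt L)(x,z)|\le c_k\,D_{\dz,k}(x,z),\quad
|\psi(\sqrt L)(x,z)-\psi(\sqrt L)(x,z')|\le c_k'\bigl[\dz^{-1}\rho(z,z')\bigr]^{\az_0}D_{\dz,k}(x,z)
\]
for all $x,z\in M$ and all $z'$ with $\rho(z,z')\le\dz$; here $c_k,c_k'$ are structural constants, since $R=2$, $\|\eta\|_{L^\infty}$, $\|\eta^{(2k+4)}\|_{L^\infty}$ and $\max_{0\le j\le k+2}|\eta^{(2j)}(0)|=1$ entering the bounds of Lemma~\ref{lem2.8x} are all controlled once $k$ (which depends only on $d$) and $\eta$ are fixed. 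Combining the first estimate with Lemma~\ref{lem2.1x}(i) (exponent $2k>d$) gives $\|\psi(\sqrt L)(x,\cdot)\|_{L^2(M)}\ls c_k|B(x,\dz)|^{-1/2}$, and likewise in the other variable.

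For the size bound, write $f(\sqrt L)=URV$ with $U=V=\psi(\sqrt L)$ and $R=f(\sqrt L)$; by the spectral theorem $R$ is bounded on $L^2(M)$ with $\|R\|_{L^2(M)\to L^2(M)}\le\|f\|_{L^\infty(\rr_+)}$. Lemma~\ref{lem2.2x} (whose conclusion is homogeneous in $U,V$, so the extra factor $c_k$ is harmless) then shows $f(\sqrt L)$ is an integral operator and, using the $L^2$ bounds above,
\[
|f(\sqrt L)(x,y)|\le\|\psi(\sqrt L)(x,\cdot)\|_{L^2(M)}\|f\|_{L^\infty(\rr_+)}\|\psi(\sqrt L)(\cdot,y)\|_{L^2(M)}\ls\|f\|_{L^\infty(\rr_+)}\frac1{\sqrt{|B(x,\tau^{-1})|\,|B(y,\tau^{-1})|}},
\]
the first claimed inequality. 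For the regularity bound, set $T:=f(\sqrt L)\psi(\sqrt L)=\psi(\sqrt L)f(\sqrt L)\psi(\sqrt L)$ (the last identity by $\psi f=f$); Lemma~\ref{lem2.2x} gives $|T(x,z)|\ls\|f\|_{L^\infty(\rr_+)}|B(x,\dz)|^{-1/2}|B(z,\dz)|^{-1/2}$, and the above bounds make the composition $T\psi(\sqrt L)$ absolutely convergent, so $f(\sqrt L)(x,y)=\int_M T(x,z)\psi(\sqrt L)(z,y)\,d\mu(z)$. Hence, for $\rho(y,y')\le\dz=\tau^{-1}$,
\[
|f(\sqrt L)(x,y)-f(\sqrt L)(x,y')|\le\int_M|T(x,z)|\,\bigl|\psi(\sqrt L)(z,y)-\psi(\sqrt L)(z,y')\bigr|\,d\mu(z),
\]
and inserting the Hölder bound for $\psi(\sqrt L)$ and then Lemma~\ref{lem2.1x}(i) (exponent $k>d$) to carry out the $z$-integration yields the second claimed inequality.

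The genuinely delicate point is the very first one: because $f$ is only bounded and measurable, no smooth-calculus estimate applies to $f(\sqrt L)$ directly, and the factorization $\psi f\psi=f$ is exactly what makes the smooth factors $\psi(\sqrt L)$ carry all the spatial decay and Hölder regularity while the rough factor is used solely through its $L^2$ operator norm via Lemma~\ref{lem2.2x}. The remaining work — checking that every constant stays structural (true since $k$ and $\eta$ are fixed once and for all) and performing the two convolution integrals — is routine and handled entirely by Lemma~\ref{lem2.1x}.
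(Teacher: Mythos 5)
Your argument is correct and is precisely the route the paper intends: it invokes Lemma \ref{lem2.8x} for the smooth cut-off $\eta(\delta^{\beta_0/2}\sqrt L)$ (with $\delta=\tau^{-1}$), Lemma \ref{lem2.2x} with the rough factor $f(\sqrt L)$ used only through its $L^2$ operator norm, and Lemma \ref{lem2.1x} for the kernel compositions, exactly as in the proof of \cite[Theorem~3.7]{CKP} that the paper cites while omitting the details. No gaps worth noting.
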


Using Lemma \ref{lem2.8x} and following the proof of
\cite[Theorem~3.4]{KP}, we find
the following properties of the kernel of the operator $f(\dz^{\bz_0/2}\sqrt L)$,
with $f$ being some smooth function with fast decay at infinity,
the details of the proof being omitted.

\begin{prop}\label{prop2.10x}
Let $f\in C^\infty(\rr_+)$ satisfy, for all $\nu\in\zz_+$ and $r\in(0,\infty)$,
there is a positive constant $C_{\nu,r}$ such that
\begin{equation}\label{eq:2.11x}
f^{(2\nu+1)}(0)=0,\qquad|f^{(\nu)}(\lz)|\le C_{\nu,r}(1+\lz)^{-r}\qquad
\textup{for all}\, \,\lz\in(0,\infty).
\end{equation}
Let $\sigma\in(0,\infty)$. Then, for any $m\in\zz_+$ and $\delta\in(0,1]$,
there exists a constant $C_{\sigma, m}\in(1,\infty)$ such that
the operator $L^mf(\delta\sqrt L)$ is an integral operator
with kernel $L^m f(\delta \sqrt L)(x, y)$
satisfying:
\begin{enumerate}
\item[\rm (i)] for all $x,\,y\in M$,
\begin{equation}\label{eq:2.12x}
|L^m f(\delta^{\bz_0/2} \sqrt L)(x, y)|
\le C_{\sigma, m}\, \delta^{-\bz_0 m} D_{\delta, \sigma}(x,y);
\end{equation}
\item[\rm (ii)] for all $x,\,y,\,y'\in M$ such that $\rho(y,y')\le \delta$,
\begin{equation}\label{eq:2.13x}
|L^m f(\delta^{\bz_0/2} \sqrt L)(x, y)-L^m f(\delta^{\bz_0/2} \sqrt L)(x, y')|
\le C_{\sigma, m}\, \delta^{-\bz_0 m} \lf[\frac{\rho(y,y')}{\delta}\r]^{\alpha_0}
D_{\delta, \sigma}(x,y);
\end{equation}
\item[\rm (iii)] for all $x\in M$,
$\int_M  f(\delta^{\bz_0/2} \sqrt L)(x, y)\,d\mu(y)= f(0).$
\end{enumerate}
\end{prop}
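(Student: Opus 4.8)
The plan is to reduce the statement about the smooth, fast-decaying function $f$ to the compactly supported situation already handled in Lemma \ref{lem2.8x}, by means of a dyadic decomposition of $f$ on the spectral side. First I would fix a smooth partition of unity subordinate to a dyadic covering of $\rr_+$: choose $\psi_0,\psi\in C_c^\infty(\rr)$ with $\supp\psi_0\subset[0,2]$, $\supp\psi\subset[1/2,2]$ and $\psi_0(\lz)+\sum_{j\ge1}\psi(2^{-j}\lz)=1$ for $\lz\ge0$ (this is the third alternative in Lemma \ref{lem2.7x}, with a prescribed $\ez\in(0,1]$, say $\ez=1$). Writing $\lz^{2m}f(\lz)$ in place of $f$ and absorbing the factor $\dz^{-\bz_0 m}$ appropriately, one has for $g(\lz):=\lz^{2m}f(\lz)$
\begin{equation*}
L^m f(\dz^{\bz_0/2}\sqrt L)=\dz^{-\bz_0 m}\, g_0(\dz^{\bz_0/2}\sqrt L)+\dz^{-\bz_0 m}\sum_{j=1}^\infty g_j(\dz^{\bz_0/2}\sqrt L),
\end{equation*}
where $g_0:=\psi_0\, g$ and $g_j(\lz):=\psi(2^{-j}\lz)g(\lz)$, so that $\supp g_j\subset[2^{j-1},2^{j+1}]$. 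The point of writing $\lz^{2m}$ is that the hypotheses \eqref{eq:2.11x} guarantee $g\in C^\infty(\rr_+)$ with $g^{(2\nu+1)}(0)=0$ and with decay $|g^{(\nu)}(\lz)|\ls(1+\lz)^{2m-r}$ for every $r$, hence for $r$ as large as we wish; and multiplication by the fixed cutoffs preserves the vanishing of odd derivatives at $0$.

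Next I would apply Lemma \ref{lem2.8x} to each $g_j$, taking $\ez=1$ and a fixed $k\in\nn$ with $k>2d$ and $k\ge\sz$ (if $\sz>2d$ we take $k=\lceil\sz\rceil$, otherwise $k=\lceil 2d\rceil+1$; note $D_{\dz,k}(x,y)\le D_{\dz,\sz}(x,y)$ whenever $k\ge\sz$, which is all we need since we want a bound by $D_{\dz,\sz}$). For $g_j$ the support radius on the $\sqrt{\phantom L}$-scale is $R_j\sim 2^{(j+1)2/\bz_0}$, i.e. $R_j^{2/\bz_0}\sim 2^{j+1}$, so the constant $c_k$ from Lemma \ref{lem2.8x}(i) applied to $g_j$ is controlled by a fixed power $2^{jN_0}$ (with $N_0:=2k+d+4$) times
\begin{equation*}
\|g_j\|_{L^\infty}+\|g_j^{(2k+4)}\|_{L^\infty}+\max_{0\le i\le k+2}|g_j^{(2i)}(0)|.
\end{equation*}
For $j\ge1$ the derivatives at $0$ vanish, and by the Leibniz rule together with the decay $|g^{(\nu)}(\lz)|\ls_r(1+\lz)^{2m-r}$ (choosing $r:=2m+N_0+2k+4+2$, which is legitimate since $r$ is arbitrary) and the bound $\|\psi^{(i)}(2^{-j}\cdot)\|_{L^\infty}\le 2^{-ji}\|\psi^{(i)}\|_{L^\infty}$, one gets $\|g_j\|_{L^\infty}+\|g_j^{(2k+4)}\|_{L^\infty}\ls 2^{-j(N_0+2)}$ on $[2^{j-1},2^{j+1}]$. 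Hence each term contributes $\ls 2^{-2j}D_{\dz,\sz}(x,y)$, the series in $j$ converges, and summing (plus the $j=0$ term, handled directly by Lemma \ref{lem2.8x}) yields \eqref{eq:2.12x}. The Hölder estimate \eqref{eq:2.13x} follows the identical pattern, invoking Lemma \ref{lem2.8x}(ii) termwise, noting $c_k'=c_3 c_k R_j^{2\az_0/\bz_0}\sim c_k\, 2^{j\az_0}$, and simply demanding a slightly larger $r$ so that the geometric series still converges after the extra factor $2^{j\az_0}$.

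The only genuinely delicate point — and the reason the proof is not entirely mechanical — is the bookkeeping of how the constant $c_k$ in Lemma \ref{lem2.8x} grows with the support radius $R_j$: it enters as $(R_j^{2/\bz_0})^{2k+d+4}$, i.e. as $2^{jN_0}$ with $N_0$ depending on $k$. This is harmless because $k$ (hence $N_0$) is fixed once $\sz$ is fixed, and the decay of $g$ is at our disposal with \emph{arbitrary} exponent $r$; so we just pick $r$ large enough to beat $2^{jN_0}$ (and, for part (ii), also the extra $2^{j\az_0}$) and leave a net summable factor like $2^{-2j}$. The resulting constant $C_{\sz,m}$ then depends only on $\sz$, $m$, and finitely many of the $C_{\nu,r}$ from \eqref{eq:2.11x}, as claimed. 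Part (iii) is immediate: $\int_M f(\dz^{\bz_0/2}\sqrt L)(x,y)\,d\mu(y)=f(0)$ is exactly Lemma \ref{lem2.8x}(iii) — or, applied termwise and summed, follows from Lemma \ref{lem2.4x}(iii) — since the decomposition $f=\sum_j g_j/\lz^{2m}$ (for the $m=0$ kernel one uses $f$ itself) reconstitutes $f(0)=\sum_j g_j(0)$ with only the $j=0$ term nonzero.
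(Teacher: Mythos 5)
Your proposal is correct and is essentially the argument the paper intends: the paper omits the proof of Proposition \ref{prop2.10x}, pointing to Lemma \ref{lem2.8x} and the proof of \cite[Theorem~3.4]{KP}, which is exactly your route --- pass to $g(\lz)=\lz^{2m}f(\lz)$ to absorb $\dz^{-\bz_0 m}$, decompose $g$ by a dyadic partition of unity, apply the compactly supported Lemma \ref{lem2.8x} to each piece, and use the arbitrary-order decay in \eqref{eq:2.11x} to beat the polynomial growth of $c_k$ (and of $c_k'$) in the support radius so the series of kernel bounds sums. The only blemishes are cosmetic: your identification of $R_j$ is off (one should take $R_j=2^{j+1}$, so the factor is $(2^{j+1})^{2(2k+d+4)/\bz_0}$ rather than $2^{(j+1)(2k+d+4)}$, which only overestimates the growth and changes nothing), and you should add one routine line that the partial sums $\sum_{j\le N}g_j(\dz^{\bz_0/2}\sqrt L)$ converge strongly on $L^2(M)$ to $g(\dz^{\bz_0/2}\sqrt L)$ while their kernels converge absolutely, so the limit operator indeed has the summed kernel.
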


\begin{rem}\label{rem2.11x}
(i) In \eqref{eq:2.11x}, the condition
$f^{(2\nu+1)}(0)=0$ for all $\nu\in\zz_+$ ensures that the function
$f$ has an even extension to $\rr$.
Conversely, any even function in the Schwartz class
satisfies \eqref{eq:2.11x}.

(ii) Notice that the constant $C_{\sz,m}$ in \eqref{eq:2.12x} and \eqref{eq:2.13x} depends also on
the parameters $K$, $\az_0$, $\bz_0$, $C^\ast$, $c^\ast$ and $C_{\nu,r}$ in \eqref{eq:2.11x}; but, for simplicity, we write only $C_{\sz,m}$
since, for most of the cases, we care only the parameters $\sz$ and $m$.
We point out that $C_{\sz,m}$ is independent of $\dz\in(0,1]$.
\end{rem}

For functions $\phi$ in Lemma \ref{lem2.7x}, from
Lemma  \ref{lem2.8x} and Corollary \ref{cor2.9x}, we deduce
some nice properties of kernels of $L^m\phi(\dz^{\bz_0/2}\sqrt L)$,
via a slight modification of the proof of \cite[Theorem~3.6]{KP} as follows.

\begin{prop}\label{prop2.12x}
Let $\ez\in(0,1]$ and $\phi$ be a smooth function satisfying the conditions of Lemma \ref{lem2.7x}.
For any $m\in\nn$,
there exist positive constants $\gz:=\gz(m,\ez)$
and $C^{\flat}_{\gz,\ez,m}:=C(\gz,\ez,m)$
such that, for all  $\delta\in(0,1]$,
the operator $L^m\phi(\delta^{\bz_0/2}\sqrt L)$ is an integral operator
with kernel $L^m \phi(\delta^{\bz_0/2} \sqrt L)(x, y)$
satisfying:
\begin{enumerate}
\item[\rm (i)]  for all $x,\,y\in M$,
\begin{equation*}
|L^m \phi(\delta^{\bz_0/2} \sqrt L)(x, y)|
\le C^{\flat}_{\gz,\ez,m}
 \delta^{-\bz_0m} \frac{1}
{\sqrt{|B(x, \delta)|\, |B(y,\delta)|} }\,\exp\lf\{-\gz\left[\frac{\rho(x,y)}{\delta}\right]^{\frac1{10(1+\ez)}}\r\};
\end{equation*}

\item[\rm (ii)] for all $x,\,y,\,y'\in M$ such that $\rho(y,y')\le \delta$,
\begin{eqnarray*}
&&|L^m\phi(\delta^{\bz_0/2} \sqrt L)(x, y)-L^m\phi(\delta^{\bz_0/2} \sqrt L)(x, y')|\\
&&\qquad\le C^{\flat}_{\gz,\ez,m}
 \delta^{-\bz_0m}
\lf[\frac{\rho(y,y')}{\delta}\r]^{\alpha_0} \frac{1}
{\sqrt{|B(x, \delta)|\, |B(y,\delta)|} }\,\exp\lf\{-\gz\left[\frac{\rho(x,y)}{\delta}\right]^{\frac1{10(1+\ez)}}\r\}.
\end{eqnarray*}
\end{enumerate}
\end{prop}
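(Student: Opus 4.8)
The plan is to obtain the exponential (more precisely, stretched-exponential) decay by interpolating between the polynomial decay of arbitrarily high order supplied by Proposition \ref{prop2.10x} and a careful accounting of how the implied constants in Lemma \ref{lem2.8x}(i) blow up with the order $k$. First I would fix $m\in\nn$ and decompose $\phi$ via the dyadic resolution from Lemma \ref{lem2.7x}: write $\phi=\sum_{j\ge 0}\psi_j$ where $\psi_0$ is supported near the origin and $\psi_j(\lz)=\psi(2^{-j}\lz)$ for $j\ge 1$ with $\supp\psi\subset[1/2,2]$, so that $L^m\phi(\dz^{\bz_0/2}\sqrt L)=\sum_{j\ge0}L^m\psi_j(\dz^{\bz_0/2}\sqrt L)$. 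The point of this splitting is that $\psi_j$ is a smooth bump at scale $2^j$, so $L^m\psi_j(\dz^{\bz_0/2}\sqrt L)=(\dz^{-\bz_0}2^{2j})^m(\dz^{\bz_0/2}2^j\cdot)^{2m}\psi(\dz^{\bz_0/2}2^j\cdot)\,(\dots)$ can be rewritten in terms of the rescaled parameter $\dz_j:=\dz\,2^{-2j/\bz_0}$, reducing each piece to the compactly-supported functional calculus of Lemma \ref{lem2.8x} applied at scale $\dz_j$ with $R\sim 2^{j}$-type support.

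The key quantitative step is to track the dependence on $k$. For a fixed smoothness order $k>2d$, Lemma \ref{lem2.8x}(i) gives a bound of the form $c_k\,D_{\dz_j,k}(x,y)$ where $c_k\lesssim (R^{2/\bz_0})^{2k+d+4}(c_2k^{1+\ez})^{2k+2}\big[\|\phi\|_\infty+\|\phi^{(2k+4)}\|_\infty+\max_{j}|\phi^{(2j)}(0)|\big]$; here the crucial input from Lemma \ref{lem2.7x} is the derivative bound $\|\phi^{(k)}\|_\infty\le 8(16\ez^{-1}k^{1+\ez})^k$, which feeds in a factor like $(Ck^{1+\ez})^k$. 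Combining the factor $(1+\dz^{-1}\rho(x,y))^{-k}$ hidden in $D_{\dz,k}$ with the growth $(C k^{1+\ez})^{Ck}$ of $c_k$, one optimizes over $k\sim\big(\rho(x,y)/\dz\big)^{\theta}$ for the appropriate exponent $\theta$; since $(Ck^{1+\ez})^{Ck}\le \exp(Ck^{1+\ez}\log k)$ and we are dividing by $\exp(k\log(1+\dz^{-1}\rho(x,y)))$, the balance point produces decay of order $\exp\big(-\gz(\rho(x,y)/\dz)^{1/(10(1+\ez))}\big)$ — the precise exponent $\frac{1}{10(1+\ez)}$ coming from solving, roughly, $k^{1+\ez}\log k\sim k\log(\rho/\dz)$, i.e. $\log k\sim (\log(\rho/\dz))^{1/(1+\ez)}$ modulo the logarithmic loss absorbed into the constant $10$. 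Summation over $j$ of the pieces $L^m\psi_j$ is harmless because the rescaling $\dz_j=\dz 2^{-2j/\bz_0}$ only improves the Gaussian/stretched-exponential factor and the prefactors $2^{2jm}$ are beaten by the super-exponential-in-$j$ decay extracted from the same mechanism applied with $\rho(x,y)$ replaced by its comparison with $\dz_j$; one also uses $D_{\dz_j,k}(x,y)\le C(2^{2j/\bz_0})^{d}D_{\dz,k}(x,y)$ from \eqref{doubling2} to reduce everything back to scale $\dz$. Part (ii), the Hölder estimate, follows by the identical argument starting from Lemma \ref{lem2.8x}(ii) in place of (i), which already carries the extra factor $[\rho(y,y')/\dz]^{\az_0}$; and part (iii) is immediate from Lemma \ref{lem2.8x}(iii) (or Lemma \ref{lem2.4x}(iii)) and hence is inherited by $\phi$ since $\phi(0)$ is fixed.

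The main obstacle is the bookkeeping of constants: one must verify that the double-exponential growth $(Ck^{1+\ez})^{Ck}$ of the kernel bound (originating jointly from the derivative estimate for $\phi$ in Lemma \ref{lem2.7x} and from the factorials $c_k$ in Lemma \ref{lem2.8x}) is exactly compensated, after optimizing in $k$, by the polynomial factor $[1+\dz^{-1}\rho(x,y)]^{-k}$, and that the resulting exponent is independent of $\dz\in(0,1]$, of $x,y$, and of $m$ up to the constant $\gz(m,\ez)$. The delicate point — and the reason the authors avoid the finite-speed-of-propagation argument available when $\bz_0=2$ — is that the exponent one gets this way, $\frac{1}{10(1+\ez)}$, is strictly worse than the true Gaussian exponent $\frac{\bz_0}{\bz_0-1}$ one would expect from \textbf{(UE)}, but it suffices for all later applications (notably the discrete Calderón reproducing formula). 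Once the optimization in $k$ is set up correctly, everything else is routine dilation-invariance and the elementary estimates of Lemma \ref{lem2.1x}.
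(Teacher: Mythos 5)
Your overall strategy is the same as the paper's: apply Lemma \ref{lem2.8x} at an arbitrary order $k$ to the function $\lz^{2m}\phi(\lz)$, use the sub-factorial derivative bounds of Lemma \ref{lem2.7x} to control the constant $c_k$ by something like $(ck)^{Ck(1+\ez)}$, and then choose $k$ in terms of $\rho(x,y)/\dz$ to upgrade the polynomial decay $[1+\dz^{-1}\rho(x,y)]^{-k}$ to the stretched-exponential bound. Two remarks on the execution, the second of which is a genuine flaw. First, the dyadic decomposition $\phi=\sum_j\psi_j$ with rescaled parameters $\dz_j=\dz 2^{-2j/\bz_0}$ and ``$R\sim 2^j$'' is unnecessary and suggests you are treating $\phi$ as if it had unbounded support: by Lemma \ref{lem2.7x}, $\supp\phi\subset[0,2]$, so one applies Lemma \ref{lem2.8x} directly (with $R=2$) to $\psi(\lz):=\lz^{2m}\phi(\lz)$, which also disposes of the factor $\dz^{-\bz_0 m}$ via $\dz^{\bz_0 m}L^m\phi(\dz^{\bz_0/2}\sqrt L)=\psi(\dz^{\bz_0/2}\sqrt L)$; all the bookkeeping about summing over $j$ and beating $2^{2jm}$ prefactors is spurious.

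Second, and more seriously, the optimization step as you wrote it does not produce the claimed exponent. Replacing $(Ck^{1+\ez})^{Ck}$ by $\exp(Ck^{1+\ez}\log k)$ is far too lossy: with that bound, your balance ``$k^{1+\ez}\log k\sim k\log(\rho/\dz)$'' forces $k$ of size about $(\log(\rho/\dz))^{1/\ez}$ (and your ``i.e.\ $\log k\sim(\log(\rho/\dz))^{1/(1+\ez)}$'' does not follow from that equation either), so the resulting decay would only be of quasi-polynomial type $\exp\{-c[\log(\rho(x,y)/\dz)]^{1/\ez}\}$, much weaker than $\exp\{-\gz[\rho(x,y)/\dz]^{1/(10(1+\ez))}\}$. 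The correct bookkeeping keeps the constant in the form $(ck)^{(4k+6)(1+\ez)}\le (ck)^{10k(1+\ez)}$ and demands $(ck)^{10(1+\ez)}\le e^{-1}[1+\dz^{-1}\rho(x,y)]$, so that the bound becomes $e^{-k}$ times $[\,|B(x,\dz)|\,|B(y,\dz)|\,]^{-1/2}$; taking the largest admissible $k$, namely $k\sim[\rho(x,y)/(c_\ast\dz)]^{1/(10(1+\ez))}$, yields exactly the stated exponent. This is precisely how the paper argues. You should also treat the near-diagonal regime $\rho(x,y)\ls\dz$ separately (the paper invokes Corollary \ref{cor2.9x}; a fixed-order application of Proposition \ref{prop2.10x} also suffices), since the optimization only makes sense when $\rho(x,y)/\dz$ is large; and note that the proposition has no part (iii) — the normalization identity you mention belongs to Lemma \ref{lem2.8x} and Proposition \ref{prop2.10x}.
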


\begin{proof}
For $\phi$ as in Lemma \ref{lem2.7x},
consider the function $\psi(\lz):=\lz^{2m}\phi(\lz)$ for all $\lz\in[0,\fz)$.
Since $\phi$ satisfies the conditions of Lemma \ref{lem2.7x},
it is easy to show that $\|\psi\|_{L^\infty(\rr)}\le 2^{2m}$
and $\|\psi^{(k)}\|_{L^\infty(\rr)}\le C_m\, (ck)^{k(1+\ez)}$
uniformly in $k\in\nn$ for some
positive constants $C_m$ and $c$ .
A further calculation leads to that, for the kernel $\psi(\dz^{\bz_0/2}\sqrt L)(x,y)$,
the corresponding positive constant $c_k$ in Lemma \ref{lem2.8x}(i) is controlled by $C(ck)^{(4k+6)(1+\ez)}$, where $C$ and $c$ are positive constants depending on $\ez$ and $m$, but independent of $\dz$ and $x,\,y\in M$. Thus, by Lemma \ref{lem2.8x}(i), we know
that, for all $k\in\nn$ and $x,\,y\in M$,
\begin{eqnarray*}
|\psi(\dz^{\bz_0/2}\sqrt L)(x,y)|\le C \frac{(ck)^{(4k+6)(1+\ez)}}
{\sqrt{|B(x, \delta)|\, |B(y,\delta)|} \,[1+\dz^{-1}\rho(x,y)]^k}.
\end{eqnarray*}
Observe that $(4k+6)(1+\ez)\le 10 k(1+\ez)$ for all $k\in\nn$.
Thus, when $\dz^{-1}\rho(x,y)\ge e(ck)^{10(1+\ez)}=:c_\ast k^{10(1+\ez)}$, we have
\begin{eqnarray*}
|\psi(\dz^{\bz_0/2}\sqrt L)(x,y)|\le C \frac{e^{-k}}
{\sqrt{|B(x, \delta)|\, |B(y,\delta)|} }.
\end{eqnarray*}
When $\dz^{-1}\rho(x,y)\ge 2^{20}c_\ast$,
there exists  $k\in\nn$ such that
$k-1\le [\frac{\rho(x,y)}{c_\ast\,\dz}]^{\frac1{10(1+\ez)}}<k$,
which further implies that
\begin{eqnarray*}
|\psi(\dz^{\bz_0/2}\sqrt L)(x,y)|\le C \frac{\exp\lf(-[\frac{\rho(x,y)}{c_\ast\,\dz}]^{\frac1{10(1+\ez)}} \r)}
{\sqrt{|B(x, \delta)|\, |B(y,\delta)|} }.
\end{eqnarray*}
When $\dz^{-1}\rho(x,y)< 2^{20}c_\ast$, we use Corollary \ref{cor2.9x} to conclude that
\begin{eqnarray*}
|\psi(\dz^{\bz_0/2}\sqrt L)(x,y)|\le \frac{C_\flat \|\psi\|_{L^\infty(\rr)}}
{\sqrt{|B(x, \delta)|\, |B(y,\delta)|} }
\le C \frac{\exp\lf(-[\frac{\rho(x,y)}{c_\ast\,\dz}]^{\frac1{10(1+\ez)}} \r)}
{\sqrt{|B(x, \delta)|\, |B(y,\delta)|} }.
\end{eqnarray*}
Combining the last two formulae and noticing that
$\dz^{\bz_0m}L^m \phi(\delta^{\bz_0/2} \sqrt L)=\psi(\dz^{\bz_0/2}\sqrt L)$,
we obtain  the desired estimate in (i).

The H\"older continuity in (ii) follows in a similar manner, but using the constant $c_k'$ associated to $\psi(\dz^{\bz_0/2}\sqrt L)(x,y)$ in  Lemma \ref{lem2.8x}(ii).
This finishes the proof of Proposition \ref{prop2.12x}.
\end{proof}

When $\mu(M)<\infty$, we use  $\cd(M)$
to denote the \emph{test function space}
which consists of all functions $\phi\in \cap_{m\in\zz_+} {\rm Dom}(L^m)$
with topology induced by
$\cp_m(\phi):=\|L^m\phi\|_{L^2(M)}$ for all $m\in\zz_+.$
Let $x_0$ be some fixed point in $M$. When $\mu(M)=\infty$, the \emph{test function space} $\cd(M)$ consists
of all functions $\phi\in \cap_m {\rm Dom}(L^m)$
such that, for all $m,\,\ell\in\zz_+$,
$$\cp_{m,\ell}(\phi):= \sup_{x\in M} [1+\rho(x, x_0)]^\ell|L^m\phi(x)|<\infty.$$
No matter $\mu(M)$ is finite or not,
the distribution space $\cd'(M)$ is defined to be the set of all continuous linear functionals on $\cd(M)$. We use $\laz f,\, \phi\raz$ to denote
the pairing between $f\in\cd'(M)$ and $\phi\in\cd(M)$. If both $f$ and $\phi$ are in $L^2(M)$,
 then $\laz f,\, \phi\raz$ is understood in the following way:
$$\laz f, \phi\raz= \int_M f(x)\phi(x)\,d\mu(x).$$
 Due to Proposition \ref{prop2.10x}, the proofs of  \cite[Propositions~5.2(a) and 5.4(a)]{KP}
 imply the following conclusions, the details being omitted.

\begin{prop}\label{prop2.13xx}
\begin{enumerate}
\item[\rm (i)] For any even function $\phi\in \cs(\rr)$,
the kernel $\phi(\sqrt L)(x,y)$
of the operator $\phi(\sqrt L)$
belongs to $\cd(M)$ as a function of $x$ or $y$.

\item[\rm (ii)] If $\mu(M)<\infty$, then
$f\in\cd'(M)$ if and only if
there exist $m_0\in\zz_+$ and $C_{m_0}\in(0,\fz)$
such that
\begin{equation*}
|\laz f,\,\phi\raz|
\le C_{m_0}\, \max_{0\le m\le m_0} \cp_m(\phi)
\qquad \textup{for all}\; \phi\in \cd(M).
\end{equation*}

\item[\rm (iii)] If $\mu(M)=\infty$, then $f\in\cd'(M)$ if and only if
there exist $\ell_0,\, m_0\in\zz_+$  and $C_{m_0,\ell_0}\in(0,\fz)$
such that
\begin{equation*}
|\laz f,\,\phi\raz|
\le C_{m_0,\ell_0}\, \max_{0\le m\le m_0,\, 0\le \ell\le \ell_0} \cp_{m,\ell}(\phi) \qquad\textup{for all}\; \phi\in \cd(M).
\end{equation*}
\end{enumerate}
\end{prop}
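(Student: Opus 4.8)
The plan is to deduce Proposition \ref{prop2.13xx} directly from Proposition \ref{prop2.10x}, mimicking the arguments of \cite[Propositions~5.2(a) and 5.4(a)]{KP}, and splitting the work into the three itemized claims.

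For (i), fix an even $\phi\in\cs(\rr)$ and write $\phi(\lz)=\lz^{2N}\psi_N(\lz)$ for each $N\in\zz_+$, where $\psi_N(\lz):=\lz^{-2N}\phi(\lz)$; since $\phi$ is even and Schwartz with $\phi^{(2\nu+1)}(0)=0$, each $\psi_N$ is again even, smooth, and rapidly decreasing, so it satisfies \eqref{eq:2.11x}. Thus $L^m\phi(\sqrt L)=L^{m+N}\psi_N(\sqrt L)$, and Proposition \ref{prop2.10x}(i) (applied with $\dz=1$, $m$ replaced by $m+N$, and any $\sz>d$, say $\sz=\ell+d+1$) gives the pointwise bound $|L^m\phi(\sqrt L)(x,y)|\le C\,D_{1,\ell+d+1}(x,y)$, which in particular yields, for each fixed $y$, decay of the form $[1+\rho(x,x_0)]^{\ell}|L^m\phi(\sqrt L)(x,y)|<\infty$ after absorbing $\rho(x_0,y)$ into the constant. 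Since $N$ is arbitrary and $m$ is arbitrary, this shows $\phi(\sqrt L)(\cdot,y)\in\cap_m{\rm Dom}(L^m)$ with all the seminorms $\cp_{m,\ell}$ finite when $\mu(M)=\infty$, and with $\cp_m(\phi(\sqrt L)(\cdot,y))=\|L^m\phi(\sqrt L)(\cdot,y)\|_{L^2(M)}<\infty$ when $\mu(M)<\infty$ (here using Lemma \ref{lem2.1x}(i) to integrate $D_{1,\sz}(\cdot,y)^2$ for $\sz$ large). One must also check that $\phi(\sqrt L)(\cdot,y)$ genuinely lies in the domains, i.e.\ that $L^m$ applied to this kernel equals $(L^m\phi(\sqrt L))(\cdot,y)$; this is where one invokes that $L^m\phi(\sqrt L)$ is itself an integral operator with the kernel named in Proposition \ref{prop2.10x}, together with self-adjointness and the density of $\cd(M)$. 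Symmetry of the heat kernel gives the same conclusion in the $x$-variable.

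For (ii) and (iii), the ``if'' directions are immediate: a functional satisfying the displayed bound is continuous on $\cd(M)$ by definition of the topology generated by the seminorms $\cp_m$ (resp.\ $\cp_{m,\ell}$), hence lies in $\cd'(M)$. The substance is the ``only if'' direction, i.e.\ that continuity of a linear functional on the locally convex space $\cd(M)$ is equivalent to an estimate by \emph{finitely many} of the defining seminorms. When $\mu(M)<\infty$ the seminorm family $\{\cp_m\}_{m\in\zz_+}$ is countable and increasing ($\cp_m(\phi)\le\cp_{m'}(\phi)$ whenever $m\le m'$, since $\|L^m\phi\|_{L^2}=\|L^m\phi\|_{L^2}$ and one uses $\|L^m\phi\|_{L^2}^2=\laz L^{2m}\phi,\phi\raz\le\|L^{2m}\phi\|_{L^2}\|\phi\|_{L^2}$ to compare, or more directly monotonicity via the spectral calculus), so $\cd(M)$ is metrizable and a standard Fréchet-space argument shows any continuous functional is bounded by a single $C\,\cp_{m_0}$. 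When $\mu(M)=\infty$ the same reasoning applies to the countable doubly-indexed family $\{\cp_{m,\ell}\}$, after noting it can be replaced by the increasing family $\tilde\cp_n:=\max_{m,\ell\le n}\cp_{m,\ell}$; continuity then forces a bound $|\laz f,\phi\raz|\le C\tilde\cp_{n_0}(\phi)$, which is exactly the displayed inequality with $m_0=\ell_0=n_0$.

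The main obstacle I anticipate is not the functional-analytic abstract nonsense in (ii)--(iii), which is routine once one observes the seminorm families are countable and (can be taken) increasing, but rather the careful bookkeeping in (i): one must verify that the kernel $\phi(\sqrt L)(\cdot,y)$ actually belongs to every ${\rm Dom}(L^m)$ with $L^m$ acting as expected, and that the Gaussian/polynomial bounds of Proposition \ref{prop2.10x} translate into finiteness of the \emph{correct} seminorms --- in particular, for $\mu(M)<\infty$ one needs the $L^2$-integrability of $D_{1,\sz}(\cdot,y)$, which requires choosing $\sz$ large (at least $\sz>d/2$, and one has the freedom to take $\sz$ as large as needed in Proposition \ref{prop2.10x}), and for $\mu(M)=\infty$ one needs to absorb the basepoint shift from $\rho(x,y)$ to $\rho(x,x_0)$ uniformly. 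Since all of this is already carried out in \cite{KP} for the case $\bz_0=2$ and Proposition \ref{prop2.10x} provides the exact analogue of the kernel estimates used there, the proof amounts to checking that nothing in that argument used $\bz_0=2$, and we therefore omit the details.
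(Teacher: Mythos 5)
Your overall route is the same as the paper's: the paper offers no argument here beyond observing that Proposition \ref{prop2.10x} supplies the kernel estimates needed to transplant the proofs of \cite[Propositions~5.2(a) and 5.4(a)]{KP}, and your treatment of (ii) and (iii) is correct and is indeed the routine part — continuity of a linear functional on a locally convex space whose topology is generated by the seminorms $\cp_m$ (resp. $\cp_{m,\ell}$) is equivalent to domination by a constant times the maximum of finitely many of them (metrizability is not even needed for this; the basic-neighborhood characterization of continuity at $0$ already gives it).

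In (i), however, there is a concrete false step: for a general even $\phi\in\cs(\rr)$ the function $\psi_N(\lz):=\lz^{-2N}\phi(\lz)$ is \emph{not} smooth and rapidly decreasing. Evenness only forces the odd-order derivatives of $\phi$ to vanish at the origin, so unless $\phi$ vanishes there to order $2N$ — already false for $\phi(\lz)=e^{-\lz^2}$ — the function $\psi_N$ blows up like $\lz^{-2N}$ as $\lz\to0$ and does not satisfy \eqref{eq:2.11x}. Fortunately the factorization is superfluous: by Remark \ref{rem2.11x}(i) the even Schwartz function $\phi$ itself satisfies \eqref{eq:2.11x}, and Proposition \ref{prop2.10x}(i) is stated for $L^mf(\dz^{\bz_0/2}\sqrt L)$ with \emph{arbitrary} $m\in\zz_+$; applying it directly with $f=\phi$, $\dz=1$ and $\sz$ as large as needed gives $|L^m\phi(\sqrt L)(x,y)|\le C_{\sz,m}\,D_{1,\sz}(x,y)$, which is all you extracted from the detour. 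With that repair the remainder of your sketch — absorbing $\rho(y,x_0)$ into the constant, using the non-collapsing condition to bound $[|B(x,1)|\,|B(y,1)|]^{-1/2}$, invoking Lemma \ref{lem2.1x}(i) for the $L^2$ bound when $\mu(M)<\fz$, and deferring, exactly as the paper does, to the argument of \cite{KP} the verification that $L^m$ acting on the kernel in one variable coincides with the kernel of $L^m\phi(\sqrt L)$ — matches the paper's intended (omitted) proof.
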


Let $\delta\in(0,1)$. Assume that there exists a positive constant $c$ such that the pair of functions $(\Phi_0, \Phi)$ in
$C^\infty(\rr_+)$ satisfies
\begin{equation}\label{eq:2.14xx}
\supp \Phi_0\subset [0, \delta^{-\bz_0/2}],\quad
\Phi_0^{(2\nu+1)}(0)=0\ \textup{for all }\ \nu\in\zz_+,\, \quad
|\Phi_0(\lz)|\ge c\ \textup{for}\ \lz\in[0, \delta^{-3\bz_0/8}]
\end{equation}
and
\begin{equation}\label{eq:2.15xx}
\supp \Phi\subset [\delta^{\bz_0/2}, \delta^{-\bz_0/2}],\quad \, \quad
|\Phi(\lz)|\ge c\ \,\,\textup{for}\,\,\lz\in[\delta^{3\bz_0/8}, \delta^{-3\bz_0/8}].
\end{equation}
For all $j\in\nn$ and $\lz\in\rr_+$, let
\begin{equation}\label{eq:2.16xx}
\Phi_j(\lz):=\Phi(\delta^{j\bz_0/2}\lz).
\end{equation}
According to \cite[Lemma~6.10]{FJW} (see also \cite[p.\,1487,\, (3.20)]{BH}),
there exist functions  $(\wz\Phi_0, \wz\Phi)\in C^\infty(\rr_+)$
satisfying \eqref{eq:2.14xx} and \eqref{eq:2.15xx}
such that
$$\sum_{j=0}^\infty \wz\Phi_j(\lz)\Phi_j(\lz)=1
\qquad \textup{for all}\,\,\lz\in\rr_+,$$
where $\wz\Phi_j$ for $j\ge1$ is defined as in \eqref{eq:2.16xx}.
Based on the proof of \cite[Lemma~6.10]{FJW},
if $\Phi_0$ and $\Phi$ are required to be nonnegative, then $\wz{\Phi}_0$
and $\wz{\Phi}$ are also nonnegative. By Proposition \ref{prop2.10x},
every $\Phi_j(\sqrt L)$ is an integral
operator  with kernel
$\Phi_j(\sqrt L)(x,\cdot)\in\cd(M)$ for any given $x\in M$,
so that it makes sense to consider $\Phi_j(\sqrt L)f (x):=
\laz f,\, \Phi_j(\sqrt{L})(x,\cdot)\raz$ for all $f\in\cd'(M)$ and $x\in M$.
Consequently, by \cite[Proposition~5.5(b)]{KP}, for all $f\in\cd'(M)$
(or $f\in L^p(M)$ with $p\in[1,\infty)$, or $f\in\cd(M)$),  we have
\begin{eqnarray}\label{c-crf}
f=\sum_{j=0}^\infty \wz\Phi_j(\sqrt L)\Phi_j(\sqrt L) f
=\sum_{j=0}^\infty \Phi_j(\sqrt L)\wz\Phi_j(\sqrt L) f
\quad \textup{in}\ \cd'(M) \quad(\textup{or}\ \ L^p(M)\ \textup{or}\  \cd(M)).
\end{eqnarray}
This is usually called the \emph{continuous Calder\'on reproducing formula}.
As was seen from the articles \cite{BH, FJ90, FJW, YSY} (see also \cite{s012,s013}), the Calder\'on reproducing formula
and the following almost orthogonal estimates
serve as  powerful tools in the study for function spaces.

\begin{prop}\label{prop2.14x}
Let $\dz\in(0,1)$.
Assume that  $(\Phi_0,\Phi)$ and $(\Psi_0,\Psi)$
satisfy \eqref{eq:2.14xx} and \eqref{eq:2.15xx}, respectively. For any $j\in\nn$,
define
$\Phi_j$
and $\Psi_j$ as in \eqref{eq:2.16xx}.
Then, for any  $m\in\zz_+$ and $\sz>d$,
there exists a constant $C_{\sz,m}\in(1,\infty)$
such that, for all $j,\,k\in\nn$  and  $x,\,y\in M$,
\begin{eqnarray}\label{eq:2.18x}
\left|(\Phi_j( \sqrt L)
\Psi_k(\sqrt L))(x, y)\right|
\le C_{\sigma, m} \delta^{|k-j|(m\bz_0-d)} D_{\delta^{k\wedge j}, \sigma}(x,y).
\end{eqnarray}
\end{prop}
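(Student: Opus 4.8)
The plan is to prove the almost orthogonality estimate \eqref{eq:2.18x} by combining the kernel estimates for $\Phi_j(\sqrt L)$ and $\Psi_k(\sqrt L)$ from Proposition \ref{prop2.10x} with the composition lemma, Lemma \ref{lem2.1x}(ii), and then gaining the decay factor $\dz^{|k-j|(m\bz_0-d)}$ by inserting a power of $L$ on one side and a negative power on the other. By symmetry of the statement in $(j,\Phi)$ versus $(k,\Psi)$ we may assume $k\ge j$, so that $\dz^{k\wedge j}=\dz^j$ and we must produce the factor $\dz^{(k-j)(m\bz_0-d)}$.

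First I would dispose of the case $m=0$, which is immediate: by Proposition \ref{prop2.10x}(i) the kernels of $\Phi_j(\sqrt L)$ and $\Psi_k(\sqrt L)$ are bounded in absolute value by $C_\sz\,D_{\dz^j,\sz}(x,\cdot)$ and $C_\sz\,D_{\dz^k,\sz}(\cdot,y)$ respectively (recall $\Phi_j(\lz)=\Phi(\dz^{j\bz_0/2}\lz)$ corresponds to the scale $\dz^j$ in Proposition \ref{prop2.10x} after relabelling), and then Lemma \ref{lem2.1x}(ii) gives
\[
\lf|(\Phi_j(\sqrt L)\Psi_k(\sqrt L))(x,y)\r|
\le C_\sz \int_M D_{\dz^j,\sz}(x,z)D_{\dz^k,\sz}(z,y)\,d\mu(z)
\le C_\sz\, (\dz^{j-k})^d\, D_{\dz^j,\sz}(x,y),
\]
using $\dz^j\vee\dz^k=\dz^j$ and $\max\{(\dz^k/\dz^j)^d,(\dz^j/\dz^k)^d\}=\dz^{(j-k)d}$. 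This is exactly \eqref{eq:2.18x} with $m=0$, after noting $\dz^{|k-j|(0\cdot\bz_0-d)}=\dz^{-|k-j|d}=\dz^{(j-k)d}$.

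For $m\ge1$ the key algebraic observation is that, since $k\ge j$, the functions $\Phi_j$ and $\Psi_k$ have Fourier supports at scales $\dz^j$ and $\dz^k$, and one can write, formally,
\[
\Phi_j(\sqrt L)\Psi_k(\sqrt L)
=\lf[\dz^{-j\bz_0 m}L^{-m}\Phi_j(\sqrt L)\r]\lf[\dz^{j\bz_0 m}L^m\Psi_k(\sqrt L)\r].
\]
The first bracket is $\dz^{-j\bz_0m}L^{-m}\Phi_j(\sqrt L)$, which we handle by applying Proposition \ref{prop2.10x} to the function $\lz\mapsto \lz^{-2m}\Phi(\lz)$ (legitimate because $\supp\Phi\subset[\dz^{\bz_0/2},\dz^{-\bz_0/2}]$ stays away from $0$, so this function is again smooth, satisfies \eqref{eq:2.11x}, and has the same support): its kernel is bounded by $C_{\sz,m}\,D_{\dz^j,\sz}(x,\cdot)$ up to the scaling factor $\dz^{j\bz_0 m}$ that Proposition \ref{prop2.10x}(i) produces, and these cancel. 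The second bracket is $\dz^{j\bz_0m}L^m\Psi_k(\sqrt L)=\dz^{(j-k)\bz_0 m}\cdot\dz^{k\bz_0m}L^m\Psi_k(\sqrt L)$, and Proposition \ref{prop2.10x}(i) applied to $\Psi$ at scale $\dz^k$ bounds $\dz^{k\bz_0m}L^m\Psi_k(\sqrt L)$ in absolute value by $C_{\sz,m}\,D_{\dz^k,\sz}(\cdot,y)$. Thus we extract a clean prefactor $\dz^{(j-k)\bz_0 m}$, and then Lemma \ref{lem2.1x}(ii) again convolves $D_{\dz^j,\sz}\ast D_{\dz^k,\sz}\le C\,\dz^{(j-k)d}D_{\dz^j,\sz}$. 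Multiplying, $\dz^{(j-k)\bz_0m}\cdot\dz^{(j-k)d}=\dz^{(k-j)(m\bz_0-d)}\cdot\dz^{(j-k)\cdot 2d}$... I should be careful here: the correct bookkeeping splits $L^m$ only on the $\Psi_k$ side, pairing it with $L^{-m}$ on the $\Phi_j$ side at the \emph{same} scale $\dz^j$, which gives the factor $(\dz^j/\dz^k)^{\bz_0 m}=\dz^{(k-j)(-\bz_0m)}$... no, $\dz<1$ so $(\dz^j)^{\bz_0m}(\dz^{-k\bz_0m})=\dz^{(j-k)\bz_0m}$ which for $j\le k$ is $\ge1$; the gain must come out as $\dz^{(k-j)(m\bz_0-d)}$, so one writes $\dz^{j\bz_0m}L^m\Psi_k=\dz^{(j-k)\bz_0m}[\dz^{k\bz_0m}L^m\Psi_k]$ and notes $\dz^{(j-k)\bz_0m}\cdot\dz^{(j-k)d}$ is not yet right either — the resolution is that one should instead write the identity as $\Phi_j\Psi_k = [L^{-m}\Phi_j]\cdot[L^m\Psi_k]$ with $L^{-m}\Phi_j$ rescaled at $\dz^j$ and $L^m\Psi_k$ at $\dz^k$, producing $\dz^{j\bz_0m}\cdot\dz^{-k\bz_0m}=\dz^{(j-k)\bz_0m}$ wrong-signed, so one must instead put $L^m$ on the \emph{lower-index} operator.

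The hard part, and the thing to get exactly right, is which operator absorbs $L^m$ and which absorbs $L^{-m}$ so that the exponent comes out as $|k-j|(m\bz_0-d)$ with the correct sign; the principle is that $L^m$ should act on the operator localized at the \emph{finer} scale $\dz^{k}$ (larger frequency, so $L^m$ is ``cheap'': the kernel bound $\dz^{k\bz_0m}L^m\Psi_k\ls D_{\dz^k,\sz}$ costs $\dz^{-k\bz_0m}$), while $L^{-m}$ acts on the operator at the coarser scale $\dz^j$ (lower frequency, so $L^{-m}$ is also controlled: $\dz^{-j\bz_0m}L^{-m}\Phi_j\ls D_{\dz^j,\sz}$, i.e.\ $L^{-m}\Phi_j\ls\dz^{j\bz_0m}D_{\dz^j,\sz}$), and combining $\dz^{j\bz_0m}\cdot\dz^{-k\bz_0m}$ — which is still the wrong sign — the point I am missing is that one does \emph{not} split $\Phi_j\Psi_k$ into a product of two kernels but rather writes $\Phi_j(\sqrt L)\Psi_k(\sqrt L)=\Phi_j(\sqrt L)L^{-m}\cdot L^m\Psi_k(\sqrt L)$ and uses that $L^{-m}$ commutes through, with $\Phi_j(\sqrt L)L^{-m}$ having kernel $\ls\dz^{j\bz_0m}D_{\dz^j,\sz}$ and $L^m\Psi_k(\sqrt L)$ having kernel $\ls\dz^{-k\bz_0m}D_{\dz^k,\sz}$; then $\dz^{j\bz_0m}\dz^{-k\bz_0m}=\dz^{(j-k)\bz_0m}$ and since $j\le k$ this equals $\dz^{-(k-j)\bz_0m}$, the \emph{opposite} of what is wanted — so in fact $L^m$ must go on the coarse operator $\Phi_j$ and $L^{-m}$ on the fine $\Psi_k$: kernel of $L^m\Phi_j(\sqrt L)$ is $\ls\dz^{-j\bz_0m}D_{\dz^j,\sz}$ and kernel of $\Psi_k(\sqrt L)L^{-m}$ is $\ls\dz^{k\bz_0m}D_{\dz^k,\sz}$ (applying Proposition \ref{prop2.10x} to $\lz\mapsto\lz^{-2m}\Psi(\lz)$, which is smooth since $0\notin\supp\Psi$), giving $\dz^{-j\bz_0m}\dz^{k\bz_0m}=\dz^{(k-j)\bz_0m}$, and then Lemma \ref{lem2.1x}(ii) contributes the extra $\max\{\dz^{(j-k)d},\dz^{(k-j)d}\}=\dz^{-(k-j)d}$, for a total of $\dz^{(k-j)(\bz_0m-d)}=\dz^{|k-j|(m\bz_0-d)}$, precisely \eqref{eq:2.18x}. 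So the proof is: assume WLOG $k\ge j$; write $\Phi_j(\sqrt L)\Psi_k(\sqrt L)=[L^m\Phi_j(\sqrt L)][\Psi_k(\sqrt L)L^{-m}]$ (valid on $\cd(M)$ by the spectral calculus and the support conditions); bound the two kernels by $C_{\sz,m}\dz^{-j\bz_0m}D_{\dz^j,\sz}$ and $C_{\sz,m}\dz^{k\bz_0m}D_{\dz^k,\sz}$ via Proposition \ref{prop2.10x}; and conclude with Lemma \ref{lem2.1x}(ii). One should also remark that, since the total decay $\dz^{|k-j|(m\bz_0-d)}$ can be arbitrarily strong by taking $m$ large (as $\bz_0\ge2$), and the choice of $\sz>d$ is free, the constant $C_{\sz,m}$ depends only on $\sz,m$ and the structural parameters, as claimed.
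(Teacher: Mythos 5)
Your final argument is correct and is essentially the paper's own proof: after reducing to $k\ge j$ by symmetry, the paper writes $\Phi_j(\sqrt L)\Psi_k(\sqrt L)=\delta^{(k-j)m\bz_0}\,\phi_j(\sqrt L)\psi_k(\sqrt L)$ with $\phi_j(\sqrt\lz):=(\delta^{j\bz_0/2}\sqrt\lz)^{2m}\Phi(\delta^{j\bz_0/2}\sqrt\lz)$ and $\psi_k(\sqrt\lz):=(\delta^{k\bz_0/2}\sqrt\lz)^{-2m}\Psi(\delta^{k\bz_0/2}\sqrt\lz)$, which is exactly your placement of $L^m$ on the coarse-scale factor and $L^{-m}$ on the fine-scale factor (legitimate since $\Psi$ is supported away from the origin), followed by Proposition \ref{prop2.10x}(i) and Lemma \ref{lem2.1x}(ii). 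The only difference is presentational: the paper extracts the normalizing constants into the single prefactor $\delta^{(k-j)m\bz_0}$ up front, whereas you arrive at the same bookkeeping after some visible sign-chasing that should be cleaned out of a final write-up.
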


\begin{proof}
If $k=j$, then \eqref{eq:2.18x} follows from Proposition \ref{prop2.10x}(i) and
Lemma \ref{lem2.1x}(ii). By symmetry, it suffices to show \eqref{eq:2.18x} for  $k>j$.
When $j>0$, the functional calculus gives us that
\begin{eqnarray*}
\Phi_j( \sqrt L)\Psi_k( \sqrt L)
=(\Phi_j\Psi_k)( \sqrt L)= \delta^{(k-j)m\bz_0} \phi_j( \sqrt L)\psi_k( \sqrt L),
\end{eqnarray*}
where $\phi_j(\sqrt\lz):= (\delta^{j\bz_0/2} \sqrt \lz)^{2m}\Phi(\delta^{j\bz_0/2}  \sqrt \lz)$
and $\psi_k(\sqrt\lz):= (\delta^{k\bz_0/2} \sqrt \lz)^{-2m}\Psi(\delta^{k\bz_0/2}  \sqrt \lz)$.
Then, Proposition \ref{prop2.10x}(i) implies that
$|\phi_j(\sqrt L)(x,y)|\ls D_{\delta^{j}, \sigma} (x,y)$ and
$|\psi_k(\sqrt L)(x,y)|\ls D_{\delta^{k},\sigma} (x,y)$
uniformly in $j,\,k\in\zz_+$ and $x,\,y\in M$.
Applying $\sigma\in(d,\fz)$ and Lemma \ref{lem2.1x}(ii), we conclude that
\begin{eqnarray*}
\lf|(\phi_j( \sqrt L)\psi_k( \sqrt L))(x,y)\r|
&& \ls  \int_M D_{\delta^{j}, \sigma} (x,u)
D_{\delta^{k},\sigma}(u,y)\,d\mu(u)
\ls \delta^{(j-k)d}D_{\delta^{j}, \sigma} (x,y).
\end{eqnarray*}
Consequently, for all $x,\,y\in M$,
\begin{eqnarray*}
|\Phi_j( \sqrt L)\Psi_k( \sqrt L)(x,y)|
\ls \delta^{(k-j)(m\bz_0-d)} D_{\delta^j, \sigma} (x,y).
\end{eqnarray*}
A similar argument as above also implies the desired result for $j=0$, the details being omitted.
This finishes the proof of Proposition \ref{prop2.14x}.
\end{proof}

\begin{rem}\label{rem2.15x}
In the hypothesis of Proposition \ref{prop2.14x}, instead of assuming that $(\Phi_0,\Phi)$ satisfy \eqref{eq:2.14xx} and \eqref{eq:2.15xx}, we
may assume  that $\Phi_0$ and $\Phi$ are smooth functions
satisfy \eqref{eq:2.11x}, then
the proof of Proposition \ref{prop2.14x} implies that
\eqref{eq:2.18x} remains valid when $k\ge j$.
\end{rem}


\section{Besov-type and Triebel--Lizorkin-type spaces }\label{sec-3}

\hskip\parindent
We first recall
Christ's dyadic cubes (see \cite{chr}) on the space of homogeneous type. Such dyadic cubes retain
most of the properties of the dyadic cubes in the Euclidean space.

\begin{lem}\label{lem3.1x}
Let $(M, \rho, \mu)$ be a space of homogeneous type. Then there exist a
collection $\cq:=\{Q_\az^j\subset M:\, j\in\zz,\, \az\in I_j\}$ of
open subsets, where $I_j$ is some index set, and positive constants
$\dz\in(0, 1)$ and $C_\natural>c_\natural$ such that
\begin{enumerate}
\item[ \rm (i)] for each fixed $j\in\zz$, $\mu(M\setminus\bigcup_\az Q_\az^j)=0$  and
$Q_\az^j\cap Q_\bz^j=\emptyset$ if $\az\neq\bz$;
\item[ \rm (ii)] for any $\az$, $\bz$, $j$, $\ell$ with $\ell\ge j$, either
$Q_\bz^\ell\subset Q_\az^j$ or $Q_\bz^\ell\cap Q_\az^j=\emptyset$;
\item[ \rm (iii)] for each  $(j, \az)$ and each $\ell<j$, there is a unique
$\bz$ such that $Q_\az^j\subset Q_\bz^\ell$;
\item[ \rm (iv)] $\diam Q_\az^j:=\sup_{x,\,y\in Q_\az^j}\,d(x,y)\le C_\natural\,\dz^j$ and
each $Q_\az^j$ contains some ball $B(z_\az^j, c_\natural\dz^j)$, where
$z_\az^j\in M$.
\end{enumerate}
\end{lem}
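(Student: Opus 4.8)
The plan is to follow Christ's original construction from \cite{chr}; only the metric and the doubling condition \eqref{doubling1} (equivalently \eqref{doubling2}) are used. First I would fix a small constant $\dz\in(0,1)$ (how small is dictated below by $K$) and, for each $j\in\zz$, use Zorn's lemma to select a \emph{maximal} $\dz^j$-separated set $Y^j:=\{z_\az^j:\az\in I_j\}\subset M$. Maximality forces $\dz^j$-density, $\bigcup_\az B(z_\az^j,\dz^j)=M$, while separation gives that the balls $\{B(z_\az^j,\dz^j/2)\}_\az$ are pairwise disjoint. Since $\dz^{j+1}<\dz^j$, a $\dz^j$-separated set is $\dz^{j+1}$-separated, and a standard transfinite selection (greedy against a fixed well-order of $M$) lets one take the nets nested, $Y^j\subseteq Y^{j+1}$, after identifying indices.

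Next I would organize the centers into a forest: to each $z_\bz^{j+1}\in Y^{j+1}\setminus Y^j$ assign as \emph{parent} a fixed nearest point of $Y^j$, necessarily within distance $\dz^j$ by density, and declare a point of $Y^j$ its own parent at level $j$. Iterating gives, for $\ell\ge j$, ancestor maps $\az\mapsto A^j_\ell(\az)$ from $I_\ell$ to $I_j$; a geometric series shows any $\ell$-descendant of $z_\az^j$ lies within $\dz^j/(1-\dz)$ of $z_\az^j$. Fixing a well-ordering of each $I_j$ compatible with the forest, I would set the ``tree-tube'' $\widetilde Q_\az^j:={\rm int}\big(\bigcup\{\oz{B(z_\bz^\ell,\,c\dz^\ell)}:\ell\ge j,\ A^j_\ell(\bz)=\az\}\big)$ for a suitable small $c=c(\dz)$, and then obtain the cubes by disjointifying along the well-order, $Q_\az^j:=\widetilde Q_\az^j\setminus\oz{\bigcup_{\az'\prec\az}\widetilde Q_{\az'}^j}$. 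Because the tube of a child sits inside that of its parent and the disjointification respects the order across scales, properties (ii) and (iii) follow; property (iv) holds with $\diam Q_\az^j\le C_\natural\dz^j$, $C_\natural\sim 1/(1-\dz)$, and $B(z_\az^j,c_\natural\dz^j)\subseteq Q_\az^j$, $c_\natural\sim c$, provided $\dz$ is small enough that the tube of any other level-$j$ center cannot reach the inner ball $B(z_\az^j,c_\natural\dz^j)$.

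It then remains to prove (i). Pairwise disjointness at a fixed scale is built into the disjointification. Since the closed tubes cover $M$, the set $M\setminus\bigcup_\az Q_\az^j$ lies in the countable union $\bigcup_\az\partial Q_\az^j$, so it suffices to show $\mu(\partial Q_\az^j)=0$; this comes by letting $t\downarrow0$ in the \emph{small-boundary estimate}
\begin{equation*}
\mu\big(\{x\in Q_\az^j:\,\dist(x,\,M\setminus Q_\az^j)<t\dz^j\}\big)\le C\,t^{\eta}\,\mu(Q_\az^j),\qquad 0<t<1,
\end{equation*}
for some $\eta=\eta(\dz,K)\in(0,\fz)$. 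I would prove this by induction on scales: if $x$ is within $t\dz^j\le\dz^m\dz^j$ of $M\setminus Q_\az^j$, then by the nesting (ii)--(iii) the unique generation-$(j+m)$ cube containing $x$ lies in $Q_\az^j$ and $x$ is correspondingly close to its own boundary; passing one generation at a time and using that, by \eqref{doubling2}, the portion of a cube within a fixed metric fraction of its boundary occupies at most a fixed proportion $\tz<1$ of its mass — the complementary concentric inner ball carries a fixed share — one gains a factor $\tz$ at each of the $\sim m$ steps, giving $t^\eta$ with $\eta=\log(1/\tz)/\log(1/\dz)$. Summing over the countably many $\az$ yields $\mu(M\setminus\bigcup_\az Q_\az^j)=0$, and the constants $\dz\in(0,1)$, $c_\natural<C_\natural$ depend only on $K$.

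The main obstacle is the construction step itself: forcing genuine nesting of the cubes across \emph{all} scales while simultaneously keeping the two-sided ball containment (iv). This is exactly why $\dz$ must be taken small and the radii $c\dz^\ell$ chosen carefully, and it is also what makes the doubling property indispensable in the small-boundary estimate behind (i); the remaining pieces — maximal nets, the parent forest, and the geometric-series bounds — are routine.
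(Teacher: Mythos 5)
The paper does not prove this lemma: it is quoted verbatim from Christ \cite{chr}, and your proposal is a reconstruction of exactly that construction (maximal $\delta^j$-separated nets, a nearest-point parent forest, ``tubes'' of balls around descendants, disjointification along coherent well-orders, and the small-boundary estimate via doubling to get (i)). So in outline you are following the same route as the cited source, and the overall architecture is sound.

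Two points, however, are justified by the wrong mechanism and would need repair before this counts as a proof. First, a greedy selection of nets against a single fixed well-order of $M$ does \emph{not} automatically produce nested nets $Y^j\subseteq Y^{j+1}$: a point accepted at the coarse threshold can be rejected at the finer one because an intermediate point (rejected at the coarse scale) was accepted earlier at the fine scale. This is harmless — either extend a given $\delta^j$-separated set to a maximal $\delta^{j+1}$-separated set by Zorn, or drop nestedness altogether as Christ does and work only with the parent map — but the claim as stated is false. Second, and more seriously, the key assertion behind (iv) (and behind the cross-scale consistency of your disjointification, hence (ii)) cannot follow from ``$\delta$ small'' together with your geometric-series drift bound: descendants of a neighbouring level-$j$ centre $z_{\alpha'}^j$ are only controlled within distance $\delta^j/(1-\delta)$ of $z_{\alpha'}^j$, which \emph{exceeds} the separation $\delta^j$ for every $\delta\in(0,1)$, so on these bounds alone the tube of $\alpha'$ may overrun $z_\alpha^j$ and no inner ball survives. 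What saves the construction is the ``capture''/protected-ball argument that your nearest-point parent rule makes available: any level-$(j+1)$ net point within $\tfrac12\delta^j$ of $z_\alpha^j$ must have parent $\alpha$, whence (inducting down the scales and then using the drift bound only from level $j+1$ on) every descendant of any $\alpha'\neq\alpha$ stays at distance $\gtrsim\tfrac14\delta^j$ from $z_\alpha^j$ once $\delta$ is small; only then do small ball radii $c\delta^\ell$ keep the other tubes, and their closures, away from $B(z_\alpha^j,c_\natural\delta^j)$, and only then does removing $\overline{\bigcup_{\alpha'\prec\pi(\beta)}\widetilde Q_{\alpha'}^j}$ from the parent not delete points retained in a child. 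Relatedly, in (i) you should justify that the closed tubes cover $M$ (each $x$ is a limit of net points $z_{\beta_\ell}^\ell$ whose level-$j$ ancestors lie in a ball of radius $\sim\delta^j$ about $x$, so by doubling one ancestor recurs), and the inductive small-boundary estimate should be run so as not to presuppose (i) at the finer level — e.g.\ iterate over the closed children/tubes, gaining the factor $\theta<1$ at each generation from the deep inner ball via \eqref{doubling2}, exactly as in Christ's paper.
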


In what follows, we always use  $\delta$ to denote the constant same as in Lemma \ref{lem3.1x}.
Then any Christ dyadic cube $Q^k_\az$ with $k\in\zz$ and $\az\in I_k$
has diameter roughly $\delta^k$.

\begin{defn}\label{def3.2x}
Let $s\in \rr$, $\tau\in[0,\fz)$ and $q\in(0,\infty]$. Let
 $\dz\in(0,1)$ be as in Lemma \ref{lem3.1x}.
 Assume that  $(\Phi_0, \Phi)$ in
$C^\infty(\rr_+)$ satisfy \eqref{eq:2.14xx} and \eqref{eq:2.15xx}.
For $j\in\nn$, define $\Phi_j$ as in \eqref{eq:2.16xx}.
\begin{enumerate}
\item[ (i)] Let $p\in(0,\infty]$. The \emph{Besov-type space $B^{s,\tau}_{p, q}(M)$} is defined
to be the collection of all $f\in\cd'(M)$ such that
$$\|f\|_{B^{s,\tau}_{p, q}(M)}:=
\sup_{\gfz{k\in\zz}{\az\in I_k}} \frac1{|Q^k_\az|^\tau}
\bigg\{\sum_{j=k\vee0}^\fz\bigg[\int_{Q^k_\az}
|\delta^{-js}\Phi_j(\sqrt L) f(x)|^p\,d\mu(x)\bigg]^{q/p}\bigg\}^{1/q}<\infty$$
with a usual modification made when $p=\infty$ or $q=\infty$.
The \emph{Besov-type space $\wz B^{s,\tau}_{p, q}(M)$} is defined
to be the collection of all $f\in\cd'(M)$ such that
$$\|f\|_{\wz B^{s,\tau}_{p, q}(M)}:=
\sup_{\gfz{k\in\zz}{\az\in I_k}} \frac1{|Q^k_\az|^\tau}
\bigg\{\sum_{j=k\vee0}^\fz \bigg[\int_{Q^k_\az}
|B(x, \delta^{j})|^{-sp/d}|\Phi_j(\sqrt L) f(x)|^p\,d\mu(x)\bigg]^{q/p}\bigg\}^{1/q}
<\infty$$
with a usual modification made when $p=\infty$ or $q=\infty$.

\item[(ii)] Let $p\in(0,\infty)$. The \emph{Triebel--Lizorkin-type space $F^{s,\tau}_{p, q}(M)$} is defined
to be the collection of all $f\in\cd'(M)$ such that
$$\|f\|_{F^{s,\tau}_{p, q}(M)}:=
\sup_{\gfz{k\in\zz}{\az\in I_k}} \frac1{|Q^k_\az|^\tau}
\bigg\{\int_{Q^k_\az} \bigg[\sum_{j=k\vee0}^\fz
|\delta^{-js}\Phi_j(\sqrt L) f(x)|^q\bigg]^{p/q}\,d\mu(x)\bigg\}^{1/p}<\infty$$
with a usual modification made when  $q=\infty$.
The \emph{Triebel--Lizorkin-type space $\wz F^{s,\tau}_{p, q}(M)$} is defined
to be the collection of all $f\in\cd'(M)$ such that
$$\|f\|_{\wz F^{s,\tau}_{p, q}(M)}:=
\sup_{\gfz{k\in\zz}{\az\in I_k}} \frac1{|Q^k_\az|^\tau}
\bigg\{\int_{Q^k_\az} \bigg[\sum_{j=k\vee0}^\fz
|B(x, \delta^{j})|^{-sq/d}|\Phi_j(\sqrt L) f(x)|^q\bigg]^{p/q}\,d\mu(x)\bigg\}^{1/p}<\infty$$
with a usual modification made when  $q=\infty$.
\end{enumerate}
\end{defn}

\begin{rem}\label{rem3.3x}
(i) For $p=q\in(0,\infty)$, we have
$ B^{s,\tau}_{p, p}(M)= F^{s,\tau}_{p, p}(M)$ and
$\wz B^{s,\tau}_{p, p}(M)= \wz F^{s,\tau}_{p, p}(M).$

(ii) In general, even when $\tau=0$,
the spaces $B^{s,\tau}_{p, q}(M)$ and  $\wz B^{s,\tau}_{p, q}(M)$
 may not coincide with each other,
unless $(M,\rho,\mu)$ is an Ahlfors $d$-regular metric measure space (that is, $\mu(B(x,r))\sim r^d$ for all $x\in M$ and $r>0$).
Neither do $F^{s,\tau}_{p, q}(M)$ and $\wz F^{s,\tau}_{p, q}(M)$.
\end{rem}

\begin{rem}\label{rem3.4x}
When $\mu(M)=\infty$ and $\tau\in(-\infty,0)$, it is easy to see that
$$B^{s,\tau}_{p, q}(M)=F^{s,\tau}_{p, q}(M)=\wz B^{s,\tau}_{p, q}(M)=\wz F^{s,\tau}_{p, q}(M)=\{0\}.$$
But, when $\mu(M)<\infty$ and $\tau\in(-\infty,0)$,
it holds true that
$$B^{s,\tau}_{p, q}(M) = B^{s}_{p, q}(M),
\quad \wz B^{s,\tau}_{p, q}(M) = \wz B^{s}_{p, q}(M),
\quad F^{s,\tau}_{p, q}(M) = F^{s}_{p, q}(M),
\quad \wz F^{s,\tau}_{p, q}(M) = \wz F^{s}_{p, q}(M).$$
No matter $\mu(M)$ is finite or not, when $\tau=0$, it always holds true that
$$B^{s,0}_{p, q}(M) = B^{s}_{p, q}(M),
\quad \wz B^{s,0}_{p, q}(M) = \wz B^{s}_{p, q}(M),
\quad F^{s,0}_{p, q}(M) = F^{s}_{p, q}(M),
\quad \wz F^{s,0}_{p, q}(M) = \wz F^{s}_{p, q}(M).$$
When $\tau\in(1/p,\infty)$, in Proposition \ref{prop4.9x} below,
we  show that $$B_{p,q}^{s,\tau}(M)=F_{p,q}^{s,\tau}(M)=B_{\fz,\fz}^{s+d\tau-d/p}(M)
\quad \textup{and}\quad
\wz B_{p,q}^{s,\tau}(M)=\wz F_{p,q}^{s,\tau}(M)= \wz B_{\fz,\fz}^{s+d\tau-d/p}(M).$$
Thus, when $\tau\in(0,1/p]$,  Definition  \ref{def3.2x}
gives new scales of function spaces.
Especially, when $\tau=1/p$, it is proved in Theorem \ref{thm7.8x}
below that $F_{p,q}^{s,1/p}(M)$ and $\wz F_{p,q}^{s,1/p}(M)$
are respectively the endpoint cases of the Triebel--Lizorkin spaces
$F_{\infty,q}^s(M)$ and $\wz F_{\infty,q}^s(M)$.
\end{rem}


\section{Properties of Besov--Triebel--Lizorkin-type spaces }\label{sec-4}

\hskip\parindent
In this section, we first prove some estimates for the Peetre maximal function. As an application, we  establish some embedding properties of Besov-type and Triebel--Lizorkin-type spaces, and then we identify Besov-type and Triebel--Lizorkin-type spaces with the smooth index  in different ranges.


\subsection{Peetre maximal functions}\label{sec-4.1}

\hskip\parindent
An important tool to deal with Besov-type and Triebel--Lizorkin-type spaces
is the following \emph{Peetre maximal function} (see also \cite{lsuyy2}).

\begin{defn}\label{def4.1x}
Let $P_0, P$ be even functions in  $\mathcal S(\rr)$ such  that
$|P_0|\ge c$ on $[0,b^{-\bz_0/2}]$
and $|P|\ge c$ on $[b^{\bz_0/2}, b^{-\bz_0/2}]$, where $b\in (0,1)$ and $c$ is a positive constant.
For $\ell\in\nn$,  let
$$P_\ell(\lz):=P(b^{\ell\bz_0/2}\lz),\qquad \lz\in\rr_+.$$
Let $f\in \cd'(M)$. For any $\ell\in\zz_+$, $a\in(0,\fz)$ and $\gz\in \rr$, the \emph{Peetre maximal function} $[P_\ell(\sqrt{L})]^*_{a,\gz}f$ is defined by
$$[P_\ell(\sqrt{L})]^*_{a,\gz}f(x)
:=\sup_{y\in M} \frac{|B(y,b^{\ell})|^\gamma\, |P_\ell(\sqrt L)f(y)|}
{[1+b^{-\ell}\rho(x,y)]^{a}},\quad x\in M.$$
When $\gz=0$, we simply write $[P_\ell(\sqrt{L})]^*_{a,\gz}$
as $[P_\ell(\sqrt{L})]^*_{a}$.
\end{defn}

The assumptions on $P_0$ and $P$ imply that the kernel of $P_\ell(\sqrt L)$
satisfies \eqref{eq:2.12x} and \eqref{eq:2.13x}, so that $P_\ell(\sqrt L)f(y)$ is well defined for all $f\in\cd'(M)$ and $y\in M$.
Define the \emph{Hardy-Littlewood maximal operator $\cm $} as follows: for all $g\in L_\loc^1(M)$ and $x\in M$,
$$\cm g(x):=\sup_{B\ni x}\frac1{|B|}\int_B |g(y)|\,d\mu(y),$$
where the supremum is taken over all balls $B\subset M$ containing $x$.
If $r\in(0, \infty)$,  then we define
$$\cm_rg(x):= [\cm(|g|^r)(x)]^{1/r}, \qquad x\in M.$$
It is known that $\cm$ is bounded on $L^p(M)$ when $p\in(1,\infty]$;
see \cite{CW1,CW2}.
The following proposition shows that the Peetre maximal function
can be controlled by the Hardy-Littlewood maximal operator $\cm$.
Its proof relies on the Calder\'on
reproducing formula in \eqref{c-crf} and the off-diagonal
estimates in Proposition \ref{prop2.14x}.

\begin{prop}\label{prop4.2x}
Let $\nu,\,r\in(0,\infty)$ and $\gamma\in\rr$.
Fix $x\in Q^k_\az$ with $k\in\zz$ and $\az\in I_k$.
For any $f\in \cd'(M)$ and $\ell\in\zz_+$, the Peetre maximal function
$[P_\ell(\sqrt{L})]^*_{\nu+d/r,\gz}f(x)$ defined in Definition \ref{def4.1x}
satisfies the following estimate:
\begin{eqnarray}\label{eq:4.1x}
&&[P_\ell(\sqrt{L})]^*_{\nu+d/r,\gz}f(x)\notag\\
&&\hs\le C  \sum_{j=\ell}^\fz
\sum_{i=0}^\fz  b ^{(j-\ell)\nu}  b ^{i\nu }\mathcal{M}_r
\left(|B(\cdot, b ^j)|^\gamma |P_j(\sqrt L)f|
\chi_{B(z_\az^k,  b ^{\ell-i}+\diam Q_\az^k)}\right)(x),
\end{eqnarray}
where $C:=C(b,\nu,r,\gamma)$ is a positive constant independent of $\ell,\, x,\,k,\,\az$  and $f$.
\end{prop}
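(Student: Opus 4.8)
The plan is to expand $f$ via the continuous Calder\'on reproducing formula \eqref{c-crf} written with the pair $(\wz\Phi_0,\wz\Phi)$ and the family $(\Phi_0,\Phi)$, so that $P_\ell(\sqrt L)f = \sum_{j=0}^\infty P_\ell(\sqrt L)\wz\Phi_j(\sqrt L)\Phi_j(\sqrt L)f$, and then estimate each composition $P_\ell(\sqrt L)\wz\Phi_j(\sqrt L)$ by the off-diagonal bound of Proposition \ref{prop2.14x} (in the form of Remark \ref{rem2.15x}, since $P_\ell$, $\wz\Phi_j$ only satisfy \eqref{eq:2.11x}). The key point is the geometric gain: for a suitable $m\in\zz_+$ chosen so large that $m\bz_0-d>\nu$, one gets a kernel bound $C_{\sz,m}\,\delta^{|j-\ell|(m\bz_0-d)}D_{\delta^{j\wedge\ell},\sz}(\cdot,\cdot)$ with $\sz$ large. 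Only the indices $j\ge \ell$ (equivalently $\delta^{j\wedge\ell}=\delta^{\ell}$) need the full-strength decay; for $j<\ell$ the minimum is $\delta^j$ but the resolution of unity and the support condition kill those terms, or one argues by the same off-diagonal estimate with the roles reversed. Since $\delta$ plays the role of the generic base $b$ (after the equivalence of definitions over different bases), I will freely write the decay as $b^{|j-\ell|(m\bz_0-d)}$.

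First I would fix $x\in Q^k_\az$ and an arbitrary $y\in M$, and write
\[
|B(y,b^\ell)|^\gamma |P_\ell(\sqrt L)f(y)|
\le \sum_{j=0}^\infty |B(y,b^\ell)|^\gamma \int_M \bigl|P_\ell(\sqrt L)\wz\Phi_j(\sqrt L)(y,z)\bigr|\,|\Phi_j(\sqrt L)f(z)|\,d\mu(z).
\]
Plugging in the off-diagonal estimate and using the elementary comparison $|B(y,b^\ell)|\sim |B(y,b^{j\wedge\ell})|$ up to a factor $b^{-|j-\ell|d|\gamma|}$ (from \eqref{doubling2}/\eqref{rdoubling2}), together with $|B(y,b^{j\wedge\ell})|^\gamma D_{b^{j\wedge\ell},\sz}(y,z)\lesssim |B(z,b^{j\wedge\ell})|^\gamma [1+b^{-(j\wedge\ell)}\rho(y,z)]^{-\sz+d|\gamma|}$, I reduce the bound to a sum over $j$ of $b^{|j-\ell|(m\bz_0-d-d|\gamma|)}$ times an integral of $|B(z,b^{j\wedge\ell})|^\gamma|\Phi_j(\sqrt L)f(z)|$ against a decaying kernel centered at $y$ at scale $b^{j\wedge\ell}$. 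The standard discretization of such an integral into annuli $\rho(y,z)\sim b^{-i}b^{j\wedge\ell}$, $i\ge 0$, converts it, after dividing by $[1+b^{-\ell}\rho(x,y)]^{\nu+d/r}$ and taking the supremum over $y$, into the Hardy--Littlewood maximal function $\cm_r$ of $|B(\cdot,b^j)|^\gamma|\Phi_j(\sqrt L)f|$ evaluated at $x$, picking up a further factor $b^{i\nu}$ from the annular decay; the ball over which the truncation $\chi_{B(z_\az^k,\,b^{\ell-i}+\diam Q^k_\az)}$ is inserted comes from controlling how far $z$ can be from the fixed cube $Q^k_\az$ given that $x\in Q^k_\az$ and $\rho(y,z)\lesssim b^{\ell-i}$, $\rho(x,y)\lesssim$ (the scale forcing a nontrivial contribution).

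The combinatorial bookkeeping — keeping precise track of which power of $b$ each of the three mechanisms (off-diagonal decay in $|j-\ell|$, the $\gamma$-weight mismatch, the annular decay in $i$) contributes, and choosing $m$ and $\sz$ large enough that the residual exponent on $b^{|j-\ell|}$ and on $b^{i}$ is at least $\nu$ — is the main obstacle; it is routine but must be done carefully so that after relabeling $j\mapsto j$ (only $j\ge\ell$ survive with the stated decay $b^{(j-\ell)\nu}$, the $j<\ell$ terms being absorbed into the $j=\ell$ term via the reverse off-diagonal estimate) one lands exactly on the double sum $\sum_{j\ge\ell}\sum_{i\ge0} b^{(j-\ell)\nu}b^{i\nu}$ displayed in \eqref{eq:4.1x}. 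A secondary technical point is that $P_\ell(\sqrt L)f(y)$ must be shown to be well defined and the interchange of $\sum_j$ with the pairing $\langle f,\cdot\rangle$ must be justified; this follows from Proposition \ref{prop2.13xx} and the fact that, by Proposition \ref{prop2.10x}, the kernels $P_\ell(\sqrt L)\wz\Phi_j(\sqrt L)(y,\cdot)$ lie in $\cd(M)$ with seminorms decaying geometrically in $|j-\ell|$, exactly as remarked before \eqref{c-crf}. Once the estimate holds for one admissible pair $(\Phi_0,\Phi)$ defining the spaces, the equivalence of the definition under change of base (cited from \cite{KP}) transfers it to the general $P_\ell$.
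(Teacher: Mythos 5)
There is a genuine gap — in fact two. First, the right-hand side of \eqref{eq:4.1x} must be expressed through the \emph{same} family $P_j(\sqrt L)f$, summed only over $j\ge\ell$, whereas your expansion $P_\ell(\sqrt L)f=\sum_{j\ge0}P_\ell(\sqrt L)\wz\Phi_j(\sqrt L)\Phi_j(\sqrt L)f$ produces $\Phi_j(\sqrt L)f$ on the right. Your proposed repair (transferring by the independence of the spaces under change of the defining pair) is circular: that independence is Theorem \ref{thm6.2x}(iii), which is proved \emph{from} Proposition \ref{prop4.2x}, and in any case a norm-equivalence cannot convert a pointwise inequality involving one family into one involving another; moreover the later applications really need $P_jf$ itself on the right (e.g.\ \eqref{eq:6.21x}, where $P_j=h_j$ are the non-band-limited heat-kernel functions). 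In addition, since $P$ is merely an even Schwartz function with no vanishing moments, the terms with $j<\ell$ cannot be ``killed'': Remark \ref{rem2.15x} supplies the almost-orthogonality bound only when the band-limited factor sits at the coarser scale ($k\ge j$), so ``the same off-diagonal estimate with the roles reversed'' is not available. The paper's device is precisely to avoid those terms: one sets $\Psi:=\Phi/P$ (legitimate by the nondegeneracy of $P$ on $[b^{\bz_0/2},b^{-\bz_0/2}]$) and uses the level-$\ell$ identity $\wz\Psi_\ell(\lz)+\sum_{j=\ell+1}^{\fz}\Psi_j(\lz)P_j(\lz)=1$, which yields \eqref{eq:4.5x} with only $P_j(\sqrt L)f$, $j\ge\ell$, appearing.

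Second, and more seriously, the step ``discretize into annuli, divide by $[1+b^{-\ell}\rho(x,y)]^{\nu+d/r}$, take the supremum, and obtain $\cm_r$'' fails for $r<1$. An annular decomposition of $\int_M|g(z)|K(y,z)\,d\mu(z)$ bounds it by ball averages of $|g|$, i.e.\ by $\cm g=\cm_1 g$; since $\cm_r g\le\cm g$ for $r\le1$ (Jensen), this is strictly weaker than the asserted bound by $\cm_r$, and the proposition is needed exactly in that regime (the Peetre index is $a=\nu+d/r$ and Theorem \ref{thm6.2x} requires $a>d[\tau+1/(p\wedge q)]$, forcing $r<1$ when $p\wedge q$ is small; for $r\ge1$ one has $\cm\le\cm_r$ and your argument would suffice). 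To put the power $r$ inside the integral one needs the self-improving estimate of Lemma \ref{lem4.4x} for $r\in(0,1)$, whose proof is the real content: one introduces the auxiliary quantity $N(x,k):=\sup_{\ell\ge k}\sup_{y\in M}b^{(\ell-k)N\bz_0}|P_\ell(\sqrt L)f(y)|[1+b^{-k}\rho(x,y)]^{-\sz}$, bootstraps the $r=1$ inequality against $[N(x,k)]^{1-r}$, proves $N(x,k)<\fz$ for an arbitrary $f\in\cd'(M)$ via Proposition \ref{prop2.13xx} (separately for $\mu(M)<\fz$ and $\mu(M)=\fz$), and then removes the $f$-dependence of the admissible $\sz$ and $N$. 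None of this is in your outline, and without it the estimate with $\cm_r$, $r<1$, does not follow. The part you did anticipate correctly — the localization producing the factor $b^{i\nu}$ and the cut-off $\chi_{B(z_\az^k,\,b^{\ell-i}+\diam Q_\az^k)}$, which is Lemma \ref{lem4.3x} — is indeed the routine part of the paper's argument.
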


Compared with \cite[Lemma 6.4]{KP},
the advantage of \eqref{eq:4.1x} lies in that it is true for  general distributions. Also, \eqref{eq:4.1x}  is  more delicate due to the existence of the characteristic function in the maximal function of its right-hand side.

To prove Proposition \ref{prop4.2x}, we begin with two technical lemmas.

\begin{lem}\label{lem4.3x}
Let $b\in(0,1)$, $\sigma\in(d,\fz)$ and $g\in L_\loc^1(M)$. Then,
for all $j\in\zz$, and $x\in Q_\az^k$ with $k\in\zz$ and $\az\in I_k$,
\begin{equation}\label{eq:4.2x}
\int_M \frac{ |g(y)|}{|B(y,  b ^j)|[1+ b ^{-j}\rho(x,y)]^\sz}\,d\mu(y)
\le C\sum_{i=0}^\infty  b ^{i(\sz-d)}
\cm(g\chi_{B(z_\az^k,  b ^{j-i}+\diam Q_\az^k)})(x),
\end{equation}
where $C$ is a positive constant independent of
$k,\,j,\, x$ and $g$.
\end{lem}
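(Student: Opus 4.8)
The plan is to decompose the integration domain $M$ into the annuli around the center $z_\az^k$ of the dyadic cube $Q_\az^k$ and then invoke the doubling property together with the decay factor $[1+b^{-j}\rho(x,y)]^{-\sz}$. More precisely, since $x\in Q_\az^k$, we have $\rho(x,z_\az^k)\le \diam Q_\az^k$, so for $y\in M$ with $\rho(z_\az^k,y)< b^{j-i}+\diam Q_\az^k$ we can control $\rho(x,y)$ from above appropriately, and conversely when $\rho(z_\az^k,y)\ge b^{j-i}+\diam Q_\az^k$ the distance $\rho(x,y)$ is bounded below by roughly $b^{j-i}$. I would write
$$M=B(z_\az^k, b^{j}+\diam Q_\az^k)\cup\bigcup_{i=1}^{\infty}\big[B(z_\az^k, b^{j-i}+\diam Q_\az^k)\setminus B(z_\az^k, b^{j-i+1}+\diam Q_\az^k)\big],$$
noting that $b\in(0,1)$ so $b^{j-i}$ increases with $i$, and bound the integral over each piece separately.

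First I would handle the innermost ball: here we simply drop the decay factor (it is at most $1$) and use $B(y,b^j)\supset$ a fixed proportion of a ball centered at $x$ of comparable radius, or more directly bound $\frac{1}{|B(y,b^j)|}$ using doubling by a constant multiple of $\frac{1}{|B(x, b^j+\rho(x,y))|}$ — but on this ball $\rho(x,y)\lesssim b^j+\diam Q_\az^k$, which by Lemma~\ref{lem3.1x}(iv) gives $b^j+\diam Q_\az^k\lesssim b^j+\dz^k$. Since $x\in Q_\az^k$ forces $\dz^k\gtrsim \rho(z_\az^k,x)$-scale comparisons, one gets a ball centered at $x$ of radius $\lesssim b^{j-i}+\diam Q_\az^k$ containing $B(z_\az^k,\cdot)$, and the average over that ball is dominated by $\cm(g\chi_{B(z_\az^k,b^{j}+\diam Q_\az^k)})(x)$ up to a doubling constant. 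This contributes the $i=0$ term. For the $i$-th annulus with $i\ge1$, every $y$ in it satisfies $\rho(x,y)\ge \rho(z_\az^k,y)-\diam Q_\az^k\ge b^{j-i+1}$, hence $1+b^{-j}\rho(x,y)\ge b^{-i+1}$, giving a gain of $b^{(i-1)\sz}$ from the decay factor; the remaining integral $\int \frac{|g(y)|}{|B(y,b^j)|}\,d\mu(y)$ over $B(z_\az^k,b^{j-i}+\diam Q_\az^k)$ is, again by the doubling argument (comparing $|B(y,b^j)|$ to the ball centered at $x$ of radius $b^{j-i}+\diam Q_\az^k$ and picking up a factor $\le K 2^{?}\le C b^{-id}$ from \eqref{doubling2}), bounded by $C b^{-id}\cm(g\chi_{B(z_\az^k, b^{j-i}+\diam Q_\az^k)})(x)$. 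Combining, the $i$-th term carries the factor $b^{(i-1)\sz}\cdot b^{-id}\sim b^{i(\sz-d)}$, which is exactly the weight in \eqref{eq:4.2x}, and since $\sz>d$ the geometric series in $i$ converges.

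\textbf{The main obstacle} I anticipate is the bookkeeping needed to pass from a ball centered at the moving point $y$ (in the denominator $|B(y,b^j)|$) to a fixed ball centered at $z_\az^k$ or at $x$, so that the Hardy–Littlewood maximal function at $x$ genuinely appears; this requires carefully applying \eqref{doubling1}–\eqref{doubling2} in the form $|B(y,r)|\le K\,(1+\rho(x,y)/r)^{d}|B(x,r)|$ and tracking that the radius inflation $b^{j-i}+\diam Q_\az^k$ matches the cutoff in the asserted bound, using $\diam Q_\az^k\le C_\natural\dz^k$ and $x\in Q_\az^k\supset B(z_\az^k,c_\natural\dz^k)$ from Lemma~\ref{lem3.1x}(iv) to keep all radii mutually comparable up to fixed constants. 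Everything else is routine: the annular decomposition, the decay gain, and the summation of a convergent geometric series.
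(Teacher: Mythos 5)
Your decomposition is centered at the wrong point, and this creates a genuine gap. In your $i$-th piece the decay factor only yields $[1+b^{-j}\rho(x,y)]^{-\sz}\le b^{(i-1)\sz}$, while the doubling comparison you invoke is between $|B(y,b^j)|$ and a ball centered at $x$ of radius comparable to $b^{j-i}+\diam Q_\az^k$; by \eqref{doubling2} this costs a factor of order $\bigl[(b^{j-i}+\diam Q_\az^k)/b^{j}\bigr]^{d}$, \emph{not} $b^{-id}$. Since $j$ and $k$ are completely unrelated in the lemma (and in its application in Proposition \ref{prop4.2x} one typically has $b^{j}\ll\dz^{k}\sim\diam Q_\az^k$), the regime $\diam Q_\az^k\gg b^{j-i}$ is unavoidable, and there your claimed per-annulus bound with weight $b^{i(\sz-d)}$ fails. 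The failure is already visible in your $i=0$ step, where you drop the decay factor entirely: take $g=\chi_{B(y_0,b^{j})}$ with $y_0\in Q_\az^k$ at distance about $\diam Q_\az^k$ from $x$. Then $\int_{B(z_\az^k,\,b^{j}+\diam Q_\az^k)}|g(y)|\,|B(y,b^{j})|^{-1}\,d\mu(y)\sim 1$, whereas $\cm\bigl(g\chi_{B(z_\az^k,\,b^{j}+\diam Q_\az^k)}\bigr)(x)\sim |B(y_0,b^{j})|/|B(x,\diam Q_\az^k)|$, which by \eqref{rdoubling2} can be arbitrarily small; the discarded factor $[1+b^{-j}\rho(x,y)]^{-\sz}$ is precisely what compensates, so the inequality you assert for the innermost ball is simply false.

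The correct mechanism (and the one the paper uses) is to decompose according to the distance to $x$ itself, i.e. over the annuli $\rho(x,y)\sim b^{j-i}$, so that the averaging ball and the decay scale match: on such an annulus the decay factor gives $b^{i\sz}$, and since $\rho(x,y)<b^{j-i}$ one has $B(x,b^{j-i})\subset B(y,2b^{j-i})$, whence $|B(x,b^{j-i})|\ls b^{-id}|B(y,b^{j})|$ by \eqref{doubling2} — this is where the exact factor $b^{-id}$ comes from. Only afterwards does the cube enter: because $x\in Q_\az^k$, the ball $B(x,b^{j-i})$ is contained in $B(z_\az^k,b^{j-i}+\diam Q_\az^k)$, so the characteristic function can be inserted for free and the average over $B(x,b^{j-i})$ is dominated by $\cm\bigl(g\chi_{B(z_\az^k,\,b^{j-i}+\diam Q_\az^k)}\bigr)(x)$. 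In your scheme the fixed averaging ball of radius $b^{j-i}+\diam Q_\az^k$ is far too large for the decay gain $b^{i\sz}$ available on that piece, and no rearrangement of your per-annulus bounds recovers the weights $b^{i(\sz-d)}$ in \eqref{eq:4.2x}; you would need to re-do the splitting at the scale of $\rho(x,y)$ as above.
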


\begin{proof}
Denote by ${\rm J}$ the left-hand side of \eqref{eq:4.2x}.
Obviously,
\begin{eqnarray*}
{\rm J}
= \sum_{i=0}^\infty
\int_{\rho(x,y)\sim b ^{j-i}} \frac1{|B(y,  b ^j)|}
 \frac{|g(y)|}{[1+ b ^{-j}\rho(x,y)]^{\sz}} \,d\mu(y)=: \sum_{i=0}^\infty{\rm J}_i,
\end{eqnarray*}
where the symbol $\rho(x,y)\sim b ^{j-i}$
means that $ b ^{j-i+1}\le\rho(x,y)<  b ^{j-i}$
for $i\ge1$ and that $\rho(x,y)<  b ^{j}$
for $i=0$.
For any $i\in\zz_+$, by the fact $x\in Q_\az^k$ and \eqref{doubling2}, we see that
\begin{eqnarray*}
{\rm J}_i
\ls  b ^{i(\sz-d)} \cm\lf(
g\chi_{B(z_\az^k,  b ^{j-i}+\diam Q_\az^k)}
\r)(x).
\end{eqnarray*}
Thus, \eqref{eq:4.2x} holds true, which
completes the proof of Lemma \ref{lem4.3x}.
\end{proof}

\begin{lem}\label{lem4.4x}
Let $b\in(0,1)$ and  $\{P_k(\sqrt{L})\}_{k\in\zz_+}$ be as in  Definition \ref{def4.1x}.
Then, for any $r\in(0,\fz)$, $\sz\in(d,\fz)$ and $N\in(d/\bz_0,\fz)$,
there exists a positive constant $C:=C(r, \sz, N)$ such that, for
all $k\in\zz_+$, $f\in \cd'(M)$ and $x\in M$,
\begin{eqnarray}\label{eq:4.3x}
|P_k(\sqrt{L})f(x)|^r
&&\le C \sum_{j=k}^\fz  b ^{(j-k)[N\bz_0(r\wedge1)-d]}
\int_M \frac{1}{|B(z, b ^{k})|}
\frac{|P_j(\sqrt{L})f(z)|^r}{[1+ b ^{-k}\rho(x,z)]^{\sigma (r\wedge1)}}\,d\mu(z).
\end{eqnarray}
\end{lem}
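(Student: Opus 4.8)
The plan is to derive \eqref{eq:4.3x} from the continuous Calder\'on reproducing formula \eqref{c-crf} together with the almost orthogonality estimate of Proposition \ref{prop2.14x}. First I would apply \eqref{c-crf} to the functions $\wz\Phi_j$ (chosen so that $\sum_j \wz\Phi_j(\sqrt L)\Phi_j(\sqrt L) = 1$) to write, for $f \in \cd'(M)$,
\[
P_k(\sqrt L)f = \sum_{j=0}^\fz P_k(\sqrt L)\wz\Phi_j(\sqrt L)\,\Phi_j(\sqrt L) f,
\]
and observe that since $P_0, P$ satisfy \eqref{eq:2.11x}, Remark \ref{rem2.15x} applies: the kernel of $P_k(\sqrt L)\wz\Phi_j(\sqrt L)$ obeys a bound of the form $C_{\sz,N}\,\delta^{(j-k)(N\bz_0-d)}D_{\delta^{k\wedge j},\sigma}(x,y)$ for $j \ge k$, for any $N \in \zz_+$ with $N\bz_0 > d$ and any $\sz > d$ (here one must match the paper's dilation base $b$ with $\delta$, or simply rerun the estimate with $b$ in place of $\delta$). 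Actually only the terms $j \ge k$ survive up to acceptable error; for $j < k$ the support conditions force $P_k(\sqrt L)\wz\Phi_j(\sqrt L) = 0$ when $|k-j|$ is large enough, and the finitely many remaining low-frequency terms are absorbed into the $j \ge k$ sum after reindexing. So one gets the pointwise bound
\[
|P_k(\sqrt L)f(x)| \le C\sum_{j=k}^\fz \delta^{(j-k)(N\bz_0-d)} \int_M D_{\delta^k,\sigma}(x,z)\,|\Phi_j(\sqrt L)f(z)|\,d\mu(z),
\]
and after replacing $\Phi_j$ by $P_j$ (the two families generate the same operators up to the reproducing identity, or one simply takes $P_j = \Phi_j$) this is the $r=1$ version of \eqref{eq:4.3x}, using $D_{\delta^k,\sigma}(x,z) \le \sqrt K |B(z,\delta^k)|^{-1}[1+\delta^{-k}\rho(x,z)]^{-(\sigma-d/2)}$ from Section \ref{sec-2.1}.

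Next, for the general $r \in (0,\fz)$ I would bootstrap. When $r \ge 1$, raise the above inequality to the $r$th power, apply H\"older's inequality in the $j$-sum with the summable weight $\delta^{(j-k)(N\bz_0-d)}$ (losing a harmless constant and halving the exponent, which is why one picks $N$ large), then apply H\"older again to the $z$-integral against $D_{\delta^k,\sigma}(x,\cdot)$, whose total mass over $M$ is bounded by Lemma \ref{lem2.1x}(i); this produces $|B(z,\delta^k)|^{-1}[1+\delta^{-k}\rho(x,z)]^{-\sigma r}$ inside and the factor $\delta^{(j-k)(N\bz_0 r - d)}$ outside, matching \eqref{eq:4.3x} with $(r\wedge 1) = 1$. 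When $r \in (0,1)$, instead use the elementary inequality $(\sum_j a_j)^r \le \sum_j a_j^r$ and $(\int \cdots)^r \le \int \cdots$ termwise — but that last step is false for integrals, so here the standard device is to first self-improve: apply the $r=1$ bound once more to the inner $|P_j(\sqrt L)f(z)|$ to introduce an extra decaying factor, or more cleanly run the argument of Lemma \ref{lem4.4x} as it is done in \cite{YSY}: split the $z$-integral over the annuli $\rho(x,z)\sim \delta^{k-i}$, on each annulus bound $|P_j(\sqrt L)f(z)| \le \sup$, and use that $\sup \le C_{\text{annulus}}(\int |P_j f|^r)^{1/r}$ via the already-established $r=1$ estimate at scale $j$ combined with the reverse-doubling volume comparison; the exponent $N\bz_0(r\wedge 1) - d$ records exactly the loss in passing the $r$-th power through this annular decomposition.

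The main obstacle I expect is the $r \in (0,1)$ case: one cannot naively pull the power $r$ through the convolution integral, so the proof must trade part of the off-diagonal decay $\delta^{(j-k)(N\bz_0-d)}$ for the room needed to perform an annular decomposition and convert an $L^\fz$ bound at scale $j$ into an $L^r$ average — this is the origin of the factor $(r\wedge 1)$ multiplying $N\bz_0$ in \eqref{eq:4.3x}. Keeping track of which estimates hold for general distributions $f \in \cd'(M)$ rather than merely for $L^2$ spectral elements (which is the advertised improvement over \cite[Lemma~6.4]{KP}) requires that every step go through the reproducing formula \eqref{c-crf} valid in $\cd'(M)$ and the kernel bounds of Proposition \ref{prop2.10x}, never invoking finite speed of propagation; this is routine given the machinery already set up, but the indices $N$ (chosen $> d/\bz_0$, and in practice large so that $N\bz_0(r\wedge 1) - d > 0$) and $\sigma$ (chosen $> d$, and large enough that all the volume integrals converge) must be selected in the right order, and I would make that order explicit at the start of the proof.
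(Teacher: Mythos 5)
There is a genuine structural gap in your first step. The expansion you take from \eqref{c-crf} produces $\Phi_j(\sqrt L)f$ on the right-hand side, not $P_j(\sqrt L)f$, and neither of your repairs works: $P_0,P$ in Definition \ref{def4.1x} are arbitrary even Schwartz functions with only a lower bound on a band, so you cannot ``simply take $P_j=\Phi_j$'', and since $P$ is not compactly supported and is not assumed to vanish at the origin, the operators $P_k(\sqrt L)\wz\Phi_j(\sqrt L)$ with $j<k$ neither vanish for $|k-j|$ large nor gain any decay in $k-j$ (Remark \ref{rem2.15x} only covers the case where the Schwartz factor sits at the finer scale). Terms at scales coarser than $k$ therefore cannot be ``absorbed'' into a sum over $j\ge k$, which is all that the right-hand side of \eqref{eq:4.3x} contains. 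The paper's proof avoids both issues by building a resolution of the identity adapted to $P$ and starting at level $\ell$: with $\Phi(\lz):=\Phi_0(\lz)-\Phi_0(b^{-\bz_0/2}\lz)$ one has $\Phi_0(b^{\ell\bz_0/2}\lz)+\sum_{j>\ell}\Phi_j(\lz)\equiv1$, and setting $\Psi:=\Phi/P$ (legitimate because $|P|\ge c$ on the band supporting $\Phi$, see \eqref{eq:2.15xx}) yields \eqref{eq:4.4x}, hence $P_\ell(\sqrt L)f=\wz\Psi_\ell(\sqrt L)P_\ell(\sqrt L)f+\sum_{j>\ell}P_\ell(\sqrt L)\Psi_j(\sqrt L)P_j(\sqrt L)f$, which involves exactly the family $P_j$ and only the scales $j\ge\ell$; Proposition \ref{prop2.14x} with Remark \ref{rem2.15x} then gives the $r=1$ bound \eqref{eq:4.5x}. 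Your treatment of $r\ge1$ by H\"older is fine and matches the paper.

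The case $r\in(0,1)$ is not actually proved in your sketch. Converting a supremum over an annulus into an $L^r$ average with $r<1$ is precisely the nontrivial point, and it cannot be obtained by invoking the $r=1$ estimate, which controls pointwise values by $L^1$-type averages only. The paper's mechanism is the Peetre--Rychkov bootstrap: one introduces $N(x,k):=\sup_{\ell\ge k}\sup_{y\in M} b^{(\ell-k)N\bz_0}|P_\ell(\sqrt L)f(y)|[1+b^{-k}\rho(x,y)]^{-\sz}$, inserts \eqref{eq:4.5x} to get the self-improving inequality $N(x,k)\ls[N(x,k)]^{1-r}\sum_{j\ge k}b^{(j-k)(N\bz_0 r-d)}\int_M |B(z,b^k)|^{-1}|P_j(\sqrt L)f(z)|^r[1+b^{-k}\rho(x,z)]^{-\sz r}\,d\mu(z)$, and then --- the step entirely missing from your proposal --- proves $N(x,k)<\fz$ for an arbitrary $f\in\cd'(M)$. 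That finiteness argument uses Proposition \ref{prop2.13xx} and the kernel bounds of Proposition \ref{prop2.10x}, treats $\mu(M)<\fz$ and $\mu(M)=\fz$ separately, and finishes with a monotonicity argument in $(\sz,N)$ to pass from parameters depending on $f$ to all $\sz>d$ and $N>d/\bz_0$. Since validity for general distributions (rather than spectral elements as in \cite[Lemma 6.4]{KP}) is the whole point of the lemma, this finiteness verification is essential and cannot be skipped.
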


\begin{proof} Fix $\ell\in\zz_+$.
Choose a nonnegative even function $\Phi_0\in C_c^\infty(\rr)$
such that $\supp \Phi_0\subset [-b ^{-\bz_0/2},  b ^{-\bz_0/2}]$
and $\Phi_0(\lz)=1$ when $\lz\in[-1,1]$.
Define $\Phi(\lz):= \Phi_0(\lz)-\Phi_0( b ^{-\bz_0/2}\lz)$ for all $\lz\in\rr_+$.
Notice that  $\Phi$ satisfies  \eqref{eq:2.15xx} with $\dz=b$ there.
For any $j\in\nn$, define $\Phi_j(\cdot)=\Phi(b^{j\bz_0/2}\cdot)$.
Then
$$\Phi_0( b ^{\ell\bz_0/2}\lz)+\sum_{j=\ell+1}^\fz\Phi_j(\lz)\equiv 1,
\qquad \lz\in\rr_+.$$
For $\ell\in\nn$, define $\wz{\Psi}_\ell(\cdot):=\Phi_0( b ^{\ell\bz_0/2}\cdot)$ and ${\Psi}:={\Phi}/{P}$.
The assumption on $P$ (see Definition \ref{def4.1x})
implies that $\Psi$ is well defined and it
 satisfies \eqref{eq:2.15xx}. Moreover,
\begin{equation}\label{eq:4.4x}
\wz{\Psi}_\ell(\lz)+\sum_{j=\ell+1}^\fz \Psi_j(\lz)P_j(\lz)=1,
\qquad \lz\in\rr_+,
\end{equation}
where $\Psi_j(\cdot):=\Psi( b ^{j\bz_0/2}\cdot)$ for all $j\in\nn$.
Based on \eqref{eq:4.4x} and \eqref{c-crf}, we see that
\begin{eqnarray*}
P_\ell(\sqrt{L})f&&=\wz{\Psi}_\ell(\sqrt{L})P_\ell(\sqrt{L})f +\sum_{j=\ell+1}^\fz P_\ell(\sqrt{L})\Psi_j(\sqrt{L})P_j(\sqrt{L})f \quad \textup{in}\; \cd'(M).
\end{eqnarray*}
Since $\Psi_0$ is an even Schwartz function,
we apply Proposition \ref{prop2.10x}(i) to conclude that, for all $x,\,y\in M$,
$$ |\wz{\Psi}_\ell(\sqrt{L})(x,y)|
=|\Phi_0( b ^{\ell\bz_0/2}\sqrt{L})(x,y)|
\ls D_{ b ^{\ell},\sz+d/2}(x,y).$$
Further, by Proposition \ref{prop2.14x} and Remark \ref{rem2.15x}, we see that, for any $j\ge \ell+1$ and $x,\,y\in M$,
$$ |P_\ell(\sqrt{L})\Psi_j(\sqrt{L})(x,y)|
\ls  b ^{(j-\ell)(N\bz_0-d)}  D_{ b ^{\ell}, \sigma+d/2}(x,y).$$
Consequently, for all $\ell\in\zz_+$ and $x\in M$, we have
\begin{eqnarray}\label{eq:4.5x}
|P_\ell(\sqrt{L})f(x)|
\ls  \sum_{j=\ell}^\fz  b ^{(j-\ell)(N\bz_0-d)}
\int_M \frac{1}{|B(y, b ^{\ell})|}
\frac{|P_j(\sqrt{L})f(y)|}{[1+ b ^{-\ell}\rho(x,y)]^\sigma}\,d\mu(y).
\end{eqnarray}
Thus, the estimate \eqref{eq:4.3x} for $r=1$ is proved.

When $r\in(1,\infty)$,
H\"older's inequality and Lemma \ref{lem2.1x}(i) imply that,
if we choose $\sz>d$, then
$$
\int_M \frac{1}{|B(z, b ^{\ell})|}
\frac{|P_j(\sqrt{L})f(z)|}{[1+ b ^{-\ell}\rho(x,z)]^\sigma}\,d\mu(z)
\ls \bigg\{\int_M \frac{1}{|B(z, b ^{\ell})|}
\frac{|P_j(\sqrt{L})f(z)|^r}{[1+ b ^{-\ell}\rho(x,z)]^\sigma}\,d\mu(z)\bigg\}^{1/r}.$$
Inserting this into \eqref{eq:4.5x}, we then apply H\"older's inequality,
the facts $N\bz_0>d$ and $b\in(0,1)$ to conclude that, for all $x\in M$,
\begin{eqnarray*}
|P_\ell(\sqrt{L})f(x)|^r
&&\ls
\sum_{j=\ell}^\fz  b ^{(j-\ell)(N\bz_0-d)}
\int_M \frac{1}{|B(z, b ^{\ell})|}
\frac{|P_j(\sqrt{L})f(z)|^r}{[1+ b ^{-\ell}\rho(x,z)]^\sigma}\,d\mu(z).
\end{eqnarray*}
Hence, \eqref{eq:4.3x} holds true for $r\in(1,\infty)$.

To prove \eqref{eq:4.3x} for $r\in(0,1)$, we define
$$N(x,k):=\sup_{\ell\ge k}\sup_{y\in M} b ^{(\ell-k)N\bz_0}
\frac{|P_\ell(\sqrt{L})f(y)|}{[1+ b ^{-k}\rho(x,y)]^\sigma},\qquad x\in M,\, \, k\in\zz_+.$$
If $\ell\ge k$,  then
$|B(z, b ^{\ell})|\ge  b ^{(\ell-k)d}|B(z, b ^{k})|$
and $[1+ b ^{-k}\rho(x,y)][1+ b ^{-\ell}\rho(y,z)]\ge
1+ b ^{-k}\rho(x,z)$ for all $x,\,y,\,z\in M$.
From this and \eqref{eq:4.5x},  it follows that, for all $\ell\ge k$ and $x$, $y\in M$,
\begin{eqnarray*}
\frac{|P_\ell(\sqrt{L})f(y)|}{[1+ b ^{-k}\rho(x,y)]^\sigma}
\ls\sum_{j=\ell}^\fz  b ^{(j-\ell)N\bz_0}  b ^{(k-j)d}
\int_M \frac{1}{|B(z, b ^{k})|}
\frac{|P_j(\sqrt{L})f(z)|}{[1+ b ^{-k}\rho(x,z)]^\sigma}\,d\mu(z),
\end{eqnarray*}
which, together with the definition of $N(x,k)$, further implies that
\begin{eqnarray*}
N(x,k)
\ls \lf[N(x,k)\r]^{1-r}\sum_{j=k}^\fz  b ^{(j-k)[N\bz_0r-d]}
\int_M \frac{1}{|B(z, b ^{k})|}
\frac{|P_j(\sqrt{L})f(z)|^r}{[1+ b ^{-k}\rho(x,z)]^{\sigma r}}\,d\mu(z) .
\end{eqnarray*}
If we have proved that $N(x,k)<\fz$, then
\begin{eqnarray}\label{eq:4.6x}
|P_k(\sqrt{L})f(x)|^r
&&\le [N(x,k)]^r\ls\sum_{j=k}^\fz  b ^{(j-k)[N\bz_0r-d]}
\int_M \frac{1}{|B(z, b ^{k})|}
\frac{|P_j(\sqrt{L})f(z)|^r}{[1+ b ^{-k}\rho(x,z)]^{\sigma r}}\,d\mu(z),
\end{eqnarray}
as desired.

It remains to prove $N(x,k)<\infty$ for all $\sz,\, N>0$. Consider first the case $\mu(M)<\infty$.
By Proposition \ref{prop2.13xx}(ii), there exists some $m_0\in\nn$, depending on $f$, such that, for all $y\in M$,
\begin{eqnarray*}
|P_\ell(\sqrt{L})f(y)|
\ls \max_{0\le m\le m_0}  \|L^m P_\ell (\cdot, y)\|_{L^2(M)}.
\end{eqnarray*}
For any $\wz\sz>d/2$, it follows, from Proposition \ref{prop2.10x}(i) and Lemma \ref{lem2.1x}(i),
that
\begin{eqnarray*}
\|L^m P_\ell(\sqrt L) (\cdot, y)\|_{L^2(M)}
\ls \frac{ b ^{-\bz_0 m\ell}}{|B(y,  b ^{\ell})|^{1/2}},
\end{eqnarray*}
which further implies that, when $\sz> d/2$ and $N> m_0+ d/(2\bz_0)$,
\begin{eqnarray*}
N(x,k)
\ls \sup_{\ell\ge k}\sup_{y\in M}
\max_{0\le m\le m_0}
\frac{ b ^{(\ell-k)N\bz_0-\bz_0m\ell}}{[1+ b ^{-k}\rho(x,y)]^\sigma
|B(y,  b ^{\ell})|^{1/2}}
\ls \frac{ b ^{-\bz_0m_0k}}{|B(x,  b ^k)|^{1/2}}<\infty.
\end{eqnarray*}

Now we consider the case
$\mu(M)=\infty$. By Proposition \ref{prop2.13xx}(iii), there exist $\ell_0,\, m_0\in\nn$, depending on $f$, such that, for all $y\in M$,
\begin{eqnarray*}
|P_\ell(\sqrt{L})f(y)|
&&\ls \,\max_{0\le m\le m_0,\, 0\le \nu\le \ell_0}\, \sup_{z\in M}\,
[1+\rho(z, x_0)]^\nu |L^mP_\ell(\sqrt L) (z, y)|.
\end{eqnarray*}
For any  $0\le m\le m_0$,
it follows, from Proposition \ref{prop2.10x}(i),
 that
$$|L^m P_\ell(\sqrt L) (z, y)|
\ls  \frac{ b ^{-\bz_0m\ell}}
{|B(y,  b ^{\ell})|[1+ b ^{-\ell}\rho(z,y)]^{\ell_0}}
\ls \frac{ b ^{-\ell(\bz_0m_0+ d)} }
{|B(y,1)|[1+\rho(z,y)]^{\ell_0}}$$
for all $\ell\in\zz_+$ and $z,\,y\in M$.
Thus, for all $y\in M$,
\begin{eqnarray*}
|P_\ell(\sqrt{L})f(y)|
\ls \frac{ b ^{-\ell(\bz_0m_0+ d)} [1+\rho(y, x_0)]^{\ell_0}}
{|B(y,1)|}.
\end{eqnarray*}
Therefore, if $\sz\ge\ell_0+d$ and $N\ge m_0+d/\bz_0$, we have
\begin{eqnarray*}
N(x,k)
\ls \sup_{\ell\ge k}\sup_{y\in M}
\frac{ b ^{(\ell-k)N\bz_0-\ell(\bz_0m_0+ d)}
[1+\rho(y, x_0)]^{\ell_0}  }{[1+\rho(x,y)]^\sigma|B(y,1)|
}\ls \frac{ b ^{-k(\bz_0m_0+ d)} [1+\rho(x, x_0)]^{\ell_0}}
{|B(x,1)|},
\end{eqnarray*}
which is finite.
Since $\ell_0$ and $m_0$ depend on $f$,
we have proved the boundedness
of $N(x,k)$ in both cases $\mu(M)<\fz$
and $\mu(M)=\fz$ whenever $\sigma>\sigma(f)$ and $N>N(f)$
for some positive constants $\sigma(f)$ and $N(f)$ depending on $f$.
Further, invoking \eqref{eq:4.5x} and \eqref{eq:4.6x}, we obtain,
when $\sigma>\sigma(f)$ and $N>N(f)$,
\begin{equation*}
|P_k(\sqrt{L})f(x)|^r
\le c(f)\sum_{j=k}^\fz  b ^{(j-k)[N\bz_0r-d]}
\int_M \frac{1}{|B(z, b ^{k})|}
\frac{|P_j(\sqrt{L})f(z)|^r}{[1+ b ^{-k}\rho(x,z)]^{\sigma r}}\,d\mu(z)
\end{equation*}
for a positive constant $c(f)$ depending on $f$ and for all $k\in\zz$ and $x\in M$.
Observe that this inequality holds true for all $\sigma,\ N>0$ since its  right-hand side
decreases as $\sigma$ and $N$ increase.
We now use this fact to prove the boundedness of $N(x,k)$ for all $\sz,\,N>0$.
To be precise, by \eqref{doubling2}, we have
\begin{eqnarray*}
[N(x,k)]^r
\le K\,c(f)\sum_{j=k}^\fz  b ^{(j-k)[N\bz_0r-d]}
\int_M \frac{1}{|B(z, b^{k})|}
\frac{|P_{j}(\sqrt{L})f(z)|^r}{[1+ b ^{-k}\rho(x,z)]^{\sigma r}}\,d\mu(z),
\end{eqnarray*}
which is finite; otherwise \eqref{eq:4.3x} holds trivially since its right-hand side is infinity. Altogether, we proved that $N(x,k)$ is finite for all $\sz,\,N>0$. This finishes the proof of Lemma \ref{lem4.4x}.
\end{proof}

\begin{proof}[Proof of Proposition \ref{prop4.2x}]
Since $\cm g\le \cm_r g$ for all $r\in[1,\infty)$ and $g\in L_\loc^1(M)$,
it suffices to prove \eqref{eq:4.1x} for the case $r\in(0,1]$.
Fix $r\in(0,1]$.
For sufficiently large $\sz$ and $N$, by Lemma \ref{lem4.4x},
we know that, for all $\ell\in\zz_+$ and $x,\,y\in M$,
 \begin{eqnarray}\label{eq:4.7x}
\frac{|B(y, b ^\ell)|^{\gamma r} |P_\ell(\sqrt L)f(y)|^r}
{[1+ b ^{-\ell}\rho(x,y)]^{\nu r+d}}
&&\ls
\sum_{j=\ell}^\fz  b ^{(j-\ell)[N\bz_0 r-d]}
\int_M
\frac{|B(y, b ^\ell)|^{\gamma r}}{|B(z, b ^j)|^{\gamma r}}\bigg.\notag\\
&&\quad\times\bigg.
\frac{|B(z, b ^j)|^{\gamma r}|P_j(\sqrt{L})f(z)|^r}
{|B(z, b ^{\ell})|
[1+ b ^{-\ell}\rho(x,y)]^{\nu r+d}
[1+ b ^{-\ell}\rho(y,z)]^{\sigma r}}\,d\mu(z).
\end{eqnarray}
From \eqref{doubling2} and $\ell\le j$, it follows that
$$\frac{|B(y, b ^\ell)|^{r\gamma}}{|B(z, b ^j)|^{r\gamma}}
\ls  b ^{(\ell-j)dr|\gz|}[1+ b ^{-\ell}\rho(y,z)]^{d r|\gz|}.$$
Choose $\sz$ and $N$ such that $\sz r> \nu r+d+dr|\gz|$
and $N\bz_0 r-d-dr|\gz|>\nu>0$.
Then, by  \eqref{eq:4.7x}, $r\in(0,1]$ and H\"older's inequality, we have
\begin{eqnarray*}
\frac{|B(y, b ^\ell)|^\gamma |P_\ell(\sqrt L)f(y)|}
{[1+ b ^{-\ell}\rho(x,y)]^{\nu+d/r}}
\ls
\sum_{j=\ell}^\fz  b ^{(j-\ell)[N\bz_0r-d-dr|\gz|]}
\bigg\{\int_M
\frac{|B(z, b ^j)|^{r\gamma}|P_j(\sqrt{L})f(z)|^r}
{|B(z, b ^{\ell})|
[1+ b ^{-\ell}\rho(x,z)]^{\nu r+d}}\,d\mu(z)\bigg\}^{1/r}.
\end{eqnarray*}
Further, Lemma \ref{lem4.3x} and $(N\bz_0+d/2)r-d-dr|\gz|>\nu$ imply that
\begin{eqnarray*}
\frac{|B(y, b ^\ell)|^\gamma |P_\ell(\sqrt L)f(y)|}
{[1+ b ^{-\ell}\rho(x,y)]^{\nu+d/r}}\ls
\sum_{j=\ell}^\fz \sum_{i=0}^\infty
 b ^{(j-\ell)\nu} b ^{i\nu }
\cm_r(|B(\cdot, b ^j)|^{\gamma}|P_j(\sqrt{L})f|
\chi_{B(z_\az^k,  b ^{\ell-i}+\diam Q_\az^k)})(x).
\end{eqnarray*}
Taking the supremum over all $y\in M$ and using the definition of
$[P_\ell(\sqrt{L})]^*_{\nu+d/r,\gz}f(x)$, we obtain
\eqref{eq:4.1x}. This proves
Proposition \ref{prop4.2x}.
\end{proof}

In the proof of Proposition \ref{prop4.2x}, instead of using Lemma \ref{lem4.3x},
we use the fact that the left-hand
side of \eqref{eq:4.2x} can be controlled by $\cm g(x)$, then we easily
obtain the following estimate, the details being omitted.

\begin{cor}\label{cor4.5x}
Let $\nu,\,r\in(0,\infty)$ and $\gamma\in\rr$.
For all $f\in \cd'(M)$, $\ell\in\zz_+$  and $x\in M$, the Peetre maximal function
$[P_\ell(\sqrt{L})]^*_{\nu+d/r,\gz}f(x)$ defined in Definition \ref{def4.1x}
satisfies the following estimate:%
$$
[P_\ell(\sqrt{L})]^*_{\nu+d/r,\gz}f(x)\le C  \sum_{j=\ell}^\fz
 b ^{(j-\ell)\nu} \mathcal{M}_r
\left(|B(\cdot, b ^j)|^\gamma |P_j(\sqrt L)f|
\right)(x), \qquad x\in M,
$$
where $C:=C(b,\nu,r,\gamma)$ is a positive constant independent of $\ell,\,f$ and $x$.
\end{cor}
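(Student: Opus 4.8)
The plan is to imitate the proof of Proposition \ref{prop4.2x}, simply dropping the bookkeeping associated with the Christ cube $Q_\az^k$. Recall that the only place where the cube entered the argument for Proposition \ref{prop4.2x} was the final application of Lemma \ref{lem4.3x}, where the integral $\int_M \frac{|g(y)|}{|B(y,b^j)|[1+b^{-j}\rho(x,y)]^\sz}\,d\mu(y)$ was decomposed into dyadic annuli $\rho(x,y)\sim b^{j-i}$ and estimated by the sum $\sum_{i=0}^\infty b^{i(\sz-d)}\cm(g\chi_{B(z_\az^k,b^{j-i}+\diam Q_\az^k)})(x)$. If instead we invoke only the cruder pointwise bound $\int_M \frac{|g(y)|}{|B(y,b^j)|[1+b^{-j}\rho(x,y)]^\sz}\,d\mu(y)\ls \cm g(x)$, valid whenever $\sz>d$ (this is the standard annular decomposition $\sum_{i\ge0}b^{i(\sz-d)}\cm g(x)$ with the characteristic function simply removed), then no cube and no extra sum over $i$ is needed.

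Concretely, I would first fix $r\in(0,1]$ (the case $r\in[1,\infty)$ follows from $\cm g\le\cm_r g$), and reproduce verbatim the chain of estimates in the proof of Proposition \ref{prop4.2x} up through the inequality
\[
\frac{|B(y,b^\ell)|^\gz|P_\ell(\sqrt L)f(y)|}{[1+b^{-\ell}\rho(x,y)]^{\nu+d/r}}
\ls \sum_{j=\ell}^\fz b^{(j-\ell)[N\bz_0r-d-dr|\gz|]}\bigg\{\int_M\frac{|B(z,b^j)|^{r\gz}|P_j(\sqrt L)f(z)|^r}{|B(z,b^\ell)|[1+b^{-\ell}\rho(x,z)]^{\nu r+d}}\,d\mu(z)\bigg\}^{1/r},
\]
which uses Lemma \ref{lem4.4x}, \eqref{doubling2} to compare $|B(y,b^\ell)|^{r\gz}$ with $|B(z,b^j)|^{r\gz}$, and the choices $\sz r>\nu r+d+dr|\gz|$ and $N\bz_0 r-d-dr|\gz|>\nu>0$. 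Then, instead of Lemma \ref{lem4.3x}, I would apply the plain Hardy--Littlewood bound $\int_M\frac{h(z)}{|B(z,b^\ell)|[1+b^{-\ell}\rho(x,z)]^{\nu r+d}}\,d\mu(z)\ls\cm h(x)$ with $h:=|B(\cdot,b^j)|^{r\gz}|P_j(\sqrt L)f|^r$ (legitimate since $\nu r+d>d$), obtaining $\ls\{\cm(|B(\cdot,b^j)|^{r\gz}|P_j(\sqrt L)f|^r)(x)\}^{1/r}=\cm_r(|B(\cdot,b^j)|^\gz|P_j(\sqrt L)f|)(x)$. Taking the supremum over $y\in M$ and recalling the definition of $[P_\ell(\sqrt L)]^*_{\nu+d/r,\gz}f(x)$ yields the claimed estimate.

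There is essentially no hard part: the entire content is already contained in Lemma \ref{lem4.4x} and in the first half of the proof of Proposition \ref{prop4.2x}, which are invoked as black boxes. The only point requiring a moment's care is verifying that the exponents coming out of Lemma \ref{lem4.4x} --- with $\sz$ and $N$ chosen sufficiently large --- still satisfy both $\sz r>\nu r+d+dr|\gz|$ and $N\bz_0 r-d-dr|\gz|>\nu$ simultaneously, and that $\nu r+d>d$ so that the Hardy--Littlewood maximal bound applies to the remaining integral; all of this is immediate because $\nu,r>0$. As the statement indicates, I would simply remark that the details are the same as those of Proposition \ref{prop4.2x} with Lemma \ref{lem4.3x} replaced by the pointwise majorization of its left-hand side by $\cm g(x)$, and omit them.
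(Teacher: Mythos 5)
Your proposal is correct and is exactly the argument the paper intends: the paper's own justification of Corollary \ref{cor4.5x} consists precisely of the remark that one reruns the proof of Proposition \ref{prop4.2x} with Lemma \ref{lem4.3x} replaced by the pointwise bound of its left-hand side by $\cm g(x)$ (valid since $\nu r+d>d$), which is what you do. Your verification of the exponent bookkeeping and the reduction to $r\in(0,1]$ via $\cm g\le\cm_r g$ matches the paper's proof of Proposition \ref{prop4.2x} as well.
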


\begin{rem}\label{rem4.6x}
When $\bz_0=2$,
the inequality in Corollary \ref{cor4.5x}
was obtained
in \cite[Lemma~6.4]{KP},
but only for functions in the following \emph{spectral space}
$$\Sigma_\lz^p(M):=
\{f\in L^p(M):\,\, \theta(\sqrt L)f=f\, \,\textup{for all}\, \,
\theta\in C_c^\infty(\rr_+),\, \theta\equiv1 \,\textup{on}\, [0,\lz]\,\}
$$
with $p\in[1,\infty]$ and $\lz\in(0,\infty)$, where
$C_c^\infty(\rr_+)$
denotes the \emph{space of all functions in $C^\infty(\rr_+)$ with compact support}.
Here, when $p=\infty$, $L^\infty(M)$ is replaced by the space of
uniformly continuous and bounded functions on $M$.
Clearly, Corollary \ref{cor4.5x} improves \cite[Lemma~6.4]{KP},
since it holds true for all distributions.
\end{rem}


\subsection{Embedding properties}\label{sec-4.2}

\hskip\parindent
In what follows, if the function space
$\mathcal X$ is continuously embedded into $\mathcal Y$,
then we write $\mathcal X\hookrightarrow \mathcal Y$.
For all $s\in\rr$, notice that $B_{\infty,\infty}^s(M)= B_{\infty,\infty}^{s,0}(M)$
and $\wz B_{\infty,\infty}^s(M)= \wz B_{\infty,\infty}^{s,0}(M)$. Moreover, for any $f\in\cd'(M)$,
$$\|f\|_{B_{\infty,\infty}^s(M)}
\sim\|f\|_{B_{\infty,\infty}^{s,0}(M)}
\sim\sup_{j\in\zz_+}\sup_{x\in M} \dz^{-js}|\Phi_j(\sqrt L)f(x)|$$
and
$$\|f\|_{\wz B_{\infty,\infty}^s(M)}
\sim \|f\|_{\wz B_{\infty,\infty}^{s,0}(M)}
\sim\sup_{j\in\zz_+}\sup_{x\in M} |B(x, \dz^j)|^{-s/d}|\Phi_j(\sqrt L)f(x)|$$
with equivalent positive constants independent of $f$,
where $\Phi_j$ for $j\in\zz_+$ is as in Definition \ref{def3.2x}.
We start with the following embedding property,
which was obtained in  \cite[Proposition 4.2]{s013}
when $(M,\rho,\mu)$ is the Euclidean space.

\begin{prop}\label{prop4.7x}
Let $s\in\rr$, $\tau\in[0,\infty)$ and $q\in(0,\infty]$.
\begin{enumerate}
\item[\rm (i)]  If $p\in(0,\infty]$, then $B_{p,q}^{s,\tau}(M)\hookrightarrow B_{\fz,\fz}^{s+d\tau-d/p}(M)$
and $\wz B_{p,q}^{s,\tau}(M)\hookrightarrow \wz B_{\fz,\fz}^{s+d\tau-d/p}(M)$.

\item[\rm (ii)] If $p\in(0,\infty)$, then $F_{p,q}^{s,\tau}(M)\hookrightarrow B_{\fz,\fz}^{s+d\tau-d/p}(M)$
and $\wz F_{p,q}^{s,\tau}(M)\hookrightarrow \wz B_{\fz,\fz}^{s+d\tau-d/p}(M)$.
\end{enumerate}
\end{prop}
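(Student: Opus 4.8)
\noindent\emph{Proof proposal.}
The plan is to reduce the four embeddings to pointwise bounds for the Littlewood--Paley pieces $\Phi_j(\sqrt L)f$ and then to control these by a sub-mean-value inequality localized to Christ's dyadic cubes. Recall, from the beginning of Section \ref{sec-4.2}, that
$$\|f\|_{B_{\fz,\fz}^{s+d\tau-d/p}(M)}\sim\sup_{j\in\zz_+}\sup_{x\in M}\dz^{-j(s+d\tau-d/p)}|\Phi_j(\sqrt L)f(x)|$$
and $\|f\|_{\wz B_{\fz,\fz}^{s+d\tau-d/p}(M)}\sim\sup_{j\in\zz_+}\sup_{x\in M}|B(x,\dz^{j})|^{-(s+d\tau-d/p)/d}|\Phi_j(\sqrt L)f(x)|$. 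Thus, for (i) it suffices to bound $\dz^{-j(s+d\tau-d/p)}|\Phi_j(\sqrt L)f(x)|$ by $C\|f\|_{B_{p,q}^{s,\tau}(M)}$ and $|B(x,\dz^{j})|^{-(s+d\tau-d/p)/d}|\Phi_j(\sqrt L)f(x)|$ by $C\|f\|_{\wz B_{p,q}^{s,\tau}(M)}$, uniformly in $j\in\zz_+$ and $x\in M$. For (ii), since $[\sum_{i}|a_{i}|^{q}]^{1/q}\ge|a_{i_{0}}|$ for every $i_0$, the defining (quasi-)norm of $F_{p,q}^{s,\tau}(M)$ (resp.\ $\wz F_{p,q}^{s,\tau}(M)$) dominates $\f1{|Q_{\az}^{k}|^{\tau}}[\int_{Q_{\az}^{k}}|\dz^{-is}\Phi_i(\sqrt L)f|^{p}\,d\mu]^{1/p}$ (resp.\ the same with $\dz^{-is}$ replaced by $|B(\cdot,\dz^{i})|^{-s/d}$) for each $i\ge k\vee0$; this is the only consequence of the $B$-type (quasi-)norm used below, so (ii) follows verbatim from the argument for (i).

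The first ingredient is a sub-mean-value estimate. Put $r:=\min\{1,p\}$ if $p<\fz$ and $r:=1$ if $p=\fz$. Arguing as in the proof of Lemma \ref{lem4.4x} --- whose inputs, Propositions \ref{prop2.10x} and \ref{prop2.14x}, apply unchanged to the functions $\Phi_j$ of Definition \ref{def3.2x} --- and using the Calder\'on reproducing formula \eqref{c-crf}, one gets: for every $N\in\nn$ and $\sz>d$ there is $C>0$ such that, for all $j\in\zz_+$, $f\in\cd'(M)$ and $x\in M$,
\begin{equation}\label{eq:prop47a}
|\Phi_j(\sqrt L)f(x)|^{r}\le C\sum_{i=j}^{\fz}\dz^{(i-j)N}\int_M\f{|\Phi_i(\sqrt L)f(z)|^{r}}{|B(z,\dz^{j})|\,[1+\dz^{-j}\rho(x,z)]^{\sz}}\,d\mu(z).
\end{equation}
Now fix $j$ and $x$, and split the integral in \eqref{eq:prop47a} into the annuli $\{z\in M:\rho(x,z)\sim\dz^{j-n}\}$, $n\in\zz_+$. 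On the $n$-th annulus one has $[1+\dz^{-j}\rho(x,z)]^{-\sz}\ls\dz^{n\sz}$ and, by \eqref{doubling2}, $|B(z,\dz^{j})|^{-1}\ls\dz^{-nd}|B(x,\dz^{j-n})|^{-1}$; moreover the annulus lies in $B(x,C\dz^{j-n})$, which by Lemma \ref{lem3.1x} is covered by a number, bounded independently of all parameters, of Christ cubes $Q$ of generation $j-n$ with $|Q|\sim|B(x,\dz^{j-n})|$. Since $i\ge j\ge(j-n)\vee0$, the defining (quasi-)norm of $B_{p,q}^{s,\tau}(M)$ on each such $Q$ gives $\int_{Q}|\Phi_i(\sqrt L)f|^{p}\,d\mu\le\dz^{isp}|Q|^{\tau p}\|f\|_{B_{p,q}^{s,\tau}(M)}^{p}$; feeding this in through H\"older's inequality (legitimate as $r\le p$) and Lemma \ref{lem2.1x}, and summing the geometric series in $i$ (choose $N$ with $N+sr>0$), I would obtain
\begin{equation}\label{eq:prop47b}
|\Phi_j(\sqrt L)f(x)|^{r}\ls\|f\|_{B_{p,q}^{s,\tau}(M)}^{r}\,\dz^{jsr}\sum_{n=0}^{\fz}\dz^{n(\sz-d)}\,|B(x,\dz^{j-n})|^{r(\tau-1/p)}.
\end{equation}
For $\wz B_{p,q}^{s,\tau}(M)$ the sole change is that on a cube $Q$ of generation $j-n$ the weight $|B(\cdot,\dz^{i})|^{-sp/d}$ is, by \eqref{doubling2}, comparable to $|B(x,\dz^{j-n})|^{-sp/d}$ up to a power of $\dz$ in $(i-j)$ and in $n$ that is absorbed by enlarging $N$ and $\sz$, so that the right-hand side of \eqref{eq:prop47b} becomes $\|f\|_{\wz B_{p,q}^{s,\tau}(M)}^{r}\sum_{n\ge0}\dz^{n(\sz-d)}|B(x,\dz^{j-n})|^{r(s/d+\tau-1/p)}$.

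It remains to sum over $n$. Let $\tz$ denote the exponent $r(\tau-1/p)$ in \eqref{eq:prop47b}, or $r(s/d+\tau-1/p)$ in its $\wz{}$-analogue. When $\tz\ge0$ I would use \eqref{doubling2} in the form $|B(x,\dz^{j-n})|\le K\dz^{-nd}|B(x,\dz^{j})|$; when $\tz<0$, I would use $|B(x,\dz^{j-n})|\ge|B(x,\dz^{j})|$; in either case, choosing $\sz$ large enough, $\sum_{n\ge0}\dz^{n(\sz-d)}|B(x,\dz^{j-n})|^{\tz}\ls|B(x,\dz^{j})|^{\tz}$. For the $\wz{}$-spaces this gives directly $|\Phi_j(\sqrt L)f(x)|\ls|B(x,\dz^{j})|^{(s+d\tau-d/p)/d}\|f\|_{\wz B_{p,q}^{s,\tau}(M)}$, hence the second embedding in (i) (and, by the reduction, in (ii)). For the non-$\wz{}$-spaces one reaches $|\Phi_j(\sqrt L)f(x)|\ls\dz^{js}|B(x,\dz^{j})|^{\tau-1/p}\|f\|_{B_{p,q}^{s,\tau}(M)}$, and the volume factor is turned into the power $\dz^{jd(\tau-1/p)}$ required by $B_{\fz,\fz}^{s+d\tau-d/p}(M)$ by invoking \eqref{doubling2} together with the non-collapsing condition \eqref{non-collapsing}, which yield $|B(x,\dz^{j})|\ge K^{-1}c_{0}\,\dz^{jd}$ for all $j\in\zz_+$ and $x\in M$.

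The step I expect to be the main obstacle is exactly this bookkeeping of the volume factors $|B(\cdot,\dz^{j})|$ and $|B(\cdot,\dz^{j-n})|$, which enter simultaneously in \eqref{eq:prop47a}, in the weights of the $\wz{}$-spaces, and in the target (quasi-)norms, and must be reconciled using only \eqref{doubling2}, \eqref{rdoubling2} and, crucially, \eqref{non-collapsing}: the passage from $|B(x,\dz^{j})|^{\tau-1/p}$ to $\dz^{jd(\tau-1/p)}$ for the non-$\wz{}$-spaces is the one spot where \eqref{non-collapsing} is indispensable, and it is natural to keep track there of whether $\tau\le1/p$ or $\tau>1/p$ (compare also Proposition \ref{prop4.9x}). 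Making every exponent of $\dz$ fit --- so that both the geometric series in $i$ and that in $n$ converge, which is what forces the size of $N$ and $\sz$ --- is the part requiring most care.
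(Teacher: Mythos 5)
Your overall strategy is the one the paper itself uses: reduce everything to a $B^{s,\tau}_{p,\infty}$-type bound (your term-by-term domination of the $F$-norm plays the role of the paper's Minkowski/monotonicity reduction \eqref{eq:4.8x}), then apply the sub-mean-value inequality of Lemma \ref{lem4.4x} and pay for the pointwise bound with the cube-localized $L^p$ information furnished by the (quasi-)norm. The differences are cosmetic: the paper applies Lemma \ref{lem4.4x} directly with exponent $r=p$, so no H\"older step is needed, and it tiles $M$ by the generation-$k$ Christ cubes, summing $\sum_{\az\in I_k}\inf_{u\in Q^k_\az}[1+\dz^{-k}\rho(x,u)]^{-(d+1)}\ls 1$, whereas you take $r=\min\{1,p\}$, decompose into annuli and cover each annulus by boundedly many generation-$(j-n)$ cubes; both bookkeepings close, and your verification that $i\ge (j-n)\vee 0$ (so the norm may be used on those cubes) is the right point to check. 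For the spaces $\wz B^{s,\tau}_{p,q}(M)$ and $\wz F^{s,\tau}_{p,q}(M)$ your argument is complete for all $\tau\in[0,\infty)$, exactly as in the paper, because there every weight is expressed through ball volumes and only \eqref{doubling2} is needed.

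The one genuine gap is the place you yourself flag: for the non-tilde spaces your last step needs $|B(x,\dz^{j})|^{\tau-1/p}\ls \dz^{jd(\tau-1/p)}$, and the bound $|B(x,\dz^{j})|\ge K^{-1}c_0\dz^{jd}$ coming from \eqref{doubling2} and \eqref{non-collapsing} yields this only when $\tau\le 1/p$. When $\tau>1/p$ the required inequality reverses: you would need $|B(x,\dz^{j})|\ls\dz^{jd}$ uniformly, i.e.\ an upper Ahlfors-type regularity, which does not follow from \eqref{doubling1}, \eqref{rdoubling1} and \eqref{non-collapsing} (think of a doubling, reverse-doubling, non-collapsing measure with $\mu(B(x,1))$ unbounded). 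You cannot appeal to Proposition \ref{prop4.9x} to cover this range either, since its proof invokes Proposition \ref{prop4.7x}. So as written your proposal proves $B^{s,\tau}_{p,q}(M)\hookrightarrow B^{s+d\tau-d/p}_{\infty,\infty}(M)$ and the corresponding $F$-embedding only for $\tau\in[0,1/p]$. It is worth noting that the paper's own proof writes out only the tilde case and disposes of the non-tilde case ``due to the similarity''; if one repeats its generation-$k$ argument for $B^{s,\tau}_{p,q}(M)$ one meets precisely the same obstruction, the factor $\dz^{-kd(\tau-1/p)p}|Q^k_\az|^{(\tau-1/p)p}$, so your treatment is in fact no less complete than the published one — but the case $\tau>1/p$ of the non-tilde statement is not established by either argument and would need a separate idea (or an additional hypothesis such as Ahlfors regularity).
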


\begin{proof}
Due to the similarity, we only consider  embeddings of
 $\wz B_{p,q}^{s,\tau}(M)$ and $\wz F_{p,q}^{s,\tau}(M)$.
By the Minkowski inequality,
we know that
\begin{equation}\label{eq:4.8x}
\wz B_{p,\min\{p,q\}}^{s,\tau}(M)\hookrightarrow\wz F_{p,q}^{s,\tau}(M)\hookrightarrow
\wz B_{p,\max\{p,q\}}^{s,\tau}(M),
\end{equation}
which, together with the monotonicity of $\wz B_{p,r}^{s,\tau}(M)$ on $r\in(0,
\fz]$, further implies that
$$\wz B_{p,q}^{s,\tau}(M),\  \wz F_{p,q}^{s,\tau}(M)\hookrightarrow \wz B_{p,\fz}^{s,\tau}(M).$$
Thus,  it suffices to prove that
$ \wz B_{p,\fz}^{s,\tau}(M)\hookrightarrow \wz B_{\fz,\fz}^{s+d\tau-d/p}(M).$
To this end, we let $(\Phi_0, \Phi)$
satisfy \eqref{eq:2.14xx} and \eqref{eq:2.15xx},
and $f\in \wz B_{p,\fz}^{s,\tau}(M)$.
Then
$$\|f\|_{\wz B_{p,\fz}^{s,\tau}(M)}
= \sup_{k\in\zz,\,\az\in I_k}\, \sup_{j\ge (k\vee0)}
\frac1{|Q_\az^k|^\tau} \bigg[ \int_{Q_\az^k}
|B(x,\delta^j)|^{-sp/d}|\Phi_j(\sqrt{L})f(x)|^p
\,d\mu(x)\bigg]^{1/p}<\infty,$$
where $\Phi_j$ for $j\in\nn$ is defined as in \eqref{eq:2.16xx}.
By Lemma \ref{lem4.4x}, for
all $k\in\zz_+$ and $x\in M$, we have
\begin{eqnarray*}
&&|B(x,\delta^k)|^{-(s+d\tau-d/p)/d}|\Phi_k(\sqrt{L})f(x)|\\
&&\quad\ls \Bigg\{\sum_{j=k}^\fz \delta^{(j-k)[N\bz_0(p\wedge1)-d]}
\int_M \frac{|B(x,\delta^k)|^{-sp/d-p\tau+1}}{|B(z,\delta^{k})|}
\frac{|\Phi_j(\sqrt{L})f(z)|^p}
{[1+\delta^{-k}\rho(x,z)]^{\sigma (p\wedge1)}}\,d\mu(z)\Bigg\}^{1/p},
\end{eqnarray*}
where $\sz$ and $N$ are large numbers satisfying the hypothesis of Lemma \ref{lem4.4x},
$\sigma (p\wedge1)-|s|p-p\tau d>2d+1$ and
$N\bz_0(p\wedge1)-d>|s|p+1$.
For all $k\in\zz_+$, $j\ge k$ and $x,\,z\in M$, by \eqref{doubling2}, we find that
\begin{eqnarray*}
\frac{|B(x,\delta^k)|^{-sp/d-p\tau+1}}{|B(z,\delta^{k})|[1+\delta^{-k}\rho(x,z)]^{\sigma (p\wedge1)}}
\ls\dz^{(k-j)|s|p}\frac{|B(z,\delta^j)|^{-sp/d}}{|B(z,\delta^k)|^{\tau p}
[1+\delta^{-k}\rho(x,z)]^{d+1} },
\end{eqnarray*}
and hence
\begin{eqnarray*}
|B(x,\delta^k)|^{-(s+d\tau-d/p)/d}|\Phi_k(\sqrt{L})f(x)|
\ls\Bigg\{\sum_{j=k}^\fz \delta^{j-k}
\sum_{\az\in I_k}\inf_{u\in Q_\az^k}\frac{1}
{[1+\delta^{-k}\rho(x,u)]^{d+1}}
\Bigg\}^{1/p} \|f\|_{\wz B_{p,\fz}^{s,\tau}(M)},
\end{eqnarray*}
which implies that $ \|f\|_{\wz B_{\fz,\fz}^{s+d\tau-d/p}(M)} \ls \|f\|_{\wz B_{p,\fz}^{s,\tau}(M)}$
if we observe that
 $\sum_{j=k}^\fz \delta^{j-k}\ls1$ and
$$\sum_{\az\in I_k}\inf_{u\in Q_\az^k}\frac{1}
{[1+\delta^{-k}\rho(x,u)]^{d+1}}
\ls \sum_{\az\in I_k}\int_{Q_\az^k} \frac1
{|B(z, \dz^k)|[1+\delta^{-k}\rho(x,z)]^{d+1}}\,d\mu(z)\ls1.$$
This proves
$ \wz B_{p,\fz}^{s,\tau}(M)\hookrightarrow \wz B_{\fz,\fz}^{s+d\tau-d/p}(M)$
and hence finishes the proof of Proposition \ref{prop4.7x}.
\end{proof}

\begin{prop}\label{prop4.8x}
Let $s\in\rr$, $\tau\in[0,\infty)$ and $q\in(0,\infty]$.
\begin{enumerate}
\item[\rm(i)]  If $p\in(0,\infty]$, then $\cd(M) \hookrightarrow B_{p,q}^{s,\tau}(M)\hookrightarrow \cd'(M)$
and $\cd(M) \hookrightarrow \wz B_{p,q}^{s,\tau}(M)\hookrightarrow \cd'(M)$.

\item[\rm(ii)] If $p\in(0,\infty)$, then $\cd(M) \hookrightarrow F_{p,q}^{s,\tau}(M)\hookrightarrow \cd'(M)$
and $\cd (M)\hookrightarrow \wz F_{p,q}^{s,\tau}(M)\hookrightarrow \cd'(M)$.
\end{enumerate}
\end{prop}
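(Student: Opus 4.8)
The plan is to establish the two embeddings for $B_{p,q}^{s,\tau}(M)$ and to reduce the remaining three cases to this one. For the Triebel--Lizorkin-type spaces, the Minkowski inequality yields, exactly as in \eqref{eq:4.8x}, the chain $B_{p,\min\{p,q\}}^{s,\tau}(M)\hookrightarrow F_{p,q}^{s,\tau}(M)\hookrightarrow B_{p,\max\{p,q\}}^{s,\tau}(M)$ together with its analogue for the spaces carrying a tilde, so that Proposition \ref{prop4.8x}(ii) follows from Proposition \ref{prop4.8x}(i). For the spaces with a tilde, the only change in the (quasi-)norm is that $\dz^{-js}$ is replaced by $|B(\cdot,\dz^j)|^{-s/d}$; since, for $j\in\zz_+$, \eqref{doubling2} and \eqref{rdoubling2} bound $|B(x,\dz^j)|^{-s/d}$, uniformly in $x\in M$, from above and below by fixed powers of $\dz^j$ times a factor dominated by $[1+\rho(x,x_0)]^{|s|}$, and since both arguments below have arbitrarily fast decay in $j$ and in $\rho(\cdot,x_0)$ at their disposal, these changes are harmless. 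Hence it suffices to prove $\cd(M)\hookrightarrow B_{p,q}^{s,\tau}(M)\hookrightarrow\cd'(M)$; throughout, $(\Phi_0,\Phi)$ denotes the fixed pair used in Definition \ref{def3.2x}.

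\emph{Step 1: the embedding $\cd(M)\hookrightarrow B_{p,q}^{s,\tau}(M)$.} The key ingredient is the pointwise bound that, for every $m\in\nn$ and $\ell\in\zz_+$, there is a constant $C$ such that, for all $\phi\in\cd(M)$, $j\in\zz_+$ and $x\in M$, when $\mu(M)=\fz$,
\begin{equation}\label{eq-p48}
|\Phi_j(\sqrt L)\phi(x)|\le C\,\dz^{j\bz_0 m}[1+\rho(x,x_0)]^{-\ell}\max_{0\le m'\le m}\cp_{m',\ell}(\phi),
\end{equation}
while when $\mu(M)<\fz$ the right-hand side may be replaced by $C\,\dz^{j(\bz_0 m-d/2)}\max_{0\le m'\le m}\cp_{m'}(\phi)$. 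To prove \eqref{eq-p48}, for $j\ge1$ I would use the identity $\Phi_j(\sqrt L)\phi=\dz^{j\bz_0 m}\,\psi_j(\sqrt L)(L^m\phi)$, where $\psi_j(\lz):=(\dz^{j\bz_0/2}\lz)^{-2m}\Phi(\dz^{j\bz_0/2}\lz)$ is a rescaling of the function $\lz\mapsto\lz^{-2m}\Phi(\lz)$, the latter being smooth and compactly supported because $\supp\Phi\subset[\dz^{\bz_0/2},\dz^{-\bz_0/2}]$ avoids the origin. Proposition \ref{prop2.10x}(i) (with its own parameter $m$ equal to $0$) then gives $|\psi_j(\sqrt L)(x,y)|\ls D_{\dz^j,\sz}(x,y)$ for all $\sz>d$; combining this with $|L^m\phi(y)|\le\cp_{m,\ell}(\phi)[1+\rho(y,x_0)]^{-\ell}$, the elementary inequality $[1+\rho(y,x_0)]^{-\ell}\le[1+\rho(x,x_0)]^{-\ell}[1+\dz^{-j}\rho(x,y)]^{\ell}$ (valid for $j\ge0$, used to absorb a factor into $D_{\dz^j,\sz}$ after enlarging $\sz$), and Lemma \ref{lem2.1x}(i) yields \eqref{eq-p48}; the term $j=0$ is handled the same way, directly from $\Phi_0$ and Proposition \ref{prop2.10x}(i) with $\dz=1$. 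When $\mu(M)<\fz$ one instead bounds $|\psi_j(\sqrt L)(L^m\phi)(x)|$, by the Cauchy--Schwarz inequality, by $\|D_{\dz^j,\sz}(x,\cdot)\|_{L^2(M)}\|L^m\phi\|_{L^2(M)}\ls|B(x,\dz^j)|^{-1/2}\cp_m(\phi)\ls\dz^{-jd/2}c_0^{-1/2}\cp_m(\phi)$, using Lemma \ref{lem2.1x}(i), \eqref{doubling2} and \eqref{non-collapsing}. Once \eqref{eq-p48} is in hand, I would insert it into the definition of $\|\phi\|_{B_{p,q}^{s,\tau}(M)}$, use $\int_M[1+\rho(\cdot,x_0)]^{-\ell p}\,d\mu<\fz$ for $\ell p>d$ (Lemma \ref{lem2.1x}(i)) together with the volume lower bound $|Q_\az^k|\gs\dz^{(k\vee0)d}$ (from Lemma \ref{lem3.1x}(iv), \eqref{doubling2} and \eqref{non-collapsing}), and then, choosing $m$ so large that $\bz_0 m-s>0$ and $\bz_0 m-s-d\tau\ge0$, sum the geometric series $\sum_{j\ge k\vee0}\dz^{j(\bz_0 m-s)q}\ls\dz^{(k\vee0)(\bz_0 m-s)q}$ and check that $|Q_\az^k|^{-\tau q}\dz^{(k\vee0)(\bz_0 m-s)q}\ls1$ uniformly in $k\in\zz$ and $\az\in I_k$. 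This gives $\|\phi\|_{B_{p,q}^{s,\tau}(M)}\le C\max_{0\le m'\le m}\cp_{m',\ell}(\phi)$ (respectively $\le C\max_{0\le m'\le m}\cp_{m'}(\phi)$ when $\mu(M)<\fz$), which is precisely the continuity of the inclusion.

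\emph{Step 2: the embedding $B_{p,q}^{s,\tau}(M)\hookrightarrow\cd'(M)$.} Here it suffices to show that, for each $\phi\in\cd(M)$, there is a constant $C_\phi<\fz$, depending only on finitely many seminorms of $\phi$, such that $|\laz f,\phi\raz|\le C_\phi\|f\|_{B_{p,q}^{s,\tau}(M)}$ for all $f\in B_{p,q}^{s,\tau}(M)$. By \cite[Lemma~6.10]{FJW} I would choose $(\wz\Phi_0,\wz\Phi)$ satisfying \eqref{eq:2.14xx} and \eqref{eq:2.15xx} (with the same $\dz$) and $\sum_{j\ge0}\wz\Phi_j\Phi_j\equiv1$ on $\rr_+$, so that by \eqref{c-crf}, $f=\sum_{j\ge0}\wz\Phi_j(\sqrt L)\Phi_j(\sqrt L)f$ in $\cd'(M)$. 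Each $\wz\Phi_j(\sqrt L)$ has a symmetric kernel lying in $\cd(M)$ in either variable (Proposition \ref{prop2.13xx}(i)) and, $\wz\Phi_j$ being real, is self-adjoint; moreover $\Phi_j(\sqrt L)f$ is a bounded continuous function and $\wz\Phi_j(\sqrt L)\phi$ decays rapidly (apply \eqref{eq-p48} with $(\Phi_0,\Phi)$ replaced by $(\wz\Phi_0,\wz\Phi)$). Hence Fubini's theorem justifies
$$\laz f,\phi\raz=\sum_{j\ge0}\int_M\Phi_j(\sqrt L)f(x)\,\overline{\wz\Phi_j(\sqrt L)\phi(x)}\,d\mu(x).$$
Writing $s':=s+d\tau-d/p$, Proposition \ref{prop4.7x}(i) together with the equivalence $\|f\|_{B_{\fz,\fz}^{s'}(M)}\sim\sup_{j\in\zz_+}\sup_{x\in M}\dz^{-js'}|\Phi_j(\sqrt L)f(x)|$ recalled just before Proposition \ref{prop4.7x} gives $|\Phi_j(\sqrt L)f(x)|\ls\dz^{js'}\|f\|_{B_{p,q}^{s,\tau}(M)}$ uniformly in $x$, whereas \eqref{eq-p48} (applied to $\wz\Phi$, with $m$ as large as we please and $\ell$ chosen so that $\ell p>d$) gives $|\wz\Phi_j(\sqrt L)\phi(x)|\ls\dz^{j\bz_0 m}[1+\rho(x,x_0)]^{-\ell}$, the implicit constant depending only on finitely many seminorms of $\phi$. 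Multiplying these two estimates, integrating over $M$ (finite by Lemma \ref{lem2.1x}(i)), choosing $m$ with $\bz_0 m+s'>0$, and summing the geometric series $\sum_{j\ge0}\dz^{j(\bz_0 m+s')}$, I obtain $|\laz f,\phi\raz|\ls\|f\|_{B_{p,q}^{s,\tau}(M)}$ with constant depending only on finitely many seminorms of $\phi$; this bound also shows that the interchange above is legitimate.

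\emph{The main obstacle.} The only genuinely delicate point is the pointwise estimate \eqref{eq-p48}: one must factor $L^m$ out of $\Phi_j(\sqrt L)$ (possible only for $j\ge1$, the piece $j=0$ being treated separately), apply Proposition \ref{prop2.10x} with the correct rescaling, and, most importantly, extract a genuine \emph{pointwise} bound from the $L^2$-type seminorms $\cp_m$ when $\mu(M)<\fz$, for which the non-collapsing condition \eqref{non-collapsing} is precisely what makes $|B(x,\dz^j)|^{-1/2}\ls\dz^{-jd/2}$ hold uniformly in $x$. Once this is granted, both embeddings come down to summing geometric series, the only requirement being to choose $m$ large in terms of $s$, $\tau$, $p$, $d$ and $\bz_0$, which is harmless since $m$ is a free parameter throughout.
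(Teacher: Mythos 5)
Your proposal is correct and follows essentially the same route as the paper: a pointwise estimate of $\Phi_j(\sqrt L)\phi$ in terms of the seminorms $\cp_{m,\ell}$ (respectively $\cp_m$) with arbitrarily fast decay in $j$ and in $\rho(\cdot,x_0)$ (the paper's analogue of your \eqref{eq-p48} is its estimate \eqref{eq:4.9x}), summation of geometric series for $\cd(M)\hookrightarrow B_{p,q}^{s,\tau}(M)$ and its tilde variant, the Minkowski-type chain \eqref{eq:4.8x} to transfer the result to the $F$-scale, and Proposition \ref{prop4.7x} combined with the continuous Calder\'on reproducing formula \eqref{c-crf} for the embedding into $\cd'(M)$. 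The only deviations are cosmetic: you factor $L^m$ out of $\Phi_j(\sqrt L)$ and use Lemma \ref{lem2.1x}(i) (getting $\dz^{j\bz_0 m}$ where the paper gets $\dz^{j(\bz_0 m-d)}$), you spell out the $\mu(M)<\fz$ case which the paper leaves to the reader, and in Step 2 the integrability requirement should read $\ell>d$ (or $\ell>|s'|+d$ in the tilde case) rather than $\ell p>d$, which is immaterial since $\ell$ is a free parameter.
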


\begin{proof} We only consider the case $\mu(M)=\infty$, since the proof for the case
$\mu(M)<\infty$ follows in a similar way.
Let $(\Phi_0,\Phi)$ satisfy \eqref{eq:2.14xx} and \eqref{eq:2.15xx}.
Fix $f\in\cd (M)$.
Fix $m,\,\ell\in \nn$ (to be determined later).
For all $j\in\nn$ and $x\in M$, by Proposition \ref{prop2.10x}(i)
and Lemma \ref{lem2.1x}(ii),
we obtain
\begin{eqnarray*}
|\Phi_j(\sqrt L)f(x)|
\ls \dz^{j(\bz_0m-d)} [1+\rho(x,x_0)]^{-\ell} \cp_{m,\ell}(f),
\end{eqnarray*}
where $x_0$ is some fixed point of $M$.
For $j=0$, the above estimate \eqref{eq:4.9x} remains true if we take $m=0$. Hence,
for all $j\in\zz_+$ and $x\in M$, we have
\begin{eqnarray}\label{eq:4.9x}
|\Phi_j(\sqrt L)f(x)|
\ls [\cp_{m,\ell}(f)+\cp_{0,\ell}(f)]\, \dz^{j(\bz_0m-d)} [1+\rho(x,x_0)]^{-\ell}.
\end{eqnarray}
Then, for any cube $Q_\az^k$ with $k\in\zz$ and $\az\in I_k$,
\begin{eqnarray}\label{eq:4.10x}
{\rm J}:=&&\frac1{|Q_\az^k|^\tau}
\Bigg\{\sum_{j=k\vee0}^\infty
\bigg[\int_{Q_\az^k} |B(x,\dz^j)|^{-sp/d}
|\Phi_j(\sqrt L)f(x)|^p\,d\mu(x)\bigg]^{q/p}\Bigg\}^{1/q}\notag\\
\ls&&[\cp_{m,\ell}(f)+\cp_{0,\ell}(f)]
\Bigg[\sum_{j=k\vee0}^\infty \dz^{j(\bz_0 m-d)q}\notag\\
&&\quad\times
\bigg\{\frac{1}{|Q_\az^k|^{\tau p}}\int_{Q_\az^k} |B(x,\dz^j)|^{-sp/d}
 [1+\rho(x,x_0)]^{-\ell p} \,d\mu(x)\bigg\}^{q/p}\Bigg]^{1/q}.
\end{eqnarray}
For all $x\in Q_\az^k$,
we have
$|Q_\az^k| \sim |B(x, \dz^k)| \gs |B(x, \dz^j)|$
and
$ \dz^{jd} [1+\rho(x,x_0)]^{-d}\ls \frac{|B(x,\dz^j)|}{|B(x_0, 1)|} \ls [1+\rho(x,x_0)]^d,$
which implies that
$$|Q_\az^k|^{-\tau p}|B(x,\dz^j)|^{-sp/d}
\ls |B(x,\dz^j)|^{-\tau p-sp/d}
\ls \dz^{-j d\tau p-j|s|p} [1+\rho(x,x_0)]^{|s|p+d\tau p}.$$
Inserting this estimate into \eqref{eq:4.10x}, we conclude that
\begin{eqnarray*}
{\rm J}&& \ls [\cp_{m,\ell}(f)+\cp_{0,\ell}(f)]
\Bigg[\sum_{j=k\vee0}^\infty \dz^{jq[(\bz_0 m-d)-\tau d -|s|]}
\bigg\{\int_{M}
 [1+\rho(x,x_0)]^{-\ell p +|s|p+d\tau p} \,d\mu(x)\bigg\}^{q/p}\Bigg]^{1/q},
\end{eqnarray*}
which is controlled by a positive constant multiple of $\cp_{m,\ell}(f)$, provided that
$(\bz_0 m-d)>\tau d +|s|$ and $\ell p-|s|p-d\tau p>d$.
This proves  $\|f\|_{\wz B_{p,q}^{s,\tau}(M)} \ls [\cp_{m,\ell}(f)+\cp_{0,\ell}(f)]$
and hence $\cd(M)\hookrightarrow \wz B_{p,q}^{s,\tau}(M)$.
Similarly, we obtain $\cd(M)\hookrightarrow  B_{p,q}^{s,\tau}(M)$.
From these and \eqref{eq:4.8x}, it follows that
$\cd(M)\hookrightarrow  F_{p,q}^{s,\tau}(M)$ and $\cd(M)\hookrightarrow \wz F_{p,q}^{s,\tau}(M)$.

To show the remaining parts of Proposition \ref{prop4.8x}, by Proposition \ref{prop4.7x},
we only need to prove that
$B^s_{\infty,\infty}(M),\,\wz B^s_{\infty,\infty}(M)\hookrightarrow\cd'(M)$
for all $s\in\rr$.
For all $f\in\cd'(M)$ and $\phi\in\cd(M)$, by  \eqref{c-crf},
we have
\begin{eqnarray*}
\laz f,\phi\raz
=\sum_{j=0}^\infty \laz\Phi_j(\sqrt L)f,\,  \wz\Phi_j(\sqrt L)\phi\raz,
\end{eqnarray*}
where $(\wz\Phi_0,\wz\Phi)$ satisfy \eqref{eq:2.14xx} and \eqref{eq:2.15xx},
and $\wz\Phi_j$ with $j\in\nn$ is defined as in \eqref{eq:2.16xx}.
Given any $m,\,\ell\in \nn$
such that $m\bz_0>d+|s|$ and $\ell>|s|+d$,
applying \eqref{eq:4.9x} to $\wz\Phi_j(\sqrt L)\phi$ and Lemma \ref{lem2.1x}(i), we obtain
\begin{eqnarray*}
|\laz f,\phi\raz|
&&\ls [\cp_{m,\ell}(\phi)+\cp_{0,\ell}(\phi)]\|f\|_{\wz B_{\infty,\infty}^s(M)} \sum_{j=0}^\infty
\dz^{j(m\bz_0-d-|s|)}
\int_M \frac{1}{|B(x_0,1)|[1+\rho(x,x_0)]^{\ell-|s|}}\,d\mu(x)\\
&&\ls [\cp_{m,\ell}(\phi)+\cp_{0,\ell}(\phi)]\|f\|_{\wz B_{\infty,\infty}^s(M)},
\end{eqnarray*}
which gives $\wz B^s_{\fz,\fz}(M)\hookrightarrow \cd'(M)$.
Similarly,  $B^s_{\fz,\fz}(M)\hookrightarrow \cd'(M)$.
This finishes the proof of Proposition \ref{prop4.8x}.
\end{proof}


\subsection{Classifications of Besov-type and Triebel--Lizorkin-type spaces}\label{sec-4.3}

\hskip\parindent
We  first show that, when $\tau>1/p$,
both Besov-type and Triebel--Lizorkin-type spaces coincide with Besov spaces.
Different from the corresponding result on $\rn$ in \cite{yy4}, which was obtained via
the coincidences between the related sequence spaces and the frame characterizations,
here we give a more direct proof.

\begin{prop}\label{prop4.9x}
Let $\tau\in(1/p, \fz)$, $s\in\rr$ and $p,\,q\in(0,\infty]$ ($p<\infty$ for $F_{p,q}^{s,\tau}(M)$ and $\wz F_{p,q}^{s,\tau}(M)$).
Then
$B_{p,q}^{s,\tau}(M)=F_{p,q}^{s,\tau}(M)=B_{\fz,\fz}^{s+d\tau-d/p}(M)$
and $\wz B_{p,q}^{s,\tau}(M)=\wz F_{p,q}^{s,\tau}(M)= \wz B_{\fz,\fz}^{s+d\tau-d/p}(M)$
with equivalent (quasi-)norms.
\end{prop}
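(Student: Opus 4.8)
The plan is to prove the chain of identifications in two directions: first show that, for $\tau\in(1/p,\infty)$, every element of $B_{p,q}^{s,\tau}(M)$ (respectively $\wz B_{p,q}^{s,\tau}(M)$) already lies in $B_{\fz,\fz}^{s+d\tau-d/p}(M)$ (respectively $\wz B_{\fz,\fz}^{s+d\tau-d/p}(M)$) with controlled norm; this is exactly Proposition \ref{prop4.7x}(i), together with the embedding $\wz B_{p,\min\{p,q\}}^{s,\tau}\hookrightarrow\wz F_{p,q}^{s,\tau}\hookrightarrow\wz B_{p,\max\{p,q\}}^{s,\tau}$ from \eqref{eq:4.8x} which reduces the $F$-case to the $B$-case. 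So the only real content is the reverse embedding $B_{\fz,\fz}^{s+d\tau-d/p}(M)\hookrightarrow B_{p,q}^{s,\tau}(M)$ (and its tilde analogue), after which all four spaces are squeezed between two spaces with equivalent norms.

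For the reverse embedding I would argue directly from the definitions. Take $f$ with $\|f\|_{\wz B_{\fz,\fz}^{s+d\tau-d/p}(M)}<\infty$, so that $|B(x,\dz^j)|^{-(s+d\tau-d/p)/d}|\Phi_j(\sqrt L)f(x)|\le \|f\|_{\wz B_{\fz,\fz}^{s+d\tau-d/p}}$ for all $j\in\zz_+$ and $x\in M$. Fix a Christ cube $Q_\az^k$. For $x\in Q_\az^k$ and $j\ge k\vee 0$ one has $|Q_\az^k|\sim|B(x,\dz^k)|\gtrsim|B(x,\dz^j)|$ by \eqref{doubling1}, hence $|B(x,\dz^j)|^{-sp/d}|\Phi_j(\sqrt L)f(x)|^p\lesssim |B(x,\dz^j)|^{p(d\tau-d/p)/d}\|f\|_{\wz B_{\fz,\fz}^{s+d\tau-d/p}}^p=|B(x,\dz^j)|^{\tau p-1}\|f\|^p$. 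Integrating over $Q_\az^k$ and using $|B(x,\dz^j)|\le|B(x,\dz^k)|\sim|Q_\az^k|$ together with $\tau p-1>0$ gives $\int_{Q_\az^k}|B(x,\dz^j)|^{-sp/d}|\Phi_j(\sqrt L)f(x)|^p\,d\mu(x)\lesssim|Q_\az^k|^{\tau p-1}|Q_\az^k|\,\|f\|^p=|Q_\az^k|^{\tau p}\|f\|^p$; however, this crude bound loses the decay in $j$, so I would instead keep one factor of $|B(x,\dz^j)|$ small: write $|B(x,\dz^j)|^{\tau p-1}=|B(x,\dz^j)|^{\tau p-1-\vp}|B(x,\dz^j)|^{\vp}$ with $\vp\in(0,\tau p-1)$ so small that $\vp d/p<\bz_0$ is irrelevant — more cleanly, since $|B(x,\dz^j)|\le K\dz^{(k-j)d}|B(x,\dz^k)|$ fails in the wrong direction, I use \eqref{rdoubling1}: for $j\ge k$, $|B(x,\dz^j)|\le K_\ast^{-2}\dz^{(j-k)\kz}\cdots$ — more simply, $|B(x,\dz^j)|\le|B(x,\dz^k)|$ and for the summability in $j$ I bound $|B(x,\dz^j)|^{\tau p-1}\le|B(x,\dz^j)|^{\vp}|B(x,\dz^k)|^{\tau p-1-\vp}$ and invoke the reverse doubling bound \eqref{rdoubling2} to get $|B(x,\dz^j)|^{\vp}\le K_\ast^{2\vp}\dz^{(j-k)\kz\vp}|B(x,\dz^k)|^{\vp}$, which produces the geometrically decaying factor $\dz^{(j-k)\kz\vp}$ needed to sum over $j\ge k\vee0$. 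Collecting the powers of $|B(x,\dz^k)|\sim|Q_\az^k|$ yields exactly the weight $|Q_\az^k|^{\tau p}$ after integration, so $\sum_{j=k\vee0}^\infty(\cdots)^{q/p}\lesssim|Q_\az^k|^{\tau q}\|f\|^q$ and taking the supremum over $(k,\az)$ gives $\|f\|_{\wz B_{p,q}^{s,\tau}(M)}\lesssim\|f\|_{\wz B_{\fz,\fz}^{s+d\tau-d/p}(M)}$. When $q=\infty$ the sum is replaced by a supremum and the same pointwise estimate works verbatim.

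For the untilded spaces the identical argument applies with $|B(x,\dz^j)|^{-s/d}$ replaced by $\dz^{-js}$, using $|Q_\az^k|\sim|B(z_\az^k,\dz^k)|$ and \eqref{rdoubling2} to convert powers of $\dz^k$ into powers of $|Q_\az^k|$; in the Ahlfors-regular case the two computations literally coincide, and in general the only adjustment is bookkeeping of the extra powers of $\mu$-measures of balls, all of which are comparable to $|Q_\az^k|$ on $Q_\az^k$ up to reverse-doubling factors. Finally I would reconcile the $F$-spaces: the sandwich \eqref{eq:4.8x} gives $\wz B_{p,\min\{p,q\}}^{s,\tau}(M)\hookrightarrow\wz F_{p,q}^{s,\tau}(M)\hookrightarrow\wz B_{p,\max\{p,q\}}^{s,\tau}(M)$, and since we have just shown both outer spaces equal $\wz B_{\fz,\fz}^{s+d\tau-d/p}(M)$, so does $\wz F_{p,q}^{s,\tau}(M)$; similarly for $F_{p,q}^{s,\tau}(M)$.

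I expect the main obstacle to be purely technical rather than conceptual: organizing the powers of $|B(x,\dz^j)|$ so that, after integration over $Q_\az^k$, one recovers precisely $|Q_\az^k|^{\tau p}$ \emph{and} retains a geometric factor $\dz^{(j-k)\eta}$ with $\eta>0$ to sum the $j$-series. The hinge is the strict inequality $\tau>1/p$, i.e. $\tau p-1>0$, which creates the ``spare'' positive power of $|B(x,\dz^j)|$ that reverse doubling \eqref{rdoubling2} converts into summability; if $\tau=1/p$ this spare power vanishes and the argument (correctly) breaks, consistent with the fact that $F_{p,q}^{s,1/p}(M)$ is a genuinely new endpoint space. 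A secondary point to handle with care is the $j=k\vee0$ lower limit when $k<0$: for $k<0$ the sum starts at $j=0$, $|Q_\az^k|$ is large, $\tau p-1>0$ still makes $|Q_\az^k|^{\tau p-1}$ the dominant factor, and the $j\ge0$ summation is controlled by comparing $|B(x,\dz^j)|$ with $|B(x,\dz^0)|=|B(x,1)|\gtrsim c_0$ via \eqref{non-collapsing}; this case should be remarked on explicitly but requires no new idea.
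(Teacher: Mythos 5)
Your proposal is correct and follows essentially the same route as the paper: reduce via Proposition \ref{prop4.7x} and \eqref{eq:4.8x} to the single embedding $\wz B_{\fz,\fz}^{s+d\tau-d/p}(M)\hookrightarrow \wz B_{p,q}^{s,\tau}(M)$ (and its untilded analogue), then deduce it from the pointwise bound $|B(x,\dz^j)|^{-sp/d}|\Phi_j(\sqrt L)f(x)|^p\le |B(x,\dz^j)|^{\tau p-1}\|f\|_{\wz B_{\fz,\fz}^{s+d\tau-d/p}(M)}^p$ combined with reverse doubling \eqref{rdoubling2}, the hypothesis $\tau p>1$, and $|B(x,\dz^k)|\sim|Q_\az^k|$ for $x\in Q_\az^k$. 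The only difference is cosmetic: the paper applies reverse doubling to the full exponent $\tau p-1$, producing the factor $\dz^{(j-k)\kz(\tau p-1)}$, whereas you split off a small $\vp$ and apply it only to that piece; both yield the geometric decay in $j$ needed to sum the series.
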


\begin{proof} By similarity, we only  prove
$\wz B_{p,q}^{s,\tau}(M)=\wz F_{p,q}^{s,\tau}(M)= \wz B_{\fz,\fz}^{s+d\tau-d/p}(M)$.
Due to \eqref{eq:4.8x} and Proposition \ref{prop4.7x}, it suffices to prove
$\wz B_{\fz,\fz}^{s+d\tau-d/p}(M)\hookrightarrow \wz B_{p,q}^{s,\tau}(M)$.
Let $f\in \wz B_{\fz,\fz}^{s+d\tau-d/p}(M)$
with norm $1$. Then
$
|B(x,\dz^j)|^{-s/d-\tau+1/p} |\Phi_j(\sqrt L)f(x)| \le 1
$
for all $j\in\zz_+$ and $x\in M$.
By  \eqref{rdoubling2}, we have
\begin{eqnarray*}
&&\sup_{\gfz{k\in\zz}{\az\in I_k}}\frac1{|Q_\az^k|^\tau}
\bigg\{
\sum_{j=k\vee0}^\infty
\bigg[ \int_{Q_\az^k}
|B(x,\dz^j)|^{-sp/d} |\Phi_j(\sqrt L)f(x)|^p\,d\mu(x) \bigg]^{q/p}
\bigg\}^{1/q}\\
&&\quad\ls \sup_{\gfz{k\in\zz}{\az\in I_k}}
\frac1{|Q_\az^k|^\tau}
\bigg\{
\sum_{j=k\vee0}^\infty
\bigg[ \dz^{(j-k)\kz (\tau p-1)}\int_{Q_\az^k}
|B(x,\dz^k)|^{\tau p-1} \,d\mu(x) \bigg]^{q/p}
\bigg\}^{1/q}\ls1,
\end{eqnarray*}
where the last step follows from the fact $|B(x,\dz^k)|\sim|Q_\az^k|$
for all $x\in Q_\az^k$. This proves
 $$ \wz B_{\fz,\fz}^{s+d\tau-d/p}(M)\hookrightarrow \wz B_{p,q}^{s,\tau}(M)$$
 and hence finishes the proof of Proposition \ref{prop4.9x}.
\end{proof}

The proofs of the following two propositions are similar to those
of \cite[Section 3.2]{s012}.

\begin{prop}\label{prop4.10x}
Let $\tau\in[1/p,\infty)$ and all other notation be as in Definition \ref{def3.2x}.
Then the supremum $\sup_{{k\in\zz},\,{\az\in I_k}}$
in Definition \ref{def3.2x} can be equivalently replaced by
$\sup_{{k\in\zz_+},\,{\az\in I_k}}$.
\end{prop}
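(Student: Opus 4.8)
The plan is as follows. Since $\zz_{+}\subset\zz$, the supremum over $k\in\zz_{+}$ is automatically dominated by the supremum over $k\in\zz$, so it will suffice to bound, for each $k\le-1$ and $\az\in I_{k}$, the term of the full supremum attached to $Q^{k}_{\az}$ by a fixed constant times the quantity $A$ obtained when the supremum is restricted to $k\in\zz_{+}$. Fix such $k,\az$. Since $k\le0$ we have $k\vee0=0$, so the inner sum over $j$ already starts at $j=0$, exactly as it does for a cube of level $0$. By Lemma \ref{lem3.1x} (with $0>k$), each level-$0$ cube $Q^{0}_{\bz}$ is either contained in $Q^{k}_{\az}$ or disjoint from it, the cubes $\{Q^{0}_{\bz}\}$ are pairwise disjoint, and $\bigcup_{\bz}Q^{0}_{\bz}$ has full measure; hence $Q^{k}_{\az}$ is, up to a $\mu$-null set, the disjoint union of $\{Q^{0}_{\bz}:\ Q^{0}_{\bz}\subset Q^{k}_{\az}\}$, so that $\sum_{\bz:\,Q^{0}_{\bz}\subset Q^{k}_{\az}}|Q^{0}_{\bz}|=|Q^{k}_{\az}|$ while $|Q^{0}_{\bz}|\le|Q^{k}_{\az}|$ for each such $\bz$. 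The hypothesis $\tau\ge1/p$, used in the form $\tau p\ge1$, then yields the key summation estimate
\begin{equation*}
\sum_{\bz:\,Q^{0}_{\bz}\subset Q^{k}_{\az}}|Q^{0}_{\bz}|^{\tau p}
=\sum_{\bz}|Q^{0}_{\bz}|\,|Q^{0}_{\bz}|^{\tau p-1}
\le|Q^{k}_{\az}|^{\tau p-1}\sum_{\bz}|Q^{0}_{\bz}|
=|Q^{k}_{\az}|^{\tau p}.\tag{$\star$}
\end{equation*}

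\textbf{Carrying it out.} For $F^{s,\tau}_{p,q}(M)$ and $\wz F^{s,\tau}_{p,q}(M)$ I would write $\int_{Q^{k}_{\az}}=\sum_{\bz}\int_{Q^{0}_{\bz}}$. Since the $j$-sum for the level-$0$ cube $Q^{0}_{\bz}$ also starts at $0$, the definition of $A$ bounds $\int_{Q^{0}_{\bz}}[\sum_{j\ge0}|\dz^{-js}\Phi_{j}(\sqrt L)f|^{q}]^{p/q}\,d\mu$ by $|Q^{0}_{\bz}|^{\tau p}A^{p}$; summing over $\bz$ and invoking $(\star)$ gives $\int_{Q^{k}_{\az}}[\,\cdots]^{p/q}\,d\mu\le|Q^{k}_{\az}|^{\tau p}A^{p}$, which is the desired bound after dividing by $|Q^{k}_{\az}|^{\tau p}$ and taking a $p$-th root. (The splitting only decomposes the region of integration, so the $x$-dependent weights $|B(x,\dz^{j})|^{-sq/d}$ occurring in $\wz F$ are untouched.) For $B^{s,\tau}_{p,q}(M)$ and $\wz B^{s,\tau}_{p,q}(M)$ one splits $\int_{Q^{k}_{\az}}|\cdots|^{p}\,d\mu=\sum_{\bz}\int_{Q^{0}_{\bz}}|\cdots|^{p}\,d\mu$ inside each $j$: if $q\ge p$, raise this to the power $q/p$ and apply Minkowski's inequality in $\ell^{q/p}$ over $j$ to pull the $\bz$-summation outside the $j$-summation, reducing to $\sum_{\bz}(\sum_{j\ge0}[\int_{Q^{0}_{\bz}}|\cdots|^{p}\,d\mu]^{q/p})^{p/q}\le A^{p}\sum_{\bz}|Q^{0}_{\bz}|^{\tau p}\le A^{p}|Q^{k}_{\az}|^{\tau p}$ by $(\star)$; if $q\le p$ (including $q=\fz$), use instead the elementary bound $(\sum_{\bz}t_{\bz})^{q/p}\le\sum_{\bz}t_{\bz}^{q/p}$ to interchange $\sum_{j}$ and $\sum_{\bz}$, which runs the same scheme with $\tau q$ in place of $\tau p$. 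In every case one arrives at $\|f\|\sim A$, i.e. the two (quasi-)norms are equivalent on $\cd'(M)$.

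\textbf{Main obstacle.} Once the Christ-cube decomposition is set up the computation is short; the only genuinely delicate ingredient is the exponent bookkeeping, namely that passing from a level-$k$ cube to its level-$0$ sub-cubes does not increase the $\tau p$-th power of the measures. This is precisely the estimate $(\star)$, and it holds because $\tau p\ge1$ — that is, because $\tau\ge1/p$; for $\tau<1/p$ the decomposition loses a positive power of $|Q^{k}_{\az}|$, which can be made arbitrarily large, and the two suprema cease to be comparable, so the threshold $1/p$ is sharp. In the Besov scale with $q\le p$ the power that must be controlled becomes $\tau q$ rather than $\tau p$, and this is the case in which the interplay between the summation exponents and the normalization $|Q^{k}_{\az}|^{-\tau}$ has to be monitored most carefully.
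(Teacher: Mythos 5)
Your argument for $F^{s,\tau}_{p,q}(M)$ and $\wz F^{s,\tau}_{p,q}(M)$ is correct and is essentially the paper's own proof: for $k<0$ one decomposes $Q^k_\az$ (up to a null set) into the level-$0$ Christ cubes it contains and uses $\tau p\ge1$ exactly in the form of your $(\star)$, i.e. $\sum_{\bz}|Q^0_\bz|^{\tau p}\le\big(\sum_{\bz}|Q^0_\bz|\big)^{\tau p}\le|Q^k_\az|^{\tau p}$. The same scheme, with Minkowski's inequality in $\ell^{q/p}$, also settles the Besov scale when $q\ge p$, and the case $q=\infty$ (which, incidentally, belongs to that branch rather than to the ``$q\le p$'' branch where you placed it, since there one simply uses $\sup_j\sum_\bz a_{j,\bz}\le\sum_\bz\sup_j a_{j,\bz}$ together with $(\star)$).

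The genuine gap is in your Besov branch with $q<p$. There the elementary bound $(\sum_\bz t_\bz)^{q/p}\le\sum_\bz t_\bz^{q/p}$ leads to the quantity $\sum_\bz|Q^0_\bz|^{\tau q}$, and ``running the same scheme with $\tau q$ in place of $\tau p$'' would require $\tau q\ge1$; the hypothesis only gives $\tau p\ge1$, and $\tau q<1$ is perfectly possible (e.g. $p=2$, $q=1$, $\tau=1/2$). In that regime the analogue of $(\star)$ is false: $\sum_\bz|Q^0_\bz|^{\tau q}$ is comparable to the number of level-$0$ cubes inside $Q^k_\az$ and is in general far larger than $|Q^k_\az|^{\tau q}$. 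One cannot fall back on the other branch either, because for $q<p$ the Minkowski interchange goes the wrong way, $\big\|\,\|a_{j,\bz}\|_{\ell^p_\bz}\big\|_{\ell^q_j}\ge\big\|\,\|a_{j,\bz}\|_{\ell^q_j}\big\|_{\ell^p_\bz}$. The obstruction is not merely cosmetic: if, say, $\tau=1/p$, $s=0$ and the pieces $\Phi_j(\sqrt L)f$, $j=1,\dots,N$, behave like normalized bumps sitting on $N$ distinct level-$0$ subcubes of a level-$k$ cube, then every term of the restricted supremum (over all cubes with $k\in\zz_+$, not only level $0$) stays bounded by an absolute constant, while the level-$k$ Besov quantity is of size $N^{1/q-1/p}$, which grows with $N$ when $q<p$. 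So this branch cannot be closed by the unit-cube decomposition and exponent bookkeeping you propose; it needs a genuinely different idea (note the paper itself only writes out the $\wz F$ case and asserts the remaining cases ``follow in the same manner'', so it offers no guidance here either). As written, your proof establishes the proposition for the Triebel--Lizorkin-type scales and for the Besov-type scales with $q\ge p$ or $q=\infty$, but not for $B^{s,\tau}_{p,q}(M)$, $\wz B^{s,\tau}_{p,q}(M)$ with $q<p$ and $\tau q<1$.
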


\begin{proof}
For any $k\in\zz$, let
$g_k:= [ \sum_{j=k\vee 0}^{\infty} |B(\cdot, \delta^{j})|^{-sq/d}
|\Phi_j(\sqrt L) f|^q]^{p/q}$.
When $k<0$, by (ii) and (iv) of Lemma \ref{lem3.1x}, we have
\begin{eqnarray*}
\frac1{|Q^k_\az|^{\tau p}}
\dint_{Q_\az^k} |g_k(x)|\,d\mu(x)
&& \le \lf[\sum_{\{\bz\in I_0:\, Q_\bz^0\cap Q_\az^k\neq\emptyset\}}
\frac{|Q_\bz^0|}{|Q_\az^k|}\r]^{\tau p}
\sup_{k\in\zz_+,\,\az\in I_k}\frac1{|Q_\az^k|^{\tau p}}
\dint_{Q_\az^k} |g_k(x)|\,d\mu(x)\\
&&\ls \sup_{k\in\zz_+,\,\az\in I_k}\frac1{|Q_\az^k|^{\tau p}}
\dint_{Q_\az^k} |g_k(x)|\,d\mu(x),
\end{eqnarray*}
where, in the second step, we used the fact $\tau p\ge1$
and the well-known inequality: for all $\{a_j\}_{j\in\nn}\subset \cc$
and $r\in(0,1]$,
$(\sum_{j\in\nn}|a_j|)^r\le \sum_{j\in\nn}|a_j|^r.$
This implies that
\begin{eqnarray*}
\|f\|_{\wz F^{s,\tau}_{p, q}(M)}
\ls\sup_{\gfz{k\in\zz_+}{\az\in I_k}} \frac1{|Q^k_\az|^{\tau}}
\bigg\{ \dint_{Q^k_\az}\bigg[ \sum_{j=k}^{\infty} |B(x, \delta^{j})|^{-sq/d}
|\Phi_j(\sqrt L) f(x)|^q\bigg]^{p/q}\,d\mu(x)\bigg\}^{1/p}.
\end{eqnarray*}
The converse of this inequality is obvious.
The  equivalence for the spaces $F^{s,\tau}_{p, q}(M)$,
$\wz B^{s,\tau}_{p, q}(M)$ and $\wz B^{s,\tau}_{p, q}(M)$ follow from
the same manner. This finishes the proof of Proposition \ref{prop4.10x}.
\end{proof}

\begin{prop}\label{prop4.11x}
Let $\tau\in[0,1/p)$ and all other notation be as in Definition \ref{def3.2x}.
Then the summation $\sum_{j=k\vee 0}^\infty$
in Definition \ref{def3.2x} can be equivalently replaced by
$\sum_{j=0}^\infty$.%
\end{prop}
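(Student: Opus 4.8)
The plan is to show that the (quasi-)norm of Definition \ref{def3.2x} and the one obtained from it by replacing $\sum_{j=k\vee0}^\fz$ by $\sum_{j=0}^\fz$ are equivalent. One inequality is immediate: since all summands are nonnegative, enlarging the range of summation only increases the (quasi-)norm, so the modified (quasi-)norm dominates the original one. For the converse, write $\|f\|$ for the (quasi-)norm of Definition \ref{def3.2x}; since the two sums already coincide when $k\le0$, it suffices to bound, for every $k\in\nn$ and $\az\in I_k$, the extra ``low-frequency'' block $\sum_{j=0}^{k-1}$ over the Christ cube $Q^k_\az$ by a constant multiple of $\|f\|$. This is where the assumption $\tau<1/p$ enters.

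The crux is the sharp pointwise estimate: for all $j\in\zz_+$ and $x\in M$,
\begin{equation}\label{eq-key-ptw}
\dz^{-js}\,|\Phi_j(\sqrt L)f(x)|\ls|B(x,\dz^j)|^{\tau-1/p}\,\|f\|
\end{equation}
(with $\dz^{-js}$ replaced by $|B(x,\dz^j)|^{-s/d}$ for the spaces $\wz B^{s,\tau}_{p,q}(M)$, $\wz F^{s,\tau}_{p,q}(M)$, and with $F$ in place of $B$). For the measure-normalized spaces \eqref{eq-key-ptw} is exactly the embedding of Proposition \ref{prop4.7x} combined with the description of the $\wz B^{s'}_{\fz,\fz}(M)$-(quasi-)norm recalled in Section \ref{sec-4.2}; here the exponent $\tau-1/p$ is negative. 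For $B^{s,\tau}_{p,q}(M)$ and $F^{s,\tau}_{p,q}(M)$ the embedding only gives the weaker bound $\dz^{-js}|\Phi_j(\sqrt L)f(x)|\ls\dz^{-jd(1/p-\tau)}\|f\|$, so one argues directly: apply Lemma \ref{lem4.4x} at level $j$ (with some $r\in(0,\min\{1,p\})$ and with $\sz,N$ taken large), multiply by $\dz^{-jsr}$ and use the splitting $\dz^{-jsr}=\dz^{(i-j)sr}\dz^{-isr}$ to absorb $\dz^{-isr}$ into the pieces $\Phi_i(\sqrt L)f$, $i\ge j$, at the cost of a factor $\dz^{(i-j)(N\bz_0 r-d+sr)}$; then split the spatial integral over the level-$j$ Christ cubes $Q^j_\gz$, estimate $\int_{Q^j_\gz}|\dz^{-is}\Phi_i(\sqrt L)f(z)|^r\,d\mu(z)\le|Q^j_\gz|^{1-r/p}\big(\int_{Q^j_\gz}|\dz^{-is}\Phi_i(\sqrt L)f(z)|^p\,d\mu(z)\big)^{r/p}\le|Q^j_\gz|^{\tau r-r/p+1}\|f\|^r$ by H\"older and the $i$-th term of $\|f\|$ over $Q^j_\gz$, and sum over $\gz$ and over $i\ge j$. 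Because $\tau<1/p$ makes the exponent $\tau r-r/p$ negative, the cube-volume factors behave correctly, and with $\sz,N$ large both the spatial sum (handled via doubling and \eqref{rdoubling2}) and the sum over $i$ converge; since $|Q^j_\gz|\sim|B(x,\dz^j)|$ when $x\in Q^j_\gz$ (Lemma \ref{lem3.1x}(iv) and doubling), this gives \eqref{eq-key-ptw}.

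Granting \eqref{eq-key-ptw}, fix $k\in\nn$, $\az\in I_k$ and $0\le j<k$. For $x\in Q^k_\az$ one has $\rho(x,z^k_\az)\le\diam Q^k_\az\ls\dz^k\le\dz^j$, hence $|B(x,\dz^j)|\sim|B(z^k_\az,\dz^j)|=:V_j$ by doubling, while $|Q^k_\az|\sim V_k:=|B(z^k_\az,\dz^k)|$ by Lemma \ref{lem3.1x}(iv). Thus \eqref{eq-key-ptw} yields
$$\frac1{|Q^k_\az|^\tau}\lf[\int_{Q^k_\az}|\dz^{-js}\Phi_j(\sqrt L)f(x)|^p\,d\mu(x)\r]^{1/p}
\ls V_j^{\tau-1/p}\,|Q^k_\az|^{1/p-\tau}\,\|f\|\sim\lf(\frac{V_k}{V_j}\r)^{1/p-\tau}\|f\|.$$
Since $j<k$, \eqref{rdoubling2} gives $V_k/V_j\ls\dz^{(k-j)\kz}$, and as $1/p-\tau>0$ the right-hand side is $\ls\dz^{(k-j)\kz(1/p-\tau)}\|f\|$; summing this geometric bound over $j\in\{0,\dots,k-1\}$ in the $\ell^q$-(quasi-)norm (with the obvious modification for $q=\fz$) controls the extra block of $\|f\|_{B^{s,\tau}_{p,q}(M)}$ by $\ls\|f\|$, and the same works for $\wz B^{s,\tau}_{p,q}(M)$. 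For the Triebel--Lizorkin spaces one instead inserts \eqref{eq-key-ptw} inside the inner $\ell^q$-sum, uses $\sum_{j<k}V_j^{(\tau-1/p)q}\ls V_k^{(\tau-1/p)q}$, then integrates over $Q^k_\az$ and divides by $|Q^k_\az|^{\tau p}\sim V_k^{\tau p}$, the powers of $V_k$ cancelling exactly. Together with the trivial direction, this proves the asserted equivalence.

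The main obstacle is \eqref{eq-key-ptw} with the \emph{negative} power $\tau-1/p$ of $|B(x,\dz^j)|$, rather than the exponent $-1/p$ that a plain ball-average would produce or the weaker $\dz^{-jd(1/p-\tau)}$ that the embedding supplies: on a general space of homogeneous type these differ, and it is precisely this difference that decides whether the geometric series in $j$ in the previous paragraph converges. Obtaining that sharp gain uses $\tau<1/p$ (equivalently $\tau p<1$) decisively, in the H\"older step packaging the single-annulus information of the generation-$j$ Christ cubes; everything after \eqref{eq-key-ptw} is the routine summation of geometric series, together with the standard Besov-versus-Triebel--Lizorkin bookkeeping via Minkowski's inequality as in \eqref{eq:4.8x}.
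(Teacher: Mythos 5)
Your proof is correct, and for the spaces $\wz B^{s,\tau}_{p,q}(M)$ and $\wz F^{s,\tau}_{p,q}(M)$ it is exactly the paper's argument: the pointwise bound $|B(x,\dz^j)|^{-s/d}|\Phi_j(\sqrt L)f(x)|\ls |B(x,\dz^j)|^{\tau-1/p}\|f\|$ comes straight from Proposition \ref{prop4.7x} together with the sup-norm description of $\wz B^{s+d\tau-d/p}_{\fz,\fz}(M)$, and the low-frequency block is then summed geometrically via the reverse doubling estimate \eqref{rdoubling2} and $|B(x,\dz^k)|\sim|Q^k_\az|$, with the powers of $|Q^k_\az|$ cancelling. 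Where you genuinely go beyond the paper is in the non-normalized spaces $B^{s,\tau}_{p,q}(M)$ and $F^{s,\tau}_{p,q}(M)$, which the paper dismisses with ``by similarity.'' You correctly observe that the literal analogue of the paper's route only yields $\dz^{-js}|\Phi_j(\sqrt L)f(x)|\ls\dz^{-jd(1/p-\tau)}\|f\|$, and that playing this off against $|Q^k_\az|^{1/p-\tau}$ would require $|Q^k_\az|\ls\dz^{kd}$, which the hypotheses \eqref{doubling2} and \eqref{non-collapsing} do not give (they give the opposite bound $|B(x,\dz^k)|\gs\dz^{kd}$); so on a non-Ahlfors-regular $M$ that shortcut does not close. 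Your remedy --- proving the stronger ``hybrid'' estimate $\dz^{-js}|\Phi_j(\sqrt L)f(x)|\ls|B(x,\dz^j)|^{\tau-1/p}\|f\|$ directly from Lemma \ref{lem4.4x}, H\"older's inequality over the generation-$j$ Christ cubes, the comparability $|Q^j_\gz|\sim|B(z^j_\gz,\dz^j)|$, and the standard summation $\sum_{\gz\in I_j}[1+\dz^{-j}\rho(x,z^j_\gz)]^{-\sz'}\ls1$ --- is sound (I checked the exponents: $\tau r-r/p<0$ is exactly what lets the cube-volume factors be converted into $|B(x,\dz^j)|^{\tau r-r/p}$ at the cost of a power of $1+\dz^{-j}\rho(x,z^j_\gz)$ absorbed by taking $\sz$ large, and $N$ large handles the sum in $i$), and it in fact also re-proves the non-tilde half of Proposition \ref{prop4.7x}. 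Two trivial remarks: the spatial sum over $\gz\in I_j$ needs only doubling, not \eqref{rdoubling2}; and your use of the single-index terms of the norm at level $j$ together with the Minkowski embedding \eqref{eq:4.8x} to cover the $F$-scale is exactly the right bookkeeping. So the tilde case matches the paper, while your treatment of the non-tilde case supplies a step the paper's ``by similarity'' glosses over.
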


\begin{proof}
By similarity, we only consider $\wz F^{s,\tau}_{p, q}(M)$.
It is enough to prove that, when $k\in\nn$,
\begin{eqnarray*}
{\rm J}:= \frac1{|Q^k_\az|^\tau}
\bigg\{ \dint_{Q^k_\az}\bigg[ \sum_{j=0}^{k-1} |B(x, \delta^{j})|^{-sq/d}
|\Phi_j(\sqrt L) f(x)|^q\bigg]^{p/q}\,d\mu(x)\bigg\}^{1/p}
\ls \|f\|_{\wz F^{s,\tau}_{p, q}(M)}.
\end{eqnarray*}
By Proposition \ref{prop4.7x}, we know that
$\wz F_{p,q}^{s,\tau}(M)\hookrightarrow \wz B_{\fz,\fz}^{s+d\tau-d/p}(M)$.
Thus, to show the above desired estimate, it suffices to prove that
${\rm J}\ls\|f\|_{\wz B_{\fz,\fz}^{s+d\tau-d/p}(M)}.$
Indeed,
using  \eqref{rdoubling2}, we see that, for all $x\in Q_\az^k$
and $0\le j\le k-1$,
$|B(x, \delta^{j})| \gs \dz^{(j-k)\kz}|B(x, \delta^{k})|
\sim \dz^{(j-k)\kz}|Q_\az^k|,$
which, combined with the fact $\tau<1/p$, further implies that
\begin{eqnarray*}
{\rm J}
\le \|f\|_{\wz B_{\fz,\fz}^{s+d\tau-d/p}(M)}
\frac1{|Q^k_\az|^\tau}
\bigg\{ \dint_{Q^k_\az}\bigg[ \sum_{j=0}^{k-1} |B(x, \delta^{j})|^{q(\tau -1/p)}
\bigg]^{p/q}\,d\mu(x)\bigg\}^{1/p}\ls \|f\|_{\wz B_{\fz,\fz}^{s+d\tau-d/p}(M)}.
\end{eqnarray*}
This finishes the proof of Proposition \ref{prop4.11x}.
\end{proof}


\section{Equivalent characterizations}\label{sec-6}

\hskip\parindent
In this section, we characterize the Besov-type and the Triebel--Lizorkin-type spaces
via the Peetre maximal functions and the heat semigroups; see Theorems \ref{thm6.2x},
 \ref{thm6.7x} and \ref{thm6.8x}  below, respectively.


\subsection{Peetre maximal function characterizations}\label{sec-6.1}
\hskip\parindent  For notational convenience, we introduce the following
(quasi-)norms.

\begin{defn}\label{def6.1x}
Let $p,\,q\in(0,\fz]$ and $\tau\in[0,\fz)$. The {\it space}
$\ell^q(L^p_\tau)$ is defined to be the
set of all sequences $\{g_j\}_{j\in\zz_+}$ of measurable
functions on $M$ such that
$$\|\{g_j\}_{j\in\zz_+}\|_{\ell^q(L^p_\tau)}
:= \sup_{\gfz{k\in\zz}{\az\in I_k}} \frac1{|Q^k_\az|^\tau}
\bigg\{\sum_{j=k\vee0}^\fz \bigg[\int_{Q^k_\az}
|g_j(x)|^p\,d\mu(x)\bigg]^{q/p}\bigg\}^{1/q}<\fz$$
with a suitable modification made when $p=\infty$ or $q=\infty$.
The {\it space} $L^p_\tau(\ell^q)$ (only for $p\in(0,\infty)$)
 is defined to be the
set of all sequences $\{g_j\}_{j\in\zz_+}$ of
measurable functions on $M$ such that
$$\|\{g_j\}_{j\in\zz_+}\|_{L^p_\tau(\ell^q)}
:= \sup_{\gfz{k\in\zz}{\az\in I_k}} \frac1{|Q^k_\az|^\tau}
\bigg\{\int_{Q^k_\az}\bigg[\sum_{j=k\vee0}^\fz
|g_j(x)|^q\bigg]^{p/q}\,d\mu(x)\bigg\}^{1/p}<\fz$$
with a suitable modification made when  $q=\infty$.
\end{defn}

With the notation in Definition \ref{def6.1x}, we rewrite
the (quasi-)norms in Definition \ref{def3.2x} as follows:
$$\|f\|_{B_{p,q}^{s,\tau}(M)}
=\|\{\delta^{-js}\Phi_j(\sqrt L) f\}_{j\in\zz_+}\|_{\ell^q(L^p_\tau)},
\quad
\|f\|_{\wz B_{p,q}^{s,\tau}(M)}
=\|\{|B(\cdot, \dz^j)|^{-s/d}\Phi_j(\sqrt L) f\}_{j\in\zz_+}\|_{\ell^q(L^p_\tau)},
$$
and
$$\|f\|_{F_{p,q}^{s,\tau}(M)}
=\|\{\delta^{-js}\Phi_j(\sqrt L)f\}_{j\in\zz_+}\|_{L^p_\tau(\ell^q)},
\quad
\|f\|_{\wz F_{p,q}^{s,\tau}(M)}
=\|\{|B(\cdot, \dz^j)|^{-s/d}\Phi_j(\sqrt L)f\}_{j\in\zz_+}\|_{L^p_\tau(\ell^q)}.
$$

\begin{thm}\label{thm6.2x}
Let $(\Psi_0, \Psi)$ satisfy \eqref{eq:2.14xx} and \eqref{eq:2.15xx}.
For $j\in\nn$, define $\Psi_j$ as in \eqref{eq:2.16xx}. Let $s\in\rr$
and $[\Psi_j(\sqrt{L})]_a^*f$ with $a\in(0,\fz)$ and $j\in\zz$
be defined as in Definition \ref{def4.1x}.
\begin{enumerate}
\item[\rm (i)] Let $\tau\in[0,\infty)$ and $q\in(0,\infty]$.
If $p\in(0,\infty]$ and $a>d(\tau+1/p)$, then there exists a constant $C\in[1,\fz)$ such that, for all $f\in\cd'(M)$,
\begin{eqnarray}\label{eq:6.1x}
C^{-1}\|f\|_{B_{p,q}^{s,\tau}(M)}
\le
\|\{\delta^{-js}[\Psi_j(\sqrt L)]_a^* f\}_{j\in\zz_+}\|_{\ell^q(L^p_\tau)}
\le C\|f\|_{B_{p,q}^{s,\tau}(M)}
\end{eqnarray}
and
\begin{eqnarray}\label{eq:6.2x}
C^{-1}\|f\|_{\wz B_{p,q}^{s,\tau}(M)}
\le\|\{[\Psi_j(\sqrt L)]_{a,-s/d}^* f\}_{j\in\zz_+}\|_{\ell^q(L^p_\tau)}
\le C\|f\|_{\wz B_{p,q}^{s,\tau}(M)}.
\end{eqnarray}

\item[\rm (ii)] Let $\tau\in[0,\infty)$ and $q\in(0,\infty]$.
If $p\in(0,\infty)$ and $a>d[\tau+1/(p\wedge q)]$, then there exists a constant $C\in[1,\fz)$ such that, for all $f\in\cd'(M)$,
\begin{eqnarray}\label{eq:6.3x}
C^{-1}\|f\|_{F_{p,q}^{s,\tau}(M)}
\le
\|\{\delta^{-js}[\Psi_j(\sqrt L)]_a^* f\}_{j\in\zz_+}\|_{L^p_\tau(\ell^q)}
\le C\|f\|_{F_{p,q}^{s,\tau}(M)}
\end{eqnarray}
and
\begin{eqnarray}\label{eq:6.4x}
C^{-1}\|f\|_{\wz F_{p,q}^{s,\tau}(M)}
\le\|\{[\Psi_j(\sqrt L)]_{a,-s/d}^* f\}_{j\in\zz_+}\|_{L^p_\tau(\ell^q)}
\le C\|f\|_{\wz F_{p,q}^{s,\tau}(M)}.
\end{eqnarray}

\item[\rm (iii)] The spaces $B_{p,q}^{s,\tau}(M)$, $\wz B_{p,q}^{s,\tau}(M)$,
$F_{p,q}^{s,\tau}(M)$ and $\wz F_{p,q}^{s,\tau}(M)$
are independent of the choices of the functions $(\Phi_0,\Phi)$
satisfying \eqref{eq:2.14xx} and \eqref{eq:2.15xx}.
\end{enumerate}
\end{thm}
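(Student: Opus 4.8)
The plan is to reduce the whole statement to a single \emph{two-family comparison inequality}: for any two pairs $(\Phi_0,\Phi)$ and $(\Psi_0,\Psi)$ satisfying \eqref{eq:2.14xx} and \eqref{eq:2.15xx}, with $\Phi_j,\Psi_j$ defined by \eqref{eq:2.16xx}, and for $a$ in the stated range,
$$\big\|\{\dz^{-js}[\Psi_j(\sqrt L)]^*_{a,\gz}f\}_{j\in\zz_+}\big\|_{X}\ls \big\|\{\dz^{-\ell s}|B(\cdot,\dz^\ell)|^{\gz}\Phi_\ell(\sqrt L)f\}_{\ell\in\zz_+}\big\|_{X},\qquad(\star)$$
where $X\in\{\ell^q(L^p_\tau),\,L^p_\tau(\ell^q)\}$ and $\gz=0$ (resp. $\gz=-s/d$) according to whether we treat $B^{s,\tau}_{p,q}$, $F^{s,\tau}_{p,q}$ (resp. $\wz B^{s,\tau}_{p,q}$, $\wz F^{s,\tau}_{p,q}$); note that the right-hand side of $(\star)$ is exactly the corresponding (quasi-)norm of $f$ when $\Phi$ is the defining pair. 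Granting $(\star)$ for arbitrary admissible pairs, the right-hand inequalities in (i) and (ii) are immediate. The left-hand inequalities follow by combining $(\star)$ (with the roles of $\Phi$ and $\Psi$ exchanged) with the trivial pointwise domination $|B(x,\dz^\ell)|^\gz|\Theta_\ell(\sqrt L)f(x)|\le[\Theta_\ell(\sqrt L)]^*_{a,\gz}f(x)$ (take $y=x$ in the supremum), valid for $\Theta\in\{\Phi,\Psi\}$: indeed $\|f\|_{(\Phi)}\le\|\{\dz^{-js}[\Phi_j(\sqrt L)]^*_{a,\gz}f\}\|_X\ls\|\{\dz^{-\ell s}|B(\cdot,\dz^\ell)|^\gz\Psi_\ell(\sqrt L)f\}\|_X\le\|\{\dz^{-js}[\Psi_j(\sqrt L)]^*_{a,\gz}f\}\|_X$. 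Finally, since $(\star)$ holds for the norm defined by \emph{any} admissible $(\Phi_0,\Phi)$, applying (i)/(ii) with two different defining pairs and the same Peetre family proves (iii); a priori finiteness of the Peetre maximal functions for $f\in\cd'(M)$ (so that the spaces are well defined) comes out of the same estimates together with the growth bounds behind Lemma \ref{lem4.4x}.

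To prove $(\star)$ I would start from the continuous Calder\'on reproducing formula \eqref{c-crf}: writing $f=\sum_{\ell\in\zz_+}\wz\Phi_\ell(\sqrt L)\Phi_\ell(\sqrt L)f$ in $\cd'(M)$, with $(\wz\Phi_0,\wz\Phi)$ the dual pair constructed before \eqref{c-crf}, and applying $\Psi_j(\sqrt L)$ gives $\Psi_j(\sqrt L)f=\sum_{\ell\in\zz_+}(\Psi_j(\sqrt L)\wz\Phi_\ell(\sqrt L))\Phi_\ell(\sqrt L)f$. By Proposition \ref{prop2.14x} and Remark \ref{rem2.15x}, the kernel of $\Psi_j(\sqrt L)\wz\Phi_\ell(\sqrt L)$ is bounded by $C_{\sz,m}\,\dz^{|j-\ell|(m\bz_0-d)}D_{\dz^{j\wedge\ell},\sz}(x,y)$, with $m\in\zz_+$ and $\sz>d$ at our disposal. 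Inserting these bounds into $\sup_{y}\frac{|B(y,\dz^j)|^\gz|\Psi_j(\sqrt L)f(y)|}{[1+\dz^{-j}\rho(x,y)]^{a}}$, using the elementary comparisons between $|B(y,\dz^j)|$, $|B(z,\dz^\ell)|$ and the quantities $[1+\dz^{-(j\wedge\ell)}\rho(\cdot,\cdot)]$ from Section \ref{sec-2.1} (together with Lemma \ref{lem2.1x}), and then running the argument in the proof of Proposition \ref{prop4.2x} verbatim with $\Phi_\ell(\sqrt L)f$ in place of $P_\ell(\sqrt L)f$, one arrives, for $x\in Q^k_\az$, at a pointwise estimate of the form
$$\dz^{-js}[\Psi_j(\sqrt L)]^*_{a,\gz}f(x)\ls\sum_{\ell\in\zz_+}\sum_{i=0}^{\fz}\dz^{|j-\ell|\vp}\,\dz^{i\vp}\,\cm_r\!\big(\chi_{B(z_\az^{k},\,\dz^{(j\wedge\ell)-i}+\diam Q^k_\az)}\,\dz^{-\ell s}|B(\cdot,\dz^\ell)|^\gz|\Phi_\ell(\sqrt L)f|\big)(x),$$
where $r\in(0,\fz)$ and $\vp>0$ may be prescribed in advance (the exponent $\vp$ is made large by choosing $m$ large, and $r$ made small by choosing $m,\sz$ large), subject only to $a=\nu+d/r$ for some $\nu>0$.

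It then remains to take the $X$-(quasi-)norm of the right-hand side. One chooses $r<p$ (for the Besov scale, using $\ell^q(L^p_\tau)$) or $r<p\wedge q$ (for the Triebel--Lizorkin scale, using $L^p_\tau(\ell^q)$) together with $d/r<a-d\tau$ and $\nu=a-d/r>0$, which is possible \emph{precisely because} $a>d(\tau+1/p)$, resp. $a>d[\tau+1/(p\wedge q)]$; and one takes $\vp$ larger than a fixed multiple of $d(1+\tau)+|s|$. Then I would: (1) dominate $\cm_r$ of a function supported in $B(z_\az^k,\dz^{(j\wedge\ell)-i}+\diam Q^k_\az)$, integrated over $Q^k_\az$, by the same quantity integrated over that ball, and replace the normalization $|Q^k_\az|^{-\tau}$ by $|B(z_\az^k,\dz^{(j\wedge\ell)-i}+\diam Q^k_\az)|^{-\tau}$ at the cost of a factor bounded by a power $\dz^{-i\,c(d,\tau)}$ of the scale ratio — absorbed by $\dz^{i\vp}$; after covering that ball by finitely many Christ cubes one recovers the defining (quasi-)norm of $f$ against the family $\Phi$; (2) apply the scalar maximal inequality for $\cm_r$ on $L^p_\tau$ (Besov case, combined with a standard $\ell^q$-triangle/convolution estimate) and the Fefferman--Stein-type vector-valued maximal inequality for $\cm_r$ on $L^p_\tau(\ell^q)$ (Triebel--Lizorkin case); (3) sum the resulting geometric series $\sum_{j,\ell}\dz^{|j-\ell|\vp}(\cdots)$ by a convolution estimate for $\ell^q$-sequences. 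The case $p=\fz$ (and $q=\fz$) is handled the same way, $\cm$ being trivially bounded on $L^\fz$. The main obstacle is exactly this bookkeeping: making the three nested summations (over $j$ or $j\wedge\ell$, over $\ell$, and over $i$) converge simultaneously while keeping the Morrey-type supremum over Christ cubes under control — this is where the sharp lower bounds on $a$, the geometric decay $\dz^{i\vp}$, and the localizing characteristic functions of Proposition \ref{prop4.2x} are all indispensable, and where, for small $p$ or $q$, one must verify the vector-valued maximal inequality applies in the chosen range of $r$.
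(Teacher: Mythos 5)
Your proposal is correct and is essentially the paper's own argument: both rest on the continuous Calder\'on reproducing formula together with the finite spectral overlap of two admissible families, on Proposition \ref{prop4.2x} with its localizing characteristic functions, on Lemma \ref{lem6.3x}, on the Christ-cube covering bookkeeping ($\sharp\cj_{k,i}\ls 1$ and $|Q^{k-i}_\bz|\ls\dz^{-id}|Q^k_\az|$), and on the Hardy--Littlewood and Fefferman--Stein maximal inequalities, the only difference being organizational: the paper proves the one-family estimates \eqref{eq:6.10x} and \eqref{eq:6.14x} and chains them with the easy direction, while you fold the same ingredients into the single two-family inequality $(\star)$. One small correction: the $i$-decay exponent in your pointwise estimate is $\nu=a-d/r$, hence capped by $a$ rather than freely enlargeable through $m$ (only the $|j-\ell|$-decay is), but your subsequent choice $r<p$ (resp.\ $r<p\wedge q$) with $d/r<a-d\tau$ uses it correctly, which is exactly where the hypotheses $a>d(\tau+1/p)$ and $a>d[\tau+1/(p\wedge q)]$ enter.
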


The Peetre maximal function characterizations for Besov-type and
Triebel--Lizorkin-type
spaces on $\rn$ can be found in \cite{YY1,lsuyy}.
To prove Theorem \ref{thm6.2x}, we need
the following estimates.

\begin{lem}\label{lem6.3x}
Let $\varepsilon\in(0,1)$, $\sigma\in(0,\infty)$  and $\{\varepsilon_j\}_{j=0}^\infty\subset[0,1]$.
When
$\sz\in(1,\infty)$, assume that
$$\bigg(\sum_{j=0}^\infty |\varepsilon_j|^{\varepsilon \sz'}\bigg)^{1/\sz'}=:B<\infty,$$
where $\sz'$ denotes the conjugate index of $\sz$.
Then, for any sequence $\{a_j\}_{j=0}^\infty\subset\cc$,
\begin{eqnarray}\label{eq:6.5x}
\bigg(\sum_{j=0}^\infty |\varepsilon_j a_j| \bigg)^\sigma
\le \max\{B^{\sz},1\}\,\sum_{j=0}^\infty |\varepsilon_j|^{\sigma (1-\varepsilon)} |a_j|^\sz.
\end{eqnarray}
\end{lem}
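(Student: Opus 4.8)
The plan is to split into the two cases $\sigma\in(0,1]$ and $\sigma\in(1,\infty)$ according to whether the outer power is subadditive or requires H\"older's inequality. First I would handle $\sigma\in(0,1]$: here the elementary inequality $(\sum_j|b_j|)^\sigma\le\sum_j|b_j|^\sigma$, applied with $b_j:=\varepsilon_j a_j$, gives
\begin{eqnarray*}
\bigg(\sum_{j=0}^\infty|\varepsilon_j a_j|\bigg)^\sigma
\le \sum_{j=0}^\infty|\varepsilon_j a_j|^\sigma
=\sum_{j=0}^\infty|\varepsilon_j|^\sigma|a_j|^\sigma
\le \sum_{j=0}^\infty|\varepsilon_j|^{\sigma(1-\varepsilon)}|a_j|^\sigma,
\end{eqnarray*}
where the last step uses $\varepsilon_j\in[0,1]$ and $\varepsilon\in(0,1)$ so that $|\varepsilon_j|^\sigma\le|\varepsilon_j|^{\sigma(1-\varepsilon)}$. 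Since $\max\{B^\sigma,1\}\ge1$, \eqref{eq:6.5x} follows in this case (in fact with constant $1$), and note that for $\sigma\le1$ no hypothesis on $B$ was needed.

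For $\sigma\in(1,\infty)$ I would first rewrite the summand as a product designed so that H\"older's inequality with exponents $\sigma$ and $\sigma'$ separates the ``$\varepsilon$'' weight from the ``$a$'' weight in the right proportions. Writing
\[
|\varepsilon_j a_j|=|\varepsilon_j|^{\varepsilon}\cdot\Big(|\varepsilon_j|^{1-\varepsilon}|a_j|\Big),
\]
H\"older's inequality yields
\begin{eqnarray*}
\sum_{j=0}^\infty|\varepsilon_j a_j|
\le\bigg(\sum_{j=0}^\infty|\varepsilon_j|^{\varepsilon\sigma'}\bigg)^{1/\sigma'}
\bigg(\sum_{j=0}^\infty|\varepsilon_j|^{(1-\varepsilon)\sigma}|a_j|^\sigma\bigg)^{1/\sigma}
=B\,\bigg(\sum_{j=0}^\infty|\varepsilon_j|^{\sigma(1-\varepsilon)}|a_j|^\sigma\bigg)^{1/\sigma}.
\]
Raising both sides to the power $\sigma$ gives
\[
\bigg(\sum_{j=0}^\infty|\varepsilon_j a_j|\bigg)^\sigma
\le B^\sigma\sum_{j=0}^\infty|\varepsilon_j|^{\sigma(1-\varepsilon)}|a_j|^\sigma
\le\max\{B^\sigma,1\}\sum_{j=0}^\infty|\varepsilon_j|^{\sigma(1-\varepsilon)}|a_j|^\sigma,
\]
which is exactly \eqref{eq:6.5x}. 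Combining the two cases completes the argument.

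\textbf{Main obstacle.} There is essentially no deep obstacle here; the only point requiring a little care is the bookkeeping of exponents in the $\sigma>1$ case — one must verify that the exponent attached to $|\varepsilon_j|$ in the first H\"older factor is $\varepsilon\sigma'$ (so the finiteness hypothesis on $B$ is precisely what is used) while the exponent in the second factor collapses to $\sigma(1-\varepsilon)$ after the outer power. It is also worth remarking explicitly that the hypothesis $B<\infty$ is only invoked when $\sigma>1$, which matches the way the lemma is stated; and that the constant $\max\{B^\sigma,1\}$ (rather than just $B^\sigma$) is needed only to cover the case $\sigma\le1$ uniformly in one statement. If desired, one could also note that when all $\varepsilon_j\in[0,1]$ one may replace $\varepsilon\sigma'$ by any smaller positive exponent, but this is not needed for the stated conclusion.
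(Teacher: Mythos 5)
Your proof is correct and follows essentially the same route as the paper: for $\sigma\in(0,1]$ the subadditivity of $t\mapsto t^{\sigma}$ together with $|\varepsilon_j|\le 1$, and for $\sigma\in(1,\infty)$ the splitting $|\varepsilon_j a_j|=|\varepsilon_j|^{\varepsilon}\cdot|\varepsilon_j|^{1-\varepsilon}|a_j|$ followed by H\"older's inequality with exponents $\sigma'$ and $\sigma$, yielding the constant $B^{\sigma}$. Your bookkeeping of exponents and the observation that $B<\infty$ is only needed when $\sigma>1$ match the paper's argument.
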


\begin{proof}
If $\sz\in(0,1]$,
 then \eqref{eq:6.5x} follows from the facts that every $|\varepsilon_j|\le1$ and
$$\bigg(\sum_{j=0}^\infty |\varepsilon_j||a_j|\bigg)^\sz\le \sum _{j=0}^\infty |\varepsilon_j|^{\sz}|a_j|^\sz
\le \sum_{j=0}^\infty |\varepsilon_j|^{\sz(1-\varepsilon)}|a_j|^\sz.$$
If $\sz\in(1,\infty)$, then H\"older's inequality implies that
 \begin{eqnarray*}
\bigg(\sum_{j=0}^\infty |\varepsilon_j a_j| \bigg)^\sigma
\le \bigg(\sum_{j=0}^\infty |\varepsilon_j|^{\varepsilon \sz'} \bigg)^{\sz/\sz'}
\sum_{j=0}^\infty |\varepsilon_j|^{\sigma (1-\varepsilon)} |a_j|^\sz
\le B^{\sz}\sum_{j=0}^\infty |\varepsilon_j|^{\sigma (1-\varepsilon)} |a_j|^\sz.
 \end{eqnarray*}
 This finishes the proof of Lemma \ref{lem6.3x}.
\end{proof}

The following estimates were essentially established in \cite[Lemma~2.3]{YY1};
however we give a much simpler proof here by using Lemma \ref{lem6.3x}.

\begin{lem}\label{lem6.4x}
Let $b\in(0,1)$, $q\in(0,\fz]$, $\tau\in[0,\fz)$ and $\theta\in(d\tau,\fz)$. Suppose
that $\{g_m\}_{m\in\zz_+}$ are measurable functions on $M$. For
all $j\in\zz_+$, let
$G_j:=\sum_{m\in\zz_+}b^{|m-j|\theta}g_m.$
\begin{enumerate}
\item[{\rm (i)}] If $p\in(0,\fz]$, then there exists a positive constant $C$,
independent of $\{g_m\}_{m\in\zz_+}$, such that
$$\|\{G_j\}_{j\in\zz_+}\|_{\ell^q(L^p_\tau)}\le C
\|\{g_m\}_{m\in\zz_+}\|_{\ell^q(L^p_\tau)}.$$

\item[{\rm (ii)}] If $p\in(0,\fz)$, then there exists a positive constant $C$,
independent of $\{g_m\}_{m\in\zz_+}$, such that
$$\|\{G_j\}_{j\in\zz_+}\|_{L^p_\tau(\ell^q)}\le C
\|\{g_m\}_{m\in\zz_+}\|_{L^p_\tau(\ell^q)}.$$
\end{enumerate}
\end{lem}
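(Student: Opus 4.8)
The plan is to reduce Lemma \ref{lem6.4x} to the scalar convolution-type inequality in Lemma \ref{lem6.3x} by fixing a dyadic cube $Q_\az^k$ and estimating the local norm of $\{G_j\}$ on $Q_\az^k$ in terms of that of $\{g_m\}$. First I would fix $\theta'\in(d\tau,\theta)$ and set $\vp:=(\theta-\theta')/\theta\in(0,1)$, so that $b^{|m-j|\theta}=b^{|m-j|\theta'}\cdot b^{|m-j|\theta\vp}$; this is exactly the splitting needed to apply Lemma \ref{lem6.3x} with $\ez_{m}:=b^{|m-j|\theta\vp}$ (reindexed appropriately) and $\sz=q$ when $q>1$. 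The key point is that $\sum_m (b^{|m-j|\theta\vp})^{\ez q'}$ is a geometric-type sum bounded by a constant $B$ independent of $j$, provided one also chooses the auxiliary exponent in Lemma \ref{lem6.3x} small enough (any fixed positive exponent works since $b<1$). When $q\le1$ the trivial estimate $(\sum|a_j|)^q\le\sum|a_j|^q$ suffices and no Hölder step is needed.

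Next, for part (i), I would write, for each fixed $(k,\az)$,
\begin{eqnarray*}
\f1{|Q_\az^k|^\tau}\lf\{\sum_{j=k\vee0}^\fz\lf[\int_{Q_\az^k}|G_j|^p\r]^{q/p}\r\}^{1/q}
&\le& \f1{|Q_\az^k|^\tau}\lf\{\sum_{j=k\vee0}^\fz\lf[\sum_{m\in\zz_+}b^{|m-j|\theta}\|g_m\chi_{Q_\az^k}\|_{L^p}\r]^{q}\r\}^{1/q},
\end{eqnarray*}
using the (quasi-)triangle inequality in $L^p(Q_\az^k)$ when $p\ge1$, and the $p$-triangle inequality $\|\sum h_m\|_{L^p}^p\le\sum\|h_m\|_{L^p}^p$ (absorbed into a redefinition of $\theta'$) when $p<1$. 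Then apply Lemma \ref{lem6.3x} to the inner sum with $\sz=q$, pulling out the factor $\max\{B^q,1\}$, to bound the above by a constant times
$$\f1{|Q_\az^k|^\tau}\lf\{\sum_{j=k\vee0}^\fz\sum_{m\in\zz_+}b^{|m-j|\theta'(1-\vp')q}\|g_m\chi_{Q_\az^k}\|_{L^p}^{q}\r\}^{1/q}.$$
Now the sum over $j\ge k\vee0$ of $b^{|m-j|\theta'(1-\vp')q}$ for fixed $m$ is again bounded by a constant, but one must be slightly careful: the exponent decay rate $\theta'(1-\vp')$ is still strictly larger than $d\tau$ if $\vp'$ is chosen small enough, which is why the hypothesis $\theta>d\tau$ is used twice. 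After interchanging the two sums and summing in $j$, one is left with $\f1{|Q_\az^k|^\tau}\{\sum_{m\ge k\vee0}\|g_m\chi_{Q_\az^k}\|_{L^p}^q\}^{1/q}$ plus a tail coming from $0\le m<k\vee0$; this latter tail is handled by the embedding into the $\ell^q(L^p_\tau)$-norm exactly as in the proof of Proposition \ref{prop4.11x} (or is simply absent when $\tau=0$), giving the bound by $\|\{g_m\}\|_{\ell^q(L^p_\tau)}$. Taking the supremum over $(k,\az)$ finishes (i).

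For part (ii), the argument is parallel but one first applies Lemma \ref{lem6.3x} pointwise in $x\in Q_\az^k$ (with $\sz=q$) to get $\sum_j|G_j(x)|^q\ls\sum_j\sum_m b^{|m-j|\theta'(1-\vp')q}|g_m(x)|^q$, then interchanges sums, sums the geometric factor in $j$, takes the $L^{p/q}(Q_\az^k)$-quasi-norm of the resulting sum in $m$ (using the $(p/q)$-triangle inequality if $p<q$, again absorbed into $\theta'$), and divides by $|Q_\az^k|^\tau$; the tail in $m$ is treated as before. The main obstacle is the bookkeeping of the exponents: one must verify that a single choice of $\theta'\in(d\tau,\theta)$ and of the small auxiliary exponent in Lemma \ref{lem6.3x} makes \emph{all} the geometric sums (over $j$ for fixed $m$, and the one inside Lemma \ref{lem6.3x}) converge with constants independent of $k$, $\az$, $j$, $m$, and makes the leftover decay rate strictly exceed $d\tau$ so that the local-norm tail for indices below $k\vee0$ can be reinserted into the global $L^p_\tau(\ell^q)$ (resp. $\ell^q(L^p_\tau)$) norm. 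Once the exponents are pinned down, everything else is routine.
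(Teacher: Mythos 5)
Your overall strategy is the same as the paper's: two applications of Lemma \ref{lem6.3x} (the paper applies it once at the integrand level with exponent $p$ and once at the level of the $q/p$-th powers, producing the factor $(1-\ez)^2$; your Minkowski-first variant is only a cosmetic difference), then an interchange of the sums in $j$ and $m$ and a splitting of the $m$-sum at $k\vee 0$. The step that does not hold up is your treatment of the tail $0\le m<k\vee 0$. You dispose of it by appealing to ``the embedding \ldots exactly as in the proof of Proposition \ref{prop4.11x}'', but that proof is a function-space argument: it uses the embedding $\wz F^{s,\tau}_{p,q}(M)\hookrightarrow \wz B^{s+d\tau-d/p}_{\fz,\fz}(M)$ from Proposition \ref{prop4.7x}, the reverse doubling condition, and, crucially, the restriction $\tau<1/p$. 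None of this is available for an arbitrary sequence of measurable functions $\{g_m\}_{m\in\zz_+}$, and Lemma \ref{lem6.4x} is needed and used for all $\tau\in[0,\fz)$, including $\tau\ge 1/p$. So the tail is left unproved, and it is exactly the only place where the hypothesis $\tz>d\tau$ and the dyadic structure genuinely enter; the rest of the computation is routine geometric summation.

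What is actually required there (and what the paper does) is a cube-comparison argument: for $k>0$ and $0\le m<k$, cover $Q_\az^k$ by the boundedly many Christ cubes $Q_\bz^m$ of level $m$ meeting it; by Lemma \ref{lem3.1x} and doubling, $|Q_\bz^m|\ls \dz^{(m-k)d}|Q_\az^k|$, hence $\frac1{|Q_\az^k|^\tau}\|g_m\|_{L^p(Q_\az^k)}\ls \dz^{(m-k)d\tau}\,\frac1{|Q_\bz^m|^\tau}\|g_m\|_{L^p(Q_\bz^m)}\le \dz^{(m-k)d\tau}\,\|\{g_{m'}\}_{m'\in\zz_+}\|_{\ell^q(L^p_\tau)}$, the last inequality being legitimate because the pair $(m,Q_\bz^m)$ occurs in the defining supremum (as $m\ge m\vee 0$). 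The loss $\dz^{(m-k)d\tau}$ must then be absorbed by the retained decay $b^{(j-m)\tz(1-\ez)^2}$ with $j\ge k>m$; this is where $\tz>d\tau$ is used, and note it is a comparison of powers of $b$ against powers of $\dz$, so one must relate $b$ to $\dz$ (the paper does this via the integer $k_0$ with $\dz^{k_0}\le b<\dz^{k_0-1}$; in every application one has $b=\dz$, where $\tz(1-\ez)^2>d\tau$ is exactly what is needed). Your closing remark that the leftover decay rate must exceed $d\tau$ shows you see the right numerology, but without this covering/doubling step, and with the wrong mechanism cited, the proof is incomplete; the same repair is needed in your part (ii), where after the pointwise use of Lemma \ref{lem6.3x} and summation in $j$, the coefficients of $|g_m|^q$ with $m<k\vee 0$ again have to be traded against the measure ratio $|Q_\bz^m|^\tau/|Q_\az^k|^\tau$.
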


\begin{proof}
By similarity, we only prove (i).
Let $k\in\zz$ and $\az\in I_k$. Fix $\ez\in(0,1)$.
Applying Lemma \ref{lem6.3x} twice, we find that
\begin{eqnarray}\label{eq:6.6x}
{\rm J}^k_\az:=&&\frac1{|Q^k_\az|^\tau}
\bigg\{\sum_{j=k\vee0}^\fz \bigg[\int_{Q^k_\az}
\bigg|\sum_{m=0}^\fz
b^{|m-j|\theta} g_m(x)\bigg|^p\,d\mu(x)\bigg]^{q/p}\bigg\}^{1/q}\notag\\
\ls&&\frac1{|Q^k_\az|^\tau}
\bigg\{\sum_{j=k\vee0}^\fz \sum_{m=0}^\fz b^{|m-j|\theta q(1-\ez)^2}\bigg[ \int_{Q^k_\az}
\lf|g_m(x)\r|^p\,d\mu(x)\bigg]^{q/p}\bigg\}^{1/q}.
\end{eqnarray}
Split the summation $\sum_{m=0}^\fz$ in the last formula of \eqref{eq:6.6x} into
$\sum_{m=k\vee 0}^\infty$ and $\sum_{m=0}^{(k\vee 0)-1}$, so
the last formula of \eqref{eq:6.6x} is
controlled by the sum of the corresponding two terms, denoted by
${\rm J}^{k,1}_\az$ and ${\rm J}^{k,2}_\az$.
Clearly, by $b\in(0,1)$, we have
$$
{\rm J}^{k,1}_\az
:= \frac1{|Q^k_\az|^\tau}
\bigg\{\sum_{j=k\vee0}^\fz \sum_{m=k\vee 0}^\fz b^{|m-j|\theta q(1-\ez)^2}\bigg[ \int_{Q^k_\az}
\lf|g_m(x)\r|^p\,d\mu(x)\bigg]^{q/p}\bigg\}^{1/q}
\ls \|\{g_m\}_{m\in\zz_+}\|_{\ell^q(L^p_\tau)}.
$$
Notice that ${\rm J}^{k,2}_\az$ is void when $k\le 0$.
If $k>0$ and $0\le m\le k-1$,
then $Q^k_\az$ is covered by a finite number of cubes,
$\{Q^{m}_{\beta}\}_{\beta\in I}$, with $\#I\ls 1$ uniformly in $m$.
Any $Q_\bz^m$ must intersect $Q_\az^k$,
so  $|Q_\bz^m|\ls \dz^{(m-k)d}|Q_\az^k|.$
Since $b,\,\dz\in(0,1)$, there exists a unique $k_0\in\nn$ such that
$\dz^{k_0}\le b <\dz^{k_0-1}.$
Since $\tz>d\tau$, we choose $\ez\in(0,1)$ such that $k_0\tz(1-\ez)^2>d\tau$. Thus,
\begin{eqnarray*}
{\rm J}^{k,2}_\az&&\ls \sum_{\beta\in I}
\frac1{|Q^k_\az|^\tau}\bigg\{\sum_{j=k}^\fz
\sum_{m=0}^{k-1}b^{(j-m)\theta q(1-\ez)^2}\bigg[\int_{Q^m_\beta}
\lf|g_m(x)\r|^p\,d\mu(x)\bigg]^{q/p}\bigg\}^{\frac 1q}\\
&&\ls \sum_{\beta\in I}\bigg\{\sum_{j=k}^\fz
\sum_{m=0}^{k-1}b^{(j-m)\theta q(1-\ez)^2}
\delta^{(m-k)d\tau q}
\bigg\}^{\frac 1q}  \|\{g_m\}_{m\in\zz_+}\|_{\ell^q(L^p_\tau)}
\ls \|\{g_m\}_{m\in\zz_+}\|_{\ell^q(L^p_\tau)}.
\end{eqnarray*}
This finishes the proof of Lemma \ref{lem6.4x}.
\end{proof}

\begin{proof}[Proof of Theorem \ref{thm6.2x}]
Observe that (iii) follows directly from (i), (ii) and
the fact that, for all $j\in\zz_+$, $x\in M$, $a\in(0,\infty)$ and $\gz\in\rr$,
$$|B(x, \dz^{j})|^{\gz} |\Psi_j(\sqrt L) f(x)|
\le [\Psi_j(\sqrt L)]_{a,\gz}^* f(x).$$
To obtain (i) and (ii),
we only prove \eqref{eq:6.2x} and \eqref{eq:6.4x}
since
the proofs for \eqref{eq:6.1x} and \eqref{eq:6.3x}
are similar and easier.

Let  $k\in\zz$ and  $j\ge (k\vee 0)$.
Choose smooth even functions $(\Phi_0,\Phi)$ satisfying \eqref{eq:2.14xx} and \eqref{eq:2.15xx}.
For $j\in\nn$, define $\Phi_j$ as in \eqref{eq:2.16xx}.
With $(\Psi_0, \Psi)$ as in Theorem \ref{thm6.2x},
there exist $(\wz \Psi_0, \wz\Psi)$ such that the Calder\'on
reproducing formula \eqref{c-crf} holds true. Thus, for all $f\in \cd'(M)$
and $x\in M$,
\begin{eqnarray}\label{eq:6.7x}
\Phi_j(\sqrt L) f(x)
&& = \sum_{\ell=0}^\infty \int_M \Phi_j(\sqrt L) \wz\Psi_\ell(\sqrt L) (x,y)
\Psi_\ell(\sqrt L)f(y)\,d\mu(y)
\end{eqnarray}
in $\cd'(M)$,
where $\wz\Psi_\ell(\cdot):=\wz\Psi(\dz^{\ell \bz_0/2} \cdot)$ for all $\ell\in\nn$.
Since the kernel
$\Phi_j(\sqrt L) \wz\Psi_\ell(\sqrt L)  $ is non-zero
only when $\supp\Phi_j\cap\supp\wz\Psi_\ell\neq\emptyset$,
the summation in \eqref{eq:6.7x} is  only valid for
 $\ell$
satisfying $|\ell-j|\le 2$.
For such $\ell$, applying Proposition \ref{prop2.10x}(i)
and Lemma \ref{lem2.1x}(ii) implies that, for any $x,\,y\in M$,
\begin{eqnarray*}
|\Phi_j(\sqrt L) \wz\Phi_\ell(\sqrt L) (x,y)|
&&\ls \int_M D_{\delta^{j},\, \sz}(x,z)  D_{\delta^{\ell},\, \sz} (z,y)\,d\mu(z)
\ls D_{\delta^\ell,\, \sz}(x,y),
\end{eqnarray*}
where we choose $\sz>|s|+a+d$.
Then, Lemma \ref{lem2.1x}(i) tells us  that, for all $x\in M$,
\begin{eqnarray*}
&&|B(x, \delta^{j})|^{-s/d}|\Phi_j(\sqrt L) f(x)|\\
&&\ \ls |B(x, \delta^{j})|^{-s/d}\sum_{{\ell\ge0},\,{|\ell-j|\le2}}\,
\int_M D_{\delta^\ell,\, \sz}(x,y)
|\Psi_\ell(\sqrt L)f(y)|\,d\mu(y)\notag\ls \sum_{{\ell\ge0},\,{|\ell-j|\le2}}\,
[\Psi_\ell(\sqrt{L})]^*_{a,-s/d}f(x). \notag
\end{eqnarray*}
Notice that the sum in $\ell$ has at most $5$ terms.
Thus, by Lemma \ref{lem6.4x}, we conclude that
\begin{equation}\label{eq:6.8x}
\|f\|_{\wz B_{p,q}^{s,\tau}(M)}=
\|\{|B(\cdot, \delta^{j})|^{-s/d}\Phi_j(\sqrt L) f\}_{j\in\zz_+}\|_{\ell^q(L^p_\tau)}
\ls\|\{[\Psi_j(\sqrt L)]_{a,-s/d}^* f\}_{j\in\zz_+}\|_{\ell^q(L^p_\tau)}
\end{equation}
and
\begin{equation}\label{eq:6.9x}
\|f\|_{\wz F_{p,q}^{s,\tau}(M)}=
\|\{|B(\cdot, \delta^{j})|^{-s/d}\Phi_j(\sqrt L) f\}_{j\in\zz_+}\|_{L^p_\tau(\ell^q)}
\ls\|\{[\Psi_j(\sqrt L)]_{a,-s/d}^* f\}_{j\in\zz_+}\|_{L^p_\tau(\ell^q)}.
\end{equation}
This proves the left-hand side inequalities in
\eqref{eq:6.2x} and \eqref{eq:6.4x}, respectively.

To obtain \eqref{eq:6.2x},
we need to prove the converse of \eqref{eq:6.8x}.
Assuming, for the moment, that
\begin{eqnarray}\label{eq:6.10x}
\|\{[\Psi_j(\sqrt L)]_{a,-s/d}^* f\}_{j\in\zz_+}\|_{\ell^q(L^p_\tau)}\ls
\|\{|B(\cdot, \delta^{j})|^{-s/d}\Psi_j(\sqrt L) f\}_{j\in\zz_+}\|_{\ell^q(L^p_\tau)}
\end{eqnarray}
holds true, we prove the converse of \eqref{eq:6.8x}.
Indeed, notice that \eqref{eq:6.8x} remains true if
 we reverse  the roles of $\Phi_j$ and $\Psi_j$ there,
 and \eqref{eq:6.10x} is also true if $\Psi_j$ there is replaced by $\Phi_j$.
Then
\begin{eqnarray*}
\|\{[\Psi_j(\sqrt L)]_{a,-s/d}^* f\}_{j\in\zz_+}\|_{\ell^q(L^p_\tau)}
&&\ls
\|\{|B(\cdot, \delta^{j})|^{-s/d}\Psi_j(\sqrt L) f\}_{j\in\zz_+}\|_{\ell^q(L^p_\tau)}\\
&&\ls
\|\{[\Phi_j(\sqrt L)]_{a,-s/d}^* f\}_{j\in\zz_+}\|_{\ell^q(L^p_\tau)}\\
&&\ls
\|\{|B(\cdot, \delta^{j})|^{-s/d}\Phi_j(\sqrt L) f\}_{j\in\zz_+}\|_{\ell^q(L^p_\tau)},
\end{eqnarray*}
which proves the converse of \eqref{eq:6.8x}.
We still need to prove \eqref{eq:6.10x}.
Fix  $k\in\zz$ and $\az\in I_k$.
Since $a>d(\tau+1/p)$,
there exist $\nu\in(0,\fz)$ and $r\in(0, p)$ such that
$a\ge\nu+d/r$ and $\nu>d\tau$.
By Proposition \ref{prop4.2x}, we know that, for any $x\in Q^k_\az$
 and $\ell \ge (k\vee0)$,
\begin{eqnarray*}
[\Psi_\ell(\sqrt{L})]^*_{a, -s/d}f(x)
\ls  \sum_{j=\ell}^\fz
\sum_{i=0}^\fz \delta^{(j-\ell)\nu} \delta^{i\nu }\mathcal{M}_r
\left(|B(\cdot,\delta^j)|^{-s/d} |\Psi_j(\sqrt L)f|
\chi_{B(z_\az^k, \delta^{\ell-i}+C_\natural \delta^k)}\right)(x). \notag
\end{eqnarray*}
For notational simplicity, we let
$g_j:= |B(\cdot,\delta^j)|^{-s/d} |\Psi_j(\sqrt L)f|$ for any $j\in\zz_+$.
Fix $\epsilon\in(0,1)$ small enough such that
$\nu (1-\epsilon)> d\tau$. Then Lemma \ref{lem6.3x} further implies that
\begin{eqnarray*}
\lf([\Psi_\ell(\sqrt{L})]^*_{a, -s/d}f(x)\r)^p\ls  \sum_{j=\ell}^\fz
\sum_{i=0}^\fz [\delta^{(j-\ell)\nu} \delta^{i\nu }]^{p(1-\epsilon)}
\lf[\mathcal{M}_r
\left(g_j
\chi_{B(z_\az^k, \delta^{k-i}+C_\natural \delta^k)}\right)(x)\r]^p.
\end{eqnarray*}
Next, we integrate both sides of the above inequality in $Q_\az^k$.
The well-known boundedness of  $\cm$ on $L^{p/r}(M)$ further implies that
\begin{eqnarray*}
\int_{Q^k_\az}
\lf\{[\Psi_\ell(\sqrt{L})]^*_{a, -s/d}f(x)\r\}^p\,d\mu(x)
\ls\sum_{j=\ell}^\fz
\sum_{i=0}^\fz [\delta^{(j-\ell)\nu} \delta^{i\nu }]^{p(1-\epsilon)}
\int_{B(z_\az^k, \delta^{k-i}+C_\natural \delta^k)}
|g_j(x)|^p\,d\mu(x).
\end{eqnarray*}
For every $i\in\zz_+$,  define
\begin{equation}\label{eq:6.11x}
\cj_{k,i}:=\lf\{\beta\in I_{k-i}:\, Q_\bz^{k-i} \cap B(z_\az^k, \delta^{k-i}+C_\natural \delta^k)\neq\emptyset\r\}.
\end{equation}
Clearly, the union of all $Q_\bz^{k-i}$ with $\beta\in \cj_{k,i}$
covers the ball
 $B(z_\az^k, \delta^{k-i}+C_\natural \delta^k)$.
Since $\{Q_\bz^{k-i}\}_{\beta\in \cj_{k,i}}$ are mutually disjoint,
it follows that
\begin{equation}\label{eq:6.12x}
\sharp \cj_{k,i} \le C,
\end{equation}
where $C$ is a positive constant depending only on $\delta$ and $K$.
Also,  for any $\bz\in\cj_{k,i}$,
\begin{equation}\label{eq:6.13x}
|Q_\bz^{k-i}|\ls |B(z_\az^k, (C_\natural+1)\delta^{k-i}+ C_\natural \delta^k)|
\ls \dz^{-id} |Q_\az^k|.
\end{equation}
Therefore,
\begin{eqnarray*}
&&\frac1{|Q^k_\az|^{\tau p}}\int_{Q^k_\az}
\lf\{[\Psi_\ell(\sqrt{L})]^*_{a, -s/d}f(x)\r\}^p\,d\mu(x)\\
&&\quad\ls
\sum_{j=\ell}^\fz
\sum_{i=0}^\fz
\sum_{\bz\in\cj_{k,i}}
 \frac{[\delta^{(j-\ell)\nu} \delta^{i\nu }]^{p(1-\epsilon)}\dz^{-id\tau p}}{|Q^{k-i}_\bz|^{\tau p}} \int_{Q_\bz^{k-i}}
|g_j(x)|^p\,d\mu(x).
\end{eqnarray*}
Again, using Lemma \ref{lem6.3x}, we see that
\begin{eqnarray*}
&&\frac1{|Q^k_\az|^{\tau q}}\sum_{\ell=k \vee 0}^\infty \lf[\int_{Q^k_\az}
\lf\{[\Psi_\ell(\sqrt{L})]^*_{a, -s/d}f(x)\r\}^p\,d\mu(x)\r]^{q/p}\\
&&\quad\ls\sum_{\ell=k \vee 0}^\infty\bigg\{
\sum_{j=\ell}^\fz
\sum_{i=0}^\fz
\sum_{\bz\in\cj_{k,i}}
 \frac{[\delta^{(j-\ell)\nu} \delta^{i\nu }]^{p(1-\epsilon)}
\dz^{-id\tau p}}{|Q^{k-i}_\bz|^{\tau p}} \int_{Q_\bz^{k-i}}
|g_j(x)|^p\,d\mu(x)
\bigg\}^{q/p}
\ls\|\{g_j\}_{j\in\zz_+}\|_{\ell^q(L^p_\tau)}^q,
\end{eqnarray*}
where the last step follows from interchanging the summations in $j$ and $\ell$
and then using
$$ \sum_{\ell=k \vee 0}^j
\sum_{i=0}^\fz
\sum_{\bz\in\cj_{k,i}}[\delta^{(j-\ell)\nu} \delta^{i\nu }]^{q(1-\epsilon)^2}
{\dz^{-id\tau q(1-\epsilon)} }
\ls 1.$$
Thus,  \eqref{eq:6.10x} holds true. This finishes the proof of \eqref{eq:6.2x}.

To obtain \eqref{eq:6.4x}, it suffices to prove
the converse of \eqref{eq:6.9x}, which  follows from
\begin{eqnarray}\label{eq:6.14x}
\|\{[\Psi_j(\sqrt L)]_{a,-s/d}^* f\}_{j\in\zz_+}\|_{L^p_\tau(\ell^q)}
\ls\|\{|B(\cdot, \delta^{j})|^{-s/d}\Psi_j(\sqrt L) f\}_{j\in\zz_+}\|_{L^p_\tau(\ell^q)}.
\end{eqnarray}
To see  \eqref{eq:6.14x},
we again use the notation
$g_j:= |B(\cdot,\delta^j)|^{-s/d} |\Psi_j(\sqrt L)f|$, $j\in\zz_+$.
Since  $a>d[\tau+1/(p\wedge q)]$,
we choose $r,\,\nu,\,\ez$ such that $r\in(0, p\wedge q)$,
$d\tau<\nu<a-d/r$ and $\nu (1-\epsilon)^3>d\tau$.
Fix  $k\in\zz$ and $\az\in I_k$.
For any $x\in Q^k_\az$
 and  $\ell \ge (k\vee0)$, by Proposition \ref{prop4.2x} and
 Lemma \ref{lem6.3x},
we find that
\begin{eqnarray*}
&&\lf\{[\Psi_\ell(\sqrt{L})]^*_{a, -s/d}f(x)\r\}^q
\ls  \sum_{j=\ell}^\fz
\sum_{i=0}^\fz [\delta^{(j-\ell)\nu} \delta^{i\nu } ]^{q(1-\epsilon)}
\lf[\mathcal{M}_r
\left(g_j
\chi_{B(z_\az^k, \delta^{k-i}+C_\natural \delta^k)}\right)(x)\r]^q. \notag
\end{eqnarray*}
Applying the vector-valued Fefferman-Stein maximal inequality
for spaces of homogeneous type (see \cite[Theorem 1.3]{s05} and \cite{GLY-ms}), we obtain
\begin{eqnarray*}
\mathrm{Z}_\az^k:=&&\frac1{|Q_\az^k|^\tau}
\bigg\{ \dint_{Q_\alpha^k}\bigg( \sum_{\ell= k \vee 0}^\infty
\lf\{[\Psi_\ell(\sqrt{L})]^*_{a, -s/d}f(x)\r\}^q\bigg)^{p/q}\,d\mu(x)\bigg\}^{1/p}\\
\ls&&
\sum_{i=0}^\infty \delta^{i\nu (1-\epsilon)^3}
\frac1{|Q_\az^k|^\tau}\bigg\{ \dint_{B(z_\az^k, \delta^{k-i}+C_\natural \delta^k)}\Bigg[ \sum_{j= k \vee 0}^\infty
|g_j(x)|^q\Bigg]^{p/q}\,d\mu(x)\bigg\}^{1/p},
\end{eqnarray*}
where, in the third step, we used Lemma \ref{lem6.3x} twice and,
in the fourth step, we interchanged the summations in $j$ and $\ell$.
For $i\in\zz_+$,  define $\cj_{k,i}$ as in \eqref{eq:6.11x}.
By  \eqref{eq:6.12x} and \eqref{eq:6.13x}, together with the fact
that $\sum_{i=0}^\infty \delta^{i\nu (1-\epsilon)^3} \delta^{-id\tau}
\ls1$, we have
\begin{eqnarray*}
\mathrm{Z}_\az^k&& \ls
\sum_{i=0}^\infty \delta^{i\nu (1-\epsilon)^3}
\bigg\{ \sum_{\beta\in\cj_{k,i}}
\frac{\delta^{-id\tau p}}{|Q_\bz^{k-i}|^{\tau p}}
\dint_{Q_\bz^{k-i} }\bigg[ \sum_{j= k \vee 0}^\infty
|g_j(x)|^q\bigg]^{p/q}\,d\mu(x)\bigg\}^{1/p}
\ls \|\{g_j\}_{j\in\zz_+}\|_{L^p_\tau(\ell^q)}.
\end{eqnarray*}
Thus, \eqref{eq:6.14x} holds true. This proves
\eqref{eq:6.4x} and hence Theorem \ref{thm6.2x}.
\end{proof}


\subsection{Heat kernel characterizations}\label{sec-6.2}

\hskip\parindent
Applying the Peetre maximal function characterizations in Theorem \ref{thm6.2x},
we now characterize Besov-type and Triebel--Lizorkin-type spaces via the heat semigroup.

\begin{defn}\label{def6.5x}
Let  $\tau\in[0,\fz)$, $s\in\rr$ and $m\in\nn$ such that $m>s/\bz_0$.
For any $j\in\nn$ and $\lz\in(0,\infty)$, define
\begin{equation}\label{eq:6.15x}
h_0(\lz):= e^{-\lz^2},\qquad h(\lz):=
\lz^{2m} e^{-\lz^2}\quad
\textup{and}\quad
h_j(\lz):= h(\dz^{j\bz_0/2}\lz)=
(\dz^{j\bz_0/2}\lz)^{2m} e^{-\dz^{j\bz_0}\lz^2}.
\end{equation}
If $p,\,q\in(0,\infty]$, then define $_h\!B^{s,\tau}_{p, q}(M)$ to be the collection of all
 $f\in\cd'(M)$ such that
\begin{eqnarray*}
\|f\|_{_h\!B^{s,\tau}_{p, q}(M)}
&&:=\|\{\delta^{-js}
h_j(\sqrt L) f\}_{j\in\zz_+}\|_{\ell^q(L^p_\tau)}<\infty,
\end{eqnarray*}
and  define $_h\!\wz B^{s,\tau}_{p, q}(M)$ to be the collection of all
 $f\in\cd'(M)$ such that
\begin{eqnarray*}
\|f\|_{_h\!\wz B^{s,\tau}_{p, q}(M)}
&&:=\|\{|B(\cdot, \delta^{j})|^{-s/d}h_j(\sqrt L)  f\}_{j\in\zz_+}\|_{\ell^q(L^p_\tau)}<\infty.
\end{eqnarray*}
If $p\in(0,\infty)$ and $q\in(0,\infty]$, then define $_h\!F^{s,\tau}_{p, q}(M)$ to be the collection of all
 $f\in\cd'(M)$ such that
$$\|f\|_{_h\!F^{s,\tau}_{p, q}(M)}
:=\|\{\delta^{-js}h_j(\sqrt L) f\}_{j\in\zz_+}\|_{L^p_\tau(\ell^q)}<\infty,$$
and define $_h\!\wz F^{s,\tau}_{p, q}(M)$ to be the collection of all
 $f\in\cd'(M)$ such that
$$\|f\|_{_h\!\wz F^{s,\tau}_{p, q}(M)}:=
\|\{|B(\cdot, \delta^{j})|^{-s/d}h_j(\sqrt L) f\}_{j\in\zz_+}
\|_{L^p_\tau(\ell^q)}<\infty.$$
\end{defn}

\begin{rem}\label{rem6.6x}
Let $h_j$ for $j\in\zz_+$ be as in \eqref{eq:6.15x}.
For any  $\sz>0$ and $k\in \zz_+$,
it follows, from  Proposition \ref{prop2.10x}(i), that
$$|L^k h_j(\sqrt L)(x,y)| \ls \dz^{-2 k}D_{\dz^j,\sz}(x,y),\qquad j\in\zz_+,\,\  x,\,y\in M.$$
This implies that $h_j(\sqrt L)(x,\cdot)\in\cd(M)$, so that $h_j(\sqrt L)f$ makes sense for any $f\in\cd'(M)$.
\end{rem}

Now we prove the following discrete  heat kernel characterizations for the full range of $p$.

\begin{thm}\label{thm6.7x}
Let $q\in(0,\infty]$, $\tau\in[0,\fz)$, $s\in\rr$ and $m\in\nn$
such that $m>s/\bz_0$.
\begin{enumerate}

\item[\rm(i)] If $p\in(0,\infty]$, then  $B^{s,\tau}_{p, q}(M)=\, _h\!B^{s,\tau}_{p, q}(M)$ and $\wz B^{s,\tau}_{p, q}(M)={_h\!\wz B^{s,\tau}_{p, q}(M)}$
    with equivalent (quasi-) norms.

\item[\rm(ii)] If $p\in(0,\infty)$, then  $F^{s,\tau}_{p, q}(M)=\, _h\!F^{s,\tau}_{p, q}(M)$ and $\wz F^{s,\tau}_{p, q}(M)={_h\!\wz F^{s,\tau}_{p, q}(M)}$
    with equivalent (quasi-) norms.
\end{enumerate}
\end{thm}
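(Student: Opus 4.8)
The plan is to derive the heat kernel characterization from the Peetre maximal function characterization already established in Theorem \ref{thm6.2x}. The key observation is that the functions $h_0(\lz)=e^{-\lz^2}$ and $h(\lz)=\lz^{2m}e^{-\lz^2}$ are even Schwartz functions satisfying \eqref{eq:2.11x}; however, the pair $(h_0,h)$ does \emph{not} directly satisfy the support and lower-bound requirements \eqref{eq:2.14xx} and \eqref{eq:2.15xx} that underlie Definition \ref{def3.2x}, so Theorem \ref{thm6.2x} cannot be invoked verbatim. Instead, I would fix once and for all a genuine pair $(\Phi_0,\Phi)$ satisfying \eqref{eq:2.14xx} and \eqref{eq:2.15xx}, use it to define the (quasi-)norms of $B^{s,\tau}_{p,q}(M)$, etc., and then compare the two systems $\{\Phi_j(\sqrt L)\}$ and $\{h_j(\sqrt L)\}$ via the continuous Calder\'on reproducing formula \eqref{c-crf} together with the off-diagonal estimates.

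First I would record, via Proposition \ref{prop2.10x} and Remark \ref{rem2.15x}, that for any $m\in\zz_+$ with $m>s/\bz_0$ and any $\sz>d$, the kernels $L^m h_j(\sqrt L)(x,y)$ and $\Phi_j(\sqrt L)(x,y)$ all satisfy the size and H\"older estimates \eqref{eq:2.12x}--\eqref{eq:2.13x} with the scale $\dz^j$; in particular $h_j(\sqrt L)f$ is well defined on $\cd'(M)$ (Remark \ref{rem6.6x}). Next, writing $h_j=\delta^{(j-k)m\bz_0}\,\tilde h_k \cdot (\text{correction})$ is awkward because $h$ has no compact support, so instead I would use almost-orthogonality in the spirit of Proposition \ref{prop2.14x}/Remark \ref{rem2.15x}: both $\Phi$ and $h$ satisfy \eqref{eq:2.11x} (after even extension), hence $(\Phi_j(\sqrt L)h_k(\sqrt L))(x,y)$ obeys a bound of the form $C_{\sz,m}\,\delta^{|k-j|(m\bz_0-d)}D_{\delta^{k\wedge j},\sz}(x,y)$ — one gains decay in $|k-j|$ by distributing the factor $(\dz^{\bz_0}L)^m$ onto whichever of $\Phi$ or $h$ has the higher frequency (for $j>k$ put it on $h_k$, writing $h(\lz)=\lz^{-2m}\cdot\lz^{2m}h(\lz)$ with $\lz^{2m}h(\lz)$ still Schwartz-type; for $k>j$ put it on $\Phi$, using that $\Phi$ vanishes to all orders at $0$ so $\lz^{-2m}\Phi(\lz)$ is again admissible). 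Plugging the decomposition $\Phi_j(\sqrt L)f=\sum_k(\Phi_j(\sqrt L)h_k(\sqrt L))\,h_k(\sqrt L)f$ — valid in $\cd'(M)$ by a Calder\'on reproducing formula of the type \eqref{c-crf} built from the Schwartz pair $(h_0,h)$, or more safely by inserting the genuine $(\wz\Phi_0,\wz\Phi)$ decomposition and then re-expanding — one controls $|B(\cdot,\dz^j)|^{-s/d}|\Phi_j(\sqrt L)f|$ pointwise by $\sum_k\dz^{|k-j|(m\bz_0-d-|s|)}[h_k(\sqrt L)]^*_{a,-s/d}f$, and symmetrically with the roles of $\Phi$ and $h$ reversed.

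Then I would feed this into the machinery of Section \ref{sec-6.1}: the Peetre maximal function $[h_k(\sqrt L)]^*_{a,-s/d}f$ satisfies the same bound by $\cM_r$ as in Proposition \ref{prop4.2x} (its proof uses only \eqref{eq:2.12x}--\eqref{eq:2.13x} and the reproducing formula, both available for $h$ with the evenness/decay hypotheses), and the summable-kernel lemma, Lemma \ref{lem6.4x}, absorbs the geometric factors $\dz^{|k-j|(m\bz_0-d-|s|)}$ provided $m\bz_0-d-|s|>d\tau$, which we can guarantee by choosing $m$ large — but the theorem only assumes $m>s/\bz_0$, so the first reduction must be to show the norm $_h\!B^{s,\tau}_{p,q}$ is independent of the admissible $m$, exactly as in Theorem \ref{thm6.2x}(iii): for two exponents $m_1<m_2$ one factors $h^{(m_2)}=\lz^{2(m_2-m_1)}h^{(m_1)}$ and uses the same almost-orthogonality with a small positive gain. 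After that reduction one takes $m$ as large as needed in the comparison argument, concluding $\|f\|_{B^{s,\tau}_{p,q}(M)}\sim\|\{\dz^{-js}[h_j(\sqrt L)]^*_{a,-s/d}f\}\|_{\ell^q(L^p_\tau)}\gtrsim\|f\|_{{}_h\!B^{s,\tau}_{p,q}(M)}$ and the reverse inequality from the already-known right-hand estimate in \eqref{eq:6.1x}. The cases $\wz B$, $F$, $\wz F$ are identical with $\ell^q(L^p_\tau)$ replaced by $L^p_\tau(\ell^q)$ and Lemma \ref{lem6.4x}(ii) and the vector-valued Fefferman--Stein inequality used in place of part (i). The main obstacle, and the only place real work is needed, is the almost-orthogonality estimate $|(\Phi_j(\sqrt L)h_k(\sqrt L))(x,y)|\lesssim\dz^{|k-j|(m\bz_0-d)}D_{\delta^{k\wedge j},\sz}(x,y)$ with the $m$-dependent gain when $h$ is not compactly supported — handling the ``low-frequency'' factor $h_k$ when $k<j$ requires care that $\lz^{2m}h(\lz)$ still lies in the admissible Schwartz class so that Proposition \ref{prop2.10x} applies with the shifted scale, and then Lemma \ref{lem2.1x}(ii) supplies the convolution bound; the $m$-independence reduction is the second, milder, technical point.
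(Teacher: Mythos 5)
Your forward direction (bounding the heat quasi-norm by the $\Phi$ quasi-norm) has the same skeleton as the paper's: expand $h_j(\sqrt L)f$ through \eqref{c-crf} and use almost orthogonality of $h_j(\sqrt L)\wz\Phi_\ell(\sqrt L)$; your symmetric bound $\dz^{|k-j|(m\bz_0-d)}D_{\dz^{k\wedge j},\sz}$ is weaker than the paper's \eqref{eq:6.17x}, which exploits the compact support of $\wz\Phi_\ell$ inside the Gaussian to gain the super-exponential factor $e^{-\dz^{(j-\ell+1)\bz_0/2}}$ when $\ell>j$, so that the only $m$-sensitive decay is $\dz^{(j-\ell)\bz_0 m}$ for $\ell<j$, which is precisely where $m>s/\bz_0$ enters. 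The genuine gap is in the converse direction. The identity $\Phi_j(\sqrt L)f=\sum_k(\Phi_j(\sqrt L)h_k(\sqrt L))\,h_k(\sqrt L)f$ is false as written, and the Calder\'on formula ``built from the Schwartz pair $(h_0,h)$'' that would replace it is not available: \eqref{c-crf} is constructed only for pairs satisfying \eqref{eq:2.14xx}--\eqref{eq:2.15xx} (compactly supported symbols), and you neither construct a companion family $\{\wz h_k\}$ with $\sum_k\wz h_k h_k\equiv1$ and uniform kernel estimates, nor make precise how ``inserting the genuine $(\wz\Phi_0,\wz\Phi)$ decomposition and re-expanding'' yields a bound by the $h_k(\sqrt L)f$. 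The missing idea is the paper's same-scale division trick: since $\supp\Phi\subset[\dz^{\bz_0/2},\dz^{-\bz_0/2}]$, the functions $\phi_0(\lz):=e^{\lz^2}\Phi_0(\lz)$ and $\phi(\lz):=\lz^{-2m}e^{\lz^2}\Phi(\lz)$ are smooth with compact support, so $\Phi_\ell(\sqrt L)=\phi_\ell(\sqrt L)h_\ell(\sqrt L)$ exactly (see \eqref{eq:6.18x}); Proposition \ref{prop2.10x} then gives $|B(x,\dz^\ell)|^{-s/d}|\Phi_\ell(\sqrt L)f(x)|\ls[h_\ell(\sqrt L)]^*_{a,-s/d}f(x)$ with no cross-scale sum at all, and Proposition \ref{prop4.2x} (which, as you correctly note, applies verbatim with $P_j=h_j$) together with the arguments for \eqref{eq:6.10x} and \eqref{eq:6.14x} finishes this direction with no largeness requirement on $m$ whatsoever.

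Because of this, your second pillar, the reduction to ``$m$-independence of the ${}_h$-quasi-norm'', is both unnecessary for the converse direction and unsound as sketched: the factorization $h^{(m_2)}(\lz)=\lz^{2(m_2-m_1)}h^{(m_1)}(\lz)$ splits off the unbounded symbol $\lz^{2(m_2-m_1)}$, i.e.\ a power of $L$, which is not an integral operator with kernel bounds at scale $\dz^j$, so ``the same almost-orthogonality'' cannot be invoked; and comparing the two discrete ${}_h$-quasi-norms directly is essentially the content of Theorem \ref{thm6.7x} itself — in the paper, $m$-independence is a corollary of the theorem (each quasi-norm being equivalent to the $\Phi$ quasi-norm), not an input. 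Your observation that Lemma \ref{lem6.4x} formally demands decay exceeding $d\tau$ is a fair one, but it concerns only the forward direction, where the paper feeds the coefficients $\dz^{(j-\ell)(\bz_0m-s)}$ ($\ell<j$) into Lemma \ref{lem6.4x} without further comment; whatever the precise bookkeeping there (for instance via the reduction of Proposition \ref{prop4.11x} when $\tau<1/p$), it is not supplied by the circular $m$-reduction you propose, and it evaporates in the converse direction once the division trick is used.
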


\begin{proof}
Due to similarity, we only show $\wz B^{s,\tau}_{p, q}(M)={_h\!\wz B^{s,\tau}_{p, q}(M)}$
and $\wz F^{s,\tau}_{p, q}(M)={_h\!\wz F^{s,\tau}_{p, q}(M)}$.
With $(\Phi_0, \Phi)$ as in Definition \ref{def3.2x},
there exist $(\wz \Phi_0, \wz\Phi)$ satisfying \eqref{eq:2.14xx} and \eqref{eq:2.15xx}
such that  \eqref{c-crf} holds true. Let $h_j$ for $j\in\zz_+$ be as in \eqref{eq:6.15x}.
Hence, for all $j\in\zz_+$, $f\in\cd'(M)$ and $x\in M$,
\begin{eqnarray}\label{eq:6.16x}
h_j(\sqrt L)  f(x)
&& = \sum_{\ell=0}^\infty
h_j(\sqrt L)\wz\Phi_\ell(\sqrt L)\Phi_\ell(\sqrt L)f(x)
\end{eqnarray}
in $\cd'(M)$.

Given any $\sz>0$, we claim that, for all $j,\,\ell\in\zz_+$
and $x,\,y\in M$,
\begin{eqnarray}\label{eq:6.17x}
|h_j(\sqrt L)\wz\Phi_\ell(\sqrt L)(x,y) |
\ls  \dz^{(j-\ell)\bz_0 m} e^{-\dz^{(j-\ell+1)\bz_0/2} }
D_{\dz^\ell, \sz}(x,y).
\end{eqnarray}
Indeed, if $j=\ell=0$, then \eqref{eq:6.17x} follows directly from Corollary \ref{cor2.6x},
Proposition \ref{prop2.10x}(ii) and Lemma \ref{lem2.1x}(ii).
If $j\in\nn$ and $\ell=0$, we let
$\Psi(\lz):= \dz^{j\bz_0 m}\lz^{2m} e^{-\dz^{j\bz_0}\lz^2}
\wz\Phi_0(\lz)$ for all $\lz\in\rr_+.$
Then $h_j(\sqrt L)\wz\Phi_0(\sqrt L)=\Psi(\sqrt L)$.
For any $\nu,\,i\in\zz_+$, we observe that $\Psi$ is compactly supported and
$$\Psi^{(2\nu+1)}(0)=0,\qquad
|\Psi^{(\nu)}(\lz)| \le  C_{i,\nu} \dz^{j\bz_0 m} (1+\lz)^{-i},$$
where $C_{i,\nu}\in(0,\fz)$ is a positive constant
independent of $\lz$ and $j$. Hence,
Proposition \ref{prop2.10x}(i) implies that, for all $x,\,y\in M$,
\begin{eqnarray*}
|h_j(\sqrt L)\wz\Phi_0(\sqrt L)(x,y) |=|\Psi(\sqrt L)(x,y)|
\ls \dz^{j\bz_0 m} D_{1, \sz}(x,y),
\end{eqnarray*}
which proves \eqref{eq:6.17x} for the case $j\in\nn$ and $\ell=0$.
If $j=0$ and $\ell\in\nn$, we let
$$
\omega(\lz):= \dz^{(j-\ell)m\bz_0}\lz^{2m}
e^{-\dz^{(j-\ell)\bz_0}\lz^2} \wz\Phi(\lz),\qquad \lz\in\rr_+.
$$
Then $h_0(\sqrt L)\wz\Phi_\ell(\sqrt L)=\omega(\dz^{\ell \bz_0/2}\sqrt L)$.
 By the properties of $\wz\Phi$, we have
 $\supp \omega\subset [\delta^{\bz_0/2}, \delta^{-\bz_0/2}]$
 and,  for any $\nu,\,i\in\zz_+$,
$$
|\omega^{(\nu)}(\lz)| \le  C_{i,\nu}
\dz^{(j-\ell)\bz_0 m} e^{-\dz^{(j-\ell+1)\bz_0/2} }(1+\lz)^{-i},$$
where $C_{i,\nu}\in(0,\fz)$ is a positive constant
independent of $\lz$, $j$ and $\ell$.
Again, applying Proposition \ref{prop2.10x}(i), we find that
\begin{eqnarray*}
|h_j(\sqrt L)\wz\Phi_\ell(\sqrt L)(x,y) |=|\omega(\sqrt L)(x,y)|
\ls  \dz^{(j-\ell)\bz_0 m} e^{-\dz^{(j-\ell+1)\bz_0/2}}
D_{\dz^\ell, \sz}(x,y),
\end{eqnarray*}
which proves  \eqref{eq:6.17x} for the case $j=0$ and $\ell\in\nn$.
Altogether, we obtain \eqref{eq:6.17x}.

Fix $k\in\zz$, $j\ge (k\vee0)$, and $x\in Q_\az^k$ for some $\az\in I_k$.
Let $a$ be a sufficiently large number satisfying
the condition of Theorem \ref{thm6.2x}.
By \eqref{eq:6.16x} and \eqref{eq:6.17x}, we see that
\begin{eqnarray*}
|B(x, \dz^j)|^{-s/d} |h_j(\sqrt L)  f(x)|
\ls \sum_{\ell=0}^\infty  \dz^{(j-\ell)\bz_0 m} e^{- \dz^{(j-\ell+1)\bz_0/2} }
\max\{1,\, \dz^{(\ell-j)s}\}
[\Phi_\ell(\sqrt L)]_{a,-s/d}^* f(x).
\end{eqnarray*}
Observe that $\sum_{\ell=0}^\infty  \dz^{(j-\ell)\bz_0 m} e^{-\dz^{(j-\ell+1)\bz_0/2} }
\max\{1,\, \dz^{(\ell-j)s}\}<\infty$ when $m>s/\bz_0$.
From this, Lemma \ref{lem6.4x} and Theorem \ref{thm6.2x}, we deduce that
$$
\|f\|_{_h\!\wz B_{p,q}^{s,\tau}(M)} \ls
\|\{[\Phi_j(\sqrt L)]_{a,-s/d}^* f\}_{j\in\zz_+}\|_{\ell^q(L^p_\tau)}
\sim
\|f\|_{\wz B_{p,q}^{s,\tau}(M)}
$$
and
$$
\|f\|_{_h\!\wz F_{p,q}^{s,\tau}(M)} \ls
\|\{[\Phi_j(\sqrt L)]_{a,-s/d}^* f\}_{j\in\zz_+}\|_{L^p_\tau(\ell^q)}
\sim
\|f\|_{\wz F_{p,q}^{s,\tau}(M)}.
$$
It remains to prove the converse of these two inequalities.
Let  $r\in(0, \min\{1,p,q\})$ and $a$ be sufficiently large.
Define $\phi_0(\lz):= e^{\lz^2}\Phi_0(\lz)$
and $\phi(\lz):= \lz^{-m}e^{\lz^2}\Phi(\lz)$, where $\lz\in\rr_+$.
For any $k\in\zz$, $\ell\in\nn$ such that $\ell\ge (k\vee 0)$, and $x\in Q_\az^k$
for some $\az\in I_k$, the functional calculus gives us that
\begin{eqnarray}\label{eq:6.18x}
|B(x, \delta^{\ell})|^{-s/d}\Phi_\ell(\sqrt L)  f(x)
=|B(x, \delta^{\ell})|^{-s/d}
\phi_\ell(\sqrt L)
h_\ell(\sqrt L) f(x).
\end{eqnarray}
Given any $\sz> a+ 2d+|s|$, Proposition \ref{prop2.10x}(i) implies  that, for all $\ell\in\zz_+$ and $x,\,y\in M$,
\begin{eqnarray}\label{eq:6.19x}
|\phi_\ell(\sqrt L)(x, y)|
\ls D_{\delta^\ell,\sigma}(x,y).
\end{eqnarray}
From \eqref{eq:6.18x} and \eqref{eq:6.19x},
it follows that
\begin{eqnarray}\label{eq:6.20x}
|B(x, \delta^{\ell})|^{-s/d}|\Phi_\ell(\sqrt L)  f(x)|
\ls   [h_\ell(\sqrt L)]_{a, -s/d}^\ast f(x).
\end{eqnarray}
Furthermore,  Proposition \ref{prop4.2x} implies that,
for all $\ell\in\zz_+$ and $x\in M$,
\begin{eqnarray}\label{eq:6.21x}
[h_\ell(\sqrt L)]_{a, -s/d}^\ast f(x)
\ls  \sum_{j=\ell}^\fz
\sum_{i=0}^\fz \delta^{(j-\ell)\nu} \delta^{i\nu r}\mathcal{M}_r
\left(|B(\cdot,\delta^j)|^\gamma |h_j(\sqrt L)f|
\chi_{B(z_\az^k, \delta^{j-i}+C_\natural \delta^k)}\right)(x).\quad
\end{eqnarray}
Invoking this and \eqref{eq:6.20x}, we proceed
the same lines as in the proof  of \eqref{eq:6.10x} to obtain
$$
\|f\|_{\wz B_{p,q}^{s,\tau}(M)}
= \|\{|B(\cdot, \delta^{\ell})|^{-s/d}|\Phi_\ell(\sqrt L)  f|\}_{\ell\in\zz_+}\|_{\ell^q(L^p_\tau)}
\ls \|f\|_{_h\!\wz B_{p,q}^{s,\tau}(M)}.
$$
Likewise, by \eqref{eq:6.20x} and \eqref{eq:6.21x},
we follow the same procedure as that used in the proof of
 \eqref{eq:6.14x} to deduce that
\begin{eqnarray*}
\|f\|_{\wz F_{p,q}^{s,\tau}(M)}
= \|\{|B(\cdot, \delta^{\ell})|^{-s/d}|\Phi_\ell(\sqrt L)  f|\}_{\ell\in\zz_+}\|_{L^p_\tau(\ell^q)}
\ls \|f\|_{_h\!\wz F_{p,q}^{s,\tau}(M)}.
\end{eqnarray*}
This finishes the proof of Theorem \ref{thm6.7x}.
\end{proof}

For all $p\in(0,\fz]$, $\tau\in[0,\fz)$ and $f\in \cd'(M)$, we let
$$\|f\|_{p,\tau}:= \sup_{k\le 0,\ \az\in I_k} \lf[ \frac1{|Q_\az^k|^\tau} \int_{Q_\az^k} |e^{-L}f(x)|^p\,d\mu(x)\r]^{1/p}$$
and
$$\wz{\|f\|}_{p,\tau, s}:= \sup_{k\le 0,\ \az\in I_k} \lf[ \frac1{|Q_\az^k|^\tau} \int_{Q_\az^k} |B(x,1)|^{-sp/d}|e^{-L}f(x)|^p\,d\mu(x)\r]^{1/p}$$
with the usual modifications made when $p=\infty$.
If $\tau=0$, then $\|f\|_{p,\tau}=\|e^{-L}f\|_{L^p(M)}$ and $\wz{\|f\|}_{p,\tau,s}= \||B(\cdot,1)|^{-s/d}e^{-L}f\|_{L^p(M)}$.
Analogous to \cite[Theorems~6.7 and 7.5]{KP}, for $p\ge1$, we derive
the following
continuous versions of the heat semigroup characterizations of the Besov-type and the
Triebel-Lizorkin-type spaces.

\begin{thm}\label{thm6.8x}
Let  $\tau\in[0,\fz)$, $s\in\rr$ and $m\in\nn$
such that $m>s/\bz_0$.
\begin{enumerate}
\item[\rm(i)] If $p\in[1,\infty]$ and $q\in(0,\fz]$, then, for all $f\in \cd'(M)$,
the (quasi-)norm
$\|f\|_{B^{s,\tau}_{p, q}(M)}$
is equivalent to
\begin{equation}\label{eq:6.22x}
\|f\|_{p,\tau} + \sup_{\gfz{k\in\zz}{\az\in I_k}} \frac1{|Q_\az^k|^\tau}
\lf\{\int_0^{\min\{1,\dz^k\}}
\lf[\int_{Q_\az^k} t^{-sp} |(t^{\bz_0}L)^m e^{-t^{\bz_0}L}f(x)|^p\,d\mu(x)\r]^{q/p}\,\frac{dt}{t}\r\}^{1/q},
\end{equation}
and a similar result also holds true
for  $\|\cdot\|_{\wz B^{s,\tau}_{p, q}(M)}$, but with $\|f\|_{p,\tau,s}$ and
$t^{-sp}$ in \eqref{eq:6.22x} replaced by $\wz{\|f\|}_{p,\tau}$
and $|B(x,t)|^{-sp/d}$, respectively.

\item[\rm(ii)] If $p\in[1,\infty)$ and $q\in[1,\fz]$, then,
for all $f\in \cd'(M)$,
the (quasi-)norm
$\|f\|_{F^{s,\tau}_{p, q}(M)}$
is equivalent to
\begin{equation}\label{eq:6.23x}
\|f\|_{p,\tau} + \sup_{\gfz{k\in\zz}{\az\in I_k}} \frac1{|Q_\az^k|^\tau}
\lf\{\int_{Q_\az^k}
\lf[ \int_0^{\min\{1,\dz^k\}} t^{-sq} |(t^{\bz_0}L)^m e^{-t^{\bz_0}L}f(x)|^q\,\frac{dt}{t}\r]^{p/q}\,d\mu(x)\r\}^{1/p},
\end{equation}
and a similar result also holds true for
$\|\cdot\|_{\wz F^{s,\tau}_{p, q}(M)}$, but with $\|f\|_{p,\tau}$ and
$t^{-sq}$ in \eqref{eq:6.23x} replaced by $\wz{\|f\|}_{p,\tau,s}$
and $|B(x,t)|^{-sq/d}$, respectively.
\end{enumerate}
\end{thm}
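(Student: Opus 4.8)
The plan is to obtain Theorem~\ref{thm6.8x} from the \emph{discrete} heat semigroup characterization of Theorem~\ref{thm6.7x}, which will serve as the bridge between the defining (quasi-)norms and the continuous quantities in \eqref{eq:6.22x} and \eqref{eq:6.23x}. By Theorem~\ref{thm6.7x} one has, say, $\|f\|_{\wz B^{s,\tau}_{p,q}(M)}\sim\|\{|B(\cdot,\dz^{j})|^{-s/d}h_j(\sqrt L)f\}_{j\in\zz_+}\|_{\ell^q(L^p_\tau)}$ with $h_j$ as in \eqref{eq:6.15x}, so that $h_0(\sqrt L)=e^{-L}$ and $h_j(\sqrt L)=(\dz^{j\bz_0}L)^m e^{-\dz^{j\bz_0}L}$ for $j\ge1$. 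First I would peel off the term $j=0$: since $\wz{\|f\|}_{p,\tau,s}$ involves only $e^{-L}f=h_0(\sqrt L)f$ and takes the supremum over cubes $Q^k_\az$ with $k\le 0$ --- which is precisely where $j=0$ occurs in $\sum_{j=k\vee0}^\fz$ --- a routine use of the ($q$-)triangle inequality shows that the discrete heat norm is comparable to $\wz{\|f\|}_{p,\tau,s}$ plus the same norm with the sum restricted to $j\ge k\vee1$, and analogously for $\|f\|_{p,\tau}$ and the non-tilde spaces. On the continuous side I would write $\int_0^{\min\{1,\dz^k\}}(\cdots)\frac{dt}{t}=\sum_{j}\int_{\dz^{j}}^{\dz^{j-1}}(\cdots)\frac{dt}{t}$ and use that on the annulus $t\in(\dz^{j},\dz^{j-1}]$ one has $t\sim\dz^{j}$, hence $t^{-s}\sim\dz^{-js}$ and $|B(x,t)|\sim|B(x,\dz^{j})|$. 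This reduces everything to comparing the discrete family $\{h_j(\sqrt L)f\}_{j\ge1}$ with the continuous family $\{(t^{\bz_0}L)^m e^{-t^{\bz_0}L}f\}_{0<t\le1}$ at matching scales.

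For the inequality ``continuous $\ls$ defining'', for $t\in(\dz^{j},\dz^{j-1}]$ I would insert the continuous Calder\'on reproducing formula \eqref{c-crf}, $(t^{\bz_0}L)^m e^{-t^{\bz_0}L}f=\sum_{i\in\zz_+}(t^{\bz_0}L)^m e^{-t^{\bz_0}L}\wz\Phi_i(\sqrt L)\Phi_i(\sqrt L)f$, and prove a kernel bound for $(t^{\bz_0}L)^m e^{-t^{\bz_0}L}\wz\Phi_i(\sqrt L)$ strictly parallel to \eqref{eq:6.17x}: because $t\sim\dz^{j}$, the computation in the proof of Theorem~\ref{thm6.7x}, via Proposition~\ref{prop2.10x} and Corollary~\ref{cor2.6x}, yields $|(t^{\bz_0}L)^m e^{-t^{\bz_0}L}\wz\Phi_i(\sqrt L)(x,y)|\ls\dz^{(j-i)\bz_0 m}e^{-c\dz^{(j-i+1)\bz_0/2}}D_{\dz^{i},\sz}(x,y)$. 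Combined with Lemma~\ref{lem2.1x}, this gives $|B(x,t)|^{-s/d}|(t^{\bz_0}L)^m e^{-t^{\bz_0}L}f(x)|\ls\sum_{i\ge0}\dz^{(j-i)\bz_0 m}e^{-c\dz^{(j-i+1)\bz_0/2}}\max\{1,\dz^{(i-j)s}\}[\Phi_i(\sqrt L)]^*_{a,-s/d}f(x)$ for $a$ large, with coefficient sequence summable as soon as $m>s/\bz_0$. Raising this to the relevant power, integrating over each annulus (where $\int_{\dz^{j}}^{\dz^{j-1}}\frac{dt}{t}=\log(1/\dz)$ is harmless), summing in $j$, and applying Lemma~\ref{lem6.4x} followed by Theorem~\ref{thm6.2x} bounds the $t$-integral in \eqref{eq:6.22x}/\eqref{eq:6.23x} by $\|f\|_{\wz B^{s,\tau}_{p,q}(M)}$ (and by the $F$- and non-tilde analogues); the $\wz{\|f\|}_{p,\tau,s}$ term is comparable to the $i=0$ contribution. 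This half needs no restriction on $p,q$.

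The reverse inequality ``defining $\ls$ continuous'' is the main obstacle and is where $p\ge1$ (and, for the Triebel--Lizorkin spaces, $q\ge1$) is used. For $j\in\nn$ choose $t$ in a fixed subinterval $J_j$ of $(\dz^{j+1},\dz^{j}]$ on which $s_0:=s_0(t):=\dz^{j\bz_0}-t^{\bz_0}\sim\dz^{j\bz_0}$; a direct spectral computation gives the identity $h_j(\sqrt L)f=(\dz^{j}/t)^{m\bz_0}\,e^{-s_0 L}\big[(t^{\bz_0}L)^m e^{-t^{\bz_0}L}f\big]$ with prefactor bounded by $\dz^{-m\bz_0}$. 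Since $s_0^{1/\bz_0}\sim t\sim\dz^{j}$, the Gaussian-type upper bound \eqref{GUB} gives $|e^{-s_0 L}(x,y)|\ls D_{\dz^{j},\sz}(x,y)$ for any $\sz$, whence $|B(x,\dz^{j})|^{-s/d}|h_j(\sqrt L)f(x)|\ls\int_M D_{\dz^{j},\sz}(x,y)|B(y,\dz^{j})|^{-s/d}|(t^{\bz_0}L)^m e^{-t^{\bz_0}L}f(y)|\,d\mu(y)$; averaging over $t\in J_j$ and using H\"older's inequality in $t$ (here $q\ge1$ enters) replaces $|(t^{\bz_0}L)^m e^{-t^{\bz_0}L}f(y)|$ by $(\int_{J_j}|(t^{\bz_0}L)^m e^{-t^{\bz_0}L}f(y)|^q\frac{dt}{t})^{1/q}$. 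One then passes to $\int_{Q^k_\az}$ by splitting over a fixed enlargement $Q^\ast$ of $Q^k_\az$ and its complement, exactly as in \eqref{eq:6.11x}--\eqref{eq:6.13x}: on $Q^\ast$ the $L^p$-contractivity of the Markov semigroup $e^{-s_0 L}$ coming from stochastic completeness \eqref{Markov} controls the contribution by the corresponding integral of $(t^{\bz_0}L)^m e^{-t^{\bz_0}L}f$ over $Q^\ast$ (with, in the Triebel--Lizorkin case, Minkowski's integral inequality in $t$ and a continuous analogue of the vector-valued Fefferman--Stein inequality used beforehand --- this is the other place where $p,q\ge1$ is needed), while on the complement the rapid off-diagonal decay in \eqref{GUB}, uniform in $s_0$ over the relevant range, makes the sum over a dyadic family of dilated cubes converge and the remainder absorb into $\wz{\|f\|}_{p,\tau,s}$ (resp.\ $\|f\|_{p,\tau}$) via Proposition~\ref{prop4.2x}. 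Assembling these bounds and invoking Theorems~\ref{thm6.2x} and~\ref{thm6.7x} gives $\|f\|_{\wz B^{s,\tau}_{p,q}(M)}$ (and the other three) $\ls$ the right-hand sides of \eqref{eq:6.22x} and \eqref{eq:6.23x}; the cases $p=\fz$ (Besov) and $q=\fz$ follow by the standard modifications. I expect the hardest part to be precisely this last step: controlling $e^{-s_0(t)L}[(t^{\bz_0}L)^m e^{-t^{\bz_0}L}f]$ restricted to a cube uniformly as $t$ runs over a dyadic annulus, and matching the continuous $t$-integral with the discrete $\ell^q$- (resp.\ $L^q$-) sum, where the interplay of heat-semigroup contractivity, the uniform Gaussian off-diagonal decay, and Minkowski's integral inequality is what forces $p,q\ge1$.
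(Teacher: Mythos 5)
Your proposal is correct in outline and, for the first inequality (continuous quantity $\ls$ defining norm), it is exactly the paper's argument: discretize $\int_0^{\min\{1,\dz^k\}}\frac{dt}{t}$ into dyadic intervals where $t\sim\dz^j$ and repeat the first half of the proof of Theorem \ref{thm6.7x}, with no restriction on $p,q$. For the converse, however, you take a genuinely different (and workable) route: you factor the \emph{discrete} heat operator through the continuous one by the semigroup law, $h_j(\sqrt L)=(\dz^j/t)^{m\bz_0}e^{-s_0L}(t^{\bz_0}L)^me^{-t^{\bz_0}L}$ with $s_0=\dz^{j\bz_0}-t^{\bz_0}\sim\dz^{j\bz_0}$ on a fixed subinterval of $(\dz^{j+1},\dz^j]$, estimate $e^{-s_0L}$ by \eqref{GUB}, and then invoke Theorem \ref{thm6.7x} to return to $\|f\|_{B^{s,\tau}_{p,q}(M)}$. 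The paper instead works directly with $\Phi_\ell(\sqrt L)f$ and factors it spectrally through $(t^{\bz_0}L)^me^{-t^{\bz_0}L}$ for $t\in[\dz^{\ell+1},\dz^\ell)$ (the analogue of \eqref{eq:6.18x}--\eqref{eq:6.20x}, using Proposition \ref{prop2.10x} for the uniform kernel bound of the compensating multiplier), then applies H\"older against the kernel (this is where $p\ge1$ enters, same as in your plan) and the near/far annulus decomposition \eqref{eq:6.24x}--\eqref{eq:6.26x}. Your route trades the functional-calculus kernel estimate for the plain heat-kernel bound plus the already-proved discrete characterization; the paper's route avoids re-quoting Theorem \ref{thm6.7x} and keeps everything at the level of the defining norm. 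Both need $p\ge1$ (and $q\ge1$ for the $F$-scale) at the same step.

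Two points in your sketch should be repaired. First, the far-field remainder does \emph{not} ``absorb into $\wz{\|f\|}_{p,\tau,s}$ (resp.\ $\|f\|_{p,\tau}$) via Proposition \ref{prop4.2x}'': after splitting into annuli around $z_\az^k$, the contribution from the $i$-th annulus must be bounded by the continuous heat quantity over boundedly many dilated cubes $Q_\bz^{k-i}$ (these are part of the supremum in \eqref{eq:6.22x}/\eqref{eq:6.23x}), paying $|Q_\bz^{k-i}|^{\tau}/|Q_\az^k|^{\tau}\ls\dz^{-id\tau}$ and winning a geometric factor from the kernel decay, exactly as in \eqref{eq:6.24x}--\eqref{eq:6.26x}; the Peetre maximal estimate plays no role there, and the $e^{-L}$-term only accounts for the $j=0$ piece. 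Second, do not use H\"older in $t$ in the Besov case, since there $q$ may be less than $1$: average over $t\in J_j$ at the level of $[\int_{Q_\az^k}\cdots\,d\mu]^{q/p}$ (the left-hand side is $t$-independent, so it is bounded by its $J_j$-average), reserving the pointwise H\"older/Jensen step in $t$, and Minkowski in $y$, for part (ii) where $q\ge1$ is assumed. Also note that positivity of $p_t$ is not assumed, so ``Markov contractivity'' should be replaced by the uniform $L^p$-boundedness of $e^{-s_0L}$, $s_0\in(0,1]$, which follows from \eqref{GUB} and Lemma \ref{lem2.1x}(i); this suffices for your near-cube estimate.
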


\begin{proof}
 By similarity, we only consider $\|\cdot\|_{\wz B^{s,\tau}_{p, q}(M)}$
in (i).
Write the first integral in \eqref{eq:6.22x} as
 $$\int_0^{\min\{1,\dz^k\}}\cdots\,\frac{dt}{t} =\sum_{j=k\vee0}^\infty \int_{\dz^{j+1}}^{\dz^j}\cdots\,\frac{dt}{t}.$$
If we observe that $t\sim \dz^j$ when $t\in [\dz^{j+1},\, \dz^j ]$,
then, following the same
procedure as the first part of the proof of Theorem \ref{thm6.7x}, we conclude that,
for any  $p\in(0,\fz]$,
$$\wz{\|f\|}_{p,\tau, s} + \sup_{\gfz{k\in\zz}{\az\in I_k}} \frac1{|Q_\az^k|^\tau}
\lf\{\int_0^{\min\{1,\dz^k\}}
\lf[\int_{Q_\az^k} |B(x,t)|^{-sp/d} |(t^{\bz_0}L)^m e^{-t^{\bz_0}L}f(x)|^p\,d\mu(x)\r]^{q/p}\,\frac{dt}{t}\r\}^{1/q}$$
 is dominated by $\|f\|_{\wz B^{s,\tau}_{p,q}(M)}.$

To prove the converse direction of the above inequality, we assume $p\in[1,\fz]$
and let $\{\Phi_j\}_{j\in\zz_+}$
be as in Definition \ref{def3.2x}.
Fix $x\in Q_\az^k$, with $k\in\zz$ and $\az\in I_k$,
and let  $\sz>d\tau q+|s|+d+1$.
For $\ell\ge 1$ and
$\dz^{\ell+1}\le t< \dz^\ell$
, or for $\ell=0$ and $t=1$,  by an argument similar to that used in
the proof of \eqref{eq:6.20x}, we see that
\begin{eqnarray*}
|B(x, \delta^{\ell})|^{-s/d}|\Phi_\ell(\sqrt L)  f(x)|
\ls \lf[\int_M
D_{\delta^\ell,\sigma-|s|}(x,y)
|B(y, t)|^{-sp/d}
|(t^{\bz_0}L)^m e^{-t^{\bz_0}L}
 f(y)|^p\,d\mu(y)\r]^{1/p},
\end{eqnarray*}
where the second inequality is due to $p\in[1,\fz]$, H\"older's inequality and
Lemma \ref{lem2.1x}(i). Let
$$g_t(\cdot):= \begin{cases}
|B(\cdot, t)|^{-s/d}
|(t^{\bz_0}L)^m e^{-t^{\bz_0}L}
 f(\cdot)|,&\,\qquad t\in(0,1);\\
|B(\cdot, 1)|^{-s/d}
| e^{-L}
 f(\cdot)|,&\,\qquad t=1.\\
 \end{cases}$$
Then we have
\begin{eqnarray*}
|B(x, \delta^{\ell})|^{-sp/d}|\Phi_\ell(\sqrt L)  f(x)|^p \ls
\int_M
D_{\delta^\ell,\sigma-|s|}(x,y)
[g_t(y)]^p\,d\mu(y).
\end{eqnarray*}
Splitting the integral over $M$ into annuals, we obtain
\begin{eqnarray}\label{eq:6.24x}
&&|B(x, \delta^{\ell})|^{-sp/d}|\Phi_\ell(\sqrt L)  f(x)|^p\notag\\
&&\hs\ls
\lf[\int_{B(z_\az^k, C_\natural \dz^k)}
+\sum_{j\in\nn}\int_{
C_\natural \dz^{k-j+1}\le \rho(y, z_\az^k)< C_\natural \dz^{k-j}}\r]D_{\delta^\ell,\sigma-|s|}(x,y)
[g_t(y)]^p\,d\mu(y),
\end{eqnarray}
where $C_\natural $ is as in Lemma \ref{lem3.1x} and  $z_\az^k$ is the
``center" of $Q_\az^k$.
For any $x\in Q_\az^k$ with $\ell\ge (k \vee1)$ and
$C_\natural \dz^{k-j+1}\le \rho(y, z_\az^k)< C_\natural \dz^{k-j}$,  we have
$D_{\delta^\ell,\sigma-|s|}(x,y)\ls \delta^{j(\sigma-|s|-d-1)}D_{\delta^\ell,d+1}(x,y).$
Therefore,
Fubini's theorem implies that
\begin{eqnarray}\label{eq:6.25x}
&&\sum_{\ell=k\vee 1}^\fz \lf[\int_{Q_\az^k} |B(x, \delta^{\ell})|^{-sp/d}|\Phi_\ell(\sqrt L)  f(x)|^p\,d\mu(x)\r]^{q/p}\notag\\
&&\quad\ls \int_{0}^{\min\{1,\delta^k\}} \lf\{\int_{B(z_\az^k, C_\natural \dz^k)}[g_t(y)]^p\,d\mu(y)\r\}^{q/p}\,\frac{dt}t \notag\\
&&\quad\quad+\sum_{j\in\nn}\delta^{j(\sigma-|s|-d-1)[1-\ez]}
\int_{0}^{\min\{1,\delta^k\}}\lf\{\int_{B(z_\az^k, C_\natural \dz^{k-j})}[g_t(y)]^p\,d\mu(y)
\r\}^{q/p}\,\frac{dt}t,
\end{eqnarray}
where, in the last inequality, we used
Lemma \ref{lem6.4x} with $\ez\in(0,1)$.
Observe that  $B(z_\az^k, C_\natural \dz^{k-j})$
can be covered with $N$ Christ cubes $Q_\bz^{k-j}$,
where the integer $N$ is independent of $k,\,j$ and $\az$.
Also, $|Q_\bz^{k-j}|\ls \dz^{-jd} |Q_\az^k|$.
Choose $\ez\in(0,1)$ such that $(\sz-|s|-d-1)(1-\ez)>d\tau q$.
Then
\begin{eqnarray}\label{eq:6.26x}
&&\sup_{\gfz{k\in\zz}{\az\in I_k}}\frac1{|Q_\az^k|^\tau}\Bigg\{\sum_{\ell=k\vee 1}^\fz \bigg[\int_{Q_\az^k} |B(x, \delta^{\ell})|^{-sp/d}|\Phi_\ell(\sqrt L)  f(x)|^p\,d\mu(x)\bigg]^{q/p}\Bigg\}^{1/q}\notag\\
&&\quad\ls\sup_{\gfz{k\in\zz}{\az\in I_k}} \frac1{|Q_\az^k|^\tau}
\lf\{\int_0^{\min\{1,\dz^k\}}
\lf[\int_{Q_\az^k} |B(x,t)|^{-sp/d} |(t^{\bz_0}L)^m e^{-t^{\bz_0}L}f(x)|^p\,d\mu(x)\r]^{q/p}
\,\frac{dt}{t}\r\}^{1/q}.\ \qquad
\end{eqnarray}
By \eqref{eq:6.24x}, together with  an argument similar
to that used in the estimates for
\eqref{eq:6.25x} and \eqref{eq:6.26x}, we have
\begin{eqnarray*}
&&\sup_{\gfz{k\in\zz}{\az\in I_k}}\frac1{|Q_\az^k|^\tau}\Bigg\{\sum_{\ell=k\vee 0}^{(k\vee 1)-1} \bigg[\int_{Q_\az^k} |B(x, \delta^{\ell})|^{-sp/d}|\Phi_\ell(\sqrt L)  f(x)|^p\,d\mu(x)\bigg]^{q/p}\Bigg\}^{1/q}\\
&&\quad\ls\sup_{k\le0,\ \az\in I_k} \frac1{|Q_\az^k|^\tau}
\lf\{\int_0^1\lf[\int_{Q_\az^k}|B(x,1)|^{-sp/d} |e^{-L}f(x)|^p\,d\mu(x)\r]^{q/p}\,\frac{dt}{t}\r\}^{1/q}\ls\wz{\|f\|}_{p,\tau,s},
\end{eqnarray*}
as desired. This proves (i) for $\|\cdot\|_{\wz B^{s,\tau}_{p, q}(M)}$.
The proofs for (ii) and the remainder
of (i)  follow from a similar method, the details being omitted.
This finishes the proof of Theorem \ref{thm6.8x}.
\end{proof}

Taking $\tau=0$ in Theorem \ref{thm6.8x}, we easily obtain the following corollary,
the details being omitted; see \cite[Theorems~6.7 and 7.5]{KP} for the case $\bz_0=2$.

\begin{cor}\label{cor6.9x}
Let   $s\in\rr$ and $m\in\nn$
such that $m>s/\bz_0$.
\begin{enumerate}
\item[\rm(i)] If $p\in[1,\infty]$ and $q\in(0,\fz]$, then, for all $f\in \cd'(M)$,
\begin{equation*}
\|f\|_{B^{s}_{p, q}(M)}\sim \|e^{-L}f\|_{L^p(M)} +
\lf\{\int_0^1
 t^{-sp} \|(t^{\bz_0}L)^m e^{-t^{\bz_0}L}f\|_{L^p(M)}^q\,\frac{dt}{t}\r\}^{1/q},
\end{equation*}
and a similar result also holds true
for  $\|\cdot\|_{\wz B^{s}_{p, q}(M)}$, but with $\|e^{-L}f\|_{L^p(M)} $ and
$t^{-sp}$ in the above formula replaced by $\||B(\cdot,1)|e^{-L}f\|_{L^p(M)} $
and $|B(\cdot,t)|^{-sp/d}$, respectively.

\item[\rm(ii)] If $p\in[1,\infty)$ and $q\in[1,\fz]$, then, for all $f\in \cd'(M)$,
\begin{equation*}
\|f\|_{F^{s}_{p, q}(M)}
\sim
\|e^{-L}f\|_{L^p(M)} +
\lf\|
\lf[ \int_0^1 t^{-sq} |(t^{\bz_0}L)^m e^{-t^{\bz_0}L}f|^q\,\frac{dt}{t}\r]^{1/q}\r\|_{L^p(M)},
\end{equation*}
and
a similar result also holds true
for  $\|\cdot\|_{\wz F^{s}_{p, q}(M)}$, but with $\|e^{-L}f\|_{L^p(M)} $ and
$t^{-sq}$ in the above formula replaced by $\||B(\cdot,1)|e^{-L}f\|_{L^p(M)} $
and $|B(\cdot,t)|^{-sq/d}$, respectively.
\end{enumerate}
\end{cor}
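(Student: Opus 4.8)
The statement is a direct specialization of Theorem \ref{thm6.8x} to the case $\tau=0$, and the plan is to read off what the right-hand sides of \eqref{eq:6.22x} and \eqref{eq:6.23x} become in that case. First I would recall from Remark \ref{rem3.4x} that $B^{s,0}_{p,q}(M)=B^s_{p,q}(M)$, $\wz B^{s,0}_{p,q}(M)=\wz B^s_{p,q}(M)$, $F^{s,0}_{p,q}(M)=F^s_{p,q}(M)$ and $\wz F^{s,0}_{p,q}(M)=\wz F^s_{p,q}(M)$ with equivalent (quasi-)norms, so that it suffices to establish the asserted equivalences for $\|\cdot\|_{B^{s,0}_{p,q}(M)}$, $\|\cdot\|_{\wz B^{s,0}_{p,q}(M)}$, $\|\cdot\|_{F^{s,0}_{p,q}(M)}$ and $\|\cdot\|_{\wz F^{s,0}_{p,q}(M)}$.

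The key point is that when $\tau=0$ every factor $|Q^k_\az|^\tau$ equals $1$, so the Morrey-type suprema over Christ's cubes in \eqref{eq:6.22x} and \eqref{eq:6.23x} (and in the definitions of $\|f\|_{p,0}$ and $\wz{\|f\|}_{p,0,s}$) collapse to integrals over all of $M$. Indeed, since each $Q^k_\az\subset M$, the integrands are nonnegative, and $\min\{1,\dz^k\}\le 1$, these suprema are dominated by the corresponding quantities with $Q^k_\az$ replaced by $M$ and the time integral taken over $(0,1)$; this gives one inequality, and in particular $\|f\|_{p,0}\le\|e^{-L}f\|_{L^p(M)}$ and $\wz{\|f\|}_{p,0,s}\le\||B(\cdot,1)|^{-s/d}e^{-L}f\|_{L^p(M)}$. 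For the reverse inequality I would fix $k\le 0$ (so that $\min\{1,\dz^k\}=1$) and use that, by Lemma \ref{lem3.1x}(i), $\{Q^k_\az\}_{\az\in I_k}$ is a partition of $M$ up to a $\mu$-null set; moreover, by the reverse doubling condition \eqref{rdoubling1} (equivalently \eqref{rdoubling2}) together with $Q^k_\az\supset B(z_\az^k,c_\natural\dz^k)$, one has $|Q^k_\az|\to\infty$ as $k\to-\infty$, and, using the nesting in Lemma \ref{lem3.1x}(ii)--(iii), any fixed ball of $M$ is contained in a single cube $Q^k_\az$ once $k$ is sufficiently negative. Letting $k\to-\infty$ and applying the monotone convergence theorem then recovers the full integral over $M$, so that the suprema in \eqref{eq:6.22x} and \eqref{eq:6.23x} equal the corresponding whole-space expressions and $\|f\|_{p,0}=\|e^{-L}f\|_{L^p(M)}$, $\wz{\|f\|}_{p,0,s}=\||B(\cdot,1)|^{-s/d}e^{-L}f\|_{L^p(M)}$.

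Substituting these identifications into Theorem \ref{thm6.8x}(i) and its ``tilde'' analogue yields part (i) of the corollary for $p\in[1,\infty]$ and $q\in(0,\infty]$, while Theorem \ref{thm6.8x}(ii) and its ``tilde'' analogue give part (ii) for $p\in[1,\infty)$ and $q\in[1,\infty]$; in both cases $m\in\nn$ with $m>s/\bz_0$, exactly as in Theorem \ref{thm6.8x}. I expect the only step that is not a purely formal substitution to be the collapse of the Morrey-type suprema to $L^p(M)$-norms described above; this is where the reverse doubling and non-collapsing hypotheses and the nesting of Christ's cubes enter, and, being routine, the remaining details may be omitted just as the paper announces.
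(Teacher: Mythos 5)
Your overall route coincides with the paper's: Corollary \ref{cor6.9x} is meant to follow by setting $\tau=0$ in Theorem \ref{thm6.8x}, and the only substantive point is that the Morrey-type suprema over Christ's cubes then collapse, up to constants, to global $L^p$-quantities; your ``$\ls$'' direction is fine. The problem is your justification of the ``$\gs$'' direction: the claim that, by the nesting in Lemma \ref{lem3.1x}(ii)--(iii), any fixed ball of $M$ is contained in a single cube $Q^k_\az$ once $k$ is sufficiently negative is false in general, and so are the asserted exact equalities $\|f\|_{p,0}=\|e^{-L}f\|_{L^p(M)}$ and $\wz{\|f\|}_{p,0,s}=\||B(\cdot,1)|^{-s/d}e^{-L}f\|_{L^p(M)}$. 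Already for $M=\rn$, where (Section \ref{sec-8.1}) Christ's cubes are the classical dyadic cubes $2^{-j}([0,1)^n+m)$, a ball centered at the origin is contained in no dyadic cube of any generation, and the union of an increasing sequence of dyadic cubes containing a fixed point is in general a proper ``quadrant'' of $\rn$; hence monotone convergence along one nested sequence of cubes does not recover the integral over all of $M$, and the step as you wrote it fails.

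The conclusion is nevertheless correct, and the omitted detail can be supplied by a bounded-overlap argument instead: given $R\ge 1$, choose $k\le 0$ with $\dz^k\sim R$; every cube $Q^k_\bz$ meeting $B(x_0,R)$ contains the ball $B(z^k_\bz,c_\natural\dz^k)$ and lies in $B(x_0,(1+2C_\natural)\dz^k)$, so disjointness of the cubes together with \eqref{doubling2} shows that $B(x_0,R)$ is covered by at most $N_0$ cubes of level $k$, with $N_0$ depending only on $K$, $d$, $c_\natural$, $C_\natural$. For each $j$ (or a.e.\ $t\in(0,1)$), one of these $N_0$ cubes carries at least $N_0^{-1}$ of $\int_{B(x_0,R)}|g_j|^p\,d\mu$, whence the maximum over these cubes of $\sum_j[\int_{Q^k_\bz}|g_j|^p\,d\mu]^{q/p}$ is at least $N_0^{-1-q/p}\sum_j[\int_{B(x_0,R)}|g_j|^p\,d\mu]^{q/p}$ (and similarly with $\sum_j$ replaced by $\int_0^1\cdot\,\frac{dt}{t}$); since $k\le0$, these cubes are admissible competitors in the supremum (here $\min\{1,\dz^k\}=1$ and the sum starts at $j=0$), and letting $R\to\infty$ with monotone convergence yields the global quantity up to a harmless constant. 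The same device is what underlies $\|f\|_{p,0}\sim\|e^{-L}f\|_{L^p(M)}$ and the identification in Remark \ref{rem3.4x} that you invoke; note that in general only equivalence, not equality, holds there.
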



\section{Frame characterizations}\label{sec-7}

\hskip\parindent
The main aim of this section is to establish  frame characterizations of the
Besov-type and the Triebel--Lizorkin-type spaces.
As an application,
we prove that $F_{p,q}^{s,1/p}(M)$ and $\wz F_{p,q}^{s,1/p}(M)$
are indeed the endpoint case of the Triebel-Lizorkin spaces $F_{\infty,q}^s(M)$ and $\wz F_{\infty,q}^s(M)$, respectively,
where $p\in(0,\infty)$, $q\in(0,\infty]$ and $s\in\rr$.


\subsection{Frame decompositions}\label{sec-7.1}

\hskip\parindent
Due to the definitions
of the Besov-type and the Triebel--Lizorkin-type spaces,
the frame structure we considered in this section is more specific
and relies on Christ's dyadic cubes in $M$.
Thus, we need to establish a new discrete Calder\'on
reproducing formula adapted to  Christ's dyadic cubes, which is different
from the one  in \cite[Theorem~4.3 and Proposition~5.5]{KP}.

The  discrete Calder\'on reproducing formula is as follows.
For the completeness of the paper,  we present its proof in Section \ref{sec-appendix} below, though a majority of the skills used  comes from \cite{CKP, KP}.

\begin{thm}\label{thm-CRF}
Let $\delta\in(0,1)$ be as in Lemma \ref{lem3.1x}.
Suppose that  $(\Phi_0, \Phi)\in C^\infty(\rr_+)$
satisfy \eqref{eq:2.14xx} and \eqref{eq:2.15xx}.
For any $j\in\nn$, let $\Phi_j(\cdot):= \Phi(\delta^{j\bz_0/2}\cdot)$.
Then there exist a small number $\ez_0\in(0,1)$ and
a sequence $\{\Psi_j(\sqrt L)\}_{j=0}^\infty$ of operators such that the following hold true:
\begin{enumerate}
\item[\rm(a)] For $j\in\zz$ and $\tau\in I_j$, denote by $\{Q_\tau^{j,\nu}:\,
\nu\in\{1,\ldots, N_{\tau}^{j}\}\,\}$ the set
of Christ's dyadic cubes $Q_{\tau'}^{j+j_0}\subset Q_\tau^j$, where $j_0\in\nn$ is some large number determined by $\ez_0$.
Then, for any  $f\in\cd'(M)$
and all $\xi_{\tau}^{j,\nu}\in Q_{\tau}^{j,\nu}$
with $\tau\in I_j$ and $\nu\in\{1,\dots, N_\tau^{j}\}$,
\begin{equation}\label{DCRF}
f(\cdot) =  \sum_{j=0}^\infty\sum_{\tau\in I_j} \sum_{\nu=1}^{N_\tau^{j}}
|Q_{\tau}^{j,\nu}| (\Phi_j(\sqrt L)f )(\xi_\tau^{j,\nu})\,
\Psi_j(\sqrt L)(\xi_\tau^{j,\nu}, \cdot),
\end{equation}
where the series converge in $\cd'(M)$.

\item[{\rm(b)}] For any given $m\in\zz_+$ and $\sz>2d$, there exists a positive constant $C:=C(m,\sz)$ such that, for all $j\in\zz_+$ and $x,\,y\in M$,
$|L^m \Psi_j( \sqrt L)(x, y)|
\le C \delta^{-2mj} D_{\delta^j, \sigma}(x,y).$

\item[{\rm(c)}] For any given $m\in\zz_+$ and $\sz>2d$, there exists a positive constant $C:=C(m,\sz)$ such that, for all $j\in\zz_+$ and $x,\,y,\,y'\in M$ satisfying $\rho(y,y')\le \delta^j$,
\begin{eqnarray*}
&&|L^m \Psi_j( \sqrt L)(x, y)-L^m\Psi_j(\sqrt L)(x, y')|
+| L^m\Psi_j( \sqrt L)(y,x)-L^m\Psi_j(\sqrt L)(y',x)|\\
&&\quad
\le C \delta^{-2mj}\lf[\delta^{-j}\rho(y,y')\r]^{\alpha_0}
D_{\delta^j, \sigma}(x,y).
\end{eqnarray*}

\item[{\rm(d)}] If the smooth functions $(\wz\Psi_0, \wz\Psi)$
satisfy \eqref{eq:2.14xx} and \eqref{eq:2.15xx},
then, for any  $m\in\nn$ and $\sz>2d$,
there exists a positive constant $C:=C(m,\sz)$
such that, for all $j,\,k\in\zz_+$  and $x,\,y\in M$,
\begin{eqnarray*}
\left|(\wz\Psi_k(\sqrt L)\Psi_j(\sqrt L))(x,y)\right|
\le C \delta^{|k-j|(m\bz_0-2d)} D_{\delta^{k\wedge j}, \sigma}(x,y).
\end{eqnarray*}
\end{enumerate}
\end{thm}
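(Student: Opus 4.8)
The plan is to adapt the discretization scheme of \cite{CKP,KP} to Christ's dyadic cubes, with the fineness of the discretization governed purely by the H\"older exponent $\alpha_0$ from {\bf (HE)} (so that the finite speed of propagation, unavailable when $\bz_0\neq 2$, is never used). First I would fix $(\wz\Phi_0,\wz\Phi)$ adapted to the given $(\Phi_0,\Phi)$ so that the continuous Calder\'on reproducing formula \eqref{c-crf} holds, $f=\sum_{j\ge0}\wz\Phi_j(\sqrt L)\Phi_j(\sqrt L)f$ in $\cd'(M)$. Since $\wz\Phi_j(\sqrt L)\Phi_j(\sqrt L)f(x)=\int_M\int_M\wz\Phi_j(\sqrt L)(x,u)\Phi_j(\sqrt L)(u,y)f(y)\,d\mu(y)\,d\mu(u)$, for a large parameter $j_0\in\nn$ to be chosen I would replace the $u$-integral by its Riemann sum over the level-$(j+j_0)$ Christ cubes $\{Q_\tau^{j,\nu}\}$ sampled at $\xi_\tau^{j,\nu}$, obtaining the \emph{discretized operator}
$$T_{j_0}f:=\sum_{j=0}^\infty\sum_{\tau\in I_j}\sum_{\nu=1}^{N_\tau^j}|Q_\tau^{j,\nu}|\,(\Phi_j(\sqrt L)f)(\xi_\tau^{j,\nu})\,\wz\Phi_j(\sqrt L)(\cdot,\xi_\tau^{j,\nu}),$$
together with its error $R_{j_0}:=I-T_{j_0}$, whose scale-$j$ part integrates $f$ against $\wz\Phi_j(\sqrt L)(x,u)\Phi_j(\sqrt L)(u,y)-\wz\Phi_j(\sqrt L)(x,\xi_\tau^{j,\nu})\Phi_j(\sqrt L)(\xi_\tau^{j,\nu},y)$ integrated over $u\in Q_\tau^{j,\nu}$.

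The heart of the argument---and what I expect to be the main obstacle---is to show that $\|R_{j_0}\|$ is small, and tends to $0$ as $j_0\to\infty$, simultaneously on $L^2(M)$ and in a way compatible with $\cd(M)$ and its dual $\cd'(M)$. The key move is to telescope the displayed difference as $[\wz\Phi_j(\sqrt L)(x,u)-\wz\Phi_j(\sqrt L)(x,\xi_\tau^{j,\nu})]\Phi_j(\sqrt L)(u,y)+\wz\Phi_j(\sqrt L)(x,\xi_\tau^{j,\nu})[\Phi_j(\sqrt L)(u,y)-\Phi_j(\sqrt L)(\xi_\tau^{j,\nu},y)]$, so that every increment falls on a \emph{smooth kernel} rather than on $f$; since $\rho(u,\xi_\tau^{j,\nu})\le\diam Q_\tau^{j,\nu}\lesssim\delta^{j_0}\delta^j$, the H\"older estimates of Corollary \ref{cor2.6x} and Proposition \ref{prop2.10x} bound each increment by $C\delta^{j_0\alpha_0}$ times the corresponding size term $D_{\delta^j,\sigma}$. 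A Schur test via Lemma \ref{lem2.1x} then bounds the scale-$j$ component of $R_{j_0}$ on $L^2(M)$ by $C\delta^{j_0\alpha_0}$ uniformly in $j$, and distinct scales are almost orthogonal---here one exploits $\wz\Phi^{(2\nu+1)}(0)=\Phi^{(2\nu+1)}(0)=0$ to insert powers $(\delta^{\bz_0}L)^{\pm m}$ exactly as in the proof of Proposition \ref{prop2.14x}---so that a Cotlar--Stein/almost-orthogonality summation gives $\|R_{j_0}\|_{L^2\to L^2}\le C\delta^{j_0\alpha_0}$. The same kernel bounds, run against the weights $[1+\rho(\cdot,x_0)]^\ell$ and powers of $L$, give the corresponding smallness on $\cd(M)$, and duality transfers it to $\cd'(M)$; one then fixes $\ez_0\in(0,1)$ and chooses $j_0$ so that $C\delta^{j_0\alpha_0}\le\ez_0$.

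With $\|R_{j_0}\|\le\ez_0<1$, the operator $T_{j_0}=I-R_{j_0}$ is invertible, $T_{j_0}^{-1}=\sum_{n\ge0}R_{j_0}^n$, bounded on $L^2(M)$, $\cd(M)$ and $\cd'(M)$. Applying $T_{j_0}^{-1}$ termwise to the definition of $T_{j_0}f$ then yields \eqref{DCRF} with
$$\Psi_j(\sqrt L)(\xi_\tau^{j,\nu},\cdot):=T_{j_0}^{-1}\big[\wz\Phi_j(\sqrt L)(\cdot,\xi_\tau^{j,\nu})\big],$$
the series converging in $\cd'(M)$ because $T_{j_0}^{-1}$ and the discretization both act continuously there; this settles (a). For (b) and (c), write $T_{j_0}^{-1}=I+E_{j_0}$ with $E_{j_0}=\sum_{n\ge1}R_{j_0}^n$; by the composition estimate Lemma \ref{lem2.1x}(ii) the kernel of $E_{j_0}$ inherits size and H\"older bounds of $D$-type, with the small factor surviving the geometric summation, so that $\Psi_j(\sqrt L)=\wz\Phi_j(\sqrt L)+E_{j_0}\wz\Phi_j(\sqrt L)$ obeys the stated bounds $|L^m\Psi_j(\sqrt L)(x,y)|\le C\delta^{-2mj}D_{\delta^j,\sigma}(x,y)$ and its H\"older analogue, using that $L^m\wz\Phi_j(\sqrt L)$ already obeys Proposition \ref{prop2.10x} and that the requirement $\sigma>2d$ is precisely the threshold forced by the H\"older-continuity input of Lemma \ref{lem2.4x}(ii).

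Finally, for (d) I would insert $T_{j_0}^{-1}=I+E_{j_0}$ to split $\wz\Psi_k(\sqrt L)\Psi_j(\sqrt L)=\wz\Psi_k(\sqrt L)\wz\Phi_j(\sqrt L)+\wz\Psi_k(\sqrt L)E_{j_0}\wz\Phi_j(\sqrt L)$. The first term carries the cross-scale decay $\delta^{|k-j|(m\bz_0-d)}$ by Proposition \ref{prop2.14x} together with Remark \ref{rem2.15x}; the second is estimated scale by scale inside $E_{j_0}$, using the almost-orthogonality of Proposition \ref{prop2.14x} and the composition estimate Lemma \ref{lem2.1x}(ii), each intermediate composition costing at most a factor $\delta^{-|k-j|d}$, which degrades the exponent to $m\bz_0-2d$; summing the geometric series in $n$ and over the intervening scales gives the bound in (d). The appearance of $2d$ in (b)--(d), against the $d$ of Proposition \ref{prop2.14x}, is exactly the price of passing through the discretization and its Neumann inverse.
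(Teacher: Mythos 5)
Your architecture is genuinely different from the paper's: you discretize the full continuous reproducing formula \eqref{c-crf} at once, obtaining a single multi-scale operator $T_{j_0}$ with error $R_{j_0}$, and then invert by a Neumann series acting on $L^2(M)$, $\cd(M)$ and $\cd'(M)$ (this is the Coifman-style scheme used in \cite{HMY2}). The paper instead never inverts anything on distribution spaces: it proves a Marcinkiewicz--Zygmund/sampling inequality for band-limited functions (Lemmas \ref{lem9.1x}--\ref{lem9.2x}), builds at \emph{each fixed scale} $j$ a symmetric discretization $V_j=\Gamma_jU_j\Gamma_j$ of the identity on the spectral subspace, inverts $I-R_j$ by a Neumann series in the $L^2$ operator norm only (where $\|R_j\|_{L^2\to L^2}\le 2\sqrt{\ez_0}$ is clean), applies the resulting representation to the band-limited kernel $\Phi_j(\sqrt L)(\cdot,z)$, and only then plugs this into \eqref{c-crf}. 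Your single-scale smallness estimate $\|E_j\|_{L^2\to L^2}\ls\dz^{j_0\az_0}$ via telescoping and the H\"older bound is fine and is morally the same input as Lemma \ref{lem9.1x}; the divergence, and the trouble, is in what you do with it afterwards.

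There are two genuine gaps. First, the step ``the same kernel bounds give smallness on $\cd(M)$, and duality transfers it to $\cd'(M)$'' is precisely the hard point of your route and is not an estimate-chase: $\cd(M)$ is a Fr\'echet space whose seminorms $\cp_{m,\ell}$ come with constants that grow in $(m,\ell)$, while $j_0$ is fixed once and for all (it may depend only on $\ez_0$, by the statement of Theorem \ref{thm-CRF}); to sum the Neumann series against a distribution of a priori unknown order you would need a contraction on arbitrarily high seminorm blocks with the same fixed $j_0$, which your estimates do not give. Second, even granting invertibility, your derivation of (b)--(d) by iterating Lemma \ref{lem2.1x}(ii) on the kernel of $E_{j_0}=\sum_{n\ge1}R_{j_0}^n$ hides a quantifier problem: the pointwise bound for $R_{j_0}^n$ at decay order $\sz$ grows like $C_\sz^n(\dz^{j_0\az_0})^n$ (composition constants and the cross-scale drift of the dominating scale both cost $\sz$-dependent factors), so summability of the series at level $\sz$ forces $C_\sz\dz^{j_0\az_0}<1$, i.e.\ $j_0$ --- and hence $\Psi_j$ itself --- would depend on $m$ and $\sz$, contradicting the statement ``for any $m,\sz$ there exists $C(m,\sz)$'' with $\ez_0,\Psi_j$ fixed in advance. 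The paper defuses exactly this with two devices you do not use: the auxiliary cutoffs $\Gamma_j$ are taken from Lemma \ref{lem2.7x} so that Proposition \ref{prop2.12x} gives \emph{sub-exponential} kernel decay $E^{\gz,\bz'}_{\dz^j}$ (one fixed $\gz,\bz'$ dominates every polynomial $D_{\dz^j,\sz}$, so a single smallness threshold \eqref{eq:9.19x} serves all $\sz,m$), and the Neumann-series kernel bounds are obtained by taking geometric means of the geometrically growing pointwise bounds with the geometrically decaying $L^2$ operator-norm bounds via Lemma \ref{lem2.2x} (see \eqref{eq:9.16x}--\eqref{eq:9.18x}), which is what lets the small factor actually survive the summation. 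Without an analogue of these two ingredients (or a Banach test-class argument as in \cite{HMY2}, which is a substantial piece of singular-integral theory in its own right), your proposal does not yet prove (a)--(d) as stated.
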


\begin{rem}\label{rem7.2x}
The subcubes $\{Q_\tau^{j,\nu}:\, \tau\in I_j,\, 1\le \nu\le N_\tau^j\}$ with $j\in\zz$ are given as follows: we first find some small $\ez_0$ (see \eqref{eq:9.19x}) so that, associated to this specific $\ez_0$, we find a $j_0$ (see \eqref{eq:9.1x} and \eqref{eq:9.2x}) and then, with this $j_0$, we choose all the subcubes in each level $j+j_0$ and denote them by
$\{Q_\tau^{j,\nu}:\, \tau\in I_j,\, 1\le \nu\le N_\tau^j\}$.
In what follows, we fix such notation.
\end{rem}

Now we introduce the related sequence spaces.

\begin{defn}\label{b-type}
Let $s\in\rr$, $\tau\in[0,\infty)$ and $p,\, q\in(0,\infty]$. The \emph{sequence space}
$b^{s,\tau}_{p, q}(M)$ is defined to be the collection of all
sequences $a:=\{a_t^{j,\nu}\}_{j\in\zz_+, t\in I_j, 1\le \nu\le N_t^j}\subset \cc$ such that
\begin{equation*}
\|a\|_{b^{s,\tau}_{p, q}(M)}
:= \sup_{\gfz{k\in\zz}{ \az\in I_k}} \frac1{|Q^k_\az|^\tau}
\lf[
\sum_{j=k\vee 0}^{\infty}
\bigg\{\int_{Q_\az^k} \bigg[\sum_{t\in I_j}\sum_{\nu=1}^{N_t^j}
\dz^{-js} |a_t^{j,\nu}|\chi_{Q_t^{j,\nu}}(x)\bigg]^p\, d \mu(x)\bigg\}^{q/p}
\r]^{1/q}
<\infty.
\end{equation*}
 The \emph{sequence space}
$\wz b^{s,\tau}_{p, q}(M)$ is defined to be the collection of all
 $a:=\{a_t^{j,\nu}\}_{j\in\zz_+, t\in I_j, 1\le \nu\le N_t^j}\subset\cc$ such that
\begin{equation*}
\|a\|_{\wz b^{s,\tau}_{p, q}(M)}
:= \sup_{\gfz{k\in\zz}{ \az\in I_k} } \frac1{|Q^k_\az|^\tau}
\lf[
\sum_{j=k\vee 0}^{\infty}
\bigg\{\int_{Q_\az^k} \bigg[\sum_{t\in I_j}\sum_{\nu=1}^{N_t^j}
|Q_t^{j,\nu}|^{-s/d} |a_t^{j,\nu}|\chi_{Q_t^{j,\nu}}(x)\bigg]^p\, d \mu(x)\bigg\}^{q/p}
\r]^{1/q}
<\infty.
\end{equation*}
\end{defn}

\begin{defn}\label{f-type}
Let $s\in\rr$, $\tau\in[0,\infty)$, $p\in(0,\infty)$ and  $q\in(0,\infty]$. The \emph{sequence space}
$f^{s,\tau}_{p, q}(M)$ is defined to be the collection of all
sequences $a:=\{a_t^{j,\nu}\}_{j\in\zz_+, t\in I_j, 1\le \nu\le N_t^j}\subset\cc$ such that
\begin{equation*}
\|a\|_{f^{s,\tau}_{p, q}(M)}
:= \sup_{\gfz{k\in\zz}{ \az\in I_k}} \frac1{|Q^k_\az|^\tau}
\lf[
\int_{Q_\az^k}
\bigg\{\sum_{j=k\vee 0}^{\infty} \bigg[\sum_{t\in I_j}\sum_{\nu=1}^{N_t^j}
 \dz^{-js}|a_t^{j,\nu}|\chi_{Q_t^{j,\nu}}(x)\bigg]^q\, d \mu(x)\bigg\}^{p/q}
\r]^{1/p}
<\infty.
\end{equation*}
 The \emph{sequence space}
$\wz f^{s,\tau}_{p, q}(M)$ is defined to be the collection of all
 $a:=\{a_t^{j,\nu}\}_{j\in\zz_+, t\in I_j, 1\le \nu\le N_t^j}\subset\cc$ such that
\begin{equation*}
\|a\|_{\wz f^{s,\tau}_{p, q}(M)}
:= \sup_{\gfz{k\in\zz}{ \az\in I_k} } \frac1{|Q^k_\az|^\tau}
\lf[
\int_{Q_\az^k}
\bigg\{\sum_{j=k\vee 0}^{\infty}\bigg[\sum_{t\in I_j}\sum_{\nu=1}^{N_t^j}
|Q_t^{j,\nu}|^{-s/d} |a_t^{j,\nu}|\chi_{Q_t^{j,\nu}}(x)\bigg]^q\, d \mu(x)\bigg\}^{p/q}
\r]^{1/p}
<\infty.
\end{equation*}
\end{defn}


Let the smooth functions $(\Phi_0,\Phi)$
satisfy \eqref{eq:2.14xx} and \eqref{eq:2.15xx}.
Define $\Phi_j$ with $j\in\nn$  as in \eqref{eq:2.16xx}.
By \eqref{DCRF},
there exist $\{\Psi_j\}_{j=0}^\infty$
satisfying (i) through (iii) of Theorem \ref{thm-CRF}
such that, for any  $f\in\cd'(M)$,
$$ f(\cdot)=  \sum_{j=0}^\infty\sum_{t\in I_j} \sum_{\nu=1}^{N_t^{j}}
|Q_{t}^{j,\nu}| (\Phi_j(\sqrt L)f )(\xi_t^{j,\nu})\,
\Psi_j(\sqrt L)(\xi_t^{j,\nu}, \cdot) \qquad \textup{in}\,\, \cd'(M),$$
where $\xi_{t}^{j,\nu}\in Q_{t}^{j,\nu}$, $t\in I_j$ and $\nu\in\{1,\dots, N_t^{j}\}$.
Define the ``analysis" and ``synthesis" operators, respectively, as follows:
\begin{equation*}
S_{\Phi} :\, \, f\to \{(\Phi_j(\sqrt L)f )(\xi_t^{j,\nu})\}_{j\in\zz_+, t\in I_j, 1\le \nu\le N_t^j}
\end{equation*}
and
\begin{equation*}
T_{\Psi} :\, \, \{a_t^{j,\nu}\}_{j\in\zz_+, t\in I_j, 1\le \nu\le N_t^j}
\to  \sum_{j=0}^\infty\sum_{t\in I_j} \sum_{\nu=1}^{N_t^{j}}
|Q_{t}^{j,\nu}| \Psi_j(\sqrt L)(\xi_t^{j,\nu}, \cdot) a_t^{j,\nu}.
\end{equation*}
These operators $S_\Phi$ and $T_{\Psi}$ are generalizations of the $\vz$-transform
and the inverse $\vz$-transform of Frazier and Jawerth \cite{FJ90}.
Notice that  \eqref{DCRF} implies that
$T_\Psi \circ S_\Phi ={\rm Id}$ on $\cd'(M)$, here and hereafter, we use ${\rm Id}$
to denote the \emph{identity operator}. Then we have the following frame characterizations.

\begin{thm}\label{thm7.5x}
Let $\tau\in[0,\infty)$, $s\in\rr$ and $q\in(0,\infty]$.
\begin{enumerate}
\item[\rm(i)] Let $p\in (0,\infty]$. Then the operators
$S_{\Phi}:\, \wz B^{s,\tau}_{p,q}(M)\to \wz b^{s, \tau}_{p, q}(M)$
and  $T_{\Psi}:\, \wz b^{s,\tau}_{p,q}(M)\to \wz B^{s, \tau}_{p, q}(M)$ are bounded,
and
$T_\Psi \circ S_{\Phi} = {\rm Id}$ on $\wz B^{s,\tau}_{p, q}(M).$
Moreover, $f\in \wz B^{s, \tau}_{p, q}(M)$ if and only if
$S_\Phi f \in \wz b^{s, \tau}_{p, q}(M)$,
and  there exists a constant $C\in[1,\fz)$ such that, for all $f\in \wz B^{s, \tau}_{p, q}(M)$,
\begin{equation*}
\frac1C\|f\|_{\wz B^{s,\tau}_{p, q}(M)}
\le \| S_\Phi f\|_{\wz b^{s, \tau}_{p, q}(M)} \le C\|f\|_{\wz B^{s,\tau}_{p, q}(M)}.
\end{equation*}

\item[\rm(ii)] Item ${\rm(i)}$ keeps valid if $\wz B^{s,\tau}_{p,q}(M)$ and $ \wz b^{s, \tau}_{p, q}(M)$ therein
are replaced by $ B^{s,\tau}_{p,q}(M)$ and $  b^{s, \tau}_{p, q}(M)$,
respectively.

\item[\rm(iii)] Let $p\in(0,\infty)$. Then the operators $S_{\Phi}:\, \wz F^{s,\tau}_{p,q}(M)\to \wz  f^{s, \tau}_{p, q}(M)$
and  $T_{\Psi}:\, \wz  f^{s,\tau}_{p,q}(M)\to \wz F^{s, \tau}_{p, q}(M)$ are bounded,
and
$T_\Psi \circ S_{\Phi} = {\rm Id}$ on $\wz F^{s,\tau}_{p, q}(M).$
Moreover, $f\in \wz B^{s, \tau}_{p, q}(M)$ if and only if
$S_\Phi f \in \wz b^{s, \tau}_{p, q}(M)$,
and  there exists a constant $C\in[1,\fz)$ such that, for all $f\in \wz F^{s, \tau}_{p, q}(M)$,
\begin{equation*}
\frac1C\|f\|_{\wz F^{s,\tau}_{p, q}(M)} \le \| S_\Phi f\|_{\wz f^{s, \tau}_{p, q}(M)}
\le C\|f\|_{\wz F^{s,\tau}_{p, q}(M)}.
\end{equation*}
\item[\rm(iv)] Item ${\rm (iii)}$ keeps valid if $\wz F^{s,\tau}_{p,q}(M)$ and $ \wz f^{s, \tau}_{p, q}(M)$ therein
are replaced by $ F^{s,\tau}_{p,q}(M)$ and $f^{s, \tau}_{p, q}(M)$,
respectively.
\end{enumerate}
\end{thm}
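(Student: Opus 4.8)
The proof of Theorem \ref{thm7.5x} follows the standard two–step strategy for $\varphi$–transform characterizations (as in \cite{FJ90,YSY}): (1) show the synthesis operator $T_\Psi$ is bounded from the sequence space to the function space; (2) show the analysis operator $S_\Phi$ is bounded from the function space to the sequence space. Since $T_\Psi\circ S_\Phi=\mathrm{Id}$ on $\cd'(M)$ by the discrete Calder\'on reproducing formula \eqref{DCRF} in Theorem \ref{thm-CRF}, these two boundedness statements immediately yield that $f\in\wz F^{s,\tau}_{p,q}(M)$ (resp.\ the other three spaces) if and only if $S_\Phi f$ lies in the corresponding sequence space, together with the norm equivalence. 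By the similarity of the four cases, I would present only one, say $\wz F^{s,\tau}_{p,q}(M)$ with $p\in(0,\infty)$, and indicate that the Besov cases and the $\Phi_j$–weighted (non-tilde) cases are analogous, the $B$-case being in fact easier since it avoids the vector-valued maximal inequality.

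\textbf{Step 1: boundedness of $T_\Psi$.} Given $a=\{a_t^{j,\nu}\}\in\wz f^{s,\tau}_{p,q}(M)$, set $f:=T_\Psi a$. To estimate $\|f\|_{\wz F^{s,\tau}_{p,q}(M)}$ I apply $\wz\Phi_k(\sqrt L)$ (a companion system satisfying \eqref{eq:2.14xx}, \eqref{eq:2.15xx}) to $f$, and use Theorem \ref{thm-CRF}(d) — the almost-orthogonality estimate $|(\wz\Phi_k(\sqrt L)\Psi_j(\sqrt L))(x,y)|\le C\delta^{|k-j|(m\bz_0-2d)}D_{\delta^{k\wedge j},\sigma}(x,y)$ — to bound $\wz\Phi_k(\sqrt L)f(x)$ by a double sum over $j$ and over the cubes $Q_t^{j,\nu}$. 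Each term is controlled, via $|Q_t^{j,\nu}|D_{\delta^{k\wedge j},\sigma}(\xi_t^{j,\nu},x)\lesssim$ an integral of $D$ over $Q_t^{j,\nu}$, by a geometric factor $\delta^{|k-j|(m\bz_0-2d)}$ times $\cm_r(\sum_{t,\nu}|a_t^{j,\nu}|\chi_{Q_t^{j,\nu}})(x)$ for suitable $r<\min\{1,p,q\}$; the key point is that $\xi_t^{j,\nu}\in Q_t^{j,\nu}$ and the cubes at fixed level $j$ are disjoint, so the maximal function recaptures the $\ell^p$ average. Then the vector-valued Fefferman–Stein inequality for spaces of homogeneous type (\cite[Theorem 1.3]{s05}, as used already in the proof of Theorem \ref{thm6.2x}) together with the summation lemma for geometric weights on the $\ell^q(L^p_\tau)$/$L^p_\tau(\ell^q)$ scales (Lemma \ref{lem6.4x}) collapses the $k$-sum, giving $\|f\|_{\wz F^{s,\tau}_{p,q}(M)}\lesssim\|a\|_{\wz f^{s,\tau}_{p,q}(M)}$; one takes $m$ large enough that $m\bz_0-2d$ exceeds the needed threshold (controlled by $d\tau$, $|s|$ and $d/(p\wedge q)$), which is possible by Theorem \ref{thm-CRF}(d).

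\textbf{Step 2: boundedness of $S_\Phi$.} Here I need $\|S_\Phi f\|_{\wz f^{s,\tau}_{p,q}(M)}\lesssim\|f\|_{\wz F^{s,\tau}_{p,q}(M)}$. The crucial observation is that for $x\in Q_t^{j,\nu}$ one has $|(\Phi_j(\sqrt L)f)(\xi_t^{j,\nu})|\le [\Phi_j(\sqrt L)]^*_{a,0}f(x)$ by definition of the Peetre maximal function (Definition \ref{def4.1x}), since $\rho(x,\xi_t^{j,\nu})\lesssim\delta^{j}$ and hence $[1+\delta^{-j}\rho(x,\xi_t^{j,\nu})]^a\lesssim1$; similarly $|Q_t^{j,\nu}|^{-s/d}\sim|B(x,\delta^j)|^{-s/d}$ by Lemma \ref{lem3.1x}(iv) and \eqref{doubling2}. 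Thus $\sum_{t,\nu}|Q_t^{j,\nu}|^{-s/d}|(\Phi_j(\sqrt L)f)(\xi_t^{j,\nu})|\chi_{Q_t^{j,\nu}}(x)\lesssim[\Phi_j(\sqrt L)]^*_{a,-s/d}f(x)$ pointwise, the cubes being disjoint. Taking $a>d[\tau+1/(p\wedge q)]$, Theorem \ref{thm6.2x}(ii) (the Peetre maximal function characterization, inequality \eqref{eq:6.4x}) gives $\|\{[\Phi_j(\sqrt L)]^*_{a,-s/d}f\}_j\|_{L^p_\tau(\ell^q)}\lesssim\|f\|_{\wz F^{s,\tau}_{p,q}(M)}$, finishing this direction. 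Combining the two steps and the identity $T_\Psi\circ S_\Phi=\mathrm{Id}$ on $\cd'(M)$ yields all the claims of (i)–(iv).

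\textbf{Main obstacle.} The routine parts (the pointwise domination in Step 2, the $\ell^q$/$L^p$ summation bookkeeping) are quick given Theorems \ref{thm-CRF} and \ref{thm6.2x}. The genuinely delicate step is Step 1, specifically justifying that the double sum defining $\wz\Phi_k(\sqrt L)f$ can be reorganized so that, for each level $j$, the cube sum $\sum_{t\in I_j}\sum_{\nu}|Q_t^{j,\nu}|\,|a_t^{j,\nu}|\,D_{\delta^{k\wedge j},\sigma}(\xi_t^{j,\nu},x)$ is dominated by a Hardy–Littlewood maximal function of $\sum_{t,\nu}|a_t^{j,\nu}|\chi_{Q_t^{j,\nu}}$ — this requires splitting the sum into dyadic annuli $\rho(\xi_t^{j,\nu},x)\sim\delta^{j-i}$ when $j\ge k$ (respectively $\delta^{k-i}$ when $j<k$), using \eqref{doubling2} to trade the $D$–decay for extra $\delta^{i(\sigma-d)}$ factors, and then invoking the pattern already established in Lemmas \ref{lem4.3x}, \ref{lem4.4x} and the proof of Proposition \ref{prop4.2x}. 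Care is needed to keep the exponents consistent so that both the $i$-sum and, after Lemma \ref{lem6.4x}, the $|k-j|$-sum converge; this forces the constraint $m\bz_0-2d>d\tau+|s|+d/(p\wedge q)$ up to harmless slack, which is why the flexibility in choosing $m\in\nn$ in Theorem \ref{thm-CRF} is essential.
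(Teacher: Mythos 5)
Your two-step strategy is the same as the paper's, and your Step 1 (boundedness of $T_\Psi$) is essentially the paper's argument: apply $\Phi_\ell(\sqrt L)$ termwise, invoke the almost-orthogonality in Theorem \ref{thm-CRF}(d), dominate the level-$j$ cube sums by $\cm_r$ of the (weighted) step functions $f_j:=\sum_{t,\nu}|Q_t^{j,\nu}|^{-s/d}|a_t^{j,\nu}|\chi_{Q_t^{j,\nu}}$ via the annular splitting of Lemma \ref{lem4.3x}, and then close with the Fefferman--Stein inequality and Lemma \ref{lem6.4x}, exactly as the paper does by ``repeating the proofs of \eqref{eq:6.10x} and \eqref{eq:6.14x}''. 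Where you genuinely diverge is the $S_\Phi$ bound: the paper re-runs the continuous reproducing formula \eqref{c-crf} together with Proposition \ref{prop2.14x} to dominate the sampled values $|Q_t^{j,\nu}|^{-s/d}|\Phi_j(\sqrt L)f(\xi_t^{j,\nu})|$ by a convolution-type sum $\sum_\ell \dz^{|j-\ell|(m\bz_0-d-|s|-a)}[\Phi_\ell(\sqrt L)f]^\ast_{a,-s/d}(x)$ and then uses Lemma \ref{lem6.4x} before invoking Theorem \ref{thm6.2x}; you instead observe that $\rho(x,\xi_t^{j,\nu})\le C_\natural\dz^{j+j_0}\ls\dz^j$ for $x\in Q_t^{j,\nu}$ and $|Q_t^{j,\nu}|\sim|B(\xi_t^{j,\nu},\dz^j)|$ (the level gap $j_0$ being a fixed constant, so doubling applies), whence the sampled quantity is pointwise dominated by $[\Phi_j(\sqrt L)]^\ast_{a,-s/d}f(x)$ with no $\ell$-sum at all, and Theorem \ref{thm6.2x}(i)--(ii) finishes. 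This is correct and shorter: it buys a cleaner proof of the $S_\Phi$ direction at no cost, since the hypotheses of Theorem \ref{thm6.2x} (with $a>d(\tau+1/p)$, resp. $a>d[\tau+1/(p\wedge q)]$) are exactly what you assume; the paper's longer route only re-derives what the Peetre maximal characterization already encodes.

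One genuine omission: before you may apply $\wz\Phi_k(\sqrt L)$ termwise to $f=T_\Psi a$ in Step 1, you must know that the synthesis series converges in $\cd'(M)$ for every $a$ in the sequence spaces. The paper devotes Lemma \ref{lem7.7x} to this (resting on the embedding of Lemma \ref{lem7.6x} into $\wz b^{s+d\tau-d/p}_{\infty,\infty}(M)$ and on size/orthogonality estimates against test functions), and without it the definition of $T_\Psi$ on $\wz b^{s,\tau}_{p,q}(M)$ and $\wz f^{s,\tau}_{p,q}(M)$, as well as the interchange of the $(j,t,\nu)$-sum with $\Phi_\ell(\sqrt L)$, is not justified. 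This is a fixable gap, but it is a real step that your sketch skips.
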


Theorem \ref{thm7.5x} generalizes the $\vz$-transform characterization for
Besov-type and Triebel--Lizorkin-type spaces on $\rn$ in \cite{YSY}.
To prove Theorem \ref{thm7.5x},
we need the following two technical lemmas.
In what follows, for any $s\in\rr$, we write
 $b_{\infty,\infty}^s(M):= b_{\infty,\infty}^{s,0}(M)$
and $\wz b_{\infty,\infty}^s(M):= \wz b_{\infty,\infty}^{s,0}(M)$. Then,
for any sequence
$a:=\{a_t^{j,\nu}\}_{j\in\zz_+, t\in I_j, 1\le \nu\le N_t^j}\subset\cc$,
let
\begin{equation}\label{eq:7.2x}
\|a\|_{\wz b_{\infty,\infty}^s(M)}
:=\|a\|_{\wz b_{\infty,\infty}^{s,0}(M)}
:=\sup_{j\in\zz_+}\, \sup_{x\in M}
\sum_{t\in I_j}\sum_{\nu=1}^{N_t^j}
|Q_t^{j,\nu}|^{-s/d} |a_t^{j,\nu}|\chi_{Q_t^{j,\nu}}(x).
\end{equation}
Similarly, we define the norm $\|a\|_{ b_{\infty,\infty}^s(M)}$
via replacing $|Q_t^{j,\nu}|^{-s/d}$ in \eqref{eq:7.2x} by $\dz^{-js}$.

\begin{lem}\label{lem7.6x}
Let $s\in\rr$, $\tau\in[0,\infty)$ and $q\in(0,\infty]$.
\begin{enumerate}
\item[\rm(i)] If $p\in(0,\infty]$, then $b_{p,q}^{s,\tau}(M)
\hookrightarrow b_{\infty,\infty}^{s+d\tau-d/p}(M)$
and $\wz b_{p,q}^{s,\tau}(M)
\hookrightarrow \wz b_{\infty,\infty}^{s+d\tau-d/p}(M)$.
\item[\rm(ii)] If $p\in(0,\infty)$, then $f_{p,q}^{s,\tau}(M)
\hookrightarrow b_{\infty,\infty}^{s+d\tau-d/p}(M)$
and $\wz f_{p,q}^{s,\tau}(M)
\hookrightarrow \wz b_{\infty,\infty}^{s+d\tau-d/p}(M)$.
\end{enumerate}
\end{lem}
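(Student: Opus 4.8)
The plan is to prove Lemma \ref{lem7.6x} by reducing everything to the embedding $\wz b_{p,\infty}^{s,\tau}(M)\hookrightarrow\wz b_{\infty,\infty}^{s+d\tau-d/p}(M)$, exactly as was done at the level of function spaces in Proposition \ref{prop4.7x}. First I would record the elementary sequence-space analogues of the inclusions used there: by Minkowski's inequality (or, for $p\le1$ or $q\le1$, the monotonicity of $\ell^r$ quasi-norms in $r$) one has
\begin{equation*}
\wz b_{p,\min\{p,q\}}^{s,\tau}(M)\hookrightarrow\wz f_{p,q}^{s,\tau}(M)\hookrightarrow\wz b_{p,\max\{p,q\}}^{s,\tau}(M)\hookrightarrow\wz b_{p,\infty}^{s,\tau}(M),
\end{equation*}
and likewise $\wz b_{p,q}^{s,\tau}(M)\hookrightarrow\wz b_{p,\infty}^{s,\tau}(M)$ by the monotonicity of the $\ell^q$-norm in $q$. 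Thus both (i) and (ii) follow once one shows $\wz b_{p,\infty}^{s,\tau}(M)\hookrightarrow\wz b_{\infty,\infty}^{s+d\tau-d/p}(M)$ (and the un-tilded version, which is handled identically, using $|Q_t^{j,\nu}|\sim\dz^{jd}$ only up to the doubling/reverse-doubling distortion — see below).

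The heart of the matter is this last embedding. Fix a sequence $a=\{a_t^{j,\nu}\}$ with $\|a\|_{\wz b_{p,\infty}^{s,\tau}(M)}\le1$; by definition this means that for every $k\in\zz$, every $\az\in I_k$ and every $j\ge k\vee0$,
\begin{equation*}
\int_{Q_\az^k}\bigg[\sum_{t\in I_j}\sum_{\nu=1}^{N_t^j}|Q_t^{j,\nu}|^{-s/d}|a_t^{j,\nu}|\chi_{Q_t^{j,\nu}}(x)\bigg]^p\,d\mu(x)\le|Q_\az^k|^{\tau p}.
\end{equation*}
Now fix $j\in\zz_+$ and $x\in M$, and let $(t_0,\nu_0)$ be the unique pair with $x\in Q_{t_0}^{j,\nu_0}$ (the subcubes at a fixed level are pairwise disjoint and cover $M$ up to a null set, by Lemma \ref{lem3.1x}(i)). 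Applying the displayed inequality with $k=j$ and $\az$ chosen so that $Q_{t_0}^{j,\nu_0}\subset Q_\az^j$ — or more simply with $Q_\az^k$ replaced by $Q_{t_0}^{j,\nu_0}$ itself viewed as a Christ cube at level $j+j_0$ — one gets, since only the single term $(t_0,\nu_0)$ survives on $Q_{t_0}^{j,\nu_0}$,
\begin{equation*}
|Q_{t_0}^{j,\nu_0}|\cdot\big(|Q_{t_0}^{j,\nu_0}|^{-s/d}|a_{t_0}^{j,\nu_0}|\big)^p\le|Q_{t_0}^{j,\nu_0}|^{\tau p},
\end{equation*}
that is, $|Q_{t_0}^{j,\nu_0}|^{-s/d}|a_{t_0}^{j,\nu_0}|\le|Q_{t_0}^{j,\nu_0}|^{\tau-1/p}$, equivalently $|Q_{t_0}^{j,\nu_0}|^{-(s+d\tau-d/p)/d}|a_{t_0}^{j,\nu_0}|\le1$. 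Taking the supremum over $x$ and $j$ gives $\|a\|_{\wz b_{\infty,\infty}^{s+d\tau-d/p}(M)}\le1$, which is the desired embedding. For the un-tilded spaces $b_{p,q}^{s,\tau}(M)$ one repeats the argument, using $\dz^{-js}$ in place of $|Q_t^{j,\nu}|^{-s/d}$ and noting $|Q_t^{j,\nu}|\sim\dz^{jd}$ up to constants depending on $K,K_\ast,\dz$ (from \eqref{doubling2}, \eqref{rdoubling2} and Lemma \ref{lem3.1x}(iv)), so that $\dz^{-js}$ and $|Q_t^{j,\nu}|^{-s/d}$ are comparable with constants independent of $j,t,\nu$; the factors $\dz^{jd\tau}$ and $|Q_\az^k|^\tau$ likewise match up to such constants.

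The only mild subtlety — and the step I expect to need the most care — is the bookkeeping that allows one to ``test'' the $\wz b_{p,\infty}^{s,\tau}$-quasi-norm against a single subcube $Q_{t_0}^{j,\nu_0}$: one must check that $Q_{t_0}^{j,\nu_0}$ is itself (comparable to) one of the cubes $Q_\az^k$ over which the supremum in Definition \ref{b-type} ranges, namely $Q_\az^k=Q_{t_0}^{j+j_0}$ with $k=j$, and that $j\ge k\vee0$ holds trivially since $k=j\ge0$; then $|Q_{t_0}^{j,\nu_0}|^\tau\sim(\dz^{jd})^\tau$ and the single-term estimate goes through. Once this is in place the proof is immediate, so I would present the two lemma items as corollaries of the one embedding $\wz b_{p,\infty}^{s,\tau}(M)\hookrightarrow\wz b_{\infty,\infty}^{s+d\tau-d/p}(M)$ together with the chain of elementary inclusions above, mirroring the proof of Proposition \ref{prop4.7x} and omitting the routine details for $b_{p,q}^{s,\tau}(M)$ and $f_{p,q}^{s,\tau}(M)$.
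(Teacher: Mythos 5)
Your proposal is correct and follows essentially the same route as the paper: Minkowski's inequality reduces (ii) to (i), and the embedding into $\wz b_{\infty,\infty}^{s+d\tau-d/p}(M)$ is obtained by isolating, for each $j$ and $x$, the single term of the subcube $Q_{t_0}^{j,\nu_0}\ni x$ when testing the quasi-norm against its parent Christ cube $Q_\az^j$ at level $k=j$, using $|Q_{t_0}^{j,\nu_0}|\sim|Q_\az^j|$ (so the right-hand side of your single-term inequality is really $|Q_\az^j|^{\tau p}$, comparable to $|Q_{t_0}^{j,\nu_0}|^{\tau p}$ up to a harmless constant). One caution: your parenthetical ``more simply'' variant, testing against $Q_{t_0}^{j,\nu_0}$ itself viewed as a Christ cube at level $k=j+j_0$, does not work as stated, because then the level sum in Definition \ref{b-type} starts at $j+j_0$ and the level-$j$ term never appears; keep the $k=j$ choice, which is exactly what the paper does.
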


\begin{proof}
By Minkowski's inequality, we see that
$$b_{p,\min(p,q)}^{s,\tau}(M) \hookrightarrow f_{p,q}^{s,\tau}(M) \hookrightarrow b_{p, \max(p,q)}^{s,\tau}(M)
\quad\textup{and}\quad
\wz b_{p,\min(p,q)}^{s,\tau}(M)\hookrightarrow \wz f_{p,q}^{s,\tau}(M)
\hookrightarrow \wz b_{p, \max(p,q)}^{s,\tau}(M).$$
Hence, it suffices to show (i).
Notice that  $\{Q_t^{j,\nu}:\ t\in I_j,\ \nu\in\{1,\ldots,N_t^j\}\}$ are
mutually disjoint.
For any fixed  $j\in\zz_+$ and $x\in M$, there exist unique $t_x\in I_j$ and $\nu_x\in \{1,\ldots,N_{t(x)}^j\}$ such that
$x\in Q_{t_x}^{j,\nu_x}$. Hence,
\begin{eqnarray*}
\|a\|_{\wz b_{\infty,\infty}^{s+d\tau-d/p}(M)}
&&=\sup_{j\in\zz_+} \sup_{x\in M}
|Q_{t_x}^{j,\nu_x}|^{-s/d-\tau+1/p} |a_{t_x}^{j,\nu_x}|\\
&&\ls \sup_{k\in\zz_+}\sup_{ \az\in I_k}  \sup_{x\in Q^k_\az}\frac1{|Q^k_\az|^\tau}
\bigg\{
\sum_{j=k}^{\infty}
\bigg[
|Q_{t_x}^{j,\nu_x}|^{1-sp/d} |a_{t_x}^{j,\nu_x}|^p\bigg]^{q/p}
\bigg\}^{1/q}\ls\|a\|_{\wz b_{p,q}^{s,\tau}(M)}.
\end{eqnarray*}
Thus, $\wz b_{p,q}^{s,\tau}(M)
\hookrightarrow \wz b_{\infty,\infty}^{s+d\tau-d/p}(M)$.
Similarly, we have $ b_{p,q}^{s,\tau}(M)
\hookrightarrow  b_{\infty,\infty}^{s+d\tau-d/p}(M)$,
the details being omitted.
This finishes the proof of Lemma \ref{lem7.6x}.
\end{proof}

\begin{lem}\label{lem7.7x}
Let $s\in\rr$, $\tau\in[0,\infty)$ and $p,\, q\in(0,\infty]$.
If the sequence $a:=\{a_t^{j,\nu}\}_{j\in\zz_+, t\in I_j, 1\le \nu\le N_t^j}$ belongs to any of the sequence spaces $b_{p,q}^{s,\tau}(M)$,
$\wz b_{p,q}^{s,\tau}(M)$,  $f_{p,q}^{s,\tau}(M)$
or $\wz f_{p,q}^{s,\tau}(M)$, then the series
\begin{equation}\label{eq:7.3x}
T_{\Psi} (a)(\cdot)= \sum_{j=0}^\infty\sum_{t\in I_j} \sum_{\nu=1}^{N_t^{j}}
|Q_{t}^{j,\nu}| \Psi_j(\sqrt L)(\xi_t^{j,\nu}, \cdot) a_t^{j,\nu}
\end{equation}
converges in $\cd'(M)$.
\end{lem}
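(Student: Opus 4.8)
The plan is to show that for every $\phi\in\cd(M)$, the series obtained by pairing the right-hand side of \eqref{eq:7.3x} with $\phi$ converges absolutely, with a bound controlled by the relevant sequence (quasi-)norm of $a$ times a seminorm of $\phi$; this gives convergence in $\cd'(M)$. First I would reduce all four sequence spaces to a single one: by Lemma \ref{lem7.6x} (together with the trivial embeddings $\wz b_{p,q}^{s,\tau}(M)\hookrightarrow \wz b_{\infty,\infty}^{s+d\tau-d/p}(M)$ and likewise without tildes, and the fact that the tilde and non-tilde scales differ only by the bounded factors $|Q_t^{j,\nu}|^{-s/d}$ versus $\delta^{-js}$ which are comparable on a fixed cube), it suffices to prove the claim when $a\in \wz b_{\infty,\infty}^{s'}(M)$ with $s':=s+d\tau-d/p$, i.e.\ when $\sup_{j,\nu,t}|Q_t^{j,\nu}|^{-s'/d}|a_t^{j,\nu}|<\infty$. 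Thus we may assume $|a_t^{j,\nu}|\le |Q_t^{j,\nu}|^{s'/d}\le C\,\delta^{js'}$ uniformly, using $|Q_t^{j,\nu}|\sim\delta^{jd}$ (or rather $|Q_t^{j,\nu}|\sim \delta^{(j+j_0)d}$, but $j_0$ is fixed, so this only affects constants).

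Next I would fix $\phi\in\cd(M)$ and estimate $\sum_{j,t,\nu}|Q_t^{j,\nu}|\,|a_t^{j,\nu}|\,|\langle\Psi_j(\sqrt L)(\xi_t^{j,\nu},\cdot),\phi\rangle|$. For the inner pairing, I would choose $m\in\nn$ large (to be fixed) and write $\langle\Psi_j(\sqrt L)(\xi_t^{j,\nu},\cdot),\phi\rangle = \langle \Psi_j(\sqrt L)(\xi_t^{j,\nu},\cdot), (1+L)^{-m}(1+L)^m\phi\rangle$; since $(1+L)^m\phi\in\cd(M)$ still has good decay, and since Theorem \ref{thm-CRF}(b) gives $|L^{m'}\Psi_j(\sqrt L)(x,y)|\ls \delta^{-2m'j}D_{\delta^j,\sigma}(x,y)$ for any $\sigma>2d$, one obtains, using Lemma \ref{lem2.1x}(i) to integrate the kernel against the decay of $\phi$ (in the $\mu(M)=\infty$ case this is the factor $[1+\rho(\cdot,x_0)]^{-\ell}$, in the $\mu(M)<\infty$ case one uses the $L^2$ bound from Proposition \ref{prop2.13xx}(ii) applied to $\phi$), an estimate of the shape
\begin{equation*}
|\langle\Psi_j(\sqrt L)(\xi_t^{j,\nu},\cdot),\phi\rangle|
\ls \delta^{j(\beta_0 m-d)}\,[1+\rho(\xi_t^{j,\nu},x_0)]^{-\ell}\,\big(\cp_{m,\ell}(\phi)+\cp_{0,\ell}(\phi)\big),
\end{equation*}
valid for all $j\in\zz_+$, with $m$ and $\ell$ at our disposal and where for $j=0$ one simply takes $m=0$ (mirroring \eqref{eq:4.9x} in the proof of Proposition \ref{prop4.8x}). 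Here I exploit that $\Psi_j(\sqrt L)=\delta^{j\beta_0 m}L^m\phi_j(\sqrt L)$ type manipulations together with Theorem \ref{thm-CRF}(b), so the gain $\delta^{j(\beta_0 m - d)}$ is genuine once $m$ is large enough to beat $d$.

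Then I would assemble the triple sum. For fixed $j$, the cubes $\{Q_t^{j,\nu}\}_{t,\nu}$ are pairwise disjoint with $|Q_t^{j,\nu}|\sim\delta^{jd}$, and $\sum_{t,\nu}|Q_t^{j,\nu}|\,[1+\rho(\xi_t^{j,\nu},x_0)]^{-\ell}\ls \sum_{t,\nu}\int_{Q_t^{j,\nu}}[1+\rho(z,x_0)]^{-\ell+O(1)}\,d\mu(z)\le\int_M[1+\rho(z,x_0)]^{-\ell+O(1)}\,d\mu(z)\ls 1$ once $\ell$ is large, by Lemma \ref{lem2.1x}(i) and the non-collapsing/doubling conditions (when $\mu(M)<\infty$ this sum is trivially finite). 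Combining, the full sum is bounded by $C\big(\cp_{m,\ell}(\phi)+\cp_{0,\ell}(\phi)\big)\,\|a\|_{\wz b_{\infty,\infty}^{s'}(M)}\sum_{j=0}^\infty \delta^{j(\beta_0 m-d)}\delta^{js'}$, which converges provided $m$ is chosen with $\beta_0 m>d+|s'|$ (and for the $j=0$ term one uses the $m=0$ version). This proves absolute convergence of the pairing and hence convergence of \eqref{eq:7.3x} in $\cd'(M)$.

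The main obstacle I anticipate is handling the two cases $\mu(M)<\infty$ and $\mu(M)=\infty$ uniformly when bounding $|\langle\Psi_j(\sqrt L)(\xi,\cdot),\phi\rangle|$: in the finite-measure case $\cd(M)$ is topologized only by the $L^2$-seminorms $\cp_m(\phi)=\|L^m\phi\|_{L^2(M)}$, so one must pass through an $L^2$ estimate on the kernel (via $\|\Psi_j(\sqrt L)(\xi,\cdot)\|_{L^2(M)}\ls \delta^{-jd/2}|B(\xi,\delta^j)|^{-1/2}$ from Theorem \ref{thm-CRF}(b) and Lemma \ref{lem2.1x}(i)) rather than the pointwise-decay argument available when $\mu(M)=\infty$; the bookkeeping of which seminorm of $\phi$ controls everything, and ensuring the resulting series in $j$ still converges, is where care is needed. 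A secondary (routine but bulky) point is the initial reduction across the four sequence spaces, which just repeats the embedding chain in Lemma \ref{lem7.6x} and the comparability of $|Q_t^{j,\nu}|^{-s/d}$ with $\delta^{-js}$.
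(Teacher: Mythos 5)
There is a genuine gap at the central step of your argument, namely where you claim
\begin{equation*}
|\langle\Psi_j(\sqrt L)(\xi_t^{j,\nu},\cdot),\phi\rangle|
\ls \delta^{j(\beta_0 m-d)}\,[1+\rho(\xi_t^{j,\nu},x_0)]^{-\ell}\,
\big(\cp_{m,\ell}(\phi)+\cp_{0,\ell}(\phi)\big),
\end{equation*}
"exploiting that $\Psi_j(\sqrt L)=\delta^{j\beta_0 m}L^m\phi_j(\sqrt L)$ type manipulations together with Theorem \ref{thm-CRF}(b)". This is exactly what is not available. Despite the notation, $\Psi_j(\sqrt L)$ in Theorem \ref{thm-CRF} is \emph{not} a spectral multiplier with symbol supported near $\delta^{-j\beta_0/2}$: it is built through $\wz\varphi_j(\sqrt L)$, whose definition involves the Neumann series $T_j=({\rm Id}-R_j)^{-1}$ with $R_j=\Gamma_j({\rm Id}-U_j)\Gamma_j$ and the discretization operator $U_j$, which is not a function of $L$. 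Hence one cannot write $\Psi_j(\sqrt L)=\delta^{j\beta_0 m}L^m\phi_j(\sqrt L)$ with $\phi_j(\sqrt L)$ enjoying kernel bounds, and inserting $(1+L)^{-m}(1+L)^m$ does not produce a factor $\delta^{j\beta_0 m}$: Theorem \ref{thm-CRF}(b) only controls $L^m\Psi_j(\sqrt L)$, with the \emph{loss} $\delta^{-2mj}$, i.e. it goes in the wrong direction, and no vanishing-moment or "divide by $\lambda^{2m}$" property of $\Psi_j$ is stated anywhere. Without a genuine gain in $j$ the argument collapses: with only $|a_t^{j,\nu}|\le|Q_t^{j,\nu}|^{s/d}$ the sum over $j$ behaves like $\sum_j\delta^{js}$, which diverges for $s\le0$. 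The cancellation of $\Psi_j$ is accessible only through its interaction with band-limited multipliers, which is precisely what part (d) of Theorem \ref{thm-CRF} encodes; the paper's proof therefore expands $\phi=\sum_{\ell}\wz\Phi_\ell(\sqrt L)\Phi_\ell(\sqrt L)\phi$ by the continuous Calder\'on reproducing formula, estimates $\Phi_\ell(\sqrt L)\phi$ by the \eqref{eq:4.9x}-type bound $\ls\delta^{\ell(\beta_0N-d)}[1+\rho(\cdot,x_0)]^{-\eta}[\cp_{N,\eta}(\phi)+\cp_{0,\eta}(\phi)]$, and applies Theorem \ref{thm-CRF}(d) to $(\overline{\wz\Phi}_\ell(\sqrt L)\Psi_j(\sqrt L))$; the factors $\delta^{|\ell-j|(m\beta_0-2d)}$ and $\delta^{\ell(\beta_0N-2d-\sigma)}$ then yield a convergent double series in $j$ and $\ell$. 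Your proposal never invokes (d), and none of the estimates you do cite can replace it.

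Two smaller points. First, your reduction uses $|Q_t^{j,\nu}|\sim\delta^{jd}$, which is false on a general RD-space (doubling and \eqref{non-collapsing} give $|Q_t^{j,\nu}|\gs\delta^{jd}$, but there is no matching upper bound, and $|Q_t^{j,\nu}|^{-s/d}$ is not uniformly comparable to $\delta^{-js}$, cf.\ Remark \ref{rem3.3x}(ii)); the paper instead uses $|Q_t^{j,\nu}|^{s/d}\ls\delta^{-j|s|}[1+\rho(\xi,x_0)]^{|s|}$ for $\xi\in Q_t^{j,\nu}$ and absorbs the weight into the decay of the pairing, which is the correct substitute. Second, the case distinction $\mu(M)<\infty$ versus $\mu(M)=\infty$ you worry about is handled in the paper by proving the infinite-measure case in detail and noting the finite-measure case is similar; that part of your plan is fine, but it does not repair the main gap above.
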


\begin{proof}
Due to Lemma \ref{lem7.6x}, it suffices to show that the series in \eqref{eq:7.3x}
converges in $\cd'(M)$ when $a\in  b_{\infty,\infty}^s(M)$
or $a\in \wz b_{\infty,\infty}^s(M)$.
We only show the conclusion for $a\in \wz b_{\infty,\infty}^s(M)$
and $\mu(M)=\infty$, since the proofs for the remaining cases are similar.
Without loss of generality, we may assume that $a\in \wz b_{\infty,\infty}^s(M)$
with norm $1$.
In this case,
$|a_t^{j,\nu}|\le |Q_t^{j,\nu}|^{s/d}$
for all $j\in\zz_+$, $t\in I_j$ and $1\le \nu\le N_t^j$.
Thus, for any $\phi\in\cd(M)$, we have
\begin{eqnarray}\label{eq:7.4x}
\sum_{j=0}^\infty\sum_{t\in I_j} \sum_{\nu=1}^{N_t^{j}}
|Q_{t}^{j,\nu}| |\laz\Psi_j(\sqrt L)(\xi_t^{j,\nu}, \cdot),\,\phi\raz| |a_t^{j,\nu}|
\le \sum_{j=0}^\infty\sum_{t\in I_j} \sum_{\nu=1}^{N_t^{j}}
|Q_{t}^{j,\nu}|^{1+s/d} |\laz\Psi_j(\sqrt L)(\xi_t^{j,\nu}, \cdot),\,\phi\raz|.\quad
\end{eqnarray}
Let $(\Phi_0,\Phi)\in C^\infty(\rr_+)$ satisfy \eqref{eq:2.14xx} and \eqref{eq:2.15xx}.
Define $\{\Phi_j\}_{j\in\nn}$ as in \eqref{eq:2.16xx}.
By \eqref{c-crf},
there exist  $\wz\Phi_0$ and $\wz\Phi$  satisfying \eqref{eq:2.14xx} and \eqref{eq:2.15xx} such that
$\phi= \sum_{\ell=0}^\infty \wz\Phi_\ell(\sqrt L)\Phi_\ell (\sqrt L)\phi,$
where the series converges in both $\cd(M)$ and  $L^2(M)$.
Notice that $\Psi_j(\sqrt L)(\xi_t^{j,\nu}, \cdot)\in L^2(M)$. Hence,
\begin{eqnarray}\label{eq:7.5x}
\laz\Psi_j(\sqrt L)(\xi_t^{j,\nu}, \cdot),\,\phi\raz
=\sum_{\ell=0}^\infty \lf\laz (\overline{\wz\Phi}_\ell(\sqrt L)\Psi_j(\sqrt L))(\xi_t^{j,\nu}, \cdot),\,\, \Phi_\ell (\sqrt L)\phi\r\raz.
\end{eqnarray}
Choose $\eta, N, m, \sz$ such that $\eta>|s|+d$, $\bz_0N>2d+\sz+|s|$,
$\bz_0m>2d+|s|$ and $\sz>3d/2+\eta$.
According to the proof of  \eqref{eq:4.9x},
we know that, for all $\ell\in\zz_+$ and $z\in M$,
\begin{eqnarray*}
|\Phi_\ell(\sqrt L)\phi(z)|
\ls \dz^{\ell(\bz_0N-d)} [1+\rho(z,x_0)]^{-\eta} [\cp_{N,\eta}(\phi)+\cp_{0,\eta}(\phi)],
\end{eqnarray*}
where $x_0$ is some fixed point of $M$. In the sequel, we let
$\mathbb A:= [\cp_{N,\eta}(\phi)+\cp_{0,\eta}(\phi)]$.
By Theorem \ref{thm-CRF}(d),
we see that, for all $j,\,\ell\in\zz_+$, $z\in M$ and
$\xi,\, \xi_t^{j,\nu}\in Q_t^{j,\nu}$,
\begin{eqnarray*}
\left|(\overline{\wz\Phi}_\ell(\sqrt L)\Psi_j(\sqrt L))(\xi_t^{j,\nu}, z)\right|
\ls\delta^{|\ell-j|(m\bz_0-2d)} \dz^{-(\ell\wedge j)(d+\sz)}
D_{1,\sigma}(\xi,z).
\end{eqnarray*}
From these two estimates and \eqref{eq:7.5x}, we deduce that, for any $\xi,\, \xi_t^{j,\nu}\in Q_t^{j,\nu}$,
\begin{eqnarray}\label{eq:7.6x}
|\laz\Psi_j(\sqrt L)(\xi_t^{j,\nu}, \cdot),\,\phi\raz|
\ls  \mathbb A\, \sum_{\ell=0}^\infty
\frac{\delta^{|\ell-j|(m\bz_0-2d)}\dz^{(\bz_0N-2d-\sz)\ell}}{[1+ \rho(\xi,x_0)]^{\eta}}.
\end{eqnarray}
For any $\xi\in Q_t^{j,\nu}$, we have
$$ |Q_t^{j,\nu}|^{s/d}
\ls \dz^{-j |s|} [1+\rho(\xi, x_0)]^{|s|}\ls \dz^{-|\ell-j||s|}\dz^{-\ell |s|} [1+\rho(\xi, x_0)]^{|s|}.
$$
Inserting this and \eqref{eq:7.6x} into \eqref{eq:7.4x}, by $\bz_0N>2d+\sz+|s|$,
$\bz_0m>2d+|s|$, $\eta>|s|+d$ and Lemma \ref{lem2.1x}(i),  we obtain
\begin{eqnarray*}
&&\sum_{j=0}^\infty\sum_{t\in I_j} \sum_{\nu=1}^{N_t^{j}}
|Q_{t}^{j,\nu}| |\laz\Psi_j(\sqrt L)(\xi_t^{j,\nu}, \cdot),\,\phi\raz| |a_t^{j,\nu}|\\
&&\quad\ls
 \mathbb A\,
\sum_{j=0}^\infty\sum_{\ell=0}^\infty
\int_{M}
\frac{{\delta^{|\ell-j|(m\bz_0-2d-|s|)}\dz^{(\bz_0N-2d-\sz-|s|)\ell}}}{[1+ \rho(\xi,x_0)]^{\eta-|s|}}\,d\mu(\xi)\ls
 \mathbb A\,,
\end{eqnarray*}
which implies that \eqref{eq:7.3x} converges in $\cd'(M)$, and hence
completes the proof of Lemma \ref{lem7.7x}.
\end{proof}

\begin{proof}[Proof of Theorem \ref{thm7.5x}]
We only show (i) and (iii),  the proofs for (ii) and (iv) being similar.
Let $f\in\cd'(M)$ and $a>d[\tau+1/(p\wedge q)]$. By \eqref{c-crf},
there exist  $(\wz\Phi_0,\wz\Phi)\in C^\infty(\rr_+)$ satisfying \eqref{eq:2.14xx} and \eqref{eq:2.15xx} such that
$f= \sum_{\ell=0}^\infty \wz\Phi_\ell(\sqrt L)\Phi_\ell (\sqrt L)f$
in $\cd'(M)$.
Given any   $m>(d+|s|+a)/\bz_0$ and $\sz>a+|s|+d$,
applying Proposition \ref{prop2.14x}, we conclude that, for any $j\in\zz_+,$ $t\in I_j$ and  $\nu\in\{1,\dots, N_t^j\}$,
\begin{eqnarray*}
|Q_t^{j,\nu}|^{-s/d} |\Phi_j(\sqrt L)f(\xi_t^{j,\nu})|
&&\ls\sum_{\ell=0}^\infty \dz^{|j-\ell|(m\bz_0-d)}
|Q_t^{j,\nu}|^{-s/d}
\int_M D_{\dz^{j\wedge \ell},\sz}(\xi_t^{j,\nu},\,y)
|\Phi_\ell (\sqrt L)f(y)|\,d\mu(y).
\end{eqnarray*}
For any $x,\,\xi_t^{j,\nu}\in Q_t^{j,\nu}$ and $y\in M$,
we see that
$D_{\dz^{j\wedge \ell},\sz}(\xi_t^{j,\nu},\,y)
\sim D_{\dz^{j\wedge \ell},\sz}(x,\,y)$
and
$$|Q_t^{j,\nu}|^{-s/d} |B(y,\dz^\ell)|^{s/d}
\sim |B(x, \dz^j)|^{-s/d} |B(y,\dz^\ell)|^{s/d}
\ls \dz^{-|j-\ell| |s|}[1+\dz^{-(j\wedge \ell)}\rho(x,y)]^{|s|}.$$
Hence, for any $x\in Q_t^{j,\nu}$,
\begin{eqnarray*}
|Q_t^{j,\nu}|^{-s/d} |\Phi_j(\sqrt L)f(\xi_t^{j,\nu})|
&& \ls
\sum_{\ell=0}^\infty \dz^{|j-\ell|(m\bz_0-d/2-|s|)}
[\Phi_\ell(\sqrt L)f]_{a,-s/d}^\ast(x)\\
&&\quad\times
\int_M
\frac1{\sqrt{|B(x, \dz^{j\wedge \ell})||B(y, \dz^{j\wedge \ell})|}}
\frac{[1+\dz^{- \ell}\rho(x,y)]^a}{[1+\dz^{-(j\wedge \ell)}\rho(x,y)]^{\sz-|s|}}\,d\mu(y)\\
&& \ls
\sum_{\ell=0}^\infty \dz^{|j-\ell|(m\bz_0-d-|s|-a)} [\Phi_\ell(\sqrt L)f]_{a,-s/d}^\ast(x).
\end{eqnarray*}
Consequently, for any $x\in Q_\az^k$, with $k\in\zz$ and $\az\in I_k$,
and any $j\ge (k\vee 0)$,
\begin{eqnarray*}
g_j(x)&&:=\sum_{t\in I_j}\sum_{\nu=1}^{N_t^j}
|Q_t^{j,\nu}|^{-s/d} |\Phi_j(\sqrt L)f(\xi_t^{j,\nu})|\chi_{Q_t^{j,\nu}}(x)\\
&&\ls\sum_{\ell=0}^\infty \dz^{|j-\ell|(m\bz_0-d-|s|-a)}
[\Phi_\ell(\sqrt L)f]_{a,-s/d}^\ast(x).
\end{eqnarray*}
By this and the definition of $\wz b_{p,q}^{s,\tau}(M)$,
applying Lemma \ref{lem6.4x} and Theorem \ref{thm6.2x}, we know that
\begin{eqnarray*}
\| S_\Phi f\|_{\wz b^{s, \tau}_{p, q}(M)}
=\|\{g_j\}_{j=0}^\infty\|_{\ell^q(L_\tau^p)}
\ls \|\{[\Phi_\ell(\sqrt L)f]_{a,-s/d}^\ast\}_{\ell=0}^\infty\|_{\ell^q(L_\tau^p)}
\ls \|f\|_{\wz B_{p,q}^{s,\tau}(M)},
\end{eqnarray*}
which implies the boundedness of  $S_{\Phi}$
from $\wz B^{s,\tau}_{p,q}(M)$ to $\wz b^{s, \tau}_{p, q}(M)$.
Due to the same reasons,  $S_{\Phi}$
is also bounded from $\wz F^{s,\tau}_{p,q}(M)$ to $\wz f^{s, \tau}_{p, q}(M)$.%

Now we consider the boundedness of $T_\Psi$.
Fix $r\in(0, \min\{1,p,q\})$.
For any $a\in \wz b_{p,q}^{s,\tau}(M)$ or $\wz f_{p, q}^{s,\tau}(M)$,
let
$$
f(\cdot):= T_\Psi a(\cdot)=\sum_{j=0}^\infty\sum_{t\in I_j} \sum_{\nu=1}^{N_t^{j}}
|Q_{t}^{j,\nu}| \Psi_j(\sqrt L)(\xi_t^{j,\nu}, \cdot) a_t^{j,\nu}.
$$
Then $f\in\cd'(M)$ by Lemma \ref{lem7.7x}. Hence,
for any $x\in Q_\az^k$, with $k\in\zz$ and $\az\in I_k$, and  any $l\in\zz_+$,
\begin{eqnarray*}
\Phi_\ell(\sqrt L) f(x)
=\sum_{j=0}^\infty\sum_{t\in I_j} \sum_{\nu=1}^{N_t^{j}}
|Q_{t}^{j,\nu}| (\Phi_\ell(\sqrt L)\Psi_j(\sqrt L))(\xi_t^{j,\nu}, x) a_t^{j,\nu}.
\end{eqnarray*}
From this and Theorem \ref{thm-CRF}(iii),
it follows that
\begin{eqnarray*}
|B(x,\dz^\ell)|^{-s/d}|\Phi_\ell(\sqrt L) f(x)|
\ls \sum_{j=0}^\infty\sum_{t\in I_j} \sum_{\nu=1}^{N_t^{j}}
\delta^{|\ell-j|(m\bz_0-2d)}
|a_t^{j,\nu}||B(x,\dz^\ell)|^{-s/d}
|Q_{t}^{j,\nu}|   D_{\dz^{j\wedge \ell},\sz}(\xi_t^{j,\nu}, x) ,
\end{eqnarray*}
where we chose  $\sz\ge 2d/r+|s|+1$ and $m>(3d+|s|+\sz)/2$.
Notice that
\begin{eqnarray*}
|Q_{t}^{j,\nu}| D_{\dz^{j\wedge \ell},\sz}(\xi_t^{j,\nu}, x)
\ls \dz^{-|j-\ell|d}[1+\dz^{-(j\wedge \ell)}\rho(\xi_t^{j,\nu}, x)]^{-\sz}
\ls \dz^{-|j-\ell|(d+\sz)}[1+\dz^{-j}\rho(\xi_t^{j,\nu}, x)]^{-\sz}
\end{eqnarray*}
and
\begin{eqnarray*}
|B(x,\dz^\ell)|^{-s/d}
\ls |Q_t^{j,\nu}|^{-s/d}
\dz^{-|j-\ell||s|}[1+\dz^{-j}\rho(\xi_t^{j,\nu}, x)]^{|s|}.
\end{eqnarray*}
Therefore,
\begin{eqnarray}\label{eq:7.7x}
|B(x,\dz^\ell)|^{-s/d}|\Phi_\ell(\sqrt L) f(x)|
\ls \sum_{j=0}^\infty
\delta^{|\ell-j|(m\bz_0-3d-\sz-|s|)}
\sum_{t\in I_j} \sum_{\nu=1}^{N_t^{j}}
\frac{|Q_t^{j,\nu}|^{-s/d}|a_t^{j,\nu}|}
{[1+\dz^{-j}\rho(\xi_t^{j,\nu}, x)]^{\sz-|s|}}.
\end{eqnarray}
Since $r\in(0,1)$, it follows that
\begin{eqnarray}\label{eq:7.8x}
\bigg(\sum_{t\in I_j} \sum_{\nu=1}^{N_t^{j}}
\frac{|Q_t^{j,\nu}|^{-s/d}|a_t^{j,\nu}|}
{[1+\dz^{-j}\rho(\xi_t^{j,\nu}, x)]^{\sz-|s|}}\bigg)^r
&&\ls \int_M
\frac{[\sum_{t\in I_j} \sum_{\nu=1}^{N_t^{j}}|Q_t^{j,\nu}|^{-s /d}
|a_t^{j,\nu}|\chi_{Q_t^{j,\nu}}(z)]^r}
{|B(z, \dz^j)|[1+\dz^{-j}\rho(z, x)]^{(\sz-|s|)r}}\,d\mu(z).
\end{eqnarray}
For notational simplicity,  let
$f_j:=\sum_{t\in I_j} \sum_{\nu=1}^{N_t^{j}}|Q_t^{j,\nu}|^{-s /d}|a_t^{j,\nu}|\chi_{Q_t^{j,\nu}}.$
Applying Lemma \ref{lem4.3x} to \eqref{eq:7.8x} with $g$ therein replaced by $|f_j|^r$,
we further find that
\begin{eqnarray*}
\sum_{t\in I_j} \sum_{\nu=1}^{N_t^{j}}
\frac{|Q_t^{j,\nu}|^{-s/d}|a_t^{j,\nu}|}
{[1+\dz^{-j}\rho(\xi_t^{j,\nu}, x)]^{\sz-|s|}}
&&\ls
\bigg\{\sum_{i=0}^\infty \delta^{i(\sz r-|s|r-d)}
\cm(|f_j|^r\chi_{B(z_\az^k, \delta^{k-i}+C_\natural \dz^k)})(x)\bigg\}^{1/r}\\
&&\ls \sum_{i=0}^\infty \delta^{i(\sz r-|s|r-d)}
\cm_r(|f_j|\chi_{B(z_\az^k, \delta^{k-i}+C_\natural \dz^k)})(x),
\end{eqnarray*}
where the second inequality is due to H\"older's inequality.
Combining this with \eqref{eq:7.7x} implies that
\begin{eqnarray}\label{eq:7.9x}
&&|B(x,\dz^\ell)|^{-s/d}|\Phi_\ell(\sqrt L) f(x)|\notag\\
&&\quad\ls \sum_{j=0}^\infty \sum_{i=0}^\infty
\delta^{|\ell-j|(m\bz_0-3d-|s|-\sz)}
 \delta^{i(\sz r-|s|r-d)}
\cm_r(|f_j|\chi_{B(z_\az^k, \delta^{k-i}+C_\natural \dz^k)})(x).
\end{eqnarray}
Then, repeating the proof for \eqref{eq:6.10x}, we  obtain
$$
\|f\|_{\wz B_{p,q}^{s,\tau}(M)}
\ls \|\{f_j\}_{j\in\zz_+}\|_{\ell^q(L_\tau^p)}
\sim \|a\|_{\wz b_{p,q}^{s,\tau}(M)},
$$
which proves  that $T_{\Psi}:\, \wz b^{s,\tau}_{p,q}(M)\to \wz B^{s, \tau}_{p, q}(M)$
is bounded.
Again, applying \eqref{eq:7.9x} and repeating the proof for
 \eqref{eq:6.14x}, we find that
$$
\|f\|_{\wz F_{p,q}^{s,\tau}(M)}
\ls \|\{f_j\}_{j\in\zz_+}\|_{L_\tau^p(\ell^q)}
\sim \|a\|_{\wz f_{p,q}^{s,\tau}(M)},
$$
 which implies  that $T_{\Psi}:\, \wz f^{s,\tau}_{p,q}(M)\to \wz F^{s, \tau}_{p, q}(M)$
is bounded.

By \eqref{DCRF}, we have
$T_\Psi \circ S_{\Phi} = {\rm Id}$ on  $\wz B^{s,\tau}_{p, q}(M)$ or  $\wz F^{s,\tau}_{p, q}(M)$.
If $S_\Phi f\in \wz b_{p,q}^{s,\tau}(M)$, then the boundedness of $T_\Psi$
implies that
$f= T_\Psi ( S_{\Phi} f)\in \wz B_{p,q}^{s,\tau}(M)$
and
$\|f\|_{\wz B_{p,q}^{s,\tau}(M)}=\|T_\Psi ( S_{\Phi} f)\|_{\wz B_{p,q}^{s,\tau}(M)}
\ls \| S_{\Phi} f\|_{\wz b_{p,q}^{s,\tau}(M)}.$
Likewise,
$\|f\|_{\wz F_{p,q}^{s,\tau}(M)}\ls \| S_{\Phi} f\|_{\wz f_{p,q}^{s,\tau}(M)}$ holds true.
This finishes the proof of Theorem \ref{thm7.5x}.
\end{proof}


\subsection{The endpoint Triebel-Lizorkin  spaces $F_{\infty,q}^s(M)$ and $\wz F_{\infty,q}^s(M)$}\label{sec-7.2}

\hskip\parindent
For $s\in\rr$ and $q\in(0,\infty]$, inspired by the definition of $F_{\infty, q}^s(\rn)$
on $\rn$ in \cite{FJ90},
we define the \emph{endpoint Triebel-Lizorkin spaces} $F_{\infty, q}^s(M) := F_{q,q}^{s,1/q}(M)$
and $\wz F_{\infty, q}^s(M) := \wz F_{q,q}^{s,1/q}(M)$.
Then we have the following coincidence.

\begin{thm}\label{thm7.8x}
Let $s\in\rr$, $p\in(0,\infty)$ and $q\in(0,\infty]$. Then
$F_{\infty, q}^s(M)=F_{p,q}^{s,1/p}(M)$ and $\wz F_{\infty, q}^s(M)=\wz F_{p,q}^{s,1/p}(M)$ with equivalent (quasi-)norms.
\end{thm}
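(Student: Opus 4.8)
The plan is to prove the two claimed identities $F_{\infty,q}^s(M)=F_{p,q}^{s,1/p}(M)$ and $\wz F_{\infty,q}^s(M)=\wz F_{p,q}^{s,1/p}(M)$ by passing through the \emph{frame characterization} of Theorem \ref{thm7.5x} and reducing the whole statement to a purely combinatorial identity at the level of the sequence spaces $f_{p,q}^{s,1/p}(M)$ and $\wz f_{p,q}^{s,1/p}(M)$. Since $F_{\infty,q}^s(M)$ is by definition $F_{q,q}^{s,1/q}(M)$, and since Theorem \ref{thm7.5x} gives bounded invertible maps $S_\Phi$ and $T_\Psi$ between each Triebel--Lizorkin-type space and its sequence-space model, it suffices to prove, for all $s\in\rr$, $p,\,p'\in(0,\infty)$ and $q\in(0,\infty]$,
\begin{equation*}
\wz f_{p,q}^{s,1/p}(M)=\wz f_{p',q}^{s,1/p'}(M)
\qquad\textup{and}\qquad
f_{p,q}^{s,1/p}(M)=f_{p',q}^{s,1/p'}(M)
\end{equation*}
with equivalent (quasi-)norms; applying this with $p'=q$ then yields the theorem. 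I would state this reduction first, spelling out that $T_\Psi S_\Phi=\mathrm{Id}$ on each space and that $\|f\|_{\wz F_{p,q}^{s,1/p}(M)}\sim\|S_\Phi f\|_{\wz f_{p,q}^{s,1/p}(M)}$, so that the norm comparison at the sequence level transfers verbatim to the function level. By the symmetry of the desired identity in $p$ and $p'$ it is enough to prove one inclusion, say $\wz f_{p,q}^{s,1/p}(M)\hookrightarrow\wz f_{p',q}^{s,1/p'}(M)$ for $p'<p$, and then the reverse follows by interchanging the roles.

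The core of the argument is therefore the $p$-independence of $\wz f_{p,q}^{s,1/p}(M)$, which is the metric-measure analogue of the classical fact (Frazier--Jawerth \cite{FJ90}, Yuan--Sickel--Yang \cite{YSY}) that the $f_{\infty,q}^s$-type sequence space admits equivalent descriptions with any integrability exponent in the inner average. Concretely, for a sequence $a=\{a_t^{j,\nu}\}$ set
\begin{equation*}
A_j(x):=\sum_{t\in I_j}\sum_{\nu=1}^{N_t^j}|Q_t^{j,\nu}|^{-s/d}\,|a_t^{j,\nu}|\,\chi_{Q_t^{j,\nu}}(x).
\end{equation*}
Then $\|a\|_{\wz f_{p,q}^{s,1/p}(M)}=\sup_{k,\az}|Q_\az^k|^{-1/p}\big[\int_{Q_\az^k}(\sum_{j\ge k\vee 0}A_j^q)^{p/q}\,d\mu\big]^{1/p}$, i.e. the $L^p(Q_\az^k,d\mu/|Q_\az^k|)$-norm of the function $(\sum_j A_j^q)^{1/q}$. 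The key point is that because the Christ cubes $\{Q_t^{j,\nu}:t\in I_j,\,1\le\nu\le N_t^j\}$ partition $M$ at each fixed scale, the quantity $\sum_j A_j(x)^q$ restricted to $Q_\az^k$ is essentially governed by its $L^\infty$-behaviour relative to \emph{sub}cubes of $Q_\az^k$: for any subcube $Q_\bz^\ell\subset Q_\az^k$ with $\ell\ge k$ one has $|Q_\bz^\ell|^{-1/p}\big[\int_{Q_\bz^\ell}(\sum_{j\ge \ell}A_j^q)^{p/q}\big]^{1/p}\le\|a\|_{\wz f_{p,q}^{s,1/p}(M)}$, while the "tail" $\sum_{k\vee 0\le j<\ell}A_j$ on $Q_\bz^\ell$ is controlled by the embedding $\wz f_{p,q}^{s,1/p}(M)\hookrightarrow\wz b_{\infty,\infty}^{s}(M)$ of Lemma \ref{lem7.6x} (applied with $\tau=1/p$, so that $s+d\tau-d/p=s$). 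This dichotomy — splitting the $j$-sum at the scale of a generic subcube, and bounding the low-scale part by the $\wz b_{\infty,\infty}^s$-norm via reverse doubling \eqref{rdoubling2} — is exactly the mechanism by which the exponent $p$ disappears: one shows $\|a\|_{\wz f_{p,q}^{s,1/p}(M)}\sim\sup_{k,\az}|Q_\az^k|^{-1/p}\|(\sum_{j\ge k\vee 0}A_j^q)^{1/q}\|_{L^p(Q_\az^k)}$ is comparable to a quantity of the form $\sup$ over cubes of normalized $L^{p'}$-averages of the same integrand, using that on a fixed cube the $A_j$'s are built from indicator functions of a sub-partition and that Hölder/Jensen in the normalized measure $d\mu/|Q_\az^k|$ goes the "wrong" way only by a multiplicative constant absorbed into the supremum over finer cubes. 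For the unweighted spaces $f_{p,q}^{s,1/p}(M)$ the argument is identical after replacing $|Q_t^{j,\nu}|^{-s/d}$ by $\dz^{-js}$ and noting $|Q_t^{j,\nu}|\sim\dz^{jd}$ up to the reverse-doubling correction, which only affects constants.

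I expect the main obstacle to be the technical bookkeeping in this $p$-independence step — precisely, making rigorous the claim that for a function of the form $F=(\sum_j A_j^q)^{1/q}$ with $A_j$ piecewise constant on the level-$j$ Christ partition, the normalized $L^p$-averages $\sup_{k,\az}|Q_\az^k|^{-1/p}\|F\chi_{Q_\az^k}\|_{L^p}$ are mutually equivalent for different $p$. One direction ($p'\le p$) is Jensen's inequality on the probability space $(Q_\az^k,d\mu/|Q_\az^k|)$ and is immediate; the other direction requires showing that if $F$ has large $L^p$-average on $Q_\az^k$ then it already has large $L^{p'}$-average on some (possibly smaller) subcube, which is where the dyadic structure of Lemma \ref{lem3.1x}, the disjointness and covering properties of the $\{Q_t^{j,\nu}\}$ from Theorem \ref{thm-CRF}(a), and the a priori embedding into $\wz b_{\infty,\infty}^s(M)$ all have to be combined carefully; in the Euclidean case this is done in \cite[Section 3]{YSY} and \cite{FJ90}, and one must check that every ingredient used there (maximal function bounds, the geometric decay in the scale index, reverse doubling) is available in the present RD-space setting — all of which it is, given \eqref{doubling1}, \eqref{rdoubling1}, Lemmas \ref{lem3.1x}, \ref{lem6.4x}, \ref{lem7.6x} and the boundedness of $\cm$ on $L^r(M)$ for $r>1$. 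I would therefore organize the proof as: (1) reduce to sequence spaces via Theorem \ref{thm7.5x}; (2) record the embedding $\wz f_{p,q}^{s,1/p}(M)\hookrightarrow\wz b_{\infty,\infty}^s(M)$ and its unweighted analogue; (3) prove the two-sided comparison of normalized $L^p$- and $L^{p'}$-averages for the relevant integrands by the split-at-the-subcube-scale argument; (4) conclude. Steps (1), (2), (4) are short; the entire weight of the proof sits in step (3), and I would model it closely on the corresponding Euclidean argument, flagging the few places where reverse doubling replaces the exact scaling $|Q|\sim 2^{-jn}$ of $\rn$.
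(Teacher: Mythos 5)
Your reduction is exactly the paper's: via Theorem \ref{thm7.5x} the theorem is equivalent to the $p$-independence of the sequence spaces (Proposition \ref{prop7.9x}), and the paper, like you, models that step on \cite[Corollary~5.7]{FJ90}. The easy direction is indeed H\"older/Jensen on the normalized measure, as you say. (A small logical slip on the way: the hard inclusion does \emph{not} follow "by interchanging the roles of $p$ and $p'$" --- relabelling the easy inclusion again produces an inclusion from the larger exponent into the smaller one --- but you implicitly correct this later when you distinguish the two directions.)

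The genuine gap is in your step (3), which is where you yourself locate the whole weight of the proof. The mechanism you describe --- split the $j$-sum at the scale of a generic subcube, control the low-frequency part through the embedding $\wz f_{p,q}^{s,1/p}(M)\hookrightarrow \wz b_{\infty,\infty}^{s}(M)$ of Lemma \ref{lem7.6x}, and claim that H\"older "goes the wrong way only by a multiplicative constant absorbed into the supremum over finer cubes" --- does not prove the hard inequality $\|a\|_{\wz f_{p,q}^{s,1/p}(M)}\ls\|a\|_{\wz f_{p',q}^{s,1/p'}(M)}$ for $p'<p$. Bounding a higher normalized $L^p$ average by a supremum of lower $L^{p'}$ averages is a John--Nirenberg-type self-improvement; it cannot be obtained from H\"older plus a covering dichotomy, and the last quoted claim is essentially the assertion to be proved. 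What the paper actually uses (following Frazier--Jawerth) is a distributional, $p$-free characterization of the norm: the median-type quantity $\wz m^{s,q}_{k,\az,u}(a)$, defined as the infimum of $\lz$ for which the super-level set of the truncated square function has measure less than $|Q_\az^{k,u}|/4$, satisfies $\|a\|_{\wz f^{s,1/q}_{q,q}(M)}\sim\|\wz m^{s,q}(a)\|_{L^\fz(M)}$ (Lemma \ref{lem7.12x}); one direction is Chebyshev, and the other is a stopping-time construction producing subsets $S_t^{j,\nu}\subset Q_t^{j,\nu}$ with $|S_t^{j,\nu}|\ge\frac34|Q_t^{j,\nu}|$ on which the square function is pointwise dominated by $\wz m^{s,q}(a)$. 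This is then combined with Lemma \ref{lem7.11x} (cubes may be replaced by subsets of proportional measure in the quasi-norm, via the Fefferman--Stein vector-valued maximal inequality on $(M,\rho,\mu)$) and Lemma \ref{lem7.10x} (restriction of the supremum to $k\in\zz_+$ when $\tau\ge1/p$); rerunning the Chebyshev step with $p$ in place of $q$ handles $p<q$. None of these three ingredients --- the median/stopping-time characterization, the good-set replacement lemma, or the reduction of the supremum --- appears in your plan, and the $\wz b_{\infty,\infty}^{s}$ embedding you propose in their stead is neither needed for nor capable of closing this step; citing \cite{FJ90} is the right instinct, but the plan as written omits the idea that makes that argument work.
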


For the corresponding result on Triebel-Lizorkin spaces on $\rn$, we refer to
\cite[Corollary 5.7]{FJ90}.
Due to Theorem \ref{thm7.5x}, to show Theorem \ref{thm7.8x},
it suffices to prove the following fact.

\begin{prop}\label{prop7.9x}
Let $s\in\rr$, $p\in(0,\infty)$ and $q\in(0,\infty]$. Then
$f_{q, q}^{s,1/q}(M)=f_{p,q}^{s,1/p}(M)$ and $\wz f_{q, q}^{s,1/q}(M)=\wz f_{p,q}^{s,1/p}(M)$ with equivalent (quasi-)norms.
\end{prop}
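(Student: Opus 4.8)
The plan is to prove Proposition \ref{prop7.9x} by reducing it, via a standard ``chaining'' argument, to a pointwise comparison between the two sequence-space quasi-norms that differ only in the outer integrability exponent. By similarity it suffices to treat $\wz f_{q,q}^{s,1/q}(M)=\wz f_{p,q}^{s,1/p}(M)$; the unweighted case is identical with $|Q_t^{j,\nu}|^{-s/d}$ replaced by $\dz^{-js}$. Fix a sequence $a=\{a_t^{j,\nu}\}$ and, for $j\in\zz_+$, set
\begin{equation*}
h_j(x):=\sum_{t\in I_j}\sum_{\nu=1}^{N_t^j}|Q_t^{j,\nu}|^{-s/d}|a_t^{j,\nu}|\chi_{Q_t^{j,\nu}}(x),
\end{equation*}
so that $\|a\|_{\wz f_{p,q}^{s,1/p}(M)}=\|\{h_j\}_{j\in\zz_+}\|_{L^p_{1/p}(\ell^q)}$ and $\|a\|_{\wz f_{q,q}^{s,1/q}(M)}=\|\{h_j\}_{j\in\zz_+}\|_{L^q_{1/q}(\ell^q)}$. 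Thus the proposition is exactly the statement that, for the special sequences $\{h_j\}$ arising as above (constant on Christ cubes $Q_t^{j,\nu}$ of generation $j+j_0$), the quasi-norms $\|\cdot\|_{L^p_{1/p}(\ell^q)}$ and $\|\cdot\|_{L^q_{1/q}(\ell^q)}$ coincide up to multiplicative constants. I would first record the trivial direction: taking $p=q$ in the definition of $\|\cdot\|_{L^p_{1/p}(\ell^q)}$ gives $\|\cdot\|_{L^q_{1/q}(\ell^q)}$, so one of the two inequalities is an equality of expressions; the content is the reverse comparison.

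The key step is the following pointwise/geometric observation. For a fixed cube $Q_\az^k$ and the collection $\{h_j\}$, write $H(x):=\big(\sum_{j=k\vee0}^\infty h_j(x)^q\big)^{1/q}$ (with the usual sup modification when $q=\infty$). On any cube $Q_t^{j,\nu}$ with $j\ge k\vee 0$ contained in $Q_\az^k$, the function $h_j$ is constant, and the crucial point is that the family of all such Christ subcubes of $Q_\az^k$ over all generations $j$ is a \emph{nested dyadic-type family} with bounded overlap at each generation; this is precisely Lemma \ref{lem3.1x}(i),(ii),(iv). Consequently $H$ restricted to $Q_\az^k$ is (comparable to) a function that is constant on a dyadic decomposition of $Q_\az^k$, and for such functions the normalized $L^p$ and $L^q$ averages over $Q_\az^k$ are mutually dominated \emph{after taking the supremum over $Q_\az^k$ and all its subcubes}. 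More precisely, the standard trick (as in \cite[Corollary~5.7]{FJ90}) is: to bound $\frac1{|Q_\az^k|^{1/p}}\|H\chi_{Q_\az^k}\|_{L^p}$ one performs a Calder\'on--Zygmund-type stopping-time decomposition of $Q_\az^k$ at height $\lambda$ using the Christ cubes, writes the $L^p$ norm as $p\int_0^\infty \lambda^{p-1}\mu(\{x\in Q_\az^k:H(x)>\lambda\})\,d\lambda$, and on each stopping cube $Q_t^{j,\nu}$ controls $\lambda$ by $\big(\frac1{|Q_t^{j,\nu}|}\int_{Q_t^{j,\nu}}H^q\big)^{1/q}\le |Q_t^{j,\nu}|^{-1/q}\|H\chi_{Q_t^{j,\nu}}\|_{L^q}\le \|a\|_{\wz f_{q,q}^{s,1/q}(M)}$; summing the measures of the disjoint stopping cubes and using $\sum_{Q_t^{j,\nu}}|Q_t^{j,\nu}|\le|Q_\az^k|$ yields $\frac1{|Q_\az^k|^{1/p}}\|H\chi_{Q_\az^k}\|_{L^p}\lesssim \|a\|_{\wz f_{q,q}^{s,1/q}(M)}$. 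Taking the supremum over $(k,\az)$ gives $\|a\|_{\wz f_{p,q}^{s,1/p}(M)}\lesssim\|a\|_{\wz f_{q,q}^{s,1/q}(M)}$. The reverse inequality $\|a\|_{\wz f_{q,q}^{s,1/q}(M)}\lesssim\|a\|_{\wz f_{p,q}^{s,1/p}(M)}$ is the easy direction obtained by specializing $p\rightsquigarrow q$ in the supremum defining $\|\cdot\|_{\wz f_{p,q}^{s,1/p}(M)}$, noting every $Q_t^{j,\nu}$ is itself a Christ cube and hence an admissible $Q_\az^k$; this is immediate.

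I expect the main obstacle to be the stopping-time argument on a general space of homogeneous type: unlike in $\rn$, Christ's cubes are only ``dyadic-like'', so one must be careful that (a) every point of $Q_\az^k$ of generation $\ge k$ lies in a unique subcube at each level (Lemma \ref{lem3.1x}(i),(ii)), (b) the stopping cubes are genuinely disjoint and their measures sum to at most $C|Q_\az^k|$ (using the reverse doubling \eqref{rdoubling1} and doubling \eqref{doubling1} to compare $|Q_t^{j,\nu}|$ with the ball it contains from Lemma \ref{lem3.1x}(iv)), and (c) the level sets $\{H>\lambda\}$ are well approximated by unions of maximal Christ cubes on which the $L^q$ average exceeds $c\lambda$. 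A secondary technical point is that the index set in $\wz f_{q,q}^{s,1/q}(M)$ runs only over subcubes $Q_t^{j,\nu}$ of generation $j+j_0$ (the frame cubes from Remark \ref{rem7.2x}), whereas the supremum in $\|\cdot\|_{\wz f_{p,q}^{s,1/p}(M)}$ runs over \emph{all} Christ cubes $Q_\az^k$; one checks that restricting to the frame generations changes the quasi-norm only by a constant depending on $j_0$, since any $Q_\az^k$ contains and is contained in frame cubes of comparable size (again Lemma \ref{lem3.1x}). Once these geometric facts are in place, the argument is the routine one from \cite{FJ90}, and I would present it at that level of detail, deferring the elementary measure estimates to the reader.
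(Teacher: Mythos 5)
You have the right general framework (reduction to step functions $h_j$ on the frame cubes and a Frazier--Jawerth-type distributional argument), but two points in your sketch are genuinely wrong, and they are exactly where the content of Proposition \ref{prop7.9x} lies. First, the direction you call ``immediate'', $\|a\|_{\wz f^{s,1/q}_{q,q}(M)}\ls\|a\|_{\wz f^{s,1/p}_{p,q}(M)}$, is immediate (by H\"older/Jensen on a fixed outer cube) only when $p\ge q$; when $p<q$ it asks you to dominate a normalized $L^q$ average by a supremum of normalized $L^p$ averages with $p<q$, which is a John--Nirenberg-type self-improvement and not a ``specialization of indices'' --- the remark that each $Q_t^{j,\nu}$ is itself an admissible outer cube has no force here. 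The paper therefore splits into the cases $p\ge q$ and $p<q$: in each case H\"older gives one inequality, and the other (hard) inequality is obtained from the median/stopping-time machinery of Lemmas \ref{lem7.10x}--\ref{lem7.12x}.

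Second, your stopping-time proof of $\|a\|_{\wz f^{s,1/p}_{p,q}(M)}\ls\|a\|_{\wz f^{s,1/q}_{q,q}(M)}$ fails at its key step. Writing $H:=\big(\sum_{j\ge k\vee0}h_j^q\big)^{1/q}$ on the outer cube $Q_\az^k$, you bound the height $\lambda$ on a stopping cube $R=Q_t^{j,\nu}\subset Q_\az^k$ by $\big(|R|^{-1}\int_R H^q\big)^{1/q}\le\|a\|_{\wz f^{s,1/q}_{q,q}(M)}$; but $H$ is truncated at the scale $k$ of the \emph{outer} cube, while the quasi-norm localized to $R$ only controls the scales at or below the generation of $R$, so the intermediate scales between $k$ and that generation are not controlled. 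In fact your chain of inequalities would force $H\ls\|a\|_{\wz f^{s,1/q}_{q,q}(M)}$ pointwise on $Q_\az^k$, which is false: take $|Q_t^{j,\nu}|^{-s/d}|a_t^{j,\nu}|=1$ exactly on a nested chain of frame cubes $Q^{(j)}$, $k\le j<m$, containing a common small cube, and $0$ otherwise; then $H\sim(m-k)^{1/q}$ on the innermost cube, while reverse doubling \eqref{rdoubling2} makes every outer-cube average, and hence $\|a\|_{\wz f^{s,1/q}_{q,q}(M)}$, bounded uniformly in $m$. This scale-truncation mismatch is precisely what the argument of \cite[Corollary 5.7]{FJ90}, reproduced in Section \ref{sec-7.2}, is designed to overcome: one introduces the median-type functional $\wz m^{s,q}(a)$ and a stopping time $v(x)$ running in the \emph{scale} variable, obtaining subsets $S_t^{j,\nu}\subset Q_t^{j,\nu}$ with $|S_t^{j,\nu}|\ge\frac34|Q_t^{j,\nu}|$ on which the tail square functions are dominated by $\wz m^{s,q}(a)$, and then recovers the full quasi-norm from these subsets via Lemma \ref{lem7.11x}, i.e.\ the Fefferman--Stein vector-valued maximal inequality; a Chebyshev estimate (in $L^q$, respectively $L^p$ for the case $p<q$) controls $\|\wz m^{s,q}(a)\|_{L^\fz(M)}$. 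Without this (or an equivalent good-$\lambda$ iteration) your level-set argument does not close. Your geometric preliminaries (Lemma \ref{lem3.1x}, and Lemma \ref{lem7.10x} for replacing $k\in\zz$ by $k\in\zz_+$) are fine; the gap is the two points above.
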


To prove Proposition \ref{prop7.9x}, we follow the
proof of \cite[Corollary 5.7]{FJ90} and need the following lemmas.
Lemma \ref{lem7.10x} follows from an argument similar to that used in
the proof of Proposition \ref{prop4.10x} (see also \cite[Lemma 2.2]{YSY}),
the details being omitted.

\begin{lem}\label{lem7.10x}
Let $s\in\rr$, $p\in(0,\infty)$ and $q\in(0,\infty]$. If $\tau\in[1/p,\fz)$,
then  $\sup_{{k\in\zz},\,{ \az\in I_k}}$ in the definitions
of $f^{s,\tau}_{p, q}(M)$ and $\wz f^{s,\tau}_{p, q}(M)$
can be equivalently replaced by $\sup_{{k\in\zz_+},\,{ \az\in I_k}}$.
\end{lem}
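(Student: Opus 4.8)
The plan is to mimic the proof of Proposition \ref{prop4.10x}, with the functions $\Phi_j(\sqrt L)f$ there replaced by the building blocks $|Q_t^{j,\nu}|^{-s/d}|a_t^{j,\nu}|\chi_{Q_t^{j,\nu}}$ (respectively $\dz^{-js}|a_t^{j,\nu}|\chi_{Q_t^{j,\nu}}$). By similarity I would only treat $\wz f^{s,\tau}_{p,q}(M)$, the argument for $f^{s,\tau}_{p,q}(M)$ being the same and the usual modification being made when $q=\infty$. Write $A_+(a):=\sup_{k\in\zz_+,\,\az\in I_k}(\cdots)$ for the quantity obtained from $\|a\|_{\wz f^{s,\tau}_{p,q}(M)}$ by restricting the supremum to $k\in\zz_+$. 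Since $\{Q_\az^k:\,k\in\zz_+,\,\az\in I_k\}$ is a subfamily of $\{Q_\az^k:\,k\in\zz,\,\az\in I_k\}$, one has $A_+(a)\le\|a\|_{\wz f^{s,\tau}_{p,q}(M)}$ trivially; thus it remains to prove the reverse inequality $\|a\|_{\wz f^{s,\tau}_{p,q}(M)}\ls A_+(a)$, i.e., to control the contribution of the cubes $Q_\az^k$ with $k<0$.

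To this end, for $x\in M$ set
\begin{equation*}
g_0(x):=\lf\{\sum_{j=0}^{\infty}\lf[\sum_{t\in I_j}\sum_{\nu=1}^{N_t^j}
|Q_t^{j,\nu}|^{-s/d}|a_t^{j,\nu}|\chi_{Q_t^{j,\nu}}(x)\r]^q\r\}^{p/q}.
\end{equation*}
Since $k\vee0=0$ whenever $k\le0$, the function $g_0$ is exactly the integrand occurring in $\|a\|_{\wz f^{s,\tau}_{p,q}(M)}$ for every $k\le0$; in particular, applying this with $k=0$, one has $|Q_\bz^0|^{-\tau p}\int_{Q_\bz^0}g_0\,d\mu\le[A_+(a)]^p$ for all $\bz\in I_0$. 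Fix $k<0$ and $\az\in I_k$. By Lemma \ref{lem3.1x}(i) and (ii), modulo a $\mu$-null set the cube $Q_\az^k$ is the disjoint union of the cubes $Q_\bz^0$, $\bz\in I_0$, with $Q_\bz^0\subset Q_\az^k$; in particular $\sum_{\{\bz\in I_0:\,Q_\bz^0\subset Q_\az^k\}}|Q_\bz^0|\le|Q_\az^k|$.

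Decomposing the integral of $g_0$ over $Q_\az^k$ into the integrals over these subcubes and using the hypothesis $\tau p\ge1$ together with $|Q_\bz^0|/|Q_\az^k|\le1$ (so that $(|Q_\bz^0|/|Q_\az^k|)^{\tau p}\le|Q_\bz^0|/|Q_\az^k|$), I would estimate
\begin{align*}
\frac1{|Q_\az^k|^{\tau p}}\int_{Q_\az^k}g_0(x)\,d\mu(x)
&=\sum_{\{\bz\in I_0:\,Q_\bz^0\subset Q_\az^k\}}\lf(\frac{|Q_\bz^0|}{|Q_\az^k|}\r)^{\tau p}\frac1{|Q_\bz^0|^{\tau p}}\int_{Q_\bz^0}g_0(x)\,d\mu(x)\\
&\le[A_+(a)]^p\sum_{\{\bz\in I_0:\,Q_\bz^0\subset Q_\az^k\}}\frac{|Q_\bz^0|}{|Q_\az^k|}\le[A_+(a)]^p.
\end{align*}
Taking $p$-th roots, then the supremum over all $k<0$ and $\az\in I_k$, and combining with the (trivial) contribution of $k\ge0$, yields $\|a\|_{\wz f^{s,\tau}_{p,q}(M)}\ls A_+(a)$; together with the opposite inequality this gives the asserted equivalence. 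The space $f^{s,\tau}_{p,q}(M)$ is treated identically, with $|Q_t^{j,\nu}|^{-s/d}$ replaced throughout by $\dz^{-js}$. No serious difficulty arises here; the only points requiring care are the exactness (up to $\mu$-null sets) and disjointness of the covering of $Q_\az^k$ by level-$0$ Christ cubes, which is exactly what Lemma \ref{lem3.1x}(i)--(ii) provides, and the routine bookkeeping of the ``usual modification'' in the case $q=\infty$.
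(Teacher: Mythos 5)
Your proof is correct and is essentially the paper's argument: the paper simply refers to the proof of Proposition \ref{prop4.10x}, which is exactly the decomposition of $Q_\az^k$ ($k<0$) into level-$0$ Christ cubes via Lemma \ref{lem3.1x}(i)--(ii) combined with the hypothesis $\tau p\ge1$ that you carry out. Your use of $(|Q_\bz^0|/|Q_\az^k|)^{\tau p}\le |Q_\bz^0|/|Q_\az^k|$ is just a minor cosmetic variant of the superadditivity inequality used there, so no further comment is needed.
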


\begin{lem}\label{lem7.11x}
Let $\varepsilon\in(0,1]$, $s\in\rr$, $p\in(0,
\infty)$, $q\in(0,\infty]$ and $\tau\in[0,\fz)$.
For all $j\in\zz_+$, $t\in I_j$, $1\le \nu\le N_t^j$, let
$S_t^{j,\nu}$ be a set contained in $Q_t^{j,\nu}$
such that $|S_t^{j,\nu}|/|Q_t^{j,\nu}|\ge \varepsilon$.
Then there exists a constant $C\in[1,\fz)$, depending on $\epsilon,\, s,\, p,\, q,\, \tau$,
such that, for all $a:=\{a_t^{j,\nu}\}_{j\in\zz_+, t\in I_j, 1\le \nu\le N_t^j}\subset\cc$,
\begin{eqnarray*}
\frac1{C}\|a\|_{f^{s,\tau}_{p, q}(M)}
&&\le\sup_{\gfz{k\in\zz}{ \az\in I_k}} \frac1{|Q^k_\az|^\tau}
\lf[\int_{Q_\az^k}
\bigg\{\sum_{j=k\vee0}^{\infty} \dz^{-jsq}\bigg[\sum_{t\in I_j}\sum_{\nu=1}^{N_t^j}
 |a_t^{j,\nu}|\chi_{S_t^{j,\nu}}(x)\bigg]^q\bigg\}^{p/q}\, d \mu(x)
\r]^{1/p}\\
&&\le \|a\|_{f^{s,\tau}_{p, q}(M)}
\end{eqnarray*}
and
\begin{eqnarray*}
\frac1{C}\|a\|_{\wz f^{s,\tau}_{p, q}(M)}
&&\le\sup_{\gfz{k\in\zz}{ \az\in I_k} } \frac1{|Q^k_\az|^\tau}
\lf[
\int_{Q_\az^k}
\bigg\{\sum_{j=k\vee0}^{\infty}\bigg[\sum_{t\in I_j}\sum_{\nu=1}^{N_t^j}
|Q_t^{j,\nu}|^{-s/d} |a_t^{j,\nu}|\chi_{S_t^{j,\nu}}(x)\bigg]^q\bigg\}^{p/q}\, d \mu(x)
\r]^{1/p}\\
&&\le \|a\|_{\wz f^{s,\tau}_{p, q}(M)}.
\end{eqnarray*}
\end{lem}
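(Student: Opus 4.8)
\textbf{Proof proposal for Lemma \ref{lem7.11x}.}
The plan is to prove only the assertions for $\wz f^{s,\tau}_{p,q}(M)$ (the $f^{s,\tau}_{p,q}(M)$ case is identical, replacing $|Q_t^{j,\nu}|^{-s/d}$ by $\dz^{-js}$). Throughout fix $k\in\zz$ and $\az\in I_k$. Since $S_t^{j,\nu}\subset Q_t^{j,\nu}$, the middle quantity is pointwise no larger than the defining sum for $\|a\|_{\wz f^{s,\tau}_{p,q}(M)}$, so the right-hand inequality is trivial and the whole content is the left-hand inequality. For this, the main point is the well-known ``filling in'' estimate from \cite{FJ90}: if $x\in Q_t^{j,\nu}$, then the fact that $|S_t^{j,\nu}|/|Q_t^{j,\nu}|\ge\vp$ together with the doubling condition \eqref{doubling2} forces a substantial part of a ball $B(x,c\,\dz^j)$ (with $c$ depending only on $C_\natural, c_\natural$ in Lemma \ref{lem3.1x}) to lie in $S_t^{j,\nu}$; hence
\begin{equation*}
\chi_{Q_t^{j,\nu}}(x)\le C\,\lf[\cm\lf(\chi_{S_t^{j,\nu}}\r)(x)\r]^{1/\theta}
\end{equation*}
for any fixed $\theta\in(0,\infty)$, with $C=C(\theta,\vp,K)$. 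Raising to suitable powers, this gives, for any $r\in(0,\min\{1,p,q\})$,
\begin{equation*}
\sum_{t\in I_j}\sum_{\nu=1}^{N_t^j}|Q_t^{j,\nu}|^{-s/d}|a_t^{j,\nu}|\chi_{Q_t^{j,\nu}}(x)
\le C\,\cm_r\!\lf(\sum_{t\in I_j}\sum_{\nu=1}^{N_t^j}|Q_t^{j,\nu}|^{-s/d}|a_t^{j,\nu}|\chi_{S_t^{j,\nu}}\r)(x),
\end{equation*}
where I used that the cubes $\{Q_t^{j,\nu}\}$ at fixed $j$ are pairwise disjoint, so the maximal function on the right ``sees'' the correct summand, and the quasi-triangle inequality in $\ell^r$ absorbs the constant.

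Next I would take $\ell^q$-norms in $j$ (over $j\ge k\vee 0$) and then $L^p$-norms in $x$ over $Q_\az^k$. Writing $f_j:=\sum_{t\in I_j}\sum_{\nu}|Q_t^{j,\nu}|^{-s/d}|a_t^{j,\nu}|\chi_{S_t^{j,\nu}}$, the previous display yields
\begin{equation*}
\frac1{|Q^k_\az|^\tau}\lf\|\lf\{\sum_{t,\nu}|Q_t^{j,\nu}|^{-s/d}|a_t^{j,\nu}|\chi_{Q_t^{j,\nu}}\r\}_{j\ge k\vee0}\r\|_{L^p(Q_\az^k,\ell^q)}
\ls \frac1{|Q^k_\az|^\tau}\lf\|\{\cm_r(f_j)\}_{j\ge k\vee0}\r\|_{L^p(Q_\az^k,\ell^q)}.
\end{equation*}
Now I would invoke the vector-valued Fefferman--Stein maximal inequality on spaces of homogeneous type (as cited in the proof of Theorem \ref{thm6.2x}, \cite[Theorem 1.3]{s05}): since $r<\min\{p,q\}$, $\cm_r$ is bounded on $L^p(\ell^q)$. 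The only subtlety is the presence of the cube $Q_\az^k$ in the integration and the normalization $|Q_\az^k|^{-\tau}$; here I would argue exactly as in the proof of Theorem \ref{thm6.2x} (the passage establishing \eqref{eq:6.14x}), splitting $M$ into the annuli $B(z_\az^k,\dz^{k-i}+C_\natural\dz^k)$, $i\in\zz_+$, using \eqref{eq:6.12x} and \eqref{eq:6.13x} to control $\sharp\cj_{k,i}$ and $|Q_\bz^{k-i}|\ls\dz^{-id}|Q_\az^k|$, and summing the resulting geometric series in $i$ — this is where the extra room $r<\min\{p,q\}$ and a choice of $\nu$ or exponent with $\nu(1-\ez)>d\tau$ is spent. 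Taking the supremum over $k\in\zz$ and $\az\in I_k$ then gives $\|a\|_{\wz f^{s,\tau}_{p,q}(M)}\ls$ the middle quantity, which is the desired left-hand inequality.

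I expect the main obstacle to be purely bookkeeping rather than conceptual: marrying the localized/normalized ``$\sup_{k,\az}\frac1{|Q^k_\az|^\tau}\int_{Q^k_\az}$'' structure of the $\wz f^{s,\tau}_{p,q}$-norm with the global nature of the Fefferman--Stein inequality. Since this exact manoeuvre has already been carried out twice in the paper (in the proofs of Theorem \ref{thm6.2x} and Theorem \ref{thm7.5x}), I would simply refer to it, noting that $\tau\ge 0$ suffices and that Lemma \ref{lem7.10x} lets one, when $\tau\in[1/p,\infty)$, restrict to $k\in\zz_+$ if convenient. The ``filling in'' step itself is classical and short; it only uses Lemma \ref{lem3.1x}(iv) and \eqref{doubling2}.
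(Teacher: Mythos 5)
Your core argument coincides with the paper's: the nontrivial (left-hand) inequality is obtained from the pointwise ``filling-in'' bound $\chi_{Q_t^{j,\nu}}\ls \vp^{-1/A}[\cm(\chi_{S_t^{j,\nu}})]^{1/A}$ (your $\cm_r$ formulation is the same thing) with $A<\min\{p,q\}$, combined with the disjointness of the cubes at each level $j$ and the vector-valued Fefferman--Stein maximal inequality on spaces of homogeneous type; this is exactly the paper's proof. Where your plan is off is the localization step. The detour through the annuli $B(z_\az^k,\dz^{k-i}+C_\natural\dz^k)$, modeled on the proofs of \eqref{eq:6.10x} and \eqref{eq:6.14x}, is both unnecessary and, as described, would not close: in those proofs the sum over $i$ converges only because Proposition \ref{prop4.2x} supplies a decaying factor $\dz^{i\nu}$ with $\nu>d\tau$, and no analogous decay in $i$ is available in the present lemma, so ``spending the room $r<\min\{p,q\}$'' does not produce a convergent series. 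The correct observation, implicit in the paper's short proof, is that no such decomposition is needed: for $j\ge k\vee 0$ each $Q_t^{j,\nu}$ is a Christ cube of generation $j+j_0\ge k$, so by Lemma \ref{lem3.1x}(ii) any such cube meeting $Q_\az^k$ lies inside $Q_\az^k$. Hence, in the integral over $Q_\az^k$ one may restrict the inner sums to cubes contained in $Q_\az^k$; the functions $\sum_{t,\nu}\big(|Q_t^{j,\nu}|^{-s/d}|a_t^{j,\nu}|\big)^A\chi_{S_t^{j,\nu}}$ to which $\cm$ is applied are then supported in $Q_\az^k$, and applying the global Fefferman--Stein inequality with exponents $p/A,\,q/A>1$ returns a quantity whose integral is again effectively over $Q_\az^k$ and is dominated by the middle expression. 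Dividing by $|Q_\az^k|^{\tau p}$ and taking the supremum over $k\in\zz$, $\az\in I_k$ finishes the argument; in particular no appeal to Lemma \ref{lem7.10x} and no restriction on $\tau$ beyond $\tau\in[0,\fz)$ is needed.
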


\begin{proof}
By similarity, we only consider the second formula regarding $\wz f^{s,\tau}_{p, q}(M)$.
Obviously, the right-hand side is dominated by the left-hand side.
To see the inverse, notice that, for all $j\in\zz_+$, $t\in I_j$,
$1\le \nu\le N_t^j$,
 $A\in(0,\fz)$ and $x\in M$,
$\chi_{{Q}_t^{j,\nu}}(x)\le \varepsilon^{-1/A}
[\mathcal{M}(\chi_{{S}_t^{j,\nu}})(x)]^{1/A}$.
We choose $A$ such that $0<A<\min\{p,q\}$.
Then, by the Fefferman-Stein vector-valued inequality (see \cite{s05,GLY-ms}), we see that
\begin{eqnarray*}
\|a\|_{\wz f^{s,\tau}_{p, q}(M)}\ls \sup_{\gfz{k\in\zz}{ \az\in I_k} } \frac1{|Q^k_\az|^\tau}
\lf[\int_{Q_\az^k}
\bigg\{ \sum_{j=k\vee0}^{\infty}\bigg[\sum_{t\in I_j}\sum_{\nu=1}^{N_t^j}
|Q_t^{j,\nu}|^{-s/d} |a_t^{j,\nu}|\chi_{S_t^{j,\nu}}(x)\bigg]^q\bigg\}^{p/q}\, d \mu(x)
\r]^{1/p}.
\end{eqnarray*}
This finishes the proof of Lemma \ref{lem7.11x}.
\end{proof}

Combining Lemmas \ref{lem7.10x} and \ref{lem7.11x}, we
can show that,
if $\tau\in[\frac1p,\fz)$, then the supremum $\sup_{{k\in\zz},\,{\az\in I_k}}$  and
${Q}_t^{j,\nu}$
in the definitions
of $f^{s,\tau}_{p, q}(M)$ and $\wz f^{s,\tau}_{p, q}(M)$
can be equivalently replaced by $\sup_{{k\in\zz_+},\,{\az\in I_k}}$
and $S_t^{j,\nu}$, respectively, the details being omitted.

For any sequence $a:=\{a_t^{j,\nu}\}_{j\in\zz_+, t\in I_j, 1\le \nu\le N_t^j}\subset\cc$,
define $$\wz G^{s,q}_{k,\az,u}(a)(x):= \lf\{\sum_{j=k}^{\infty}
\bigg[\sum_{t\in I_j}\sum_{\nu=1}^{N_t^j}
|Q_t^{j,\nu}|^{-s/d} |a_t^{j,\nu}|\chi_{{Q}_t^{j,\nu}}(x)
\chi_{{Q}_\az^{k,u}}(x)\bigg]^q\r\}^{1/q},\qquad x\in M,$$
$$\wz m^{s,q}_{k,\az,u}(a):= \inf\lf\{\lz>0:
\ |\{x\in {Q}_\az^{k,u}:\ \widetilde{G}^{s,q}_{k,\az,u}(a)(x)>\lz\}|
<|{Q}_\az^{k,u}|/4\r\}$$
and
$$\wz m^{s,q}(a)(x):=\sup_{k\in\zz_+, \az\in I_k, 1\le u\le N_\az^k} \widetilde{m}^{s,q}_{k,\az,u}(a)
\chi_{{Q}_\az^{k,u}}(x),\qquad x\in M.$$
In the definitions of $\wz G^{s,q}_{k,\az,u}$, $\wz m^{s,q}_{k,\az,u}$
and  $\wz m^{s,q}$,
if we replace
$|Q_t^{j,\nu}|^{-s/d}$ by $\delta^{-jsq}$, then we denote the corresponding
definitions, respectively, by
$G^{s,q}_{k,\az,u}$, $m^{s,q}_{k,\az,u}$ and $m^{s,q}$.

\begin{lem}\label{lem7.12x} Let $s\in\rr$ and  $q\in(0,\infty]$.
Then there exists a constant $C\in[1,\fz)$ such that, for all sequences $a:=\{a_t^{j,\nu}\}_{j\in\zz_+, t\in I_j, 1\le \nu\le N_t^j}\subset\cc$,
$$ \frac1C\|a\|_{f^{s,1/q}_{q,q}(M)}\le \|{m}^{s,q}(a)\|_{L^\fz(M)}\le C\|a\|_{f^{s,1/q}_{q,q}(M)}
$$
and
$$ \frac1C\|a\|_{\wz f^{s,1/q}_{q,q}(M)}
\le \|\widetilde{m}^{s,q}(a)\|_{L^\fz(M)}\le C\|a\|_{\wz f^{s,1/q}_{q,q}(M)}.$$
\end{lem}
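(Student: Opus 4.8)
The plan is to prove the two equivalences simultaneously, since the tilded case is handled by the same argument after replacing $\dz^{-js}$ by $|Q_t^{j,\nu}|^{-s/d}$ (which are comparable up to $\dz^{-j|s|}$-type factors on each $Q_t^{j,\nu}$, but more importantly enter identically into all estimates). Fix a sequence $a=\{a_t^{j,\nu}\}$. The starting observation is that the norm $\|a\|_{f^{s,1/q}_{q,q}(M)}$ is, by definition, a supremum over dyadic cubes $Q_\az^k$ of quantities of the form
$$\frac1{|Q_\az^k|^{1/q}}\lf[\int_{Q_\az^k}\lf(G^{s,q}_{k}(a)(x)\r)^q\,d\mu(x)\r]^{1/q},$$
where $G^{s,q}_{k}(a)$ is the $\ell^q$-sum of the normalized coefficients over levels $j\ge k\vee0$. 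Using Lemmas \ref{lem7.10x} and \ref{lem7.11x} (and the remark following Lemma \ref{lem7.11x}), we may take the supremum only over $k\in\zz_+$ and replace the cubes $Q_\az^k$ by the subcubes $Q_\az^{k,u}$; thus $\|a\|_{f^{s,1/q}_{q,q}(M)}\sim\sup_{k,\az,u}|Q_\az^{k,u}|^{-1/q}\|G^{s,q}_{k,\az,u}(a)\|_{L^q(Q_\az^{k,u})}$.

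The upper bound $\|m^{s,q}(a)\|_{L^\fz(M)}\le C\|a\|_{f^{s,1/q}_{q,q}(M)}$ is the easy (Chebyshev) direction: for each fixed $k,\az,u$, if $\lz$ exceeds a fixed multiple of $|Q_\az^{k,u}|^{-1/q}\|G^{s,q}_{k,\az,u}(a)\|_{L^q(Q_\az^{k,u})}$, then Chebyshev's inequality forces $|\{x\in Q_\az^{k,u}:G^{s,q}_{k,\az,u}(a)(x)>\lz\}|<|Q_\az^{k,u}|/4$, so $m^{s,q}_{k,\az,u}(a)\le C|Q_\az^{k,u}|^{-1/q}\|G^{s,q}_{k,\az,u}(a)\|_{L^q(Q_\az^{k,u})}\le C\|a\|_{f^{s,1/q}_{q,q}(M)}$; taking the supremum over $k,\az,u$ gives the claim. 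The reverse bound $\|a\|_{f^{s,1/q}_{q,q}(M)}\le C\|m^{s,q}(a)\|_{L^\fz(M)}$ is the substance of the lemma. Here I would fix $k,\az,u$ and estimate $|Q_\az^{k,u}|^{-1}\int_{Q_\az^{k,u}}(G^{s,q}_{k,\az,u}(a))^q\,d\mu$. Split the integral at the level $\lz_0:=m^{s,q}_{k,\az,u}(a)$: on the set where $G^{s,q}_{k,\az,u}(a)\le\lz_0$ one gets a contribution $\le\lz_0^q\le\|m^{s,q}(a)\|_{L^\fz(M)}^q$; on the ``bad'' set $E:=\{x\in Q_\az^{k,u}:G^{s,q}_{k,\az,u}(a)(x)>\lz_0\}$, which has measure $<|Q_\az^{k,u}|/4$ by definition of $\lz_0$, one must re-expand $(G^{s,q}_{k,\az,u}(a))^q$ as a sum over $j\ge k$ and, for each $j$, over the subcubes $Q_t^{j,\nu}\subset Q_\az^{k,u}$, and then recognize that the tail starting from level $j$ on a given $Q_t^{j,\nu}$ is itself controlled by the value of $m^{s,q}$ on the corresponding subcube $Q_t^{j,\nu'}$ one level below, i.e. by $\|m^{s,q}(a)\|_{L^\fz(M)}^q$ times $|Q_t^{j,\nu}|$. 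Summing the geometric series in $j$ and using $\sum_{t,\nu}|Q_t^{j,\nu}|\le|Q_\az^{k,u}|$ at each level, the bad-set contribution telescopes to $\lesssim|Q_\az^{k,u}|\,\|m^{s,q}(a)\|_{L^\fz(M)}^q$. Dividing by $|Q_\az^{k,u}|$ and taking the supremum yields the bound. This is the standard good-$\lambda$/stopping-time mechanism used for $f^s_{\infty,q}(\rn)$ in \cite[Section~5]{FJ90}, adapted to Christ cubes.

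The main obstacle, and the step requiring the most care, is the bookkeeping in the bad-set estimate: one must verify that for $x\in Q_t^{j,\nu}$ the ``sub-tail'' $\sum_{i\ge j}\dz^{-isq}[\sum_{t',\nu'}|a_{t'}^{i,\nu'}|\chi_{Q_{t'}^{i,\nu'}}(x)]^q$ really is $\le(m^{s,q}_{\text{on }Q_t^{j,\nu}}(a))^q$ plus an error controlled by the measure of the sub-bad-set, and that the nested cubes $Q_t^{j,\nu}$ fit inside the Christ-cube hierarchy so that the measures add up with the right constants. The doubling property \eqref{doubling1}, the fact that each $Q_t^{j,\nu}$ contains a ball of comparable radius (Lemma \ref{lem3.1x}(iv)), and the bounded overlap/disjointness of the subcubes are exactly what makes this work; the $\tau=1/q$ normalization is what makes the $|Q|^{-1}$ weight cancel the $\sum|Q_t^{j,\nu}|$ coming out of the level sums. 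Once $\|m^{s,q}(a)\|_{L^\fz(M)}\sim\|a\|_{f^{s,1/q}_{q,q}(M)}$ is established, the tilded version $\|\wz m^{s,q}(a)\|_{L^\fz(M)}\sim\|a\|_{\wz f^{s,1/q}_{q,q}(M)}$ follows verbatim, replacing $\dz^{-js}$ by $|Q_t^{j,\nu}|^{-s/d}$ throughout and noting that these weights are constant on each $Q_t^{j,\nu}$, so the stopping-time argument is unaffected. This completes the proof.
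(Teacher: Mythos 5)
Your easy direction (Chebyshev) is exactly the paper's argument and is fine, as is the reduction via Lemmas \ref{lem7.10x} and \ref{lem7.11x} and the remark that the tilded case runs in parallel. The problem is the hard direction, where your bad-set estimate has a genuine gap, in fact two. First, the key claim -- that ``the tail starting from level $j$ on a given $Q_t^{j,\nu}$ is controlled by $\|m^{s,q}(a)\|_{L^\infty(M)}^q$ times $|Q_t^{j,\nu}|$'' -- is precisely the inequality $\int_{Q}G_Q^q\,d\mu\lesssim\|m^{s,q}(a)\|_{L^\infty(M)}^q|Q|$ that you are trying to prove, only on a smaller cube: the median-type quantity $m^{s,q}_{j,t,\nu}(a)$ only says that $G^{s,q}_{j,t,\nu}(a)$ exceeds it on a set of measure $<|Q_t^{j,\nu}|/4$, and gives no $L^q$ control whatsoever on that exceptional quarter, which is exactly where the trouble lives. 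Invoking it at every lower level is circular, and there is no a priori finiteness or contraction factor that would let an iteration close. Second, ``summing the geometric series in $j$'' has no source of decay: at each level $j$ the subcubes $Q_t^{j,\nu}$ tile $Q_\az^{k,u}$, so $\sum_{t,\nu}|Q_t^{j,\nu}|\sim|Q_\az^{k,u}|$ for every $j$ and the naive sum over $j$ diverges; nothing in the setup produces a geometric factor.

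The mechanism that actually closes the hard direction (and is what the paper does, following Frazier--Jawerth) is a stopping time combined with the substantial-subset Lemma \ref{lem7.11x}, which you cited only for the cosmetic reformulation of the norm but not where it is indispensable. Define $v(x)$ as the first level $v$ at which the tail $\{\sum_{j\ge v}[\sum_{t,\nu}|Q_t^{j,\nu}|^{-s/d}|a_t^{j,\nu}|\chi_{Q_t^{j,\nu}}(x)]^q\}^{1/q}$ drops below $\widetilde m^{s,q}(a)(x)$, and set $S_t^{j,\nu}:=\{x\in Q_t^{j,\nu}:\,v(x)\le j\}$. Since $\widetilde m^{s,q}(a)(x)\ge\widetilde m^{s,q}_{j,t,\nu}(a)$ on $Q_t^{j,\nu}$, one gets $|S_t^{j,\nu}|\ge\frac34|Q_t^{j,\nu}|$, and by construction the square function built with $\chi_{S_t^{j,\nu}}$ in place of $\chi_{Q_t^{j,\nu}}$ is pointwise bounded by $\widetilde m^{s,q}(a)(x)\le\|\widetilde m^{s,q}(a)\|_{L^\infty(M)}$. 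Lemma \ref{lem7.11x} (whose proof rests on the Fefferman--Stein vector-valued maximal inequality) then converts this $S$-version of the norm back into $\|a\|_{\widetilde f^{s,1/q}_{q,q}(M)}$ up to a constant, giving $\|a\|_{\widetilde f^{s,1/q}_{q,q}(M)}\lesssim\|\widetilde m^{s,q}(a)\|_{L^\infty(M)}$. Replacing your telescoping step by this stopping-time/substantial-subset argument repairs the proof; the rest of your write-up can stay as is.
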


\begin{proof}
Choose  $\lz\in(4^{1/q}\|a\|_{\wz f^{s,1/q}_{q,q}(M)},\fz)$. Then, by the Chebyshev inequality, we have
$$\lf|\lf\{x\in {Q}_\az^{k,u}:\ \widetilde{G}^{s,q}_{k,\az,u}(a)(x)>\lz\r\}\r|
\le \frac1{\lz^q}\int_{{Q}_\az^{k,u}} [\widetilde{G}^{s,q}_{k,\az,u}(a)(x)]^q\,d\mu(x)\le
\frac{|{Q}_\az^{k,u}|\|a\|^q_{\wz f^{s,1/q}_{q,q}(M)}}{\lz^q}<\frac14|{Q}_\az^{k,u}|.$$
Thus, $\|\widetilde{m}^{s,q}(a)\|_{L^\fz(M)}\ls\|a\|_{\wz  f^{s,1/q}_{q,q}(M)}$.
Conversely, for any $x\in M$,
we define a stopping time function
$$v(x):= \inf\lf\{v\in\zz_+:\ \lf\{\sum_{j=v}^{\infty}\bigg[\sum_{t\in I_j}\sum_{\nu=1}^{N_t^j}
|Q_t^{j,\nu}|^{-s/d} |a_t^{j,\nu}|\chi_{{Q}_t^{j,\nu}}(x)
\bigg]^q\r\}^{1/q}\le \wz m^{s,q}(a)(x)\r\},$$
and let
$$S_t^{j,\nu}:=\lf\{x\in {Q}_t^{j,\nu}:\ v(x)\le j\r\}
=\lf\{x\in {Q}_t^{j,\nu}:\
\widetilde{G}^{s,q}_{j,t,\nu}(a)(x)\le \wz m^{s,q}(a)(x)\r\}.$$
Then $|S_t^{j,\nu}|/|{ Q}_t^{j,\nu}|\ge 3/4$ and
$$\lf\{\sum_{j\in\zz_+}\bigg[\sum_{t\in I_j}\sum_{\nu=1}^{N_t^j}
|{Q}_t^{j,\nu}|^{-s/d} |a_t^{j,\nu}|\chi_{S_t^{j,\nu}}(x)
\bigg]^q\r\}^{1/q}\le \wz m^{s,q}(a)(x),\qquad x\in M.$$
By this and Lemma \ref{lem7.11x}, we obtain
$\|a\|_{\wz  f^{s,1/q}_{q,q}(M)}\ls \|\widetilde{m}^{s,q}(a)\|_{L^\fz(M)}$,
which completes the proof of Lemma \ref{lem7.12x}.
\end{proof}

\begin{proof}[Proof of Proposition \ref{prop7.9x}]
By similarity, we only prove $\wz f^{s,1/p}_{p, q}(M)=\wz f^{s,1/q}_{q,q}(M)$.
If $p\ge q$, then $\|a\|_{\wz  f^{s,1/q}_{q,q}(M)}\ls \|a\|_{\wz f^{s,1/p}_{p, q}(M)}$
follows immediately from H\"older's inequality.
Conversely, let $S_t^{j,\nu}$ be as in the proof of Lemma \ref{lem7.12x}.
Then, by Lemmas \ref{lem7.10x}, \ref{lem7.11x} and \ref{lem7.12x}, we know that
\begin{eqnarray*}
\|a\|_{\wz f^{s,1/p}_{p, q}(M)}
\ls \lf\|\bigg\{ \sum_{j=0}^{\infty}\bigg[\sum_{t\in I_j}\sum_{\nu=1}^{N_t^j}
|Q_t^{j,\nu}|^{-s/d} |a_t^{j,\nu}|
\chi_{S_t^{j,\nu}}\bigg]^q \bigg\}^{1/q}\r\|_{L^\fz(M)}\ls \|a\|_{\wz f^{s,1/q}_{q,q}(M)}.
\end{eqnarray*}
Now we consider $p<q$. By H\"older's inequality, we have
$\|a\|_{\wz f^{s,1/p}_{p, q}(M)}\ls \|a\|_{\wz f^{s,1/q}_{q,q}(M)}.$
To prove the converse inequality, we only need to repeat the proof of Lemma \ref{lem7.12x}
involving the Chebyshev inequality. The only difference is that we need to
replace $q$ therein  by $p$ now. This finishes the proof of Proposition \ref{prop7.9x}.
\end{proof}


\section{Further remarks}\label{sec-8}

\hskip\parindent
In this section, we first prove that,
on the Euclidean spaces, when $L$ is the Laplacian operator, the Besov-type
and the Triebel--Lizorkin-type spaces coincide with those
introduced in \cite{YSY}.
Furthermore, when $\tau=0$ and $\bz_0=2$ in ${\bf (UE)}$ and ${\bf (HE)}$,
we show that the Besov and the Triebel--Lizorkin spaces
on RD-spaces satisfying \eqref{non-collapsing}
defined in \cite{HMY2} coincide with
 the Besov and the Triebel--Lizorkin spaces  in \cite{KP},
 which gives a positive answer to a question
presented in \cite{KP}.

\subsection{Go back to Euclidean spaces}\label{sec-8.1}

\hskip\parindent
Consider now $M=\rn$, $\rho$ is the Euclidean distance,
and the measure $\mu$ is the $n$-dimensional Lebesgue measure.
In this case, the  Christ cubes
turn out to be the classical  dyadic cubes
$$\cq:= \lf\{ Q_m^j:=2^{-j}([0,1)^n+m):\, j\in\zz,\, m\in\zz^n\r\},$$
and the constant $\delta$ in Lemma \ref{lem3.1x} is exactly $1/2$.
Assume that $L$ is the Laplacian operator $\Delta:=-\sum^n_{j=1}\frac{\partial^2}{\partial x_j^2}$. It is a nonnegative self-adjoint operator
on $L^2(\rn)$. The associated heat semigroup $\{e^{-tL}\}_{t>0}$
is a set of integral operators whose heat kernels
$\{p_t\}_{t>0}$ are the Gauss-Weierstrass kernel:
$$p_t(x,y)=\frac{1}{(4\pi t)^{n/2}}\exp\lf(-\frac{|x-y|^2}{4t}\r),\,\qquad x,\,y\in\rn,\hs t\in(0,\fz).$$
Then $\{p_t\}_{t>0}$
satisfies the  conditions
\eqref{GUB} with $\bz_0=2$, \eqref{HC} with $\az_0=1$, and
\eqref{Markov}.
Let $\cs(\rn)$ denote the class of all Schwartz functions on $\rn$.
By the Newton-Leibniz formula and the mathematical induction, an easy calculation
leads to  that
the test function space  $\cd(\rn)$ defined in the beginning of Section \ref{sec-3} is exactly the Schwartz class $\cs(\rn)$.
As a consequence, the distribution space $\cd'(\rn)$ coincides with the  space $\cs'(\rn)$ of Schwartz  distributions.
Thus, the previous discussed Besov-type
and Triebel--Lizorkin-type spaces on $\rn$ read as follows.

\begin{defn}\label{def8.1x}
Let $\Phi_0, \Phi\in C^\infty(\rr_+)$ such that
\begin{equation*}
\supp \Phi_0\subset [0, 2],\quad
\Phi_0^{(2\nu+1)}(0)=0\,\ \textup{for \ all }\ \nu\in\nn,\,\, \qquad
|\Phi_0(\lz)|\ge c \,\,\textup{for}\,\,\ \lz\in[0, 2^{3/4}],
\end{equation*}
and
\begin{equation*}
\supp \Phi\subset [2^{-1}, 2],\quad \, \quad
|\Phi(\lz)|\ge c \,\,\textup{for}\,\,\,\lz\in[2^{-3/4}, 2^{3/4}],
\end{equation*}
where $c$ is a positive constant.
Let $\Phi_j(\cdot):=\Phi(2^{-j}\cdot)$ for all $j\in\nn$.
Let $\tau\in[0,\infty),\,s\in\rr$ and $q\in(0,\infty]$.
For $p\in(0,\infty]$, the \emph{Besov-type space} $B_{p,q}^{s,\tau}(\rn)$ is
defined to be the set of all $f\in\cd'(\rn)$  (see Section \ref{sec-2.2}) such that
\begin{eqnarray*}
\|f\|_{B_{p,q}^{s,\tau}(\rn)}
:=\sup_{k\in\zz, m\in\zz^n}
\frac1{|Q_m^k|^\tau}
\bigg\{\sum_{j=k\vee0}^\infty
\bigg[\int_{Q_m^k} 2^{jsp} |\Phi_j(\sqrt {\Delta})f(x)|^p\,dx\bigg]^{q/p}\bigg\}^{1/q}
<\infty.
\end{eqnarray*}
For $p\in(0,\infty)$,
the \emph{Triebel--Lizorkin-type space} $F_{p,q}^{s,\tau}(\rn)$ is
defined to be the set of all
$f\in\cd'(\rn)$ (see Section \ref{sec-2.2}) such that
\begin{eqnarray*}
\|f\|_{F_{p,q}^{s,\tau}(\rn)}
:=\sup_{k\in\zz, m\in\zz^n}
\frac1{|Q_m^k|^\tau}
\bigg\{\int_{Q_m^k}
\bigg[\sum_{j=k\vee0}^\infty 2^{jsq} |\Phi_j(\sqrt {\Delta})f(x)|^q\bigg]^{p/q}\,dx\bigg\}^{1/p}
<\infty.
\end{eqnarray*}
\end{defn}

Recall that, in \cite{YSY}, the Besov-type
and the Triebel--Lizorkin-type spaces were introduced as follows.

\begin{defn}\label{def8.2x}
Let $\phi_0$, $\phi\in\cs(\rn)$
such that
\begin{eqnarray*}
\supp \widehat \phi_0 \subset \{\xi\in\rn:\,|\xi|\le 2\},\qquad
|\widehat \phi_0(\xi)|\ge c \quad\textup{if}\quad |\xi|\le 5/3,
\end{eqnarray*}
and
\begin{eqnarray*}
\supp \widehat \phi \subset \{\xi\in\rn:\,1/2\le|\xi|\le 2\},\qquad
|\widehat \phi(\xi)|\ge c \quad\textup{if}\quad 3/5\le |\xi|\le 5/3,
\end{eqnarray*}
where $c$ is a positive constant.
For $j\in\nn$, define
$\phi_j(\cdot):= 2^{jn}\phi(2^j\cdot).$
Let $\tau\in[0,\infty),\,s\in\rr$ and $q\in(0,\infty]$.
For $p\in(0,\infty]$, the
\emph{Besov-type space} $\mathcal B_{p,q}^{s,\tau}(\rn)$ is defined to be the set of all $f\in\cs'(\rn)$ such that
\begin{eqnarray*}
\|f\|_{\mathcal B_{p,q}^{s,\tau}(\rn)}
:=\sup_{k\in\zz, m\in\zz^n}
\frac1{|Q_m^k|^\tau}
\bigg\{\sum_{j=k\vee0}^\infty
\bigg[\int_{Q_m^k} 2^{jsp} |\phi_j\ast f(x)|^p\,dx\bigg]^{q/p}\bigg\}^{1/q}
<\infty.
\end{eqnarray*}
For $p\in(0,\infty)$, the \emph{Triebel--Lizorkin-type space}
$\mathcal F_{p,q}^{s,\tau}(\rn)$ is defined to be the set of
all $f\in\cs'(\rn)$ such that
\begin{eqnarray*}
\|f\|_{\mathcal F_{p,q}^{s,\tau}(\rn)}
:=\sup_{k\in\zz, m\in\zz^n}
\frac1{|Q_m^k|^\tau}
\bigg\{\int_{Q_m^k}
\bigg[\sum_{j=k\vee0}^\infty 2^{jsq} |\phi_j\ast f(x)|^q\bigg]^{p/q}\,dx\bigg\}^{1/p}
<\infty.
\end{eqnarray*}
\end{defn}

\begin{thm}\label{thm8.3x}
Let all the notation be as in Definitions \ref{def8.1x} and \ref{def8.2x}. Then
$B_{p,q}^{s,\tau}(\rn) =\mathcal B_{p,q}^{s,\tau}(\rn)$
and $
{F_{p,q}^{s,\tau}}(\rn) ={\mathcal F_{p,q}^{s,\tau}(\rn)}
$
with equivalent (quasi-)norms.
\end{thm}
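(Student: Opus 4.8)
\textbf{Proof proposal for Theorem \ref{thm8.3x}.}

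The plan is to reduce both identifications to the heat semigroup characterization established in Corollary \ref{cor6.9x} (the $\tau=0$ case suffices to warm up, but the general $\tau$-version from Theorem \ref{thm6.8x} is what we actually need), together with the classical characterization of the spaces $\mathcal B_{p,q}^{s,\tau}(\rn)$ and $\mathcal F_{p,q}^{s,\tau}(\rn)$ of \cite{YSY} in terms of the Gauss--Weierstrass (heat) semigroup. More precisely, since both scales of spaces are known to be independent of the choice of the admissible pair generating them (for $B_{p,q}^{s,\tau}(\rn)$, $F_{p,q}^{s,\tau}(\rn)$ this is Theorem \ref{thm6.2x}(iii); for $\mathcal B_{p,q}^{s,\tau}(\rn)$, $\mathcal F_{p,q}^{s,\tau}(\rn)$ this is in \cite{YSY}), it is enough to exhibit one common characterizing quantity. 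The natural candidate is the continuous heat-semigroup functional
$$
\|f\|_{p,\tau} + \sup_{k\in\zz, m\in\zz^n} \frac1{|Q_m^k|^\tau}
\bigg\{\int_0^{\min\{1,2^{-k}\}}\Big[\int_{Q_m^k} t^{-sp}\big|(t^2\Delta)^m e^{-t^2\Delta}f(x)\big|^p\,dx\Big]^{q/p}\,\frac{dt}{t}\bigg\}^{1/q}
$$
(and its $F$-analogue, and the $\widetilde{\phantom{B}}$-versions with $|B(x,t)|^{-sp/n}$; on $\rn$ all of these are comparable since $|B(x,t)|\sim t^n$). First I would verify that $M=\rn$ with the Euclidean metric and Lebesgue measure, and $L=\Delta$, satisfies all the standing hypotheses: doubling \eqref{doubling1}, reverse doubling \eqref{rdoubling1}, non-collapsing \eqref{non-collapsing}, and \textbf{(UE)}, \textbf{(HE)}, \textbf{(SC)} with $\bz_0=2$, $\az_0=1$ — this is recorded in the paragraph preceding Definition \ref{def8.1x}. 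I would also record that $\cd(\rn)=\cs(\rn)$ and $\cd'(\rn)=\cs'(\rn)$, as the excerpt notes, so that the two classes of distributions coincide and the pairings agree.

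The key steps are then: (1) Apply Theorem \ref{thm6.8x} (for $p\ge1$, $q\ge1$; the restriction is harmless for the continuous version and we will bypass it in step (3)) to get $\|f\|_{B_{p,q}^{s,\tau}(\rn)}\sim$ the displayed heat functional, and similarly for $F$. (2) Invoke the corresponding heat-kernel (Gauss--Weierstrass semigroup) characterization of $\mathcal B_{p,q}^{s,\tau}(\rn)$, $\mathcal F_{p,q}^{s,\tau}(\rn)$ from \cite{YSY}: these spaces, defined via a Littlewood--Paley pair $(\phi_0,\phi)$ with $\widehat\phi$ supported in an annulus, admit an equivalent (quasi-)norm in terms of $W_t f:=e^{-t^2\Delta}f$ and its derivatives $\partial_t^m W_t f \sim t^{-2m}(t^2\Delta)^m e^{-t^2\Delta}f$, with exactly the same Morrey-type averaging over dyadic cubes. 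Matching the normalizations (the $2^{jsp}$ in the discrete Littlewood--Paley norm versus $t^{-sp}$ with $t\sim 2^{-j}$) shows the two heat functionals are literally the same expression, so $B_{p,q}^{s,\tau}(\rn)=\mathcal B_{p,q}^{s,\tau}(\rn)$ and $F_{p,q}^{s,\tau}(\rn)=\mathcal F_{p,q}^{s,\tau}(\rn)$ with equivalent norms, for the range of indices where both heat characterizations apply. (3) Remove the auxiliary restriction $p,q\ge1$: for this I would use the discrete heat-kernel characterization of Theorem \ref{thm6.7x}, which holds for the full range $p,q\in(0,\infty]$ (no lower restriction), and match it against the discrete heat characterization of the $\mathcal B$, $\mathcal F$ spaces (or against the $\varphi$-transform/frame characterization of \cite{YSY} combined with the frame characterization Theorem \ref{thm7.5x} here, since Christ's dyadic cubes on $\rn$ are the ordinary dyadic cubes with $\delta=1/2$, and the frame atoms $\Psi_j(\sqrt\Delta)(\xi_\tau^{j,\nu},\cdot)$ play the role of the smooth molecules in \cite{YSY}).

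I expect the main obstacle to be step (2)/(3): making the comparison between "our" heat-semigroup quantity and the quantities used in \cite{YSY} genuinely rigorous rather than merely plausible. The definitions in \cite{YSY} are phrased via convolution with $\phi_j=2^{jn}\phi(2^j\cdot)$, whose Fourier transform is an annular bump, whereas $h_j(\sqrt\Delta)$ from \eqref{eq:6.15x} corresponds to the multiplier $\lambda\mapsto(2^{-j}\lambda)^{2m}e^{-2^{-2j}\lambda^2}$, i.e. on the Fourier side to $(2^{-j}|\xi|)^{2m}e^{-2^{-2j}|\xi|^2}$; these are both Schwartz multipliers with appropriate vanishing/decay, so they generate the same space, but the cleanest argument routes through the multiplier-independence statements (Theorem \ref{thm6.2x}(iii) on one side, the analogous result in \cite{YSY} on the other) applied to a \emph{common} admissible pair — for instance one can choose $\Phi_0,\Phi$ as in Definition \ref{def8.1x} and simultaneously realize $\Phi_j(\sqrt\Delta)f$ as $\psi_j*f$ where $\widehat{\psi_j}(\xi)=\Phi(2^{-j}|\xi|)$ is an admissible Littlewood--Paley function in the sense of \cite{YSY}. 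Then $B_{p,q}^{s,\tau}(\rn)$ and $\mathcal B_{p,q}^{s,\tau}(\rn)$ have literally identical (quasi-)norms for this particular pair, and independence of the pair on both sides closes the argument; the $F$-case and the $\widetilde{\phantom{B}}$-cases (which collapse to the untilded ones on $\rn$ because $|B(x,t)|\sim t^n$) are handled identically. Care is also needed that the index ranges quoted in \cite{YSY} cover all $p\in(0,\infty]$ (or $p\in(0,\infty)$ for $F$), $q\in(0,\infty]$, $s\in\rr$, $\tau\in[0,\infty)$, which they do.
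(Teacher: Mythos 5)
Your proposal is correct in substance, and it actually contains two routes, one of which coincides with the paper's and one of which is genuinely different. The paper never touches the continuous heat functional of Theorem \ref{thm6.8x}: it goes straight to the discrete heat-kernel characterization (Theorem \ref{thm6.7x}, valid for all $p,q\in(0,\infty]$, so the $p,q\ge1$ issue you worry about never arises), identifying $B^{s,\tau}_{p,q}(\rn)$ with $_h\!B^{s,\tau}_{p,q}(\rn)$; on the other side it observes that $h_0(\sqrt\Delta)f$ and $h_j(\sqrt\Delta)f$ are convolutions with $H_0:=e^{-|\cdot|^2}$ and with dilates of $H:=(|\cdot|^{2m}e^{-|\cdot|^2})^\vee$, and then identifies $_h\!B^{s,\tau}_{p,q}(\rn)$ with $\mathcal B^{s,\tau}_{p,q}(\rn)$ via the \emph{local means} characterization of \cite{lsuyy}, the kernels $H_0,H$ being admissible local-means kernels (Tauberian conditions plus $(\partial^\alpha\widehat H)(0)=0$ for $|\alpha|\le 2m$). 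This pinpoints the one weak spot in your primary route: a Gauss--Weierstrass semigroup characterization of $\mathcal B^{s,\tau}_{p,q}(\rn)$ and $\mathcal F^{s,\tau}_{p,q}(\rn)$ for the full index range is not simply quotable from \cite{YSY}; the correct external input is the local means theorem of \cite{lsuyy}, and once you replace your step (2) by that, your steps (1)--(3) become exactly the paper's argument.

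Your ``cleanest'' alternative is a legitimately different and shorter route: fix one pair $(\Phi_0,\Phi)$ as in Definition \ref{def8.1x}, note that $\Phi_j(\sqrt\Delta)f=\psi_j\ast f$ with $\widehat{\psi_j}(\xi)=\Phi(2^{-j}|\xi|)$ (the condition $\Phi_0^{(2\nu+1)}(0)=0$ makes the radial extension smooth at the origin, and $[3/5,5/3]\subset[2^{-3/4},2^{3/4}]$ gives the Tauberian conditions of Definition \ref{def8.2x}), so that for this particular pair the two quasi-norms agree verbatim, and then close with independence of the pair on both sides (Theorem \ref{thm6.2x}(iii) here, the corresponding independence result for $\mathcal B^{s,\tau}_{p,q}(\rn)$, $\mathcal F^{s,\tau}_{p,q}(\rn)$ in \cite{YSY}/\cite{lsuyy} there), using also $\cd'(\rn)=\cs'(\rn)$. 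The only detail to nail down there is the Fourier normalization: you need the symbol of $\sqrt\Delta$ to be exactly $|\xi|$, otherwise the supports of $\widehat{\psi_j}$ sit on annuli dilated by a fixed constant and one must additionally invoke the insensitivity of both scales to a change of dilation base. What this buys you is an argument with no heat-semigroup or local-means machinery at all; what the paper's route buys is that it simultaneously exhibits the concrete heat characterization of the Euclidean spaces, which is of independent interest. Your remark that the tilde spaces collapse to the untilded ones on $\rn$ because $|B(x,t)|\sim t^n$ is correct but not needed, since Theorem \ref{thm8.3x} only concerns the untilded spaces.
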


\begin{proof}
Let $m>s/2$ and $h_0$, $h$  and $h_j$ be as in \eqref{eq:6.15x}.
That is, $h_0(\lz):= e^{-\lz^2}$, $h(\lz):=
\lz^{2m} e^{-\lz^2}$ and $
h_j(\lz):= h(2^{j}\lz)=
(2^{j}\lz)^{2m} e^{-2^{2j}\lz^2}$ for all $j\in\nn$ and $\lz\in(0,\fz).$
By Theorem \ref{thm6.7x}, we know that
$f\in B_{p,q}^{s,\tau}(\rn)$ if and only if
$f\in \cs'(\rn)$ and
\begin{eqnarray*}
\|f\|_{_h\!B^{s,\tau}_{p, q}(\rn)}
&&:=
\|\{2^{-js}
h_j(\sqrt \Delta) f\}_{j\in\zz_+}\|_{\ell^q(L^p_\tau)}<\fz.
\end{eqnarray*}
Moreover, $\|\cdot\|_{_h\!B^{s,\tau}_{p, q}(\rn)}\sim \|\cdot\|_{B^{s,\tau}_{p, q}(\rn)}$.
On the other hand, let $H_0(x):=e^{-|x|^2}$ and $H(x):=(|\cdot|^{2m}e^{-|\cdot|^2})^\vee(x)$
for all $x\in\rn$. Then it is well known that
$h_0(\sqrt{\Delta})f= e^{-\Delta}f=C_0H_0\ast f $ and
$h(\sqrt{\Delta})f=\Delta^{m} e^{-\Delta}f=C_1 H\ast f$
for all $f\in \cs'(\rn)$ and some positive constants $C_0$ and $C_1$.
Notice that $\widehat{H}_0$ and $\widehat{H}$ are positive on $B(0,2)$ and $B(0,2)\setminus B(0,1/2)$, respectively,
and $(\partial^\az\widehat{H})(0)=0$ for all $|\az|\le 2m$.
Thus, by the local means characterization of $\mathcal B^{s,\tau}_{p, q}(\rn)$
(see \cite{lsuyy}), we know that
$\mathcal B_{p,q}^{s,\tau}(\rn)=\, _h\!B^{s,\tau}_{p, q}(\rn)$ with equivalent (quasi-)norms. Thus,
$B_{p,q}^{s,\tau}(\rn) =\mathcal B_{p,q}^{s,\tau}(\rn)$ with equivalent (quasi-)norms.

The proof for the Triebel--Lizorkin-type case is similar, the details being omitted.
This finishes the proof of Theorem \ref{thm8.3x}.
\end{proof}

\subsection{Go back to RD-spaces}\label{sec-8.2}

\hskip\parindent
A systemic treatment for the theory of
(in)homogeneous Besov and Triebel--Lizorkin
spaces on RD-spaces was due to the work  \cite{HMY2}.
Here, in this section, we compare the
Besov and the Triebel--Lizorkin
spaces introduced in \cite{HMY2} with those in \cite{KP}.
Throughout this section, for all $x$, $y\in M$ and $\dz>0$,
let $V_\dz(x) := \mu(B(x, \dz))$ and $V(x, y) := \mu(B(x, d(x, y)))$.

The definitions of Besov and Triebel--Lizorkin
spaces in \cite{HMY2} rely on the existence of the following  {\em approximation
of the identity} on RD-spaces; see  \cite[Definition~2.2]{HMY2}
and \cite[Theorem 2.6]{HMY2}.

\begin{defn}\label{def8.4x}
Let $\ez_1\in(0, 1]$ and $\ez_2,\,\ez_3\in(0,\fz)$.
A sequence $\{S_k\}_{k\in\zz}$ of bounded
linear  integral operators on $L^2(M)$ is called an \emph{approximation
of the identity  of order $(\ez_1, \ez_2, \ez_3)$}
(in short, $(\ez_1, \ez_2, \ez_3)$ -$\aoti$), if there exists a
positive constant $C$ such that, for all $k\in\zz$, $x$, $x'$,
$y$ and $y'\in M$, $S_k(x, y)$, the integral kernel of $S_k$, is
a measurable function from $M\times M$ into $\cc$ satisfying
\begin{enumerate}
\item[ \rm (i)] $|S_k(x, y)|
\le C\frac{1}{V_{2^{-k}}(x)+V_{2^{-k}}(y)+V(x, y)}
[\frac{2^{-k}}{2^{-k}+\rho(x, y)}]^{\ez_2};$

\item[ \rm (ii)] for
$\rho(x, x')\le[2^{-k}+\rho(x, y)]/2$,\\
$|S_k(x, y)-S_k(x', y)| \le C\lf[\frac{\rho(x,
x')}{2^{-k}+\rho(x, y)}\r]^{\ez_1}
\frac{1}{V_{2^{-k}}(x)+V_{2^{-k}}(y)+V(x, y)}
\lf[\frac{2^{-k}}{2^{-k}+\rho(x, y)}\r]^{\ez_2};$

\item[ \rm (iii)] $S_k$ satisfies {\rm (ii)} with $x$ and $y$ interchanged;

\item[ \rm (iv)] for $\rho(x, x')\le
[2^{-k}+\rho(x, y)]/3$ and $\rho(y, y')\le [2^{-k}+\rho(x, y)]/3$,
\begin{eqnarray*}
&&|[S_k(x, y)-S_k(x, y')]-[S_k(x', y)-S_k(x', y')]|\\
&&\quad\le C\lf[\frac{\rho(x, x')}{2^{-k}+\rho(x, y)}\r]^{\ez_1}
\lf[\frac{\rho(y, y')}{2^{-k}+\rho(x, y)}\r]^{\ez_1}\frac{1}{V_{2^{-k}}(x)+V_{2^{-k}}(y)+V(x, y)}
\lf[\frac{2^{-k}}{2^{-k}+\rho(x, y)}\r]^{\ez_3};
\end{eqnarray*}

\item[ \rm (v)] $\int_M S_k(x, w)\,
d\mu(w)=1=\int_M S_k(w, y) \, d\mu(w)$.
\end{enumerate}
\end{defn}

The following version of test functions on
RD-spaces was originally introduced in \cite{HMY1} (see also \cite{HMY2}).

\begin{defn}\label{def8.5x}
Let $x_1\in M$, $r\in(0, \fz)$, $\bz\in(0, 1]$ and $\gz\in(0,
\fz)$. A function $\vz$ on $M$ is said to belong to the \emph{space
$\cg(x_1, r, \bz, \gz)$ of test functions},
if there exists a positive constant $C$ such
that, for all $x,\,y\in M$,
\begin{enumerate}
\item[ \rm (i)] $|\vz(x)|\le C\frac{1}{V_r(x_1)+V_r(x)+V(x_1, x)}
[\frac{r}{r+\rho(x_1, x)}]^\gz$;

\item[ \rm (ii)] $|\vz(x)-\vz(y)| \le C[\frac{\rho(x, y)}{r+\rho(x_1, x)}]^\bz
\frac{1}{V_r(x_1)+V_r(x)+V(x_1, x)}[\frac{r}{r+\rho(x_1, x)}]^\gz$ when $\rho(x, y)\le[r+\rho(x_1, x)]/2$.
\end{enumerate}
If $\vz\in\cg(x_1, r, \bz,\gz)$,
then its \emph{norm} is defined by $\|\vz\|_{\cg(x_1,\, r,\, \bz,\,\gz)}  :=
\inf\{C:\, \rm {(i)} \mbox{ and } {\rm (ii)} \mbox{ hold}\}$.
\end{defn}

Fix $x_1\in M$ and let $\cg(\bz, \gz) : = \cg(x_1, 1, \bz, \gz)$.
For any
$x_2\in M$ and $r\in(0,\fz)$,  it is easy to see that $\cg(x_2, r, \bz, \gz)=\cg(\bz, \gz)$ with equivalent norms. Also, the space  $\cg(\bz, \gz)$ is a Banach space.
Let $\ez\in(0, 1]$ and $\bz$, $\gz\in(0, \ez]$.
Denote by $\cg_0^\ez(\bz, \gz)$ the
completion of  $\cg(\ez, \ez)$ in $\cg(\bz, \gz)$. Then
$\vz\in\cg_0^\ez(\bz, \gz)$ if and
only if $\vz\in\cg(\bz, \gz)$ and there exist
functions
$\{\phi_j\}_{j\in\nn}$
converging to $\vz$ in $\cg(\ez, \ez)$. For any
$\vz\in\cg_0^\ez(\bz, \gz)$,  define
$\|\vz\|_{\cg_0^\ez(\bz,
\gz)} := \|\vz\|_{\cg(\bz, \gz)}.$
For the above chosen $\{\phi_j\}_{j\in\nn}$, we have
$\|\vz\|_{\cg_0^\ez(\bz, \gz)}=\lim_{j\to\fz}\|\phi_j\|_{\cg(\bz, \gz)}.$
Notice that  $\cg_0^\ez(\bz, \gz)$ is also a Banach space.
Denote by $(\cg_0^\ez(\bz, \gz))'$ the \emph{set of all bounded linear functionals} on
$\cg_0^\ez(\bz, \gz)$. Define $\laz
f, \vz\raz$ to be the natural pairing of elements $f\in
(\cg_0^\ez(\bz, \gz))'$ and $\vz\in\cg_0^\ez(\bz, \gz)$.

We now recall the  definitions of Besov and Triebel--Lizorkin
spaces on RD-spaces in \cite[Definition 5.29]{HMY2}.
In what follows, for all $\ez\in(0,1)$
and $|s|<\ez$, let
$p(s,\ez) := \max\{d/(d+\ez), d/(d+\ez+s)\}.$
For all $g\in L^1_\loc(M)$ and Christ dyadic cubes $Q$,
we write $m_Q(g):=\frac1{|Q|}\int_{Q}g(y)\,d\mu(y)$.

\begin{defn}\label{def8.6x}
Let
$\ez_1\in(0, 1]$, $\ez_2,\,\ez_3\in(0,\fz)$,
$\ez\in(0, \ez_1\wg\ez_2)$, $\bz,\,\gz\in(0,\ez)$,
$|s|<\ez$ and $\{S_k\}_{k\in\zz}$ be an $(\ez_1, \ez_2, \ez_3)$-$\aoti$.
Define $D_0:=S_0$, and $D_k:=S_k-S_{k-1}$ for
all $k\in\nn$. Let $\{Q^{0,v}_\tau\}_{\tau\in I_0, v\in\{1,\ldots,N^0_\tau\}}$
be dyadic cubes as in Remark \ref{rem7.2x}.

{\rm(i)} Let $p\in(p(s,\ez),\fz]$ and $q\in(0,\fz]$. The {\it Besov space
$\mathcal B^s_{p, q}(M)$} is defined to be the set of all
$f\in(\cg^\ez_0(\bz,\gz))'$, for some $\bz$, $\gz$ satisfying
\begin{equation}\label{eq:8.1x}
\max\lf\{s,0,-s+d(1/p-1)_+\r\}<\bz<\ez,\quad d(1/p-1)_+<\gz<\ez
\end{equation}
such that
$$\|f\|_{ \mathcal B^s_{p, q}(M)}
:=\lf\{\sum_{\tau\in I_0}\sum_{v=1}^{N_\tau^0}|Q^{0,v}_\tau|[m_{Q^{0,v}_\tau}(|D_0f|)]^p\r\}^{1/p}
+\bigg[\sum^\fz_{k=1}2^{ksq}\|D_kf\|_{L^p(M)}^q\bigg]^{1/q}<\fz$$
with the usual modifications made when $p=\fz$ or $q=\fz$.

{\rm(ii)} Let $p\in(p(s,\ez),\fz)$ and $q\in(p(s,\ez),\fz].$ The {\it Triebel--Lizorkin space
$ \mathcal F^s_{p, q}(M)$} is defined to be the set of all
$f\in(\cg^\ez_0(\bz,\gz))'$ for some $\bz$, $\gz$ satisfying \eqref{eq:8.1x} such that
$$\|f\|_{ \mathcal F^s_{p, q}(M)}:=\lf\{\sum_{\tau\in I_0}\sum_{v=1}^{N_\tau^0}|Q^{0,v}_\tau|[m_{Q^{0,v}_\tau}(|D_0f|)]^p\r\}^{1/p}
+\bigg\|\bigg[\sum^\fz_{k=1}
2^{ksq}|D_kf|^q\bigg]^{1/q}\bigg\|_{L^p(M)}<\fz$$ with the usual
modification made when $q=\fz$.
\end{defn}

For all $\ez_1\in(0, 1]$, $\ez_2,\,\ez_3\in(0,\fz)$,
$\ez\in(0, \ez_1\wg\ez_2)$, $\bz,\,\gz\in(0,\ez)$ and $|s|<\ez$,
it was proved in \cite[Proposition 5.32]{HMY2} that,
if $p\in[1,\fz]$ and $q\in(0,\fz]$, then, for all $f\in(\cg^\ez_0(\bz,\gz))'$,
$$\|f\|_{ \mathcal B^s_{p, q}(M)}
\sim\bigg[\sum^\fz_{k=0}2^{ksq}\|D_kf\|_{L^p(M)}^q\bigg]^{1/q}$$
and, if $p\in[1,\fz)$ and $q\in(p(s,\ez),\fz]$, then, for all $f\in(\cg^\ez_0(\bz,\gz))'$,
$$\|f\|_{ \mathcal F^s_{p, q}(M)}\sim \bigg\|\bigg[\sum^\fz_{k=0}
2^{ksq}|D_kf|^q\bigg]^{1/q}\bigg\|_{L^p(M)}$$
with implicit positive constants independent of $f$.

Applying the discrete  Calder\'on
reproducing formula in Theorem \ref{thm-CRF}, by
an argument similar to that used in the proof of \cite[Proposition 5.32]{HMY2},
we obtain the following (quasi-)norm equivalences between two types of function spaces,
which answers a question presented in \cite{KP}.

\begin{thm} \label{thm8.7x}
 Let  $\ez_1\in(0, 1]$, $\ez_2,\,\ez_3\in(0,\fz)$,
 $\ez\in(0, \az_0\wg \ez_1\wg\ez_2)$, $\bz,\,\gz\in(0,\ez)$ and
$|s|<\ez$, where  $\az_0$ is as in ${\bf (HE)}$.
\begin{enumerate}
\item[{\rm(i)}] If $p\in(p(s,\ez),\fz]$, $q\in(0,\fz]$ and $\bz$, $\gz$ satisfy \eqref{eq:8.1x},
then
$\|f\|_{ \mathcal  B^s_{p, q}(M)}\sim \|f\|_{ B^s_{p, q}(M)}$ for all $f\in (\cg^\ez_0(\bz,\gz))'\cap\cd'(M)$.

\item[{\rm(ii)}]  If $p\in(p(s,\ez),\fz)$, $q\in(p(s,\ez),\fz]$ and $\bz$, $\gz$ satisfy \eqref{eq:8.1x},
then
$\|f\|_{ \mathcal  F^s_{p, q}(M)}\sim \|f\|_{ F^s_{p, q}(M)}$ for all $f\in (\cg^\ez_0(\bz,\gz))'\cap\cd'(M)$.
\end{enumerate}
Here, in (i) and (ii), the implicit  equivalent positive
constants are independent of $f$.
\end{thm}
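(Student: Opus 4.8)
\textbf{Proof proposal for Theorem \ref{thm8.7x}.}

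The plan is to reduce the claimed equivalence $\|f\|_{\mathcal B^s_{p,q}(M)}\sim\|f\|_{B^s_{p,q}(M)}$ (and the parallel Triebel--Lizorkin statement) to a comparison of the two discrete Littlewood--Paley decompositions that underlie the two scales: the operators $D_k=S_k-S_{k-1}$ built from an $(\ez_1,\ez_2,\ez_3)$-$\aoti$ on the RD-space side, and the operators $\Phi_j(\sqrt L)$ built from the heat functional calculus on the $L$-side. The crucial point is that both families satisfy the \emph{same type} of size, regularity and cancellation estimates, namely the bounds $|D_k(x,y)|\ls D_{2^{-k},\sz}(x,y)$ with H\"older continuity of exponent $\ez_1$ and $\int_M D_k(x,y)\,d\mu(y)=\int_M D_k(x,y)\,d\mu(x)=0$ for $k\ge1$, and likewise $|\Phi_j(\sqrt L)(x,y)|\ls D_{\dz^j,\sz}(x,y)$ with H\"older continuity of exponent $\az_0$ and moment cancellation (since $\Phi(0)=0$). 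Thus the core estimate to establish is an almost-orthogonality bound of the form
\begin{equation*}
|(D_k\,\Phi_j(\sqrt L))(x,y)|+|(\Phi_j(\sqrt L)\,D_k)(x,y)|\ls \dz^{|k-j|\ez'}D_{\dz^{k\wedge j},\sz}(x,y)
\end{equation*}
for some $\ez'>0$ with $\ez'>|s|$, which follows by the standard second-moment argument (transferring one derivative/cancellation across the composition) exactly as in the proof of Proposition \ref{prop2.14x}; here one must take $\dz=1/2$ so that the dyadic scales match, and one uses $\ez\in(0,\az_0\wedge\ez_1\wedge\ez_2)$ to guarantee enough room in the smoothness parameters.

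With that almost-orthogonality estimate in hand, I would run the usual sandwich argument. First I would invoke the discrete Calder\'on reproducing formula of Theorem \ref{thm-CRF}: for $f\in(\cg_0^\ez(\bz,\gz))'\cap\cd'(M)$ one writes $f=\sum_{j}\sum_{\tau,\nu}|Q_\tau^{j,\nu}|(\Phi_j(\sqrt L)f)(\xi_\tau^{j,\nu})\,\Psi_j(\sqrt L)(\xi_\tau^{j,\nu},\cdot)$, apply $D_k$ to both sides, and estimate $D_k\Psi_j(\sqrt L)(x,y)$ by the almost-orthogonality bound together with Theorem \ref{thm-CRF}(b)--(d). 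This expresses $D_kf(x)$ as a geometrically decaying (in $|k-j|$) sum of averages of $\Phi_j(\sqrt L)f$ over Christ cubes; then the Fefferman--Stein vector-valued maximal inequality on spaces of homogeneous type (as used in the proof of Theorem \ref{thm6.2x}) and Lemma \ref{lem6.4x} yield $\|f\|_{\mathcal B^s_{p,q}(M)}\ls\|f\|_{B^s_{p,q}(M)}$ and the analogous Triebel--Lizorkin bound. For the reverse inequality I would use the Calder\'on reproducing formula on the RD-space side from \cite{HMY2} (which produces operators $\wz D_k$ with $\sum_k\wz D_kD_k=\mathrm{Id}$) to write $\Phi_j(\sqrt L)f=\sum_k\Phi_j(\sqrt L)\wz D_kD_kf$, then estimate $\Phi_j(\sqrt L)\wz D_k$ by the same almost-orthogonality bound and repeat the maximal-function machinery. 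The handling of the lowest-frequency terms $D_0f$ versus $\Phi_0(\sqrt L)f$ (and the average term $\{\sum_{\tau,v}|Q_\tau^{0,v}|[m_{Q_\tau^{0,v}}(|D_0f|)]^p\}^{1/p}$ appearing in Definition \ref{def8.6x}) requires the auxiliary $p\in[1,\fz]$ norm equivalence from \cite[Proposition 5.32]{HMY2} together with Propositions \ref{prop4.7x} and \ref{prop4.8x} to see that the $k=0$ contributions match up; since $\tau=0$ here, the cube-supremum in Definition \ref{def3.2x} restricted to $k\le0$ collapses and one only needs the $k\in\zz_+$ part, which is what Proposition \ref{prop4.11x} (applicable as $\tau=0<1/p$) guarantees.

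The main obstacle I anticipate is bookkeeping of the two distinct distribution-space settings and the range of admissible smoothness exponents: an element of $(\cg_0^\ez(\bz,\gz))'\cap\cd'(M)$ must be shown to pair correctly with both families of test kernels, so one has to check that $\Phi_j(\sqrt L)(x,\cdot)$ lies in $\cg_0^\ez(\bz,\gz)$ (using Proposition \ref{prop2.13xx} and the Gaussian-type decay of the heat functional calculus) and conversely that $D_k(x,\cdot)\in\cd(M)$ in a suitable weak sense, so that $D_kf$ and $\Phi_j(\sqrt L)f$ are simultaneously well-defined for such $f$; this is where the hypothesis $\ez<\az_0$ is essential, since the H\"older exponent $\az_0$ of the heat kernel must dominate the regularity index $\ez$ of the RD-space test functions. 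Once the two classes of distributions are reconciled on the common domain, the rest is a faithful adaptation of the argument in \cite[Proposition 5.32]{HMY2}, with Theorem \ref{thm-CRF} playing the role that the RD-space Calder\'on reproducing formula plays there; accordingly I would present the proof by indicating these modifications and omitting the routine estimates, as the statement already announces.
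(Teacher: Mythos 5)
Your proposal follows essentially the same route as the paper: each inequality is obtained by feeding one theory's Calder\'on reproducing formula into the other theory's Littlewood--Paley pieces, estimating the mixed compositions by an almost-orthogonality bound, and then running the maximal-function machinery together with the frame characterizations (\cite[Theorem 7.4]{HMY2} on the RD-space side and Theorem \ref{thm7.5x} here). Two points in your write-up need repair, though neither changes the strategy. First, the mixed almost-orthogonality estimate cannot be obtained ``exactly as in the proof of Proposition \ref{prop2.14x}'': that proof rests on the functional-calculus identity $\Phi_j(\sqrt L)\Psi_k(\sqrt L)=(\Phi_j\Psi_k)(\sqrt L)$, which is unavailable when one factor is $D_k$, since $D_k$ is not a function of $L$. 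What is actually used is the size/H\"older/cancellation argument of \cite[Lemma 3.2]{HMY2} (your ``second-moment'' description), applied after checking via Proposition \ref{prop2.10x} that $\Phi_j(\sqrt L)(x,\cdot)$ has H\"older regularity $\az_0$, lies in $\cg_0^\ez(\bz,\gz)$, and has vanishing integral for $j\ge1$; this is precisely where the hypothesis $\ez<\az_0\wedge\ez_1\wedge\ez_2$ enters, as you anticipated.

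Second, for the direction $\|f\|_{B^s_{p,q}(M)}\ls\|f\|_{\mathcal B^s_{p,q}(M)}$ you invoke the continuous identity $\sum_k\wz D_kD_k={\rm Id}$. This suffices for $p,\,q\ge1$, but for $p$ or $q$ in $(p(s,\ez),1]$ (a range the theorem must cover) the resulting integral of $|D_kf|$ against an ATI-type kernel cannot be converted into $[\cm(|D_kf|^r)]^{1/r}$ for arbitrary locally integrable data: integral operators with such kernels are not bounded on $L^p$ when $p<1$. The paper instead uses the discrete inhomogeneous reproducing formula of \cite[Theorems 4.14 and 4.16]{HMY2}, whose point values $D_kf(y_\tau^{k,v})$ on dyadic cubes are turned into maximal functions by \cite[Lemma 5.3]{HMY2}, and then concludes with the frame characterization \cite[Theorem 7.4]{HMY2}. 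Your other direction, where you apply $D_k$ to \eqref{DCRF} and land on point values of $\Phi_j(\sqrt L)f$ over Christ cubes, is already of this discrete form (and is closed by Theorem \ref{thm7.5x}); the RD-space direction should be discretized in the same way. With these two adjustments, including your treatment of the $k=0$ term via \cite[Proposition 5.32]{HMY2}, your outline coincides with the paper's proof.
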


\begin{proof}
First, we  show (i).  Let $\{S_k\}_{k\in\zz}$ be an
$(\ez_1,\ez_2,\ez_3)$-${\rm ATI}$. Set $D_0:=S_0$, and $D_k:=S_k-S_{k-1}$ for $k\in\nn$. It was proved in \cite[Lemma~3.2,\,(3.2)]{HMY2} that,  for any $\ez_1'\in(0, \ez_1\wedge\ez_2)$ and all $x,\,y\in M$,
\begin{eqnarray}\label{eq:8.2x}
|{D}_kD_j(x,y)|\ls 2^{-|j-k|\ez_1'}
\frac{1}{V_{2^{-(k\wedge j)}}(x)+V_{2^{-(k\wedge j)}}(y)+V(x, y)}
\frac{2^{-(k\wedge j)\ez_2}}{[2^{-(k\wedge j)}+\rho(x, y)]^{\ez_2}}.
\end{eqnarray}

For $\bz,\,\gz\in(0,\epsilon)$, we recall that the discrete inhomogeneous Calder\'on reproducing formula in \cite[Theorems 4.14 and 4.16]{HMY2}:
for all $f\in (\cg^\ez_0(\bz,\gz))'$,
\begin{eqnarray}\label{eq:8.3x}
f(\cdot)= \sum_{\tau\in I_0}\sum_{v=1}^{N_\tau^0}\int_{Q^{0,v}_\tau} \wz D_0(\cdot,y)\,d\mu(y)
D^{0,v}_{\tau,1}f +\sum_{k=1}^\infty\sum_{\tau\in I_k}\sum_{v=1}^{N_\tau^k}
|Q^{k,v}_\tau|\widetilde D_k(\cdot, y^{k,v}_\tau) D_k f(y^{k,v}_\tau)
\end{eqnarray}
converges in $(\cg^\ez_0(\bz,\gz))'$, where $y^{k,v}_\tau$
 is an arbitrary point in $Q^{k,v}_\tau$,  $D^{0,v}_{\tau,1}$ denotes the integral
operator with kernel $D^{0,v}_{\tau,1}(z):=
\frac1{|Q^{0,v}_\tau|}\int_{Q^{0,v}_\tau}D_0(z,u)\,d\mu(u)$,
and the kernels of the operators
$\{\widetilde D_k\}_{k\in\zz_+}$ satisfy the conditions (i) and (iii) of Definition \ref{def8.4x} with $\ez_1$
and $\ez_2$ replaced by $\ez'\in(\ez, \ez_1\wedge\ez_2)$, and $\int_M \widetilde D_k(x,y)\,d\mu(y)=0$ when $k\in\nn$ and $=1$ when $k=0$.

Let $\Phi_0\in C_c^\infty(\rr_+)$ be such that $\supp \Phi_0\subset [0,2^{\bz_0/2}]$
and $\Phi_0\equiv 1$ on $[0,1]$. Define
$\Phi(\lz):= \Phi_0(\lz)-\Phi_0(2^{\bz_0/2}\lz)$ for $\lz\in\rr_+$.
For $j\in\nn$, define $\Phi_j(\cdot):= \Phi(2^{-j\bz_0/2}\cdot)$.
By (i) and (ii) of Proposition \ref{prop2.10x}, we see that $\Phi_j(L)(x,\cdot)\in \cg_0^\ez(\az_0,\gz)\subset \cg_0^\ez(\bz,\gz)$. Moreover, by
Proposition \ref{prop2.10x}(iii), we see that
$\int_M \Phi_j(\sqrt L)(x,y)\,d\mu(y)=0$ for $j\in\nn$ and $=1$ for $j=0$.
From these and the proof of \cite[Lemma~3.2,\,(3.2)]{HMY2},
it follows that an orthogonal estimate similar to \eqref{eq:8.2x} holds true,
namely, for any given $\ez_1''\in(0, \az_0\wedge\ez_1\wedge\ez_2)$,
\begin{eqnarray}\label{eq:8.4x}
|\Phi_j(\sqrt L){\widetilde D_k}(x,y)|\ls 2^{-|j-k|\ez_1''}
\frac{1}{V_{2^{-(k\wedge j)}}(x)+V_{2^{-(k\wedge j)}}(y)+V(x, y)}
\frac{1}{[1+2^{k\wedge j}\rho(x, y)]^{\ez_2}}
\end{eqnarray}
holds true
for all $j,\,k\in\zz_+$ and $x,\,y\in M$.
Further, applying \eqref{eq:8.3x}, we know that, for all $j\in\zz_+$ and $x\in M$,
\begin{eqnarray}\label{eq:8.5x}
&&|\Phi_j(\sqrt L) f(x)|\notag\\
&&\quad\ls \sum_{\tau\in I_0}\sum_{v=1}^{N_\tau^0}
\int_{Q^{0,v}_\tau}
\frac{2^{-j\ez_1''}}{V_{1}(x)+V_{1}(y)+V(x, y)}
\frac{1}{[1+\rho(x, y)]^{\ez_2}}\,d\mu(y) |D^{0,v}_{\tau,1}f| \notag \\
&&\quad\quad+\sum_{k=1}^\infty\sum_{\tau\in I_k}\sum_{v=1}^{N_\tau^k}
|Q^{k,v}_\tau|
\frac{2^{-|k-j|\ez_1''}}{V_{2^{-(k\wedge j)}}(x)+V_{2^{-(k\wedge j)}}(y^{k,v}_\tau)+V(x, y^{k,v}_\tau)}
\frac{|D_k f(y^{k,v}_\tau)|}{[1+2^{k\wedge j}\rho(x, y^{k,v}_\tau)]^{\ez_2}}.\qquad
\end{eqnarray}

Since $\ez,\, \ez_1''\in(0, \az_0\wg\ez_1\wg\ez_2)$,
$|s|<\ez$ and $\bz,\,\gz\in(0,\ez)$, we may choose
$\ez_1''$ such that $|s|<\ez_1''$, $p(s,\ez_1'')<p$
and $\bz,\,\gz\in(0,\ez_1'')$ satisfy \eqref{eq:8.1x}
with $\ez$ replaced by $\ez_1''$. On the other hand,
since $V_{1}(x)+V_{1}(y)+V(x, y)\sim V_{1}(x)+V_{1}(y^{0,v}_\tau)+V(x, y^{0,v}_\tau)$
and $1+\rho(x, y)\sim 1+\rho(x,y^{0,v}_\tau)$ for any $y\in Q^{0,v}_\tau$,
applying \cite[Lemma 5.3]{HMY2}, we see that, for all $\ell\in\zz_+$ and $x\in M$,
\begin{eqnarray}\label{eq:8.6x}
&&|2^{js}\Phi_j(\sqrt L) f(x)|\notag\\
&&\hs\ls 2^{-j(\ez_1''-s)} \Bigg[\cm\Bigg(\sum_{\tau\in I_0}\sum_{v=1}^{N_\tau^0}|D^{0,v}_{\tau,1}f|^r\chi_{Q^{0,v}_\tau}\Bigg)(x)\Bigg]^{1/r}\notag \\
&&\hs\quad+\sum_{k=1}^\infty2^{-|k-j|\ez_1''}2^{(j-k)s}2^{[(k\wedge j)-k]d(1-1/r)}\Bigg[\cm\Bigg(\sum_{\tau\in I_k}\sum_{v=1}^{N_\tau^k}2^{ksr}|D_k f(y^{k,v}_\tau)|^r\chi_{Q^{k,v}_\tau}\Bigg)(x)\Bigg]^{1/r},\qquad
\end{eqnarray}
where we chose $r\in (p(s,\ez_1''),p)$ if $p\le 1$ or $r=1$ if $p\in(1,\fz]$.
Therefore, letting
$$\sigma\in(0,\ez_1''-\max\{s,0,-s+d(1/r-1)\}),$$
by   Lemma \ref{lem6.3x} and the  boundedness of $\cm$ on $L^{p/r}(M)$,
we see that
\begin{eqnarray}\label{eq:8.7x}
\|f\|_{B_{p,q}^s(M)}&&\ls \Bigg\{\sum_{j=0}^\infty 2^{-j(\ez_1''-s-\sigma)q} \Bigg\|\Bigg[\cm\Bigg(\sum_{\tau\in I_0}\sum_{v=1}^{N_\tau^0}|D^{0,v}_{\tau,1}f|^r\chi_{Q^{0,v}_\tau}\Bigg)\Bigg]^{1/r}\Bigg
\|_{L^p(M)}^q\notag\\
&&\quad+\sum_{j=0}^\infty\sum_{k=1}^\infty2^{-|k-j|(\ez_1''-\sigma)q}
2^{(j-k)sq}2^{[(k\wedge j)-k]d(1-1/r)q}\notag\\
&&\quad\times\Bigg\|\Bigg[\cm\Bigg(\sum_{\tau\in I_k}\sum_{v=1}^{N_\tau^k}2^{ksr}|D_k f(y^{k,v}_\tau)|^r\chi_{Q^{k,v}_\tau}\Bigg)\Bigg]^{1/r}\Bigg\|_{L^p(M)}^q\Bigg\}^{1/q}
\ls\|f\|_{\mathcal B_{p,q}^s(M)},
\end{eqnarray}
where the last step follows from the frame characterization of $\mathcal B_{p,q}^s(M)$ in \cite[Theorem 7.4]{HMY2}.

 Now we show the converse part. For any $f\in  B_{p,q}^s(M)$,  we use  \eqref{DCRF} to write every $D_jf$ as
\begin{eqnarray*}
D_jf(\cdot)=  \sum_{k=0}^\infty\sum_{\tau\in I_k} \sum_{\nu=1}^{N_\tau^{k}}
|Q_{\tau}^{k,\nu}| (\Phi_k(\sqrt L)f )(\xi_\tau^{k,\nu})\,
D_j\Psi_k(\xi_\tau^{k,\nu},\cdot),
\end{eqnarray*}
where $\Phi_k$ and $\Psi_k$ are as in \eqref{DCRF}.
Notice that $D_j \Psi_k(\sqrt L)(x,y)$ has the same estimate as in \eqref{eq:8.4x}.  Thus, repeating the previous proofs of \eqref{eq:8.5x} through \eqref{eq:8.7x},
but with the roles of $D_j$ and $\Phi_j(\sqrt L)$ exchanged, and applying
Theorem \ref{thm7.5x}, we obtain
$\|f\|_{\mathcal B_{p,q}^s(M)}\ls \|f\|_{B_{p,q}^s(M)}$. This finishes the proof of (i).

To prove (ii), we choose
$\ez_1''$ such that $|s|<\ez_1''$, $p(s,\ez_1'')<\min\{p,q\}$
and $\bz,\,\gz\in(0,\ez_1'')$ satisfying \eqref{eq:8.1x}
with $\ez$ replaced by $\ez_1''$.
In this case, \eqref{eq:8.5x} and \eqref{eq:8.6x} keep valid,
but this time we choose
$r\in (p(s,\ez_1''),\min\{p,q\})$ if $\min\{p,q\} \le 1$ or $r=1$ if $\min\{p,q\}\in(1,\fz]$.
Then, by  Lemma \ref{lem6.3x}, the frame characterization of $\mathcal F_{p,q}^s(M)$ in \cite[Theorem 7.4]{HMY2}, and repeating the above proof for Besov spaces,
with the boundedness of $\cm$ on $L^{p/r}(M)$ replaced by
the Fefferman-Stein vector-valued maximal
inequality (see \cite{GLY-ms}), we see that
\begin{eqnarray*}
\|f\|_{F_{p,q}^s(M)}
&&\ls\Bigg\|\Bigg\{\sum_{j=0}^\fz
2^{-j(\ez_1''-s)q}
\Bigg[\cm\Bigg(\sum_{\tau\in I_0}\sum_{v=1}^{N_\tau^0}
|D^{0,v}_{\tau,1}f|^r\chi_{Q^{0,v}_\tau}\Bigg)\Bigg]^{q/r}
 \notag \\
&&\quad +\sum_{k=1}^\infty2^{-|k-j|(\ez_1''-\sigma)q}
2^{(j-k)sq}2^{[(k\wedge j)-k]d(1-1/r)q}\\
&&\quad\times\Bigg[\cm\Bigg
(\sum_{\tau\in I_k}\sum_{v=1}^{N_\tau^k}2^{ksr}|D_k f(y^{k,v}_\tau)
|^r\chi_{Q^{k,v}_\tau}\Bigg)\Bigg]^{q/r}
 \Bigg\}^{1/q}\Bigg\|_{L^p(M)}
\ls \|f\|_{\mathcal F_{p,q}^s(M)}.
\end{eqnarray*}
The converse of this inequality follows from an argument  similar to that used in
the proof of
$\|f\|_{\mathcal B_{p,q}^s(M)}\ls \|f\|_{B_{p,q}^s(M)}$, with the boundedness of $\mathcal{M}$ on $L^{p/r}(M)$ replaced by the Fefferman-Stein vector-valued inequality.
Thus, (ii) holds true, and the proof of Theorem \ref{thm8.7x} is then completed.
\end{proof}

\section{Appendix}\label{sec-appendix}


\hskip\parindent
The main aim of this section is to show Theorem \ref{thm-CRF}.
Using the language of Christ cubes,
we restate the Marcinkiewicz--Zygmund inequality,
whose proof was essentially given in  \cite[Proposition~4.1]{CKP},
here we re-present its proof for convenience as the constants involved is very subtle.
Recall that, in what follows, the spectral space $\Sigma_\lz^p$ is the same as in Remark \ref{rem4.6x}.

\begin{lem}\label{lem9.1x}
For any $\ez\in(0,1)$, there exists a  large number $j_\ez\in\nn$ such that, for all $j\in\zz$ and $\tau\in I_j$, the collection $\{Q_\tau^{j,\nu}:\,
\nu\in\{1,\ldots, N_{\tau}^{j}\}\,\}$, which is the set
of all Christ cubes $Q_{\tau'}^{j+j_\ez}$ contained in $Q_\tau^j$,
satisfies the following:
\begin{enumerate}
\item[\rm (i)] for all $f\in \Sigma_\lz^p(M)$ with $\lz,\,p\in[1, \infty)$, $j\ge -\frac2{\bz_0}\log_\delta \lz$,  and   all $\xi_{\tau}^{j,\nu}\in Q_{\tau}^{j,\nu}$
with $\tau\in I_j$ and $\nu\in\{1,\dots, N_\tau^{j}\}$,
\begin{equation*}
\bigg\{\sum_{\tau\in I_j}
\sum_{\nu=1}^{N_{\tau}^{j}} \int_{Q_{\tau}^{j,\nu}}
|f(x)-f(\xi_{\tau}^{j,\nu})|^p\,d\mu(x)\bigg\}^{1/p}
\le \frac{\epsilon}8\lf(\lz^{2/\bz_0}\delta^j\r)^{\alpha_0} \|f\|_{L^p(M)};
\end{equation*}
\item[\rm (ii)] for all $f\in \Sigma_\lz^\fz(M)$ with $\lz\in[1, \infty)$, $j\ge -\frac2{\bz_0}\log_\delta \lz$,  and   all $\xi_{\tau}^{j,\nu}\in Q_{\tau}^{j,\nu}$
with $\tau\in I_j$ and $\nu\in\{1,\dots, N_\tau^{j}\}$,
\begin{equation*}
\sup_{\tau\in I_j}\sup_{1\le\nu\le N_{\tau}^{j}}\sup_{x\in Q_{\tau}^{j,\nu}}
|f(x)-f(\xi_{\tau}^{j,\nu})|
\le \frac{\epsilon}8(\lz^{2/\bz_0}\delta^j)^{\alpha_0} \|f\|_{L^\infty(M)}.
\end{equation*}
\end{enumerate}
\end{lem}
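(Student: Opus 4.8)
Here is how I would prove Lemma \ref{lem9.1x} (the Marcinkiewicz--Zygmund inequality).

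\smallskip

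\emph{Plan and setup.} The idea is to realize an element $f$ of the spectral space $\Sigma_\lz^p(M)$ as $f=\eta(t^{\bz_0/2}\sqrt L)f$ for a fixed smooth cut-off $\eta$ and a suitable scale $t$, and then to convert the difference $f(x)-f(\xi)$ into a kernel difference to which the H\"older regularity of Proposition \ref{prop2.10x} applies. Fix once and for all an even function $\eta\in C^\infty(\rr_+)$ with $\supp\eta\subset[0,2]$ and $\eta\equiv1$ on $[0,1]$; since $\eta$ is constant near the origin and vanishes near $\fz$, it satisfies the conditions \eqref{eq:2.11x}. Given $f\in\Sigma_\lz^p(M)$ with $\lz\in[1,\fz)$, set $t:=\lz^{-2/\bz_0}\in(0,1]$. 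Then $\theta(\cdot):=\eta(\cdot/\lz)\in C_c^\infty(\rr_+)$ equals $1$ on $[0,\lz]$, so by the definition of $\Sigma_\lz^p(M)$ (see Remark \ref{rem4.6x}) we have $f=\theta(\sqrt L)f=\eta(t^{\bz_0/2}\sqrt L)f$. Let $K(\cdot,\cdot)$ denote the kernel of $\eta(t^{\bz_0/2}\sqrt L)$; since $\eta$ is real-valued and $L$ is self-adjoint, $K$ is symmetric, and Proposition \ref{prop2.10x} (with $m=0$ and dilation parameter equal to $t\in(0,1]$) gives, for any fixed $\sz>d$ and all $x,\,y,\,y'\in M$ with $\rho(y,y')\le t$,
\begin{equation}\label{eq-mzplan1}
|K(x,y)|\ls D_{t,\sz}(x,y),\qquad
|K(x,y)-K(x,y')|\ls\lf[\frac{\rho(y,y')}{t}\r]^{\az_0}D_{t,\sz}(x,y),
\end{equation}
with implicit constants depending only on the fixed structural parameters and on $\sz$, in particular independent of $t$, $\lz$ and $p$.

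\smallskip

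\emph{The pointwise estimate.} Next I would fix $j_\ez$. For $j\ge-\frac2{\bz_0}\log_\dz\lz$ one has $\dz^j\le\lz^{-2/\bz_0}=t$, and for $x,\xi$ in a common subcube $Q_\tau^{j,\nu}$ (a Christ cube of generation $j+j_\ez$, by Lemma \ref{lem3.1x}(iv)) one has $\rho(x,\xi)\le C_\natural\dz^{j+j_\ez}=C_\natural\dz^{j_\ez}\dz^j\le C_\natural\dz^{j_\ez}t$, which is $\le t$ once $C_\natural\dz^{j_\ez}\le1$. Writing $f(x)-f(\xi)=\int_M[K(x,y)-K(\xi,y)]f(y)\,d\mu(y)$, using the symmetry of $K$ together with the second estimate in \eqref{eq-mzplan1} (applied with the two arguments interchanged, the perturbed pair being $x,\xi$), and noting $\dz^j/t=\lz^{2/\bz_0}\dz^j$, I would obtain
\begin{equation}\label{eq-mzplan2}
|f(x)-f(\xi)|\ls\dz^{j_\ez\az_0}\lf(\lz^{2/\bz_0}\dz^j\r)^{\az_0}\int_M D_{t,\sz}(x,y)|f(y)|\,d\mu(y),
\end{equation}
with an implicit constant depending only on the fixed parameters, on $\sz$, and on $C_\natural^{\az_0}$.

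\smallskip

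\emph{Conclusion.} It remains to absorb the integral operator with kernel $D_{t,\sz}$. For (ii), I would bound $\int_M D_{t,\sz}(x,y)|f(y)|\,d\mu(y)\le\|f\|_{L^\fz(M)}\int_M D_{t,\sz}(x,y)\,d\mu(y)\ls\|f\|_{L^\fz(M)}$ by Lemma \ref{lem2.1x}(i) (with $\sz>d$), take the supremum over $x$ and over the subcubes in \eqref{eq-mzplan2}, and then choose $j_\ez$ so large that the resulting absolute constant times $\dz^{j_\ez\az_0}$ is at most $\ez/8$. For (i), since the subcubes $\{Q_\tau^{j,\nu}:\tau\in I_j,\ 1\le\nu\le N_\tau^j\}$ partition $M$ up to a set of $\mu$-measure zero (by Lemma \ref{lem3.1x}), the left-hand side raised to the power $p$ equals $\int_M|f(x)-f(\xi(x))|^p\,d\mu(x)$ with $\xi(x):=\xi_\tau^{j,\nu}$ on $Q_\tau^{j,\nu}$; inserting \eqref{eq-mzplan2} and using the $L^p(M)$-boundedness, uniform in $p\in[1,\fz]$, of the integral operator with kernel $D_{t,\sz}$ (Schur's test: by Lemma \ref{lem2.1x}(i), and by symmetry of $D_{t,\sz}$, both its rows and columns integrate to a constant independent of $t$) yields
$$\lf\{\sum_{\tau\in I_j}\sum_{\nu=1}^{N_\tau^j}\int_{Q_\tau^{j,\nu}}|f(x)-f(\xi_\tau^{j,\nu})|^p\,d\mu(x)\r\}^{1/p}\ls\dz^{j_\ez\az_0}\lf(\lz^{2/\bz_0}\dz^j\r)^{\az_0}\|f\|_{L^p(M)},$$
and one finishes as before by taking $j_\ez$ large enough that the implicit constant times $\dz^{j_\ez\az_0}$ is at most $\ez/8$. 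Since all implicit constants in \eqref{eq-mzplan1}, \eqref{eq-mzplan2} and the Schur bound depend only on the fixed parameters and on $\sz$ --- not on $j,\tau,\lz$ or $p$ --- one and the same $j_\ez=j_\ez(\ez)$ works simultaneously for (i) and (ii). The main (mild) obstacle is precisely this uniform bookkeeping: one must check that rescaling to the scale $t=\lz^{-2/\bz_0}$ stays in the range $t\in(0,1]$ where Proposition \ref{prop2.10x} applies with $t$-independent constants, and that the gain $\dz^{j_\ez\az_0}$ is cleanly factored out of the prescribed quantity $(\lz^{2/\bz_0}\dz^j)^{\az_0}$.
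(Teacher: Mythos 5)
Your proposal is correct and follows essentially the same route as the paper: reproduce $f$ by a smooth spectral cut-off at scale $\lz^{-2/\bz_0}$, invoke the H\"older continuity of its kernel from Proposition \ref{prop2.10x}(ii) together with symmetry to get the pointwise bound with the gain $\dz^{j_\ez\az_0}$, and then absorb the kernel $D_{t,\sz}$ via its uniform row/column integrability (your Schur-test step is the same as the paper's H\"older-plus-Fubini argument), finally fixing $j_\ez$ from the two conditions $C_\natural\dz^{j_\ez}\le1$ and smallness of the resulting constant. No gaps to report.
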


\begin{proof}
Let $\phi\in C_c^\infty(\rr_+)$ be such that $\supp\phi\subset [0,2]$,
$0\le\phi\le1$ and $\phi\equiv1$ on $[0,1]$.
For $\sz=d+1$,
by Proposition \ref{prop2.10x}(ii) and the self-adjoint property of $L$,
 there exists a positive constant $C_{d,\phi}\,$ such that, for all $x,\,x',\,y\in M$ satisfying  $\rho(x,x')\le \lz^{-2/\bz_0}$,
\begin{equation*}
|\phi(\lz^{-1} \sqrt L)(x, y)-\phi(\lz^{-1} \sqrt L)(x', y)|
\le C_{d,\phi}\,  [\lz^{2/\bz_0}\rho(x,x')]^{\alpha_0} D_{\lz^{-2/\bz_0}, d+1}(x,y).
\end{equation*}
Also, $\phi(\lz^{-1}\sqrt L)f=f$ for all $f\in \Sigma_\lz^p(M)$.
If we choose $j_\ez\in\nn$ such that
\begin{equation}\label{eq:9.1x}
C_\natural \dz^{j_\ez}\le 1,
\end{equation}
then, for all $x\in Q_\tau^{j,\nu}$, we have $\rho(x,\xi_\tau^{j,\nu})
\le \diam Q_\tau^{j,\nu}\le C_\natural \dz^{j+j_\ez}
 \le \dz^j\le \lz^{-2/\bz_0}$, and hence
\begin{eqnarray*}
|f(x)-f(\xi_{\tau}^{j,\nu})|
&&=\lf|\int_M [\phi(\lz^{-1} \sqrt L)(x, y)
-\phi(\lz^{-1} \sqrt L)(\xi_\tau^{j,\nu}, y)]f(y)\,d\mu(y)\r|\\
&&\le   C_{d,\phi}\, (C_\natural \lz^{2/\bz_0}\dz^{j+j_\ez})^{{\alpha_0} }
 \int_M  D_{\lz^{-2/\bz_0}, d+1}(x,y)
|f(y)|\,d\mu(y).
\end{eqnarray*}
Denote by ${\rm J}$ the left-hand side of the inequality in (i).
By H\"older's inequality and the third inequality in Lemma \ref{lem2.1x}(i),
together with Fubini's theorem,
we see that
\begin{eqnarray*}
{\rm J}
&&\le  C_{d,\phi}\, (C_\natural \lz^{2/\bz_0}\dz^{j+j_\ez})^{{\alpha_0} }
\lf\{\int_M
\left|
\int_M   D_{\lz^{-2/\bz_0}, d+1}(x,y)
|f(y)|\,d\mu(y)\right|^p\,d\mu(x)\r\}^{1/p}\\
&&\le 2K8^d C_{d,\phi}\, (C_\natural \lz^{2/\bz_0}\dz^{j+j_\ez})^{{\alpha_0} }
\|f\|_{L^p(M)},
\end{eqnarray*}
which is controlled by $\frac{\epsilon}8(\lz^{2/\bz_0}\delta^j)^{\alpha_0} \|f\|_{L^p(M)}$, provided that
\begin{equation}\label{eq:9.2x}
2K8^d C_{d,\phi}\, (C_\natural \dz^{j_\ez})^{{\alpha_0} }
\le \frac{\epsilon}8.
\end{equation}
This proves (i) by choosing $j_\ez$ satisfying \eqref{eq:9.1x} and \eqref{eq:9.2x}.
 A modification of the above proof also shows (ii).
Thus, we complete the proof of Lemma \ref{lem9.1x}.
\end{proof}

Indeed, the definition of the subcubes $Q_\tau^{j,\nu}$ in
Lemma \ref{lem9.1x} depends on $j_\ez$, but
below we will not explicitly indicate this for simplicity.

Applying Lemma \ref{lem9.1x} and using the choice of $j_\ez$
(see  \eqref{eq:9.1x} and \eqref{eq:9.2x}), we argue as in the proof of \cite[Theorem~4.2]{CKP}
to obtain the following sampling theorem, the details  being omitted.

\begin{lem}\label{lem9.2x}
Let all the notation be as in Lemma \ref{lem9.1x}.
Then,
for all $f\in \Sigma_\lz^p(M)$ with $\lz\in[1,\fz)$ and $p\in[1, 2]$, for all integer
$j\ge - \frac2{\bz_0}\log_\delta \lz$,  and all $\xi_{\tau}^{j,\nu}\in Q_{\tau}^{j,\nu}$
with $\tau\in I_j$ and $\nu\in\{1,\dots, N_\tau^{j}\}$,
\begin{equation*}
(1-\epsilon)\|f\|_{L^p(M)}
\le \bigg\{\sum_{\tau\in I_j}\sum_{\nu=1}^{N_{\tau}^{j}} |Q_{\tau}^{j,\nu}|
|f(\xi_{\tau}^{j,\nu})|^p\bigg\}^{1/p}
\le (1+\epsilon)\|f\|_{L^p(M)}.
\end{equation*}
\end{lem}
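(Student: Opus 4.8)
\textbf{Proof plan for Lemma \ref{lem9.2x}.} The plan is to derive the two-sided sampling inequality from the Marcinkiewicz--Zygmund inequality in Lemma \ref{lem9.1x}(i) together with the boundedness of the Hardy-Littlewood maximal operator, closely following the scheme of \cite[Theorem~4.2]{CKP}. Fix $f\in\Sigma_\lz^p(M)$ with $p\in[1,2]$ and an integer $j\ge-\frac2{\bz_0}\log_\dz\lz$, so that $\lz^{2/\bz_0}\dz^j\le1$. The quantity
$$\bigg\{\sum_{\tau\in I_j}\sum_{\nu=1}^{N_\tau^j}|Q_\tau^{j,\nu}|\,|f(\xi_\tau^{j,\nu})|^p\bigg\}^{1/p}$$
should be compared with $\|f\|_{L^p(M)}$ by writing, for each subcube, $|Q_\tau^{j,\nu}|^{1/p}|f(\xi_\tau^{j,\nu})|$ in two ways: first, $|f(\xi_\tau^{j,\nu})|^p|Q_\tau^{j,\nu}|=\int_{Q_\tau^{j,\nu}}|f(\xi_\tau^{j,\nu})|^p\,d\mu(x)$, and then splitting $f(\xi_\tau^{j,\nu})=f(x)+[f(\xi_\tau^{j,\nu})-f(x)]$ for $x\in Q_\tau^{j,\nu}$.

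First I would prove the upper bound. Summing the elementary inequality $|f(\xi_\tau^{j,\nu})|^p\le 2^{p-1}(|f(x)|^p+|f(x)-f(\xi_\tau^{j,\nu})|^p)$ over $x\in Q_\tau^{j,\nu}$, over $\nu$, and over $\tau$, and using that the $Q_\tau^{j,\nu}$ are pairwise disjoint with union of full measure in $M$ (Lemma \ref{lem3.1x}(i)), gives
$$\sum_{\tau\in I_j}\sum_{\nu=1}^{N_\tau^j}|Q_\tau^{j,\nu}|\,|f(\xi_\tau^{j,\nu})|^p\le 2^{p-1}\|f\|_{L^p(M)}^p+2^{p-1}\sum_{\tau\in I_j}\sum_{\nu=1}^{N_\tau^j}\int_{Q_\tau^{j,\nu}}|f(x)-f(\xi_\tau^{j,\nu})|^p\,d\mu(x).$$
By Lemma \ref{lem9.1x}(i) the second term is at most $2^{p-1}(\epsilon/8)^p(\lz^{2/\bz_0}\dz^j)^{p\az_0}\|f\|_{L^p(M)}^p\le 2^{p-1}(\epsilon/8)^p\|f\|_{L^p(M)}^p$; taking $p$-th roots and using $2^{(p-1)/p}\le2$ together with subadditivity of $t\mapsto t^{1/p}$ yields a bound of the form $(1+C\epsilon)\|f\|_{L^p(M)}$ for an absolute constant $C$. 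Here one must be slightly careful with the constants: the cleanest route is to apply the Marcinkiewicz--Zygmund estimate with $\epsilon$ replaced by a suitable absolute multiple of $\epsilon$ from the start (this is harmless, since $j_\epsilon$ in Lemma \ref{lem9.1x} was chosen after $\epsilon$), so that the final constant is exactly $1+\epsilon$; alternatively one absorbs the combinatorial factors and relabels. For the lower bound, the reverse triangle inequality in $\ell^p$ of the sequence space gives
$$\|f\|_{L^p(M)}=\bigg\{\sum_{\tau,\nu}\int_{Q_\tau^{j,\nu}}|f(x)|^p\,d\mu(x)\bigg\}^{1/p}\le\bigg\{\sum_{\tau,\nu}|Q_\tau^{j,\nu}|\,|f(\xi_\tau^{j,\nu})|^p\bigg\}^{1/p}+\bigg\{\sum_{\tau,\nu}\int_{Q_\tau^{j,\nu}}|f(x)-f(\xi_\tau^{j,\nu})|^p\,d\mu(x)\bigg\}^{1/p},$$
and the last sum is again $\le(\epsilon/8)\|f\|_{L^p(M)}$ by Lemma \ref{lem9.1x}(i); rearranging gives $(1-\epsilon)\|f\|_{L^p(M)}\le\{\sum|Q_\tau^{j,\nu}||f(\xi_\tau^{j,\nu})|^p\}^{1/p}$ after the same constant bookkeeping.

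The main obstacle is not any single estimate but the constant tracking: Lemma \ref{lem9.1x} produces the factor $\epsilon/8$, whereas passing through the $\ell^p$ triangle inequality and the splitting $|a+b|^p\le2^{p-1}(|a|^p+|b|^p)$ introduces $p$-dependent factors $2^{(p-1)/p}\in[1,2]$, and one wants to land on exactly $1\pm\epsilon$. The resolution, as in \cite[Theorem~4.2]{CKP}, is to note that the restriction $p\in[1,2]$ keeps all these combinatorial constants bounded by an absolute constant, so it suffices to run Lemma \ref{lem9.1x} with $c\epsilon$ in place of $\epsilon$ for an appropriate small absolute $c$; since $j_\epsilon$ (hence the choice of subcubes $Q_\tau^{j,\nu}$) is allowed to depend on $\epsilon$, this costs nothing. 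One should also record explicitly that $f\in\Sigma_\lz^p(M)$ for $p\in[1,2]$ is bounded and continuous (so point evaluations $f(\xi_\tau^{j,\nu})$ make sense), which follows from Corollary \ref{cor2.9x} applied to $\phi(\lz^{-1}\sqrt L)f=f$, and that $f\in L^p(M)$, so that all the sums above are finite and the manipulations are legitimate. With these points in place the argument is routine; I would present it in the order: (1) reduce to the Marcinkiewicz--Zygmund estimate with a rescaled $\epsilon$, (2) prove the upper bound, (3) prove the lower bound, (4) collect constants.
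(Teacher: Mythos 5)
Your overall plan coincides with what the paper intends (it simply invokes Lemma \ref{lem9.1x} and the argument of \cite[Theorem~4.2]{CKP}, omitting details), and your lower-bound step is correct as written. The upper bound, however, is genuinely flawed: after the splitting $|f(\xi_\tau^{j,\nu})|^p\le 2^{p-1}\big(|f(x)|^p+|f(x)-f(\xi_\tau^{j,\nu})|^p\big)$ and taking $p$-th roots you obtain at best $2^{(p-1)/p}\big(1+(\epsilon/8)^p\big)^{1/p}\|f\|_{L^p(M)}$, and for $p\in(1,2]$ the factor $2^{(p-1)/p}>1$ (it equals $\sqrt2$ at $p=2$) multiplies the \emph{main} term $\|f\|_{L^p(M)}$, so no bound of the form $(1+C\epsilon)\|f\|_{L^p(M)}$ follows, let alone $(1+\epsilon)\|f\|_{L^p(M)}$. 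Your proposed remedy --- rerunning Lemma \ref{lem9.1x} with $c\epsilon$ in place of $\epsilon$ --- cannot repair this, because shrinking $\epsilon$ only shrinks the error term; it does nothing to the constant $2^{(p-1)/p}$ sitting in front of $\|f\|_{L^p(M)}^p$.

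The fix is simply to use for the upper bound the same device you already used for the lower bound. Let $g:=\sum_{\tau\in I_j}\sum_{\nu=1}^{N_\tau^j} f(\xi_\tau^{j,\nu})\chi_{Q_\tau^{j,\nu}}$; since the subcubes are pairwise disjoint and cover $M$ up to a set of measure zero, the middle quantity in the lemma equals $\|g\|_{L^p(M)}$, and Minkowski's inequality ($p\ge1$) gives $\big|\,\|g\|_{L^p(M)}-\|f\|_{L^p(M)}\,\big|\le\|g-f\|_{L^p(M)}\le \frac{\epsilon}{8}\big(\lambda^{2/\beta_0}\delta^j\big)^{\alpha_0}\|f\|_{L^p(M)}\le\frac{\epsilon}{8}\|f\|_{L^p(M)}$ by Lemma \ref{lem9.1x}(i) and the hypothesis $j\ge-\frac{2}{\beta_0}\log_\delta\lambda$. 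This yields both inequalities at once with constants $1\pm\epsilon/8$, with no $2^{p-1}$ splitting and no rescaling of $\epsilon$; this is the argument of \cite[Theorem~4.2]{CKP} that the paper cites. Your remark that elements of $\Sigma_\lambda^p(M)$ are continuous and bounded (via the reproducing identity $\phi(\lambda^{-1}\sqrt L)f=f$ and Corollary \ref{cor2.9x}), so that the point values $f(\xi_\tau^{j,\nu})$ and the finiteness of the sums are legitimate, is a sensible addition; the Hardy--Littlewood maximal operator announced in your opening sentence is never actually needed.
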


The following cubature formula (see \cite[Theorem~4.4]{CKP})
follows directly from Lemma \ref{lem9.1x} and \cite[Proposition~4.6]{CKP}, the details being omitted.

\begin{cor}\label{cor9.3x}
Let all the notation be as in Lemma \ref{lem9.1x}.
For $j\in\zz$, $\tau\in I_j$ and $\nu\in\{1,\dots, N_\tau^{j}\}$, fix $\xi_{\tau}^{j,\nu}\in Q_{\tau}^{j,\nu}$.
Then there exists a sequence $\{\varepsilon_{\tau}^{j,\nu}:\,{\tau\in I_j,\,1\le\nu\le N_{\tau}^{j}}\}$ of positive constants satisfying
$\frac23\le\varepsilon_\tau^{j,\nu}\le 2$
such that for all
$f\in \Sigma_\lz^1(M)$ with $\lz\in[1,\infty)$
and
$j\ge  -\frac2{\bz_0}\log_\delta \lz$,
\begin{equation*}
\int_M f(x)\,d\mu(x)=
\sum_{\tau\in I_j}\sum_{\nu=1}^{N_{\tau}^{j}}
\varepsilon_\tau^{j,\nu} |Q_{\tau}^{j,\nu}|
f(\xi_{\tau}^{j,\nu}).
\end{equation*}
\end{cor}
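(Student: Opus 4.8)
The plan is to obtain Corollary \ref{cor9.3x} directly from the abstract positive cubature construction \cite[Proposition~4.6]{CKP}, whose sole hypothesis is a Marcinkiewicz--Zygmund type inequality, precisely the one furnished by Lemma \ref{lem9.1x}. To see concretely how this works, fix $j\in\zz$ and set $\lz_\ast:=\dz^{-j\bz_0/2}$; since the condition $j\ge-\frac2{\bz_0}\log_\dz\lz$ is equivalent to $\lz\le\lz_\ast$, and since $\Sigma_\lz^1(M)\subset\Sigma_{\lz_\ast}^1(M)$ whenever $\lz\le\lz_\ast$ (a function spectrally supported in $[0,\lz]$ is spectrally supported in $[0,\lz_\ast]$), it suffices to construct positive weights $\{\varepsilon_\tau^{j,\nu}\}_{\tau\in I_j,\,1\le\nu\le N_\tau^j}$, independent of $\lz$, for which the stated identity holds for every $f\in\Sigma_{\lz_\ast}^1(M)$. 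For such $f$ the point evaluations $f\mapsto f(\xi_\tau^{j,\nu})$ are well defined and continuous: writing $f=\phi(\lz_\ast^{-1}\sqrt L)f$ for a cut-off $\phi$ as in the proof of Lemma \ref{lem9.1x}, Proposition \ref{prop2.10x}(i) bounds the kernel $\phi(\lz_\ast^{-1}\sqrt L)(x,\cdot)$ uniformly and hence $|f(x)|\ls\|f\|_{L^1(M)}$.

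The core step is a single correction of the weights of the naive quadrature. Since $M=\bigcup_{\tau,\nu}Q_\tau^{j,\nu}$ up to a null set, for $f\in\Sigma_{\lz_\ast}^1(M)$ one has $\int_M f\,d\mu=\sum_{\tau,\nu}|Q_\tau^{j,\nu}|f(\xi_\tau^{j,\nu})+Rf$ with $Rf:=\sum_{\tau,\nu}\int_{Q_\tau^{j,\nu}}[f(x)-f(\xi_\tau^{j,\nu})]\,d\mu(x)$. Applying Lemma \ref{lem9.1x}(i) with $p=1$, and using $(\lz_\ast^{2/\bz_0}\dz^j)^{\az_0}=1$, gives $|Rf|\le\frac{\ez}8\|f\|_{L^1(M)}$ uniformly in $f$. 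On the other hand, the sampling estimate of Lemma \ref{lem9.2x} with $p=1$ (itself a consequence of Lemma \ref{lem9.1x}) shows that $f\mapsto(|Q_\tau^{j,\nu}|f(\xi_\tau^{j,\nu}))_{\tau,\nu}$ embeds $\Sigma_{\lz_\ast}^1(M)$ isomorphically onto a subspace of $\ell^1$, with $\|f\|_{L^1(M)}\le(1-\ez)^{-1}\sum_{\tau,\nu}|Q_\tau^{j,\nu}|\,|f(\xi_\tau^{j,\nu})|$. Hence the functional $f\mapsto Rf$, transported to that subspace of $\ell^1$, has norm at most $\frac{\ez}{8(1-\ez)}$; extending it to all of $\ell^1$ by the Hahn--Banach theorem yields a bounded sequence $\{b_\tau^{j,\nu}\}_{\tau,\nu}$ with $\sup_{\tau,\nu}|b_\tau^{j,\nu}|\le\frac{\ez}{8(1-\ez)}$ and $Rf=\sum_{\tau,\nu}b_\tau^{j,\nu}|Q_\tau^{j,\nu}|f(\xi_\tau^{j,\nu})$ for all $f\in\Sigma_{\lz_\ast}^1(M)$. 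Setting $\varepsilon_\tau^{j,\nu}:=1+b_\tau^{j,\nu}$ then produces the cubature identity; and since Lemma \ref{lem9.1x} is available for every $\ez\in(0,1)$ we may assume $\ez\le\tfrac12$, whence $\varepsilon_\tau^{j,\nu}\in[\tfrac23,\tfrac43]\subset[\tfrac23,2]$ and in particular $\varepsilon_\tau^{j,\nu}>0$.

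I do not expect a genuine obstacle: the whole content sits in the \emph{uniformity} of the error bound $|Rf|\le\frac{\ez}8\|f\|_{L^1(M)}$, which is exactly the Marcinkiewicz--Zygmund phenomenon already isolated in Lemma \ref{lem9.1x} via the H\"older regularity of $\phi(\lz_\ast^{-1}\sqrt L)$ and the smallness engineered into the choice of $j_\ez$. The remaining points demand only routine bookkeeping, the well-definedness and continuity of point evaluations on $\Sigma_{\lz_\ast}^1(M)$, the injectivity of the sampling map, the Hahn--Banach extension on the non-reflexive space $\ell^1$, and the fact that weights built at the top frequency $\lz_\ast$ automatically serve every admissible $\lz$ by virtue of $\Sigma_\lz^1(M)\subset\Sigma_{\lz_\ast}^1(M)$. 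Alternatively, all of this may simply be quoted from \cite[Proposition~4.6]{CKP}, which is stated in exactly the generality required here.
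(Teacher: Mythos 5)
Your proposal is correct and takes essentially the same route as the paper, which disposes of this corollary in one line by invoking Lemma \ref{lem9.1x} together with \cite[Proposition~4.6]{CKP}; your reduction to the top frequency $\lz_\ast=\dz^{-j\bz_0/2}$ and the Hahn--Banach perturbation of the naive weights is precisely the content of that cited proposition. Two cosmetic remarks: to guarantee genuinely positive real weights you should run the Hahn--Banach step over real scalars (or replace $b_\tau^{j,\nu}$ by its real part, which is legitimate since $\int_M f\,d\mu$ and the samples are real for real-valued $f$ and $\Sigma_{\lz_\ast}^1(M)$ is stable under taking real parts), and using the lower sampling bound $(1-\ez/8)\|f\|_{L^1(M)}\le\sum_{\tau,\nu}|Q_{\tau}^{j,\nu}|\,|f(\xi_{\tau}^{j,\nu})|$ that follows directly from Lemma \ref{lem9.1x} (instead of the factor $1-\ez$ from Lemma \ref{lem9.2x}) gives $|b_\tau^{j,\nu}|\le 1/7$ and hence $\varepsilon_\tau^{j,\nu}\in[\frac23,2]$ for every $\ez\in(0,1)$, so the extra assumption $\ez\le\frac12$ is not needed.
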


For any $\delta,\, \gz\in(0,\infty)$ and $\beta\in(0,1)$, let
\begin{equation*}
E_{\delta}^{ \gz,\beta}(x,y)
:= \frac{1}
{\sqrt{|B(x, \delta)|\, |B(y,\delta)|} }\,\exp\lf\{-\gz\left[\frac{\rho(x,y)}{\delta}\right]^\beta\r\},\qquad\, x,\,y\in M.
\end{equation*}
Obviously, for any $\sz\in(0,\infty)$, there exists a positive constant $C_{\gz,\bz,\sigma}$, depending only on $\gz$, $\bz$ and $\sigma$,
such that
$$
E_{\delta}^{ \gz,\beta}(x,y) \le C_{\gz,\bz,\sigma}  D_{\delta, \sigma}(x,y)
$$
for all $\dz\in(0,\infty)$ and $x,\,y\in M$.

\begin{lem}\label{lem9.4x}
For $j\in\zz$, let $Q_\tau^{j,\nu}$ be the subcubes associated to some parameter $j_\ez$ as in Lemma \ref{lem9.1x}.
Then, for any given $\gz,\ \bz\in(0,\infty)$,
there exists a positive constant  $C_{\gz,\bz}^{\diamond}$ such that, for all $j\in\zz$,   $\xi_\tau^{j,\nu}\in Q_\tau^{j,\nu}$ with $\tau\in I_j$ and $1\le \nu\le N_\tau^j$,
and all $x,\, y\in M$,
\begin{equation}\label{eq:9.3xx}
\sum_{\tau\in I_j}\sum_{\nu=1}^{N_{\tau}^{j}}
 |Q_{\tau}^{j,\nu}|
 E_{\delta^j}^{\gz,\beta}(x, \xi_{\tau}^{j,\nu})
 E_{\delta^j}^{\gz,\beta}(\xi_{\tau}^{j,\nu}, y)
 \le C_{\gz,\bz}^\diamond E_{\delta^j}^{\gz,\beta}(x,y)
\end{equation}
and
\begin{equation}\label{eq:9.4xx}
\int_M
 E_{\delta^j}^{\gz,\beta}(x, z)
 E_{\delta^j}^{\gz,\beta}(z, y)\,d\mu(z)
 \le C_{\gz,\bz}^\diamond E_{\delta^j}^{\gz,\beta}(x,y),
\end{equation}
where $\dz\in(0,1)$ is the same as in Lemma \ref{lem3.1x} and $C_{\gz,\bz}^\diamond $ is independent of $\ez$ and $j_\ez$.
\end{lem}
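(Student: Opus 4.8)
The plan is to reduce both inequalities \eqref{eq:9.3xx} and \eqref{eq:9.4xx} to a single pointwise ``semigroup-type'' estimate for the kernels $E_{\dz^j}^{\gz,\bz}$ and then perform the summation, respectively integration, by means of the elementary estimates in Lemma \ref{lem2.1x}(i). Throughout one works with $\bz\in(0,1)$, which is the only range in which $E_\dz^{\gz,\bz}$ is defined and, by Proposition \ref{prop2.12x}, the only one needed.

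The crucial elementary input is a quantitative refinement of the subadditivity of $t\mapsto t^\bz$: for $\bz\in(0,1)$ there is a constant $c_\bz\in(0,1)$ such that $a^\bz+b^\bz\ge(a+b)^\bz+c_\bz\,(a\wedge b)^\bz$ for all $a,b\in[0,\fz)$. First I would prove this by rescaling to the one-variable function $g(t):=t^\bz+1-(1+t)^\bz$ on $[0,1]$: one has $g(0)=0$, $g$ is strictly increasing on $(0,1]$ (since $g'(t)=\bz[t^{\bz-1}-(1+t)^{\bz-1}]>0$ there), and the ratio $g(t)/t^\bz$ extends continuously and strictly positively to $[0,1]$ (it tends to $1$ as $t\to0^+$ by Taylor-expanding $(1+t)^\bz$), hence has a positive minimum $c_\bz$; then $b^\bz g(a/b)\ge c_\bz a^\bz$ for $a\le b$ gives the claim. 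Combining this with the triangle inequality $\rho(x,y)\le\rho(x,z)+\rho(z,y)$ and the identity
\begin{equation*}
\frac{1}{\sqrt{|B(x,\dz^j)|\,|B(z,\dz^j)|}}\cdot\frac{1}{\sqrt{|B(z,\dz^j)|\,|B(y,\dz^j)|}}=\frac{1}{|B(z,\dz^j)|}\cdot\frac{1}{\sqrt{|B(x,\dz^j)|\,|B(y,\dz^j)|}},
\end{equation*}
one obtains, for all $x,y,z\in M$,
\begin{equation*}
E_{\dz^j}^{\gz,\bz}(x,z)\,E_{\dz^j}^{\gz,\bz}(z,y)\le E_{\dz^j}^{\gz,\bz}(x,y)\,\frac{1}{|B(z,\dz^j)|}\exp\lf\{-\gz c_\bz\lf[\frac{\rho(x,z)\wedge\rho(z,y)}{\dz^j}\r]^\bz\r\}.
\end{equation*}

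Granting this, \eqref{eq:9.4xx} follows by integrating in $z$: bound $\exp(-\gz c_\bz[(\rho(x,z)\wedge\rho(z,y))/\dz^j]^\bz)$ by $\exp(-\gz c_\bz[\rho(x,z)/\dz^j]^\bz)+\exp(-\gz c_\bz[\rho(z,y)/\dz^j]^\bz)$, use the routine inequality $\exp(-\gz c_\bz w^\bz)\le C_{\gz,\bz,\sigma}(1+w)^{-\sigma}$ with $\sigma:=d+1>d$, and invoke the second estimate of Lemma \ref{lem2.1x}(i), which gives $\int_M|B(z,\dz^j)|^{-1}[1+\dz^{-j}\rho(x,z)]^{-(d+1)}\,d\mu(z)\le K2^{d+1}$ (and the same for the $\rho(z,y)$ term). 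This produces $C_{\gz,\bz}^\diamond$ depending only on $\gz,\bz,K,d$. For \eqref{eq:9.3xx} I would run the same argument with the integral replaced by the sum over the $Q_\tau^{j,\nu}$. The point is that, by the choice of $j_\ez$ (inequality \eqref{eq:9.1x}), each $Q_\tau^{j,\nu}$ has $\diam Q_\tau^{j,\nu}\le C_\natural\dz^{j+j_\ez}\le\dz^j$; hence for every $z\in Q_\tau^{j,\nu}$ one has $\rho(z,\xi_\tau^{j,\nu})\le\dz^j$, so $|B(\xi_\tau^{j,\nu},\dz^j)|\sim|B(z,\dz^j)|$ by \eqref{doubling1}, and $|\rho(x,\xi_\tau^{j,\nu})^\bz-\rho(x,z)^\bz|\le\rho(z,\xi_\tau^{j,\nu})^\bz\le\dz^{j\bz}$ (using $|a^\bz-b^\bz|\le|a-b|^\bz$ for $\bz\in(0,1)$), and likewise with $y$ in place of $x$; therefore the $\min$ over $\{\rho(x,\cdot),\rho(\cdot,y)\}$ is comparable as well. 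Thus the summand at $\xi_\tau^{j,\nu}$ is, up to a factor depending only on $\gz,\bz,K$, at most $|Q_\tau^{j,\nu}|^{-1}$ times the integral over $Q_\tau^{j,\nu}$ of the corresponding integrand; summing and using that the $Q_\tau^{j,\nu}$ are pairwise disjoint and cover $M$ up to a null set (Lemma \ref{lem3.1x}(i)) reduces \eqref{eq:9.3xx} to \eqref{eq:9.4xx}, with the same dependence of $C_{\gz,\bz}^\diamond$ and, in particular, no dependence on $\ez$ or $j_\ez$.

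The main obstacle is precisely the refined subadditivity and, more to the point, the verification that the retained factor $\exp(-\gz c_\bz[(\rho(x,z)\wedge\rho(z,y))/\dz^j]^\bz)$ suffices to make the $z$-summation/integration converge \emph{while leaving the full constant} $\gz$ \emph{on} $\rho(x,y)$: a naive splitting of the exponent $\gz[\rho(x,z)/\dz^j]^\bz+\gz[\rho(z,y)/\dz^j]^\bz$ would cost a factor in $\gz$ (replacing $\gz$ by some $\gz'<\gz$ in the final kernel), and it is exactly the slack in $(a+b)^\bz\le a^\bz+b^\bz$ — which is genuine but non-uniform, and hence must be quantified as $c_\bz(a\wedge b)^\bz$ above — that is responsible for the sharp form asserted in the lemma. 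Everything else is bookkeeping with the doubling property and Lemma \ref{lem2.1x}(i).
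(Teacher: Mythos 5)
Your proposal is correct, and it takes a genuinely different route from the paper. The paper proves the discrete bound \eqref{eq:9.3xx} first, by reducing it to a weighted sum of exponentials and citing \cite[Lemma~3.10]{KP} where that estimate is ``implicitly proved'', and then obtains the integral bound \eqref{eq:9.4xx} from \eqref{eq:9.3xx} by observing that, for $z\in Q_\tau^{j,\nu}$, one has $E_{\delta^j}^{\gamma,\beta}(x,z)\sim E_{\delta^j}^{\gamma,\beta}(x,\xi_\tau^{j,\nu})$ and $E_{\delta^j}^{\gamma,\beta}(z,y)\sim E_{\delta^j}^{\gamma,\beta}(\xi_\tau^{j,\nu},y)$. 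You run the logic in the opposite direction: you prove \eqref{eq:9.4xx} directly, via the quantified strict subadditivity $a^\beta+b^\beta\ge(a+b)^\beta+c_\beta(a\wedge b)^\beta$ (your compactness argument for $c_\beta$ is sound), the volume identity, and the second estimate in Lemma \ref{lem2.1x}(i); you then deduce \eqref{eq:9.3xx} from \eqref{eq:9.4xx} using exactly the same within-cube comparability, which is legitimate since \eqref{eq:9.1x} forces $\diam Q_\tau^{j,\nu}\le\delta^j$, the generation-$(j+j_\ez)$ cubes are disjoint and cover $M$ up to a null set, and the comparability constants depend only on $K$, $\gamma$, $\beta$, hence not on $\ez$ or $j_\ez$. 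What your route buys is self-containedness (no appeal to the external lemma of \cite{KP}) and an explicit identification of the delicate point: the retained factor $\exp\{-\gamma c_\beta[(\rho(x,z)\wedge\rho(z,y))/\delta^j]^\beta\}$ is exactly what lets the $z$-integration converge while keeping the full constant $\gamma$ on $\rho(x,y)$ — indeed this sharp form fails at $\beta=1$, so the slack in subadditivity is essential. What the paper's route buys is brevity. One remark: your restriction to $\beta\in(0,1)$ is the right reading; although the lemma's statement loosely says $\beta\in(0,\infty)$, the kernel $E_\delta^{\gamma,\beta}$ is defined in the paper only for $\beta\in(0,1)$ and all applications use $\beta=\frac{1}{10(1+\ez)}$, so this is not a gap.
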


\begin{proof}  To obtain \eqref{eq:9.3xx}, it is enough to prove that
\begin{eqnarray*}
&&\sum_{\tau\in I_j}\sum_{\nu=1}^{N_{\tau}^{j}}
 \frac{|Q_{\tau}^{j,\nu}| }{|B(\xi_{\tau}^{j,\nu},\delta^j)|}
 \,\exp\lf\{-\gz\left[\frac{\rho(x,\xi_{\tau}^{j,\nu})}{\delta^j}\right]^\beta\r\}\,
 \exp\lf\{-\gz\left[\frac{\rho(\xi_{\tau}^{j,\nu},y)}{\delta^j}\right]^\beta\r\}\\
&&\quad\quad\le C_{\gz,\bz}^\diamond
\,\exp\lf\{-\gz\left[\frac{\rho(x,y)}{\delta^j}\right]^\beta\r\},
\end{eqnarray*}
which was implicitly proved in
\cite[Lemma~3.10]{KP}.
To obtain \eqref{eq:9.4xx},
 we split the integral in its left-hand side into
 \begin{eqnarray*}
\sum_{\tau\in I_j}\sum_{\nu=1}^{N_{\tau}^{j}}
\int_{Q_{\tau}^{j,\nu}}
E_{\delta^j}^{\gz,\beta}(x, z)
 E_{\delta^j}^{\gz,\beta}(z, y)\,d\mu(z).
\end{eqnarray*}
For any $z\in Q_{\tau}^{j,\nu}$ and $x,\,y\in M$, we observe that
$
E_{\delta^j}^{\gz,\beta}(x, z) \sim E_{\delta^j}^{\gz,\beta}(x, \xi_{\tau}^{j,\nu})
$ and
$E_{\delta^j}^{\gz,\beta}(z, y)\sim E_{\delta^j}^{\gz,\beta}(\xi_{\tau}^{j,\nu}, y)
$
with implicit positive constants independent of $j,\,x,\,y,\,z$ and $\xi_{\tau}^{j,\nu}$.
Hence, the left-hand side of \eqref{eq:9.4xx} is comparable to that of \eqref{eq:9.3xx},
so \eqref{eq:9.4xx} follows from \eqref{eq:9.3xx}.
This finishes the proof of Lemma \ref{lem9.4x}.
\end{proof}

In what follows, we fix  functions
$(\Gamma_0, \Gamma)\in C_c^\infty(\rr_+)$
satisfying
\begin{equation}\label{eq:9.3x}
\supp \Gamma_0\subset [0, \delta^{-\bz_0}],\quad
\Gamma_0(\lz)=1\,\,\textup{for}\,\,\lz\in[0,\delta^{-\bz_0/2}],
\end{equation}
and
\begin{equation}\label{eq:9.4x}
\supp \Gamma\subset [\delta^{\bz_0}, \delta^{-\bz_0}],\quad \,
\quad
\Gamma_0(\lz)=1\,\,\textup{for}\,\,\lz\in[\delta^{\bz_0/2}, \delta^{-\bz_0/2}],
\quad
\textup{and} \,\, 0\le \Gamma_0,\,\Gamma\le1.
\end{equation}
According to Lemma \ref{lem2.7x}, we may as well assume that
there exist constants $A\in(0,\fz)$ and $\beta\in(0,1)$ such that
\begin{equation}\label{eq:9.5x}
\|\Gamma_0^{(k)}\|_{L^\infty(\rr_+)}
\le (A k^{1+\bz})^{k}\quad \textup{and}\quad
\|\Gamma^{(k)}\|_{L^\infty(\rr_+)}
\le (A k^{1+\bz})^{k},\qquad k\in\nn.
\end{equation}
For $j\in\nn$, let
\begin{equation}\label{eq:9.6x}
\Gamma_j(\lz):=\Gamma(\dz^{j\bz_0/2}\lz), \qquad \lz\in\rr.
\end{equation}
For any $m\in\nn$, applying Proposition \ref{prop2.12x},
we find positive constants $\gz:=\gz(m,A, \bz)$
and $C^{\flat}_{\gz,\bz,m}:=C(m, \gz,\bz)$
such that, for all $j\in\zz_+$, $\delta\in(0,1]$ and $x,\,y\in M$,
\begin{equation}\label{eq:9.7x}
|L^m \Gamma_j(\sqrt L)(x, y)|
\le C^{\flat}_{m,\gz,\bz}
 \delta^{-\bz_0mj}\,  E_{\dz^j}^{\gz, \frac1{10(1+\bz)}}
(x,y)
\end{equation}
and,  for all $x,\,y,\,y'\in M$ satisfying $\rho(y,y')\le \delta$,
\begin{equation}\label{eq:9.8x}
|L^m\Gamma_j(\sqrt L)(x, y)-L^m\Gamma_j(\sqrt L)(x, y')|
\le C^{\flat}_{m,\gz,\bz}
 \delta^{-\bz_0mj}
\lf[\frac{\rho(y,y')}{\delta}\r]^{\alpha_0}\, E_{\dz^j}^{\gz, \frac1{10(1+\bz)}}(x,y).
\end{equation}
With these $\Gamma_j$, we apply Lemma \ref{lem9.4x} and Corollary \ref{cor9.3x}, and
also some ideas used in the proof of
\cite[Lemma~4.2]{KP} to obtain the following conclusion.

\begin{lem} \label{lem9.5x}
Let  $\{\Gamma_j\}_{j\in\zz_+}$ be as in \eqref{eq:9.6x} so that \eqref{eq:9.7x} and \eqref{eq:9.8x} hold true.
Then there exist $\ez_0\in(0,1)$ and operators $\{\wz\varphi_j(\sqrt{L})\}_{j\in\zz_+}$ such that, if $f\in L^2(M)$ satisfies that $\Gamma_j(\sqrt L)f=f$ for some $j\in\nn$, then
\begin{equation}\label{eq:9.9x}
f(\cdot)= \sum_{\tau\in I_j} \sum_{\nu=1}^{N_\tau^{j}}
|Q_{\tau}^{j,\nu}| f(\xi_\tau^{j,\nu})
\widetilde \varphi_j(\sqrt L)(\xi_\tau^{j,\nu}, \,\cdot)
\end{equation}
in $L^2(M)$, where  $\{Q_\tau^{j,\nu}:\, \tau\in I_j,\, 1\le \nu\le N_\tau^j\}$
are  the subcubes associated to some parameter $j_{\ez_0}$ as in Lemma \ref{lem9.1x},
$\xi_{\tau}^{j,\nu}$ is any point in $Q_{\tau}^{j,\nu}$,
and $\widetilde \varphi_j(\sqrt L)(\xi_\tau^{j,\nu}, x)$
is the integral kernel of  $\widetilde \varphi_j(\sqrt L)$.
Moreover, the following hold true:
\begin{enumerate}
\item[\rm(i)]   For any $m\in\zz_+$, there exist positive constants $\gamma:=\gamma(A, m, \bz)$ and $C:=C(m,\gz,\bz)$ such that,
for all $j\in\zz_+$ and $x,\,y\in M$,
\begin{equation}\label{l8.51}
|L^m\widetilde \varphi_j(\sqrt L)(x,y)|
\le C \delta^{-m \bz_0 j}E_{\delta^j}^{\frac\gz2, \frac1{10(1+\bz)}}(x, y)
\end{equation}
and,
for all $j\in\zz_+$ and $x,\,y,\,y'\in M$ satisfying $\rho(y,y')\le \delta^{j}$,
\begin{equation}\label{l8.52}
|L^m\widetilde \varphi_j(\sqrt L)(x,y)-L^m\widetilde \varphi_j(\sqrt L)(x,y')|
\le  C \delta^{-m\bz_0 j} [\delta^{-j} \rho(y,y')]^{\alpha_0}  E_{\delta^j}^{\frac\gz2, \frac1{10(1+\bz)}}(x, y);
\end{equation}

\item[\rm(ii)] Let $(\Phi_0, \Phi)$
satisfy \eqref{eq:2.14xx} and \eqref{eq:2.15xx}.
For $j\in\nn$, define $\Phi_j$ as in \eqref{eq:2.16xx}.
Then, for any  $m\in\nn$ and $\sz\in(2d,\fz)$,
there exists a positive constant $C:=C(\sz,m)$
such that, for all $j,\,k\in\zz_+$  and $x,\,y\in M$,
\begin{eqnarray*}
|(\Phi_k( \sqrt L)
\wz\varphi_j(\sqrt L))(x, y)|
\le C \delta^{|k-j|(m\bz_0-d)} D_{\delta^{k\wedge j}, \sigma}(x,y).
\end{eqnarray*}
\end{enumerate}
\end{lem}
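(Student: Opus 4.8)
\textbf{Proof plan for Lemma \ref{lem9.5x}.}
The plan is to construct the operators $\wz\varphi_j(\sqrt L)$ by an iteration (Neumann series) argument, exactly as in the proof of \cite[Lemma~4.2]{KP}, and then verify the kernel estimates by exploiting the subexponential bounds \eqref{eq:9.7x} and \eqref{eq:9.8x} together with the ``summation'' and ``integration'' estimates for the kernels $E_{\delta^j}^{\gz,\bz}$ in Lemma \ref{lem9.4x}. First I would introduce, for $f\in L^2(M)$ with $\Gamma_j(\sqrt L)f=f$, the natural discretization operator
$$T_j f(\cdot):= \sum_{\tau\in I_j}\sum_{\nu=1}^{N_\tau^j} |Q_\tau^{j,\nu}|\, f(\xi_\tau^{j,\nu})\,\Gamma_j(\sqrt L)(\xi_\tau^{j,\nu},\cdot),$$
so that, using the reproducing identity $f=\Gamma_j(\sqrt L)f$, one writes
$$f(x)-T_jf(x) = \sum_{\tau\in I_j}\sum_{\nu=1}^{N_\tau^j}\int_{Q_\tau^{j,\nu}}\big[\Gamma_j(\sqrt L)(\xi_\tau^{j,\nu},y)-\Gamma_j(\sqrt L)(x,y)\big]f(y)\,d\mu(y)$$
after inserting the cubature formula of Corollary \ref{cor9.3x} to replace $\int_M$ by the Riemann-type sum (up to the harmless weights $\varepsilon_\tau^{j,\nu}\in[\tfrac23,2]$, which can either be absorbed or handled by the same Neumann series). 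Here the point is that $\Gamma_j(\sqrt L)f$ belongs to the spectral space $\Sigma^2_{\delta^{-j\bz_0/2}}$, so Corollary \ref{cor9.3x} applies for the level $j$ (this is why $j\in\nn$ and the relation $j\ge -\tfrac2{\bz_0}\log_\delta\lambda$ is automatic).

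The second step is the crucial norm estimate: using the Hölder continuity \eqref{eq:9.8x} (with $m=0$) for $\rho(x,\xi_\tau^{j,\nu})\le \diam Q_\tau^{j,\nu}\le C_\natural\delta^{j+j_{\ez_0}}$, the kernel of $\mathrm{Id}-T_j$ (restricted to $\Sigma^2_{\delta^{-j\bz_0/2}}$) is pointwise dominated by $C^\flat_{0,\gz,\bz}(C_\natural\delta^{j_{\ez_0}})^{\az_0}\,E_{\delta^j}^{\gz,1/(10(1+\bz))}(x,y)$; by the $L^2$-boundedness that follows from \eqref{eq:9.4xx} (Schur's test, since $E$ integrates to a constant in each variable), one obtains
$$\|(\mathrm{Id}-T_j)|_{\Sigma^2_{\delta^{-j\bz_0/2}}}\|_{L^2\to L^2}\le C_\diamond\,C^\flat_{0,\gz,\bz}\,(C_\natural\delta^{j_{\ez_0}})^{\az_0}.$$
Now I would \emph{choose $\ez_0$ small}, equivalently $j_{\ez_0}$ large (this is the role of \eqref{eq:9.19x} referenced in Remark \ref{rem7.2x}), so that this operator norm is $\le\tfrac12$; then $T_j$ is invertible on the spectral space and we set $\wz\varphi_j(\sqrt L):=(T_j|_{\Sigma})^{-1}\Gamma_j(\sqrt L)=\sum_{N\ge0}(\mathrm{Id}-T_j)^N\Gamma_j(\sqrt L)$, which immediately yields the reconstruction identity \eqref{eq:9.9x} in $L^2(M)$. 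Writing out $\wz\varphi_j(\sqrt L)=\Gamma_j(\sqrt L)+\sum_{N\ge1}(\mathrm{Id}-T_j)^N\Gamma_j(\sqrt L)$, each summand is an integral operator whose kernel, by repeated application of \eqref{eq:9.3xx} and \eqref{eq:9.4xx}, is bounded by $(C_\diamond C^\flat)^{N}(C_\natural\delta^{j_{\ez_0}})^{N\az_0}\,E_{\delta^j}^{\gz',1/(10(1+\bz))}(x,y)$ for a slightly smaller constant $\gz'\in(\gz/2,\gz)$ in the exponent — the loss from $\gz$ to $\gz/2$ is exactly what one pays each time Lemma \ref{lem9.4x} is invoked, and a geometric-series argument in $N$ (again using $j_{\ez_0}$ large) sums these to give \eqref{l8.51} with $m=0$; the case $m\in\nn$ follows by applying $L^m$ to $\Gamma_j$ and noting $L^m\Gamma_j(\sqrt L)=\delta^{-m\bz_0 j}\psi_j(\sqrt L)$ with $\psi$ again of the form covered by Proposition \ref{prop2.12x}, so that \eqref{eq:9.7x} supplies the extra factor $\delta^{-m\bz_0 j}$. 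The Hölder estimate \eqref{l8.52} is obtained in precisely the same way, starting from \eqref{eq:9.8x} instead of \eqref{eq:9.7x} and using that $E_{\delta^j}^{\gz,\bz}$ is controlled by $D_{\delta^j,\sz}$ for any $\sz$ so that Lemma \ref{lem2.1x}(ii) can finish the convolution bookkeeping.

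For part (ii), I would write $\Phi_k(\sqrt L)\wz\varphi_j(\sqrt L)=\Phi_k(\sqrt L)\Gamma_j(\sqrt L)+\sum_{N\ge1}\Phi_k(\sqrt L)(\mathrm{Id}-T_j)^N\Gamma_j(\sqrt L)$. Since $\wz\varphi_j(\sqrt L)$ acts nontrivially only on $\Sigma^2_{\delta^{-j\bz_0/2}}$ and $\Gamma_j$ (like $\Phi$) is supported in a band $[\delta^{\bz_0},\delta^{-\bz_0}]$ dilated by $\delta^{j\bz_0/2}$, the composition $\Phi_k(\sqrt L)\Gamma_j(\sqrt L)$ is the functional calculus of $\Phi(\delta^{k\bz_0/2}\cdot)\Gamma(\delta^{j\bz_0/2}\cdot)$, to which the almost-orthogonality estimate of Proposition \ref{prop2.14x} (in the form of Remark \ref{rem2.15x}) applies and gives $\delta^{|k-j|(m\bz_0-d)}D_{\delta^{k\wedge j},\sz}$; the remaining terms carry the extra small factor $(C_\natural\delta^{j_{\ez_0}})^{N\az_0}$ from the Neumann series, so after the geometric summation one arrives at the claimed bound with $D_{\delta^{k\wedge j},\sz}$ and no loss in the decay order $m\bz_0-d$ (choosing $\sz>2d$ throughout so that all the convolutions of $D$'s converge by Lemma \ref{lem2.1x}). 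The main obstacle I expect is the careful tracking of the two small parameters: one must fix $\gz$ (hence the decay rate in $E$) from Proposition \ref{prop2.12x} \emph{first}, then shrink $\ez_0$ (i.e.\ enlarge $j_{\ez_0}$) so that \emph{both} the operator norm of $\mathrm{Id}-T_j$ is $\le\tfrac12$ \emph{and} the accumulated constants $(C_\diamond C^\flat)^N(C_\natural\delta^{j_{\ez_0}})^{N\az_0}$ in the kernel estimates form a convergent geometric series with sum independent of $j$ — this circular-looking dependence is resolved exactly because $C_\diamond$, $C^\flat_{0,\gz,\bz}$ and $C_\natural$ do not themselves depend on $\ez_0$ or $j_{\ez_0}$, a fact that has been deliberately built into Lemmas \ref{lem9.1x} and \ref{lem9.4x}.
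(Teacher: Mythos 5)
Your overall blueprint (discretize the reproducing identity, invert the perturbation by a Neumann series, push kernel bounds through the series) is the same as the paper's, but your mechanism for smallness is different, and as written its execution has concrete gaps. First, the displayed identity for $f-T_jf$ is not correct: $T_jf(x)$ carries the factor $\Gamma_j(\sqrt L)(\xi_\tau^{j,\nu},x)$, which has disappeared from your formula, and you then apply the H\"older estimate \eqref{eq:9.8x} to $\Gamma_j(\sqrt L)(\xi_\tau^{j,\nu},y)-\Gamma_j(\sqrt L)(x,y)$ although $\rho(x,\xi_\tau^{j,\nu})$ is not small ($x$ is the free variable, not a point of $Q_\tau^{j,\nu}$). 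Written honestly, the error on each subcube also involves the oscillation $f(y)-f(\xi_\tau^{j,\nu})$ of $f$ itself, which no fixed integral kernel can control pointwise; this is precisely what the paper's sampling inequality (Lemma \ref{lem9.2x}), fed into the quadratic form of $U_j$, is there to handle. To make your pointwise-smallness idea work one must first replace $\mathrm{Id}$ by $\Gamma_j(\sqrt L)^2$ (legitimate on $\{g:\Gamma_j(\sqrt L)g=g\}$) and compare the kernel $\Gamma_j^2(x,y)=\int_M\Gamma_j(\sqrt L)(x,u)\Gamma_j(\sqrt L)(u,y)\,d\mu(u)$ with its sampled version $\sum_{\tau,\nu}|Q_\tau^{j,\nu}|\,\Gamma_j(\sqrt L)(x,\xi_\tau^{j,\nu})\Gamma_j(\sqrt L)(\xi_\tau^{j,\nu},y)$, so that the H\"older difference \eqref{eq:9.8x} is taken inside a single subcube and Lemma \ref{lem9.4x} applies; this also produces a globally defined error operator on $L^2(M)$, which you need because $T_j$ does not map the spectral subspace into itself, so that ``$(T_j|_{\Sigma})^{-1}$'' and the series $\sum_{N\ge0}(\mathrm{Id}-T_j)^N$ are not meaningful as you wrote them (the paper's $R_j=\Gamma_j(\mathrm{Id}-U_j)\Gamma_j$ is exactly this global substitute). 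Moreover, Corollary \ref{cor9.3x} cannot be invoked for $y\mapsto\Gamma_j(\sqrt L)(x,y)f(y)$: a product of band-limited functions need not be band-limited in this setting, and the cubature weights $\varepsilon_\tau^{j,\nu}$ would in any case spoil the exact coefficients $|Q_\tau^{j,\nu}|$ required in \eqref{eq:9.9x}; neither the paper nor the repaired argument needs any cubature in this proof.

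Your bookkeeping of constants also misplaces the origin of the exponent $\gz/2$: Lemma \ref{lem9.4x} costs only a multiplicative constant and loses nothing in $\gz$ — if it lost a fixed amount at each application, your own geometric series in $N$ would annihilate the exponential decay. In the paper the halving of $\gz$ comes from a different device: the pointwise bounds on $R_j^k\Gamma_j$ grow geometrically with a large ratio $\mathbb A C^{\ast}_{\gz,\beta'}$, and convergence is rescued by taking a geometric mean with the operator-norm bound $\|R_j\|_{L^2\to L^2}\le 2\sqrt{\ez_0}$ obtained from Lemma \ref{lem9.2x}, via Lemma \ref{lem2.2x}; this is exactly where \eqref{eq:9.19x} and the factor $E_{\delta^j}^{\gz/2,\cdot}$ enter. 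If you set up the error operator as indicated above, your scheme does put the small factor $(C_\natural\dz^{j_{\ez_0}})^{N\az_0}$ pointwise into the $N$-th iterate with no loss in $\gz$, so you could dispense with Lemmas \ref{lem2.2x} and \ref{lem9.2x} and the geometric mean altogether — a genuinely simpler route, which also extends to $m\ge1$ and to part (ii) by running the same series with $L^m\Gamma_j(\sqrt L)$, respectively $\Phi_k(\sqrt L)\Gamma_j(\sqrt L)$ (Proposition \ref{prop2.14x}), in place of $\Gamma_j(\sqrt L)$. But as submitted, the proposal does not yet contain a correct proof of the key smallness estimate, and that gap must be repaired before the remaining steps go through.
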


\begin{proof}
Let $\ez_0\in(0,1)$ which will be determined later (see \eqref{eq:9.19x} below).
We then choose a large integer $j_{\ez_0}$ as in Lemma \ref{lem9.1x} (see \eqref{eq:9.1x} and \eqref{eq:9.2x})
and  define $\{Q_\tau^{j,\nu}:\, \tau\in I_j,\, 1\le \nu\le N_\tau^j\}$
to be the subcubes of $Q_\tau^j$ associated to such a $j_{\ez_0}$
as in Remark \ref{rem7.2x}.

Let
$\Theta\in C_c^\infty(\rr_+)$
such that $0\le \Theta\le1$,
$
\supp \Theta\subset [0, \delta^{-2\bz_0}]$ and
$
\Theta\equiv1$ on $[0,\delta^{-3\bz_0/2}].
$
Define  $\Theta_j:= \Theta(\delta^{j\bz_0/2}\sqrt L)$ for $j\in\zz_+$.
For simplicity, we use
$\Gamma_j(x,y)$ and $\Theta_j(x,y)$ to denote the integral kernels of
the operators $\Gamma_j:=\Gamma_j(\sqrt L)$ and
$\Theta_j:=\Theta_j(\sqrt L)$, respectively. Define the operator $U_j$ which is associated to the kernel
$$U_j(x,y):= \sum_{\tau\in I_j}\sum_{\nu=1}^{N_{\tau}^{j}}
\frac1{(1+\epsilon_0)^2}  |Q_{\tau}^{j,\nu}| \Theta_j(x, \xi_{\tau}^{j,\nu})
\Theta_j(\xi_{\tau}^{j,\nu},y),\qquad x,\ y\in M.
$$
Observe that $\Gamma_j(L^2(M))\subset \Sigma_{\delta^{-(j+3)\bz_0/2}}^2(M)$
and  $\Theta_j g=g$ for all
$g\in \Sigma_{\delta^{-(j+3)\bz_0/2}}^2(M)$.
In particular, for any $g\in \Sigma_{\delta^{-j\bz_0/2}}^2(M)$, we have
$$
\laz U_j g, \, g\raz
=\sum_{\tau\in I_j}\sum_{\nu=1}^{N_{\tau}^{j}}
\frac1{(1+\epsilon_0)^2} |Q_{\tau}^{j,\nu}| |g(\xi_{\tau}^{j,\nu})|^2,
$$
which, combined with Lemma \ref{lem9.2x} and the fact $\frac{1-\epsilon_0}{1+\epsilon_0}\ge 1-2\epsilon_0$, implies that
\begin{eqnarray}\label{eq:9.10x}
(1-2\epsilon_0)^2\|g\|_{L^2(M)}^2
\le \laz U_j g, \, g\raz \le \|g\|_{L^2(M)}^2.
\end{eqnarray}
Define
$V_j:= \Gamma_jU_j\Gamma_j$ and $R_j:= \Gamma_j({\rm Id}-U_j)\Gamma_j$.
Since $0\le \Gamma_j\le1$,
the functional calculus implies that
$\|\Gamma_jf\|_{L^2(M)}\le \|f\|_{L^2(M)}$.
Thus, for  any $f\in L^2(M)$, by $R_j= \Gamma_j^2-\Gamma_jU_j\Gamma_j$ and \eqref{eq:9.10x}, we have
\begin{eqnarray*}
0\le \laz R_j f, \, f\raz
= \laz \Gamma_j f,\, \Gamma_j f\raz
- \laz U_j\Gamma_j f,\, \Gamma_j f\raz
\le[1-(1-2\epsilon_0)^2]\|\Gamma_jf\|_{L^2(M)}^2
\le 4\epsilon_0\|f\|_{L^2(M)}^2,
\end{eqnarray*}
which implies that $R_j$ is  bounded  on $L^2(M)$
with operator norm at most $2\sqrt\epsilon_0$.
Hence, it makes sense to define
$$T_j:= (I-R_j)^{-1}= I+\sum_{k=1}^\infty R_j^k.$$
Since  $\Theta_j\Gamma_j=\Gamma_j$,
we use the expression of $U_j$ to find that the operator $V_j$ has a kernel
\begin{eqnarray}\label{eq:9.11x}
V_j(x,y)
&&=\sum_{\tau\in I_j}\sum_{\nu=1}^{N_{\tau}^{j}}
\frac1{(1+\epsilon_0)^2} |Q_{\tau}^{j,\nu}|
\Gamma_j(x, \xi_{\tau}^{j,\nu})
\Gamma_j(\xi_{\tau}^{j,\nu}, y),\qquad x,\ y\in M.
\end{eqnarray}
If $f\in L^2(M)$ such that $\Gamma_j(\sqrt L)f=f$,
then $f
= T_j(f-R_jf)
=T_j(f- \Gamma_j^2 f +V_j f)
= T_j V_j f$,
so that
\begin{eqnarray*}
f(x)&&=\sum_{\tau\in I_j}\sum_{\nu=1}^{N_{\tau}^{j}}
\frac1{(1+\epsilon_0)^2} |Q_{\tau}^{j,\nu}|\Gamma_j f(\xi_{\tau}^{j,\nu})
T_j\left(\Gamma_j(\cdot, \xi_{\tau}^{j,\nu})\right)(x),\quad x\in M
\end{eqnarray*}
in $L^2(M)$,
which gives us the identity \eqref{eq:9.9x} by setting $\widetilde \varphi_j(\sqrt L)$ to be the operator whose integral kernel is
$$\widetilde \varphi_j(\sqrt L)(x,y)
:= \frac1{(1+\epsilon_0)^2} T_j\left(\Gamma_j(\cdot, y)\right)(x),\quad x,\,y\in M.$$

For simplicity, we write $\wz\varphi_j(x,y)$ instead of
the kernel $\widetilde \varphi_j(\sqrt L)(x,y)$.
Let $\bz':=\frac1{10(1+\bz)}$.
From \eqref{eq:9.11x}, \eqref{eq:9.7x} and \eqref{eq:9.3xx}, we deduce that
\begin{eqnarray*}
|V_j(x,y)|
\le \frac{(C^{\flat}_{0,\gz,\beta})^2}{(1+\epsilon_0)^2}
\sum_{\tau\in I_j}\sum_{\nu=1}^{N_{\tau}^{j}}
 |Q_{\tau}^{j,\nu}|
 E_{\delta^j}^{\gz,\bz'}(x, \xi_{\tau}^{j,\nu})
 E_{\delta^j}^{\gz,\bz'}(\xi_{\tau}^{j,\nu}, y)
\le \frac{(C^{\flat}_{0,\gz,\beta})^2 C_{\gz,\bz'}^\diamond}{(1+\epsilon_0)^2}
 E_{\delta^j}^{\gz,\bz'}(x, y).\notag
\end{eqnarray*}
By  \eqref{eq:9.7x} and \eqref{eq:9.4xx}, we have
\begin{eqnarray*}
|\Gamma_j^2(x,y)|
\le (C^{\flat}_{0,\gz,\beta})^2
\int_M E_{\delta^j}^{\gz,\beta'}(x, u)
E_{\delta^j}^{\gz,\beta'}(u, y)\,d\mu(u)
\le (C^{\flat}_{0,\gz,\beta})^2 C^{\ast}_{ \gz,\beta'} E_{\delta^j}^{\gz,\beta'}(x, y).
\end{eqnarray*}
Combining the last two inequalities implies that
\begin{eqnarray*}
|R_j(x,y)|
= |\Gamma_j^2(x,y)-V_j(x,y)|
&&\le  (C_{\gz,\bz'}^\diamond +  C^{\ast}_{ \gz,\beta'}) (C^{\flat}_{0,\gz,\beta})^2
E_{\delta^j}^{\gz,\beta'}(x, y).
\end{eqnarray*}
Let $\mathbb A:= (C_{\gz,\bz'}^\diamond +  C^{\ast}_{ \gz,\beta'}) (C^{\flat}_{0,\gz,\beta})^2$.
Applying the above  estimate  and \eqref{eq:9.4xx} repeatedly, we see that
\begin{eqnarray}\label{eq:9.15x}
|R_j^k(x,y)|\le \mathbb A^{k} (C^{\ast}_{ \gz,\beta'})^{k-1} E_{\delta^j}^{\gz,\beta'}(x, y),\qquad k\in\nn.
\end{eqnarray}
Consequently, for all $k\in\nn$,
\begin{eqnarray}\label{eq:9.16x}
|R_j^k \Gamma_j(x,y)|
\le (\mathbb A C^{\ast}_{ \gz,\beta'})^k C^{\flat}_{0,\gz,\beta}
 E_{\delta^j}^{\gz,\beta'}(x, y).
\end{eqnarray}
Observe that \eqref{eq:9.4xx} implies that,
for all $j\in\zz_+$ and $x,\,y\in M$,
$$\|R_j(x,\cdot)\|_{L^2(M)} \|\Gamma_j(\cdot,y)\|_{L^2(M)}
\le C_{\gz,\bz'}^\ast \mathbb A C^{\flat}_{0,\gz,\beta}.
$$
By this and Lemma \ref{lem2.2x}, we see that, for $k\ge2$,
\begin{eqnarray}\label{eq:9.17x}
|R_j^k \Gamma_j(x,y)|
= |R_j R_j^{k-1}\Gamma_j(x,y)|
&&\le \mathbb A C^{\flat}_{0,\gz,\beta} C^{\ast}_{ \gz,\beta'}  \frac{\|R_j^{k-1}\|_{L^2(M)\to L^2(M)}}
{\sqrt{|B(x, \delta^j)|\, |B(y,\delta^j)|}}\notag\\
&&\le \mathbb A C^{\flat}_{0,\gz,\beta} C^{\ast}_{ \gz,\beta'}\frac{(2\sqrt\epsilon_0)^{k-1}}
{\sqrt{|B(x, \delta^j)|\, |B(y,\delta^j)|}}.
\end{eqnarray}
By taking geometric means between \eqref{eq:9.16x} and \eqref{eq:9.17x}, we have
\begin{eqnarray}\label{eq:9.18x}
|R_j^k \Gamma_j(x,y)|
\le C^{\flat}_{0,\gz,\beta} \sqrt{(C^{\ast}_{ \gz,\beta'}\mathbb A)^{k+1}
 (2\sqrt\epsilon_0)^{k-1} } \, E_{\delta^j}^{ \gz/2,\beta'}(x, y)
\ls 2^{-k}\,
 E_{\delta^j}^{ \gz/2,\beta'}(x, y),
\end{eqnarray}
provided that we choose $\epsilon_0$ small enough satisfying
\begin{eqnarray}\label{eq:9.19x}
0<\sqrt\epsilon_0 \le \frac1{4 \mathbb A C^{\ast}_{ \gz,\beta'}}
=\frac1{4 (C_{\gz,\bz}^\diamond +  C^{\ast}_{ \gz,\beta'}) (C^{\flat}_{0,\gz,\beta})^2
 C^{\ast}_{ \gz,\beta'}}.
\end{eqnarray}
Therefore, by writing
\begin{eqnarray*}
(1+\epsilon_0)^2\widetilde \varphi_j(\sqrt L)(x,y)
= T_j\left(\Gamma_j(\cdot, y)\right)(x)
= \Gamma_j(x, y)+  R_j \left(\Gamma_j(\cdot, y)\right)(x)
+ \sum_{k=2}^\infty  R_j^k \left(\Gamma_j(\cdot, y)\right)(x),
\end{eqnarray*}
we use \eqref{eq:9.18x} to obtain that
$
|\widetilde \varphi_j(\sqrt L)(x,y)|
\ls
E_{\delta^j}^{ \gz/2,\beta'}(x, y),
$
which proves \eqref{l8.51} for $m=0$.
If $m\in\mathbb N$, then we write
\begin{eqnarray*}
(1+\epsilon_0)^2 L^m\widetilde \varphi_j(x,y)
&&= T_j\left(L^m\Gamma_j(\cdot, y)\right)(x)
= L^m\Gamma_j(x, y)
+ \sum_{k=1}^\infty  R_j^k \left(L^m\Gamma_j(\cdot, y)\right)(x).
\end{eqnarray*}
Next, we replace the operator $\Gamma_j$ in \eqref{eq:9.16x} and \eqref{eq:9.17x} by $L^m \Gamma_j$.
Following the previous argument for the case $m=0$,  we  obtain  \eqref{l8.51} for the case $m\ge1$.

To obtain \eqref{l8.52}, we write
\begin{eqnarray*}
&&(1+\epsilon_0)^2|L^m\widetilde \varphi_j(\sqrt L)(x,y)-L^m\widetilde \varphi_j(\sqrt L)(x,y')|\\
&&\quad\le |L^m\Gamma_j(x, y)-L^m\Gamma_j(x, y')|
+  \left|\int_M R_j(x, u)[L^m\Gamma_j(u, y)-L^m\Gamma_j(u,y')]\,d\mu(u)\right|\\
&&\quad
\quad+ \sum_{k=2}^\infty
 \left|\int_M R_j^k(x, u) [L^m\Gamma_j(u, y)-L^m\Gamma_j(u,y')]\,d\mu(u)\right|.
\end{eqnarray*}
Then, applying \eqref{eq:9.8x} and following the previous  argument,
we obtain \eqref{l8.52}, the details being omitted.

To show (ii), for $j\in\zz_+$, we  use $\Phi_j\Gamma_j(x,y)$ to
denote the kernel of $\Phi_j(\sqrt L)\Gamma_j(\sqrt{L})$. Notice
that Proposition \ref{prop2.14x} implies that
\begin{equation}\label{eq:9.19xx}
|\Phi_k\Gamma_j(x,y)|
\ls \dz^{|k-j|(m\bz_0-d)} D_{\dz^{k\wedge j},2\sz}(x,y),\qquad x,\ y\in M.
\end{equation}
Then, instead of \eqref{eq:9.16x}, we apply \eqref{eq:9.15x} to conclude that,
for all $i\in\nn$,
$$|R_j^i\Phi_k\Gamma_j(x,y)|
\ls (C^{\ast}_{ \gz,\beta'}\mathbb A)^i \dz^{|k-j|(m\bz_0-d)} D_{\dz^{k\wedge j},2\sz}(x,y),\qquad x,\ y\in M.$$
For all $i\ge2$, similar to the estimate  of \eqref{eq:9.17x}, we use Lemma \ref{lem2.2x}
and \eqref{eq:9.19xx} to derive that
\begin{eqnarray*}
|R_j^i \Phi_k\Gamma_j(x,y)|
&&\le {\|R_j(x,\cdot)\|_{L^2(M)}\|R_j^{i-1}\|_{L^2(M)\to L^2(M)}
\|\Phi_k\Gamma_j(\cdot, y)\|_{L^2(M)}}\\
&&
\ls  \frac{\dz^{|k-j|(m\bz_0-d)} (2\sqrt{\epsilon_0})^{i-1}}
{\sqrt{|B(x, \delta^{j\wedge k})|\, |B(y,\delta^{j\wedge k})|}}
\end{eqnarray*}
uniformly for all $x,\,y\in M$ and $k,\,j\in\zz_+$.
By taking geometric mean of the above inequalities, we see that, for all $x$, $y\in M$,
$$|R_j^i\Phi_k\Gamma_j(x,y)|
\ls (2\sqrt {\epsilon_0} C^{\ast}_{ \gz,\beta}\mathbb A)^i \dz^{|k-j|(m\bz_0-d)}
D_{\dz^{k\wedge j},\sz}(x,y)
\ls 2^{-i} \dz^{|k-j|(m\bz_0-d)} D_{\dz^{k\wedge j},\sz}(x,y),$$
if we choose $\epsilon_0$ as in \eqref{eq:9.19x}.
Then, summing over all $i\ge 2$ and using the fact that
$$(1+\epsilon_0)^2 (\Phi_k(\sqrt L)\widetilde \varphi_j(\sqrt L))(x,y)
= \Phi_k\Gamma_j(x, y)+  \sum_{i=1}^\infty  R_j^i \Phi_k\Gamma_j(x, y),
$$
we argue as before and obtain the desired estimate in (ii), which completes
the proof of Lemma \ref{lem9.5x}.
\end{proof}

\begin{proof}[Proof of Theorem \ref{thm-CRF}] By similarity, we only prove the case $f\in \cd'(M)$.
Starting from the continuous Calder\'on reproducing formula \eqref{c-crf}, we write
\begin{eqnarray*}
f&&= \sum_{j=0}^\infty \wz\Phi_j(\sqrt L)\Phi_j(\sqrt L) f
=  \sum_{j=0}^\infty \int_M \int_M
\wz\Phi_j(\sqrt L)(\cdot,y)\Phi_j(\sqrt L)(y,z) f(z)\,d\mu(y)\,d\mu(z),
\end{eqnarray*}
where the inequality holds true in $\cd'(M)$.
Applying Lemma \ref{lem9.5x} to the function $\Phi_j(\sqrt L)(\cdot,z)$,
we find that
$$
\Phi_j(\sqrt L)(y,z)=
\sum_{\tau\in I_j} \sum_{\nu=1}^{N_\tau^{j}}
|Q_{\tau}^{j,\nu}| \Phi_j(\sqrt L)(\xi_\tau^{j,\nu}, z)
\widetilde \varphi_j(\sqrt L)(\xi_\tau^{j,\nu}, y)
$$
for all $j\in\zz_+$ and $y,\,z\in M$,
where $\widetilde \varphi_j(\sqrt{L})$ satisfies (i), (ii) and (iii)
of Lemma \ref{lem9.5x}.  It is easy to show that the above equality holds true in $L^2(M)$, and hence in $\cd'(M)$. Consequently,
\begin{eqnarray*}
f(\cdot)
=  \sum_{j=0}^\infty \sum_{\tau\in I_j} \sum_{\nu=1}^{N_\tau^{j}}
|Q_{\tau}^{j,\nu}| (\Phi_j(\sqrt L)f )(\xi_\tau^{j,\nu})
\int_M
\wz\Phi_j(\sqrt L)(\cdot,y)\widetilde \varphi_j(\sqrt L)(\xi_\tau^{j,\nu}, y)\,d\mu(y)
\end{eqnarray*}
in $\cd'(M)$,
which implies \eqref{DCRF} by setting $\Psi_j(\sqrt L)$ to be the operator  whose associated kernel is
$$\Psi_j(\sqrt L)( x,z)
:= \int_M
\wz\Phi_j(\sqrt L)(x,y)\widetilde \varphi_j(\sqrt L)(z, y)\,d\mu(y),
\qquad y,\,z\in M.
$$
Observe that Proposition \ref{prop2.10x} implies that $\wz\Phi_j(\sqrt L)$
satisfies \eqref{eq:2.12x} and \eqref{eq:2.13x}.
By this,  Lemma \ref{lem9.5x} and Lemma \ref{lem2.1x}(ii),
 we obtain (b) through (d) of the theorem.
This finishes the proof of Theorem \ref{thm-CRF}.
\end{proof}

\noindent{\bf Acknowledgements.} Liguang Liu is very grateful
to Professor Alexander Grigor'yan for his valuable suggestions
and discussions on this article. The authors would also like to thank
referees for their helpful remarks which improved the presentation
of this article.



\noindent {\sc Liguang Liu\,}

\smallskip

\noindent  Department of Mathematics, School of Information, Renmin University
of China, Beijing 100872, China

\&

\noindent Department of Mathematics,
University of Bielefeld,
33501 Bielefeld, Germany

\smallskip

\noindent {\it E-mail}: \texttt{liuliguang@ruc.edu.cn}

\bigskip

\noindent {\sc Dachun  Yang\,}(Corresponding author) and {\sc Wen Yuan\,}

\smallskip

\noindent School of Mathematical Sciences, Beijing Normal University,
Laboratory of Mathematics and Complex Systems, Ministry of
Education, Beijing 100875, China

\smallskip

\noindent {\it E-mails}: \texttt{dcyang@bnu.edu.cn}

\hspace{1cm}\texttt{wenyuan@bnu.edu.cn}


\begin{thebibliography}{99}

\bibitem{ax02} D. R. Adams and J. Xiao, Morrey spaces in harmonic
analysis,  Ark. Mat. 50 (2012), 201-230.

\vspace{-0.25cm}
\bibitem{ADM}
P. Auscher, X. T. Duong and A. McIntosh,
{ Boundedness of Banach space valued singular integral operators and Hardy spaces},
Unpublished preprint (2005).


\vspace{-0.25cm}
\bibitem{BGK}  M. T. Barlow, A. Grigor'yan and T. Kumagai,
{ On the equivalence of parabolic Harnack inequalities and heat kernel estimates},
J. Math. Soc. Japan 64 (2012),  1091-1146.


\vspace{-0.25cm}
\bibitem{BH}
M. Bownik and K.-P. Ho,
{Atomic and molecular decompositions of anisotropic Triebel--Lizorkin spaces},
Trans. Amer. Math. Soc. 358 (2006),  1469-1510.

\vspace{-0.25cm}
\bibitem{BDY}
H.-Q. Bui, X. T. Duong and L. Yan, { Calder\'on reproducing formulas and new Besov spaces associated with operators},
Adv. Math. 229 (2012), 2449-2502.


\vspace{-0.25cm}
\bibitem{chr} M. Christ,
{A $T(b)$ theorem with remarks on analytic capacity and the Cauchy integral},
Colloq. Math. 60/61 (1990), 601-628.


\vspace{-0.25cm}
\bibitem{CW1} R. R. Coifman and G. Weiss,
{ Analyse Harmonique Non-commutative sur Certains Espaces Homog\`enes},
Lecture Notes in Math. 242, Springer, Berlin, 1971.

\vspace{-0.25cm}
\bibitem{CW2} R. R. Coifman and G. Weiss,
{Extensions of Hardy spaces and their use in analysis},
Bull. Amer. Math. Soc. 83 (1977), 569-645.


\vspace{-0.25cm}
\bibitem{CKP} T. Coulhon, G. Kerkyacharian and P. Petrushev,
{ Heat kernel generated frames in the setting of Dirichlet spaces},
J. Fourier Anal. Appl. 18 (2012), 995-1066.

\vspace{-0.2cm}
\bibitem{DS} N. Dunford and J. T. Schwartz, {Linear Operators. I. General Theory.} With the assistance of W. G. Bade and R. G. Bartle, Pure and Applied Mathematics, Vol. 7, Interscience Publishers, Inc., New York; Interscience Publishers, Ltd., London 1958, xiv+858 pp.

\vspace{-0.2cm}
\bibitem{DY0} X. T. Duong and L. Yan,
{ Hardy spaces of spaces of homogeneous type},
Proc. Amer. Math. Soc. 131  (2003), 3181-3189.

\vspace{-0.25cm}
\bibitem {DY1}
X. T. Duong and L. Yan,
{ Duality of Hardy and BMO spaces associated with operators with heat kernel bounds},
J. Amer. Math. Soc. 18 (2005),  943-973.

\vspace{-0.25cm}
\bibitem {DY2}
X. T. Duong and L. Yan,  {New function spaces of BMO type,
the John-Nirenberg inequality, interpolation, and applications},
Comm. Pure Appl. Math. 58 (2005), 1375-1420.


\vspace{-0.25cm}
\bibitem{FJ90}
M. Frazier and B. Jawerth,
{A discrete transform and decompositions of distribution spaces},
J. Funct. Anal. 93 (1990), 34-170.



\vspace{-0.25cm}
\bibitem{FJW}
M. Frazier, B. Jawerth and G. Weiss,
{Littlewood-Paley Theory and The Study of Function Spaces},
CBMS Regional Conference Series in Mathematics 79,
American Mathematical Society,
Providence, RI, 1991, viii+132 pp.


\vspace{-0.25cm}
\bibitem{dx} G. Dafni and J. Xiao,
{Some new tent spaces and duality theorems for fractional Carleson
measures and $Q_\alpha({\mathbb R}^n)$}, J. Funct. Anal. 208 (2004),
377-422.

\vspace{-0.25cm}
\bibitem{ejpx} M. Ess\'en, S. Janson, L. Peng and J. Xiao,
{$Q$ spaces of several real variable}s, Indiana Univ. Math. J. 49 (2000),
575-615.


\vspace{-0.25cm}
\bibitem{GLY-ms} L. Grafakos, L. Liu and D. Yang,
{ Vector-valued singular integrals and maximal functions
on spaces of homogeneous type},
Math. Scand. 104 (2009),  296-310.

\vspace{-0.25cm}
\bibitem{Gri}
A. Grigor'yan, { Heat kernels and function
theory on metric measure spaces}, Contemp. Math. 338 (2003),
143-172.

\vspace{-0.25cm}
\bibitem{GH}  A. Grigor'yan and J. Hu,
{ Off-diagonal upper estimates for the heat kernel of the Dirichlet forms on metric spaces}, Invent. Math. 174 (2008), 81-126.

\vspace{-0.25cm}
\bibitem{GHL}  A. Grigor'yan, J. Hu  and K.-S. Lau,
{Heat kernels on metric measure spaces and an application to semilinear elliptic equations}, Trans. Amer. Math. Soc. 355 (2003),  2065-2095.

\vspace{-0.25cm}
\bibitem{GL}
A. Grigor'yan and L. Liu, {Heat kernel and Lipschitz-Besov spaces},
 Forum Math. (2014), DOI: 10.1515/forum-2014-0034.

\vspace{-0.25cm}
\bibitem{GT}
A. Grigor'yan and A. Telcs, { Two-sided estimates of
 heat kernels on metric measure spaces},
 Ann. Probab. 40 (2012),  1212-1284.

\vspace{-0.25cm}
\bibitem{HMY1} Y. Han, D. M\"uller and D. Yang,
{ Littlewood-Paley characterizations for Hardy spaces on spaces of
homogeneous type}, Math. Nachr. 279 (2006), 1505-1537.

\vspace{-0.25cm}
\bibitem{HMY2} Y. Han, D. M\"uller and D. Yang,
{A theory of Besov and Triebel--Lizorkin spaces on metric measure
spaces modeled on Carnot-Carath\'eodory spaces},
Abstr. Appl. Anal. 2008, Art. ID 893409, 250 pp.

\vspace{-0.25cm}
\bibitem{IPX}
K. Ivanov, P. Petrushev and Y. Xu, { Decomposition of spaces of distributions induced by tensor product bases}, J. Funct. Anal. 263 (2012), 1147-1197.


\vspace{-0.25cm}
\bibitem{KP} G. Kerkyacharian and P. Petrushev,
{Heat kernel based decomposition of spaces of
distributions in the framework of dirichlet
spaces}, Trans. Amer. Math. Soc. 367 (2015), 121-189.

\vspace{-0.25cm}
\bibitem{KY}
{H.~Kozono and M.~Yamazaki}, {Semilinear heat
equations and the Navier-Stokes equation with distributions in new
function spaces as initial data}, Comm.
Partial Differential Equations 19 (1994), 959-1014.

\vspace{-0.25cm}
\bibitem{lxy12} P. Li, J. Xiao and Q. Yang,
{Global mild solutions of fractional Navier-Stokes equations with
small initial data in critical Besov-$Q$ spaces},
Electron. J. Differential
Equations 185 (2014), 37 pp.

\vspace{-0.25cm}
\bibitem{lz10} P. Li and Z. Zhai, {Well-posedness and regularity of
generalized Navier-Stokes equations in some critical $Q$-spaces},
J. Funct. Anal. 259 (2010), 2457-2519.

\vspace{-0.25cm}

\bibitem{lsuyy}
Y.~Liang, Y.~Sawano, T.~Ullrich, D.~Yang and W.~Yuan,
{New characterizations of Besov-Triebel--Lizorkin-Hausdorff spaces including
coorbits and wavelets}, J. Fourier Anal. Appl. 18 (2012), 1067-1111.

\vspace{-0.25cm}

\bibitem{lsuyy2} Y. Liang, D. Yang, W. Yuan, Y. Sawano and T. Ullrich,
{A new framework for generalized Besov-type and Triebel-Lizorkin-type spaces},
Dissertationes Math. (Rozprawy Mat.) 489 (2013), 1-114.

\vspace{-0.25cm}
\bibitem{ma03}
{A.~Mazzucato},
{Besov-Morrey spaces: function space theory and applications to non-linear PDE},
Trans. Amer. Math. Soc. 355 (2003), 1297-1369.


\vspace{-0.25cm}

\bibitem{Ou}
E. M.  Ouhabaz,  { Analysis of Heat Equations on Domains}, London Mathematical
Society Monographs Series 31, Princeton University Press, Princeton, NJ, 2005.


\vspace{-0.25cm}

\bibitem{p76}
J. Peetre, New Thoughts on Besov Spaces, Duke University, Durham, N.C., 1976.

\vspace{-0.25cm}

\bibitem{rs}
T. Runst and W. Sickel, {Sobolev Spaces of Fractional Order, Nemytskij Operators,
and Nonlinear Partial Differential Equations}, de Gruyter Series in Nonlinear
Analysis and Applications, 3. Walter de Gruyter \& Co., Berlin, 1996.


\vspace{-0.25cm}

\bibitem{s05} Y. Sawano,
Sharp estimates of the modified Hardy-Littlewood
maximal operator on the nonhomogeneous space via covering
lemmas, Hokkaido Math. J. 34 (2005), 435-458.

\vspace{-0.25cm}

\bibitem{s09} Y. Sawano, A note on Besov-Morrey spaces and Triebel-Lizorkin-Morrey
spaces, Acta Math. Sin. (Engl. Ser.)  25  (2009), 1223-1242.

\vspace{-0.25cm}

\bibitem{s10} Y. Sawano, Besov-Morrey spaces and Triebel-Lizorkin-Morrey
spaces on domains, Math. Nachr.  283  (2010), 1456-1487.

\vspace{-0.25cm}

\bibitem{st}
Y. Sawano and H. Tanaka, {Decompositions of Besov-Morrey spaces and
Triebel--Lizorkin-Morrey spaces}, Math. Z. 257 (2007), 871-904.

\vspace{-0.25cm}

\bibitem{syy}
Y. Sawano, D. Yang and W. Yuan, {New applications of Besov-type
and Triebel--Lizorkin-type spaces}, J. Math. Anal. Appl. 363 (2010), 73-85.


\vspace{-0.25cm}
\bibitem{s012}
W.~Sickel, {Smoothness spaces related to Morrey spaces---a survey. I},
Eurasian Math. J. 3 (2012), 110-149.

\vspace{-0.25cm}
\bibitem{s013}
W.~Sickel, {Smoothness spaces related to Morrey spaces---a survey. II},
Eurasian Math. J. 4 (2013), 82-124.

\vspace{-0.25cm}
\bibitem{TX}
L. Tang and J. S. Xu,
{ Some properties of Morrey type Besov-Triebel spaces},
Math. Nachr. 278 (2005),  904-917.

\vspace{-.25cm}
\bibitem{t83} H. Triebel, {Theory of Function Spaces}, Birkh\"auser
Verlag, Basel, 1983.

\vspace{-0.25cm}
\bibitem{t92}
H. Triebel, {Theory of Function Spaces. II},
 Birkh\"auser Verlag, Basel, 1992.

\vspace{-.25cm}
\bibitem{t95} H. Triebel, Interpolation Theory, Function Spaces, Differential Operators,
Second edition, Johann Ambrosius Barth, Heidelberg, 1995.

\vspace{-0.25cm}
\bibitem{t13}
H. Triebel,
Local Function Spaces, Heat and Navier-Stokes Equations,
EMS Tracts in Mathematics 20,
European Mathematical Society (EMS), Z\"urich, 2013.

\vspace{-0.25cm}
\bibitem{xj01} J. Xiao, {Holomorphic $Q$ Classes}, Lecture Notes in
Math. 1767, Springer, Berlin, 2001.

\vspace{-0.25cm}
\bibitem{xj06} J. Xiao, {Geometric $Q_p$ Functions},
Birkh\"auser Verlag, Basel, 2006.


\vspace{-0.25cm}
\bibitem{x07} J. Xiao, {Homothetic variant of fractional Sobolev space
with application to Navier-Stokes system}, Dyn. Partial Differ. Equ.
4 (2007), 227-245.

\vspace{-0.25cm}

\bibitem{yy1}
D. Yang and W. Yuan, {A new class of function spaces connecting Triebel--Lizorkin
spaces and Q spaces}, J. Funct. Anal. 255 (2008), 2760-2809.

\vspace{-0.25cm}

\bibitem{yy2}
D. Yang and W. Yuan, {New Besov-type spaces and Triebel--Lizorkin-type spaces
including Q spaces}, Math. Z. 265 (2010), 451-480.


\vspace{-0.25cm}
\bibitem{YY1}
D. Yang and W. Yuan,
{Characterizations of Besov-type and Triebel--Lizorkin-type spaces via maximal functions and local means},
Nonlinear Anal. 73 (2010), 3805-3820.

\vspace{-0.25cm}

\bibitem{yy4} D. Yang and W. Yuan,
{Relations among Besov-type spaces, Triebel--Lizorkin-type spaces and
generallized Carleson measure spaces}, Appl. Anal. 92 (2013), 549-561.

\vspace{-0.25cm}

\bibitem{yyz13} D. Yang, W. Yuan and C. Zhuo, Complex interpolation on Besov-type and
Triebel-Lizorkin-type spaces, Anal. Appl. (Singap.) 11  (2013), 1350021, 45 pp.

\vspace{-0.25cm}

\bibitem{yhsy} W Yuan, D.D. Haroske, L. Skrzypczak and D. Yang,
Embedding properties of weighted Besov-type spaces, Anal. Appl. (Singap.) (2014),
DOI: 10.1142/S0219530514500493.

\vspace{-0.25cm}
\bibitem{YSY}
W. Yuan, W. Sickel and D. Yang,
 { Morrey and Campanato meet Besov, Lizorkin and Triebel},
 Lecture Notes in Mathematics 2005, Springer-Verlag, Berlin, 2010.

\vspace{-0.25cm}
\bibitem{ysy13}
W. Yuan, W. Sickel and D. Yang,
On the coincidence of
certain approaches to smoothness spaces related to Morrey spaces,
Math. Nachr. 286 (2013), 1571-1584.


\end{thebibliography}
\end{document}